\documentclass[11pt,leqno]{article}

\usepackage[T1]{fontenc}
\usepackage{lmodern}
\usepackage{microtype}
\usepackage{etoolbox}
\usepackage{needspace}
\usepackage[a4paper,margin=1.12in]{geometry}
\usepackage{amsmath,amssymb,amsthm,mathtools,mathrsfs}
\usepackage{enumitem}
\usepackage{booktabs,array,tabularx,longtable}
\usepackage{xcolor}
\usepackage{hyphenat}
\usepackage{tikz}
\usetikzlibrary{arrows.meta,calc}
\definecolor{refcitepink}{cmyk}{0,1,0,0}
\usepackage[colorlinks=true,linkcolor=blue,citecolor=refcitepink,
  urlcolor=black,linkbordercolor={1 0 0},citebordercolor={0 1 0},
  urlbordercolor={0 1 1},pdfborder={0 0 1}]{hyperref}
\usepackage[noabbrev]{cleveref}
\makeatletter
\let\IF@original@linkurl\hyper@linkurl
\def\hyper@linkurl#1#2{%
  \begingroup
  \hypersetup{pdfborder={0 0 1}}%
  \IF@original@linkurl{#1}{#2}%
  \endgroup
}
\makeatother

\allowdisplaybreaks[2]
\numberwithin{equation}{section}
\setlist{nosep,leftmargin=2em}

\newtheorem{theorem}{Theorem}[section]
\newtheorem{lemma}[theorem]{Lemma}
\newtheorem{proposition}[theorem]{Proposition}
\newtheorem{corollary}[theorem]{Corollary}
\newtheorem{frobeniuslemma}[theorem]{Lemma}
\theoremstyle{definition}
\newtheorem{definition}[theorem]{Definition}
\newtheorem{convention}[theorem]{Convention}

\theoremstyle{remark}

\BeforeBeginEnvironment{comment}{\Needspace{7\baselineskip}}

\crefname{theorem}{theorem}{theorems}
\crefname{lemma}{lemma}{lemmas}
\crefname{proposition}{proposition}{propositions}
\crefname{corollary}{corollary}{corollaries}
\crefname{definition}{definition}{definitions}
\crefname{convention}{convention}{conventions}
\crefname{remark}{remark}{remarks}

\newcommand{\A}{\mathcal A}
\newcommand{\K}{\mathcal K}

\newcommand{\Lus}{\mathrm{Lus}}
\newcommand{\Dr}{\mathrm{Dr}}
\newcommand{\ULus}{U_{\A}^{\Lus}}
\newcommand{\UDr}{U_{\A}^{\Dr}}
\newcommand{\UDrn}[1]{U_{\A}^{\Dr,(#1)}}
\newcommand{\Bstar}{B^{\ast}}
\newcommand{\Cstar}{C^{\ast}}

\newcommand{\steptag}[1]{\stepcounter{equation}\tag{#1}}

\tikzset{
  dynkin vertex/.style={circle,draw=black,fill=white,
    inner sep=0pt,minimum size=5.6pt,line width=.45pt},
  dynkin bond/.style={draw=black,line width=.45pt},
  dynkin label/.style={font=\scriptsize,inner sep=1.5pt}
}
\newcommand{\DynkinNode}[3]{%
  \node[dynkin vertex,label={[dynkin label]below:$#3$}] (#1) at #2 {};%
}
\newcommand{\DynkinNodeAbove}[3]{%
  \node[dynkin vertex,label={[dynkin label]above:$#3$}] (#1) at #2 {};%
}
\newcommand{\DynkinDouble}[2]{%
  \draw[dynkin bond] ([yshift=1.05pt]#1.east)--
    ([yshift=1.05pt]#2.west);%
  \draw[dynkin bond] ([yshift=-1.05pt]#1.east)--
    ([yshift=-1.05pt]#2.west);%
}
\newcommand{\DynkinTriple}[2]{%
  \draw[dynkin bond] ([yshift=1.55pt]#1.east)--
    ([yshift=1.55pt]#2.west);%
  \draw[dynkin bond] (#1.east)--(#2.west);%
  \draw[dynkin bond] ([yshift=-1.55pt]#1.east)--
    ([yshift=-1.55pt]#2.west);%
}
\newcommand{\DynkinQuadruple}[2]{%
  \draw[dynkin bond] ([yshift=2.25pt]#1.east)--
    ([yshift=2.25pt]#2.west);%
  \draw[dynkin bond] ([yshift=.75pt]#1.east)--
    ([yshift=.75pt]#2.west);%
  \draw[dynkin bond] ([yshift=-.75pt]#1.east)--
    ([yshift=-.75pt]#2.west);%
  \draw[dynkin bond] ([yshift=-2.25pt]#1.east)--
    ([yshift=-2.25pt]#2.west);%
}
\newcommand{\DynkinArrow}[2]{%
  \draw[-{Latex[length=2.7pt,width=3.2pt]},line width=.42pt]
    ($(#1)!.46!(#2)$)--($(#1)!.54!(#2)$);%
}

\title{Integral Forms of Twisted Quantum Affine Algebras and Quantum Frobenius }
\author{Ziyang Jiang and Naihuan Jing}
\date{}
\hypersetup{
  pdftitle={Integral Forms of Twisted Quantum Affine Algebras and Quantum Frobenius},
  pdfauthor={Ziyang Jiang and Naihuan Jing}
}

\begin{document}
\maketitle

\begin{abstract}
For twisted quantum affine algebras, we compare Lusztig's restricted integral form
with the integral form generated by group-like toral elements and
divided powers of real Drinfeld current modes.
The two forms coincide over \(\mathbb Z[q,q^{-1}]\) outside type
\(A_{2n}^{(2)}\).
In type \(A_{2n}^{(2)}\), the smallest ordinary localization over which they coincide is
\(\mathbb Z[q,q^{-1}][\Phi_{4r}(q)^{-1}\mid r\geq1]\) when \(n=1\),
and \(\mathbb Z[q,q^{-1}][\Phi_{8r}(q)^{-1}\mid r\geq1]\)
when \(n\geq2\), where $\Phi_k(q)$ are cyclotomic polynomials.
In particular, no localization obtained by inverting finitely many
nonzero Laurent polynomials suffices in this family.

\end{abstract}

\medskip
\noindent\textbf{Keywords.}
Twisted quantum affine algebra; Lusztig's restricted integral form; integral
form in Drinfeld presentation; quantum Frobenius; cyclotomic localization.

\medskip
\noindent\textbf{2020 Mathematics Subject Classification.}
17B37, 17B67, 20G42.

\section*{Introduction}

Quantum affine algebras admit two complementary presentations.  The
Drinfeld--Jimbo definition is attached directly to an affine Cartan datum
and is expressed in Chevalley--Serre generators \cite{Dr1,Ji}, whereas
Drinfeld's new realization is organized by loop modes and current generators
\cite{Dr2}.  The current realization is particularly well adapted to the study of 
finite-dimensional representations \cite{CP} and the construction of
level‑one vertex representations \cite{J1}, while the Drinfeld--Jimbo
presentation is the natural setting for Lusztig's divided powers, braid
symmetries, and specialization at roots of unity. Explicit formulas for the comparison between the Drinfeld--Jimbo and
Drinfeld current presentations were given by Jing
\cite{J2} and, for twisted types including
\(A_{2n}^{(2)}\), were also recorded by Jing and Misra
\cite{JM}. These works stated the comparison as an
isomorphism; a complete proof over \(\mathbb C(q)\) was later obtained by Damiani 
\cite{D2, D3}.
A braid group construction of it appeared in \cite{JZ}.

The integral case is more interesting. Although the comparison isomorphism identifies the two algebras over
\(\mathbb C(q)\), it need not identify their prescribed
\(\mathbb Z[q,q^{-1}]\)-forms.  A denominator
invisible over the rational function field can change the specialized
algebra, and a generating set adapted to current modes need not generate
the Chevalley divided-power lattice.  The comparison also depends on exactly
which current generators are placed in the integral form; this point is part
of the definition, not a harmless normalization choice.

There is a useful classical precedent.  The works of Chevalley and Kostant
initiated the divided-power integral theory for finite-dimensional
semisimple Lie algebras \cite{C,Ko}.  Garland treated untwisted loop algebras
\cite{G}, and Mitzman constructed integral bases for affine Kac--Moody
algebras in the Chevalley presentation \cite{M}; see also \cite{Pr}. Akasaka~\cite{A} constructed an integral PBW basis and an integral
crystal basis of $U_q^+(A_2^{(2)})$, but did not determine the least
ordinary localization over which the integral forms in Drinfeld presentation and Lusztig's restricted
integral forms coincide. At the classical specialization \(q\mapsto1\), Damiani--Paolini proved in
rank one that the integral algebra generated by the divided powers of
the Drinfeld generators of \(A_2^{(2)}\) is properly contained in the
corresponding Chevalley--Mitzman integral form \cite{DP}. Paolini
subsequently extended this classical comparison to all affine types
and identified \(A_{2n}^{(2)}\) as the exceptional family \cite{P}.  These classical
results predict where an obstruction may occur, while they do not determine
its cyclotomic support in the generic quantum algebra.

The comparison of the integral forms for quantum twisted affine algebras is substantially complex and new ideas are needed. We pursue a structural
reduction for the comparison problem in this paper. We use local
\(q\)-Weyl contractions and braid transport to establish sufficiency, while
modified quantum Frobenius and parity separation detect necessity. This approach does not require a global computation of all commutation relations, which were extremely complicated in quantum situation.

The untwisted simply-laced case provides an important quantum precedent.
In this setting, the real-root comparison and the recovery of the affine
simple-root divided powers established by Beck--Chari--Pressley show that
no additional coefficient obstruction occurs
\cite{BCP}. Chari--Jing subsequently
recorded the resulting equality between the corresponding Drinfeld and
Lusztig integral forms \cite[Prop. 1.2]{CJ}.

The {\it goal} of this paper is to compare the ''bare real-current core'' generated as an \(\mathcal A\)-algebra by the group-like toral elements \(K_\delta^{\pm1}\), \(K_i^{\pm1}\),
and the divided powers of the Drinfeld current modes \(x_{i,r}^{\pm}\) with the full Lusztig's divided-power form. We denote the bare real-current core by
\(\UDr(\mathfrak g)\). No imaginary generator or Heisenberg generators, no additional real-root vector, and no toral quantum
binomial is inserted into this definition. 
This choice separates the genuine real-current obstruction from artifacts of
an enlarged generating set.

Our first result is that no obstruction occurs outside twisted type \(A_{2n}^{(2)}\).
For
\[
 A_{2n-1}^{(2)},\quad D_{n+1}^{(2)},\quad E_6^{(2)},\quad D_4^{(3)},
\]
the bare real-current core equals Lusztig's integral form over \(\A\).  The proof does not use information of a global PBW basis.  An ordinary \(A_1^{(1)}\) vertex supplies a
direct integral starting point for each finite color.  Current translations
and finite braid operators preserve the core; an explicit extended Weyl element
then carries one starting root \(\alpha_{i^\ast}\)
to \(\alpha_0\). Lifting that extended Weyl group element by a braid operator transports \(E_{i^\ast}\) to \(E_0\). Finite braid stability then places the affine simple divided powers in the current integral form, which therefore contains the full
Lusztig form.

Type \(A_{2n}^{(2)}\) stands out in the family and splits sharply according to rank.  Let
\[
 \Bstar=\A[\Phi_{8r}(q)^{-1}\mid r\geq1],\qquad
 \Cstar=\A[\Phi_{4r}(q)^{-1}\mid r\geq1],
\]
where $\Phi_k(q)\in \Bbb Z[q]$ are the cyclotomic polynomials of degree $\phi(k)$.

For \(A_2^{(2)}\), a single \(q\)-Weyl cell produces explicit coefficients
\[
 d_a=\prod_{h=1}^{a}(q^{2h}+1)(q^{4h}+1)
\]
with \(d_aE_0^{(a)},d_aF_0^{(a)}\in\UDr\).  Their irreducible support is
contained in the \(\Phi_{4r}\)'s, so \(\Cstar\) is sufficient.  For
\(A_{2n}^{(2)}\), \(n\geq2\), the relevant coefficients are instead
\[
 c_a=\prod_{h=1}^{a}(q^{4h}+1).
\]
We first create \(c_aE_{\eta_n}^{(a)}\) inside a special three-root cell.
Denominator-free braid transport carries this integrality along a precisely
normalized chain from \(\eta_n\) to \(\eta_1=\alpha_0\).  Since
the nonunit factors of \(c_a\) are exactly supported on \(\Phi_{8r}\), the ring
\(\Bstar\) is sufficient.

At roots of unity we use Lusztig's modified quantum Frobenius formalism
\cite{L}.  Sufficiency alone does not identify a least ring.  For this we introduce the
coefficient ideals
\[
 \mathfrak c_{n,r}:=(\UDrn{n}:_{\A}E_0^{(r)}).
\]
At each relevant cyclotomic prime ideal, a modified quantum Frobenius
map preserves the root degree of every nonzero block. We equip the scaled root lattice with the parity given by the coefficient
of the scaled affine simple root $r\alpha_0$ modulo $2$.
With respect to this parity, every nonzero image of a real-current block
is even, whereas the image of $E_0^{(r)}1_0$ is nonzero and odd. This forces the relevant coefficient ideal to be contained in that
cyclotomic prime ideal. The standard Frobenius theorem applies at
\((\Phi_{8r}(q))\) for \(r\geq2\), while a second scaled datum treats
\((\Phi_{4r}(q))\) in rank one for odd \(r\geq3\).

Two endpoint roots require special treatment: \((\Phi_8)\) in every rank and \((\Phi_4)\) in rank one.
The standard Frobenius size condition fails there, so it would be circular to
invoke the same theorem.  We instead form a common triangular quotient of the
specialized modified form.  Root-of-unity Gaussian vanishing shows that triangular straightening
preserves badness, where a word is bad if some associated weight
\(\lambda_j(w)\) lies outside \(X^\bullet\), or if it contains a
divided-power letter \(E_i^{(a)}\) or \(F_i^{(a)}\) with
\(\ell_i\nmid a\).  Consequently, the span \(J\) of the evaluations
of all bad words agrees with the span of the evaluations of the
triangularly ordered bad words.  The resulting subspace \(J\) is
root graded, and concatenation of words shows that it is a two-sided
ideal. The quotient kills the evaluations of all bad words, preserves \(E_01_0\), and carries the same parity
separation.  Thus the endpoint primes are forced as well.
The detailed Gaussian, straightening, and half-ideal arguments are
collected in Appendix~\ref{app:endpoint-technical}, while
Section~\ref{sec:endpoint-obstructions} retains the quotient
construction and its role in the parity argument.

Combining the constructive and obstructive directions gives the following
classification.  Among all ordinary localizations \(B=S^{-1}\A\subset\K\),
\[
 B\UDr(A_2^{(2)})=B\ULus(A_2^{(2)})
 \quad\Longleftrightarrow\quad
 \Phi_{4r}(q)\in B^{\times}\ \text{for every }r\geq1,
\]
and, for \(n\geq2\),
\[
 B\UDr(A_{2n}^{(2)})=B\ULus(A_{2n}^{(2)})
 \quad\Longleftrightarrow\quad
 \Phi_{8r}(q)\in B^{\times}\ \text{for every }r\geq1.
\]
Consequently the least rings are \(\Cstar\) in rank one and \(\Bstar\) in
higher rank, whereas \(\A\) itself is least in every other twisted family. In particular, for every \(n\geq1\), no localization of \(\A\)
obtained by inverting only finitely many elements is sufficient to
make the two integral forms of type \(A_{2n}^{(2)}\) coincide.

Both exceptional coefficient rings have the same classical shadow.  For
\(m>1\), one has \(\Phi_m(1)=p\) when \(m\) is a power of the prime \(p\),
and \(\Phi_m(1)=1\) otherwise.  Consequently, base change along
\(q\mapsto1\) sends both \(\Cstar\) and \(\Bstar\) to
\(\mathbb Z[1/2]\), which is consistent with the classical
conclusions obtained by Damiani--Paolini \cite{DP} and Paolini \cite{P}. The classical strictness
results are therefore a consistency check, but they cannot imply the
individual coefficient-ideal obstructions proved here.

The paper is organized as follows.  Section~\ref{sec:common} fixes the comparison isomorphism between the Drinfeld and
Drinfeld--Jimbo realizations and places the two integral forms in a common
ambient algebra.  The remaining argument is divided into two parts.  Part~
\ref{part:nonexceptional} consists of
\cref{sec:nonexceptional-data,sec:real-current-start,%
sec:finite-Weyl-endpoint} and proves equality in the
four non-\(A_{2n}^{(2)}\) twisted families.  Part~\ref{part:a2n} begins with
the type \(A_{2n}^{(2)}\) data in \cref{sec:a2n-root-data}.
\Cref{sec:rank-one-sufficiency,sec:quantum-dp-scaled-roots} prove sufficiency
in rank one and higher rank, respectively; \cref{sec:coefficient-ideals}
develops the coefficient-ideal and parity-separation machinery; and
\cref{sec:scaled-root-datum,sec:endpoint-obstructions} prove
cyclotomic necessity for \(r\geq2\) and \(r=1\), respectively, including the
two exceptional endpoints.  Finally, Section~\ref{sec:least-ordinary-localization}
assembles the implications, proves minimality, and shows that
in type \(A_{2n}^{(2)}\), \(n\geq1\), no localization obtained
by inverting finitely many nonzero Laurent polynomials makes
the two integral forms equal.  Appendix~\ref{app:endpoint-technical}
contains the technical proofs for the endpoint quotient used in
Section~\ref{sec:endpoint-obstructions}.
\section{Conventions, two integral forms, and minimality}
\label{sec:common}

\subsection{Types, parameters, and the fixed comparison isomorphism}

Throughout the paper, set
\[
  \mathcal A=\mathbb Z[q,q^{-1}],
  \qquad
  \mathcal K=\mathbb Q(q).
\]
Let \(\mathfrak g\) be an irreducible twisted affine Cartan type in
\[
 \mathcal T:=
 \left\{
 A_{2n}^{(2)}\ (n\geq1),\quad
 A_{2n-1}^{(2)}\ (n\geq3),\quad
 D_{n+1}^{(2)}\ (n\geq2),\quad
 E_6^{(2)},\quad D_4^{(3)}
 \right\}.
\]
The low-rank isomorphism \(A_3^{(2)}\cong D_3^{(2)}\) is understood through
the case \(n=2\) of the family \(D_{n+1}^{(2)}\).  Write
\(I=\{0\}\sqcup I_0\) for the set of affine nodes.  We work throughout with the derived quantum affine
algebra: the central element is retained, whereas the degree element is
omitted.


Let \(A=(a_{ij})_{i,j\in I}\) be the affine Cartan matrix, and choose
relatively prime positive integer symmetrizers \((d_i)_{i\in I}\) satisfying
\[
d_i a_{ij}=d_j a_{ji}
\qquad(i,j\in I).
\]
Outside type \(A_{2n}^{(2)}\), the \(d_i\) are those used in
\cref{eq:nonexceptional-type-table}.  In type \(A_{2n}^{(2)}\), they are
normalized by
\[
 (d_0,d_1,\ldots,d_n)=(4,2,\ldots,2,1)\quad(n\geq2),
 \qquad
 (d_0,d_1)=(4,1)\quad(n=1).
\]
Set
\begin{equation}\label{eq:local-quantum-parameters}
q_i:=q^{d_i}
\qquad(i\in I).
\end{equation}
We use the corresponding integral invariant form
\[
 (\alpha_i\mid\alpha_j):=d_i a_{ij};
\]
thus \((\alpha_i\mid\alpha_i)=2d_i\), and the Cartan action and all
root parameters are expressed through this form.
Thus the local parameters in type \(A_{2n}^{(2)}\) are
\((q_0,q_1,\ldots,q_n)=(q^4,q^2,\ldots,q^2,q)\) for \(n\geq2\),
and \((q_0,q_1)=(q^4,q)\) for \(n=1\).

Let \(\Delta^{\mathrm{re}}\) be the set of affine real roots.
For \(i\in I_0\), define
\begin{equation}\label{eq:allowed-current-modes}
\nu_i
:=
\min\bigl\{
m>0 \bigm| m\delta-\alpha_i\in\Delta^{\mathrm{re}}
\bigr\},
\qquad
I_i:=\nu_i\mathbb Z\subseteq\mathbb Z.
\end{equation}
Thus \(I_i\) is the set of allowed current modes of color \(i\). See Figures~\ref{fig:nonexceptional-dynkin} and~\ref{fig:a2n-dynkin} for numbering of Dynkin nodes .

In the four non-\(A_{2n}^{(2)}\) types, \(\nu_i=d_i\);
in type \(A_{2n}^{(2)}\), \(\nu_i=1\).
For an indeterminate \(v\), use the symmetric quantum integers,
factorials, and binomial coefficients
\begin{equation}\label{eq:quantum-number-convention}
 [r]_v:=\frac{v^r-v^{-r}}{v-v^{-1}},\qquad
 [a]_v!:=\prod_{r=1}^a[r]_v,\qquad
 \begin{bmatrix}a\\b\end{bmatrix}_v
 :=\frac{[a]_v!}{[b]_v![a-b]_v!},
\end{equation}
where \(r\in\mathbb Z\), \(0\leq b\leq a\), and \([0]_v!:=1\).

\medskip
\noindent\textbf{The Drinfeld--Jimbo presentation.}
The algebra \(U_q^{\mathrm{DJ}}(\mathfrak g)\) is the unital
\(\mathcal K\)-algebra generated by
\{\(E_i,F_i,K_i^{\pm1}\)| \(i\in I\)\}, subject to
\begin{subequations}\label{eq:dj-defining-relations}
\begin{align}
 &K_iK_i^{-1}=K_i^{-1}K_i=1,\qquad K_iK_j=K_jK_i,
 \label{eq:dj-torus}\\
 &K_iE_jK_i^{-1}=q_i^{a_{ij}}E_j,\qquad
 K_iF_jK_i^{-1}=q_i^{-a_{ij}}F_j,
 \label{eq:dj-cartan-action}\\
 &[E_i,F_j]=\delta_{ij}\frac{K_i-K_i^{-1}}{q_i-q_i^{-1}}
 \qquad(i,j\in I),
 \label{eq:dj-mixed-commutator}\\
 &\sum_{p=0}^{1-a_{ij}}(-1)^p
 E_i^{(p)}E_jE_i^{(1-a_{ij}-p)}=0
 \qquad(i\neq j),
 \label{eq:dj-positive-serre}\\
 &\sum_{p=0}^{1-a_{ij}}(-1)^p
 F_i^{(p)}F_jF_i^{(1-a_{ij}-p)}=0
 \qquad(i\neq j).
 \label{eq:dj-negative-serre}
\end{align}
\end{subequations}
Here the Chevalley divided powers are
\begin{equation}\label{eq:chevalley-divided-power-definition}
 E_i^{(a)}:=\frac{E_i^a}{[a]_{q_i}!},\qquad
 F_i^{(a)}:=\frac{F_i^a}{[a]_{q_i}!}
 \qquad(i\in I,\ a\geq0).
\end{equation}
Write \(Q:=\bigoplus_{i\in I}\mathbb Z\alpha_i\) for the root lattice.
For \(\mu=\sum_{i\in I}m_i\alpha_i\in Q\), put
\(K_\mu:=\prod_{i\in I}K_i^{m_i}\); in particular, writing
\(\delta=\alpha_0+\sum_{i\in I_0}r_i\alpha_i\), we have
\(K_\delta=K_0\prod_{i\in I_0}K_i^{r_i}\).
Put
\[
Q_+:=\sum_{i\in I}\mathbb N\alpha_i.
\]
Let \(U_q^+\) denote the \(\mathcal K\)-subalgebra of
\(U_q^{\mathrm{DJ}}(\mathfrak g)\) generated by the \(E_i\),
\(i\in I\).  Its root-lattice grading is written as
\[
U_q^+
=
\bigoplus_{\gamma\in Q_+}(U_q^+)_{\gamma},
\qquad
\deg(E_i)=\alpha_i.
\]

\medskip
\noindent\textbf{The Drinfeld presentation.}
Set \(\nu_{ij}:=\max\{\nu_i,\nu_j\}\), and write
\([u,v]_a:=uv-avu\).
For \(r\in\mathbb Z\setminus\{0\}\), define
\[
 b_{ij}(r):=
 \begin{cases}
  0,&\nu_{ij}\nmid r,\\[2pt]
  \displaystyle
  \frac{[2r]_q\bigl(q^{2r}+(-1)^{r-1}+q^{-2r}\bigr)}{r},
   &\mathfrak g=A_{2n}^{(2)},\ i=j=n,\\[7pt]
  \displaystyle
  \frac{[(r/\nu_{ij})a_{ij}]_{q_i}}{r/\nu_{ij}},
   &\text{otherwise}.
 \end{cases}
\]
The algebra \(U_q^{\mathrm{Dr}}(\mathfrak g)\) is the unital
\(\mathcal K\)-algebra generated by
\[
 \{K_\delta^{\pm1},\quad K_i^{\pm1},\quad
 x_{i,r}^{\pm},\quad h_{i,s}
 \qquad |i\in I_0,\ r\in I_i,\
       s\in I_i\setminus\{0\}\}
\]
with the relations listed below.
Use the zero-mode convention
\begin{equation}\label{eq:dr-zero-mode-convention}
 x_{i,r}^{\pm}=h_{i,r}=0\qquad(r\notin I_i),
\end{equation}
and define
\begin{equation}\label{eq:dr-cartan-series}
 \sum_{a\geq0}\phi_{i,\pm a}^{\pm}u^a
 =K_i^{\pm1}\exp\!\left(
  \pm(q_i-q_i^{-1})\sum_{a>0}h_{i,\pm a}u^a
 \right),
 \qquad \phi_{i,t}^{\pm}=0\quad\text{if }\pm t<0.
\end{equation}
For \(\varepsilon\in\{1,-1\}\), write
\(x_{i,r}^{1}=x_{i,r}^{+}\), \(x_{i,r}^{-1}=x_{i,r}^{-}\).
The notation \(\operatorname{Sym}_{r_1,\ldots,r_a}\) means the sum over
all permutations of these mode indices. For \(m=-a_{ij}\in\{2,3\}\), let
\[
\begin{aligned}
 P_{ij}^{\varepsilon}(u,v;s)
 &:={}
 x_{j,s}^{\varepsilon}x_{i,u}^{\varepsilon}x_{i,v}^{\varepsilon}
 -[2]_{q^m}x_{i,u}^{\varepsilon}x_{j,s}^{\varepsilon}x_{i,v}^{\varepsilon}
 +x_{i,u}^{\varepsilon}x_{i,v}^{\varepsilon}x_{j,s}^{\varepsilon}.
\end{aligned}
\]
In the following list, \(i,j\in I_0\),
\(\varepsilon,\eta\in\{1,-1\}\), and \(z\) ranges over the generators.
In the first five relations, input modes belong to their respective
\(I_i\), and Heisenberg modes are nonzero; all mode indices in the
remaining relations range over \(\mathbb Z\), using the zero convention
above. In the last two relations, \(M=1-a_{ij}\) and \(m=-a_{ij}\),
respectively. The defining relations are
\begingroup
\allowdisplaybreaks[1]
\begin{subequations}\label{eq:dr-defining-relations}
\begin{align}
 &\begin{aligned}
 [K_\delta,z]&=0,\qquad
 K_\delta K_\delta^{-1}=K_\delta^{-1}K_\delta=1,\\
 K_iK_i^{-1}&=K_i^{-1}K_i=1,\qquad K_iK_j=K_jK_i,
 \end{aligned}
 \label{eq:dr-torus}\\[3pt]
 &K_ix_{j,r}^{\varepsilon}K_i^{-1}
 =q_i^{\varepsilon a_{ij}}x_{j,r}^{\varepsilon},
 \qquad [K_i,h_{j,s}]=0,
 \label{eq:dr-cartan-action}\\[3pt]
 &[x_{i,r}^{+},x_{j,s}^{-}]
 =\delta_{ij}\frac{
 K_\delta^{-s}\phi_{i,r+s}^{+}-K_\delta^{-r}\phi_{i,r+s}^{-}
 }{q_i-q_i^{-1}},
 \label{eq:dr-mixed-current}\\[3pt]
 &[h_{i,r},x_{j,s}^{\varepsilon}]
 =\varepsilon b_{ij}(r)
 K_\delta^{(r-\varepsilon|r|)/2}x_{j,r+s}^{\varepsilon},
 \label{eq:dr-heisenberg-current}\\[3pt]
 &[h_{i,r},h_{j,s}]
 =\delta_{r+s,0}b_{ij}(r)
 \frac{K_\delta^r-K_\delta^{-r}}{q_j-q_j^{-1}},
 \label{eq:dr-heisenberg}\\[3pt]
 &\begin{aligned}
 &[x_{i,r+\varepsilon\nu_{ij}}^{\varepsilon},
   x_{j,s}^{\varepsilon}]_{q_i^{a_{ij}}}
 +[x_{j,s+\varepsilon\nu_{ij}}^{\varepsilon},
   x_{i,r}^{\varepsilon}]_{q_i^{a_{ij}}}=0
 &&(i\neq j,\ a_{ij}<0),
 \end{aligned}
 \label{eq:dr-distinct-current}\\[3pt]
 &\operatorname{Sym}_{r_1,r_2}
 [x_{i,r_1+\varepsilon\nu_i}^{\varepsilon},
  x_{i,r_2}^{\varepsilon}]_{q_i^2}=0
 \quad\bigl((\mathfrak g,i)\neq(A_{2n}^{(2)},n)\bigr),
 \label{eq:dr-same-current}\\[3pt]
 &\begin{aligned}
 &\operatorname{Sym}_{r_1,r_2}\left(
 [x_{n,r_1+2\varepsilon}^{\varepsilon},x_{n,r_2}^{\varepsilon}]_{q^2}
 -q^4[x_{n,r_1+\varepsilon}^{\varepsilon},
       x_{n,r_2+\varepsilon}^{\varepsilon}]_{q^{-6}}
 \right)=0\\[-2pt]
 &\hspace{18em}(\mathfrak g=A_{2n}^{(2)}),
 \end{aligned}
 \label{eq:dr-exceptional-quadratic}\\[3pt]
 &\operatorname{Sym}_{r_1,r_2,r_3}
 \bigl[[x_{n,r_1+\varepsilon\eta}^{\varepsilon},
        x_{n,r_2}^{\varepsilon}]_{q^{2\eta}},
        x_{n,r_3}^{\varepsilon}\bigr]_{q^{4\eta}}=0
 \quad(\mathfrak g=A_{2n}^{(2)}),
 \label{eq:dr-exceptional-cubic}\\[3pt]
 &\begin{aligned}
 &\operatorname{Sym}_{r_1,\ldots,r_M}
 \sum_{p=0}^{M}(-1)^p\begin{bmatrix}M\\p\end{bmatrix}_{q_i}
 x_{i,r_1}^{\varepsilon}\cdots x_{i,r_p}^{\varepsilon}
 x_{j,s}^{\varepsilon}
 x_{i,r_{p+1}}^{\varepsilon}\cdots x_{i,r_M}^{\varepsilon}=0\\[-2pt]
 &\hspace{18em}(i\neq j,\ a_{ij}\in\{0,-1\}),
 \end{aligned}
 \label{eq:dr-current-serre}\\[3pt]
 &\operatorname{Sym}_{r_1,r_2}
 \sum_{p=0}^{m-1}q^{m-1-2p}
 P_{ij}^{\varepsilon}
 \bigl(r_1+\varepsilon(m-1-p),\,r_2+\varepsilon p;\,s\bigr)=0
 \quad(m\in\{2,3\}).
 \label{eq:dr-multiple-edge-serre}
\end{align}
\end{subequations}
\endgroup

\medskip
\noindent\textbf{The comparison map and braid action.}
Over \(\K\), we fix the comparison isomorphism
\begin{equation}\label{eq:comparison-isomorphism}
\Psi_{\mathfrak g}\colon
U_q^{\mathrm{Dr}}(\mathfrak g)
\xrightarrow{\;\sim\;}
U_q^{\mathrm{DJ}}(\mathfrak g),
\end{equation}
whose values on the generators are specified below.
We use Beck's braid operators \(T_i\) \cite[Section~1.3]{B}.
The standard finite and extended braid lifts preserve the Lusztig
integral form:
\begin{equation}
 \label{eq:braid-integral-stability}
 T_w^{\pm1}\bigl(U_{\mathcal A}^{\mathrm{Lus}}(\mathfrak g)\bigr)
 =U_{\mathcal A}^{\mathrm{Lus}}(\mathfrak g);
\end{equation}
see \cite[Section~41.1.2]{L}.
For \(i\neq j\), let \(m=-a_{ij}\). The braid operators $T_i$ act on the positive Chevalley generators by
\begin{equation}
\label{eq:braid-generator-formulas}
\begin{aligned}
 T_i(E_i)&=-F_iK_i,
 & T_i^{-1}(E_i)&=-K_i^{-1}F_i,\\
 T_i(E_j)
 &=\sum_{r=0}^{m}(-1)^{m+r}q_i^{-r}
   E_i^{(m-r)}E_jE_i^{(r)},\\
 T_i^{-1}(E_j)
 &=\sum_{r=0}^{m}(-1)^{m+r}q_i^{-r}
   E_i^{(r)}E_jE_i^{(m-r)}.
\end{aligned}
\end{equation}

For a reduced expression \(w=s_{i_1}\cdots s_{i_t}\tau\), with
\(\tau\) a length-zero diagram automorphism, set
\begin{equation}\label{eq:standard-braid-lift-definition}
 T_w:=T_{i_1}\cdots T_{i_t}T_\tau,
 \qquad
 T_\tau(E_i)=E_{\tau(i)},\quad
 T_\tau(F_i)=F_{\tau(i)},\quad
 T_\tau(K_i)=K_{\tau(i)}.
\end{equation}
For ordinary Weyl-group elements, \(\tau=1\).
Weyl-group products act from right to left, and
\(T_i(K_\mu)=K_{s_i(\mu)}\).
We use the semilinear antiautomorphism \(\Omega\) given by
\begin{equation}
 \label{eq:Omega}
 \Omega(q)=q^{-1},\qquad
 \Omega(E_i)=F_i,\qquad
 \Omega(F_i)=E_i,\qquad
 \Omega(K_\mu)=K_{-\mu}.
\end{equation}
It is easy to check that 
\begin{equation}
 \label{eq:Omega-commutes-braid}
 \Omega T_w=T_w\Omega.
\end{equation}

Thus \(T_i^{\pm1}(F_j)=\Omega(T_i^{\pm1}(E_j))\), which gives the formulas for the braid action on the generators.

\medskip
\noindent\textbf{Real root vectors.}
For \(i\in I_0\), let \(\omega_i^\vee\) be the fundamental
coweight, and put
\begin{equation}\label{eq:fundamental-translation-definition}
 \lambda_i:=\nu_i\omega_i^\vee,\qquad
 t_{\lambda_i}(\alpha)
 :=\alpha-\langle\lambda_i,\alpha\rangle\delta,
 \qquad T_{\lambda_i}:=T_{t_{\lambda_i}}.
\end{equation}
Here \(\langle\omega_i^\vee,\alpha_j\rangle=\delta_{ij}\) for
\(j\in I_0\), and \(\langle\lambda_i,\delta\rangle=0\).
Order \(I_0\) increasingly in the non-\(A_{2n}^{(2)}\) types
and decreasingly in type \(A_{2n}^{(2)}\), and denote this order by
\((j_1,\ldots,j_{|I_0|})\).
For each \(j\in I_0\), choose a reduced expression for
\(t_{\lambda_j}\) ending in \(s_j\); this is possible because
\(t_{\lambda_j}(\alpha_j)=\alpha_j-\nu_j\delta<0\).
Concatenate these expressions in the chosen order and move all
length-zero diagram automorphisms to the right. This gives a reduced
expression
\begin{equation}\label{eq:fixed-translation-word}
 t_{\lambda_{j_1}}\cdots t_{\lambda_{j_{|I_0|}}}
 =s_{i_1}\cdots s_{i_N}\tau,
 \qquad N=\sum_{i\in I_0}\ell(t_{\lambda_i}).
\end{equation}
In type \(A_{2n}^{(2)}\), choose the initial block
\begin{equation}\label{eq:a2n-initial-translation-word}
 (s_0s_1\cdots s_n)^n=t_{\lambda_n}.
\end{equation}
Extend the indices to \(\mathbb Z\) by \(i_{r+N}=\tau(i_r)\), and set
\begin{equation}\label{eq:real-root-enumeration}
 w_r:=
 \begin{cases}
  s_{i_1}\cdots s_{i_{r-1}},&r\geq1,\\
  s_{i_0}\cdots s_{i_{r+1}},&r\leq0,
 \end{cases}
 \qquad \rho_r:=w_r(\alpha_{i_r}).
\end{equation}
The roots \(\rho_r\), \(r\in\mathbb Z\), enumerate
\(\Delta_+^{\mathrm{re}}\) without repetition. We define
\begin{equation}\label{eq:real-root-vector-definition}
 E_{\rho_r}:=
 \begin{cases}
  T_{i_1}\cdots T_{i_{r-1}}(E_{i_r}),&r\geq1,\\
  T_{i_0}^{-1}\cdots T_{i_{r+1}}^{-1}(E_{i_r}),&r\leq0,
 \end{cases}
 \qquad F_{\rho_r}:=\Omega(E_{\rho_r}),
\end{equation}
with empty products equal to the identity. These are straightforward generalization of untwisted cases (Damiani
\cite{D1}). 
Note that the initial block in type \(A_{2n}^{(2)}\) is the reduced block therein. 
In particular,
\begin{equation}\label{eq:translation-real-root-vectors}
\begin{aligned}
 E_{m\nu_i\delta+\alpha_i}&=T_{\lambda_i}^{-m}(E_i)
 &&(m\geq0),\\
 E_{m\nu_i\delta-\alpha_i}&=T_{\lambda_i}^{m}T_i^{-1}(E_i)
 &&(m>0).
\end{aligned}
\end{equation}

Choose the bipartite sign function \(o:I_0\to\{\pm1\}\) satisfying
\begin{equation}\label{eq:current-sign-convention}
\begin{gathered}
 o(i)o(j)=-1\qquad(i\neq j,\ a_{ij}\neq0),\\
 o(i)=1\quad(\mathfrak g\neq A_{2n}^{(2)},\ a_{ij}=-2),\qquad
 o(i)=(-1)^{n-i}\quad(\mathfrak g=A_{2n}^{(2)}).
\end{gathered}
\end{equation}
Then the comparison isomorphism can be stated as follows.
\begin{equation}\label{eq:comparison-current-images}
\begin{aligned}
 \Psi_{\mathfrak g}(K_\delta)&=K_\delta,
 &\Psi_{\mathfrak g}(K_i)&=K_i,\\
 \Psi_{\mathfrak g}(x_{i,\nu_i r}^{+})
 &=o(i)^rT_{\lambda_i}^{-r}(E_i),
 &\Psi_{\mathfrak g}(x_{i,\nu_i r}^{-})
 &=o(i)^rT_{\lambda_i}^{r}(F_i)
 \qquad(r\in\mathbb Z).
\end{aligned}
\end{equation}
For complete proofs, see 
\cite{D2, D3} (also see \cite{JZ}).  
The coefficients in the defining relations and in the formulas for the comparison map belong to
\(\mathbb Q(q)\), so the comparison homomorphism is first defined over
\(\mathcal K\).  Since \(\mathbb C(q)/\mathbb Q(q)\) is faithfully
flat, its kernel and cokernel already vanish over \(\mathbb Q(q)\).  Thus
\eqref{eq:comparison-isomorphism} is indeed an isomorphism of
\(\mathcal K\)-algebras.

We henceforth identify the Drinfeld generators with their images under
\(\Psi_{\mathfrak g}\) and write \(U_q(\mathfrak g)\) for the common algebra.
All root--coroot pairings are denoted by \(\langle h_i,\gamma\rangle\).
For \(m\geq1\), let \(\Phi_m(q)\) denote the \(m\)-th cyclotomic polynomial.

Whenever a modified form is used, we fix a regular realization
\((X,Y,I,\langle\, ,\,\rangle)\) of the same Cartan datum, solely to
define the weight idempotents, modified blocks, and modified braid
action. The ordinary quantum affine algebra remains the derived
subalgebra, so no degree element is added to either the ordinary integral
form.

For \(i\in I_0\), \(r\in I_i\), and \(a\geq0\), define
\begin{equation}\label{eq:current-divided-power-definition}
 (x_{i,r}^{\pm})^{(a)}
 :=\frac{(x_{i,r}^{\pm})^a}{[a]_{q_i}!},
\end{equation}
with the factorials of \eqref{eq:quantum-number-convention}.

Writing
\[
\delta=\alpha_0+\sum_{i\in I_0}r_i\alpha_i,
\]
the affine toral generator is recovered by
\begin{equation}
\label{eq:K0-recovery}
K_0
=
K_\delta\prod_{i\in I_0}K_i^{-r_i}.
\end{equation}
\begin{lemma}\cite[Lemma~3.2]{B}\label{lem:simple-to-simple}
Let $w$ lie in the (extended) affine Weyl group and let $i,j$ be nodes with
$w(\alpha_i)=\alpha_j$.  Then $ws_i$ and $s_jw$ are length additive and
\[
   T_wT_i=T_jT_w,\qquad T_w(E_i)=E_j,\qquad T_w(F_i)=F_j .
\]
\end{lemma}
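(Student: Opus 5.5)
This is Beck's lemma, and I would follow the standard argument: Matsumoto/length combinatorics first, then a rank-two reduction for the braid operators.

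\emph{Length additivity.} Since $w(\alpha_i)=\alpha_j>0$, the standard criterion $\ell(ws_i)>\ell(w)\iff w(\alpha_i)>0$ gives $\ell(ws_i)=\ell(w)+1$. For the other product, $w^{-1}(\alpha_j)=\alpha_i>0$, so $\ell(s_jw)=\ell(w^{-1}s_j)=\ell(w)+1$. The extended case reduces to the ordinary one by writing $w=w'\tau$ with $\ell(\tau)=0$. Then $w'(\alpha_{\tau(i)})=\alpha_j$, and $T_\tau$ permutes generators, so $T_\tau T_iT_\tau^{-1}=T_{\tau(i)}$. Moreover $ws_i=s_jw$ as Weyl group elements, since $ws_iw^{-1}=s_{w(\alpha_i)}=s_j$. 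The braid relations satisfied by the $T_k$ (Lusztig's theorem) and length additivity then give $T_wT_i=T_{ws_i}=T_{s_jw}=T_jT_w$.

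\emph{$T_w(E_i)=E_j$, by induction on $\ell(w)$.} If $\ell(w)=0$, then $w=\tau$ and the claim is the definition of $T_\tau$. Otherwise, choose $k$ with $\ell(s_kw)<\ell(w)$, i.e.\ $w^{-1}(\alpha_k)<0$; note $k\neq j$, since $w^{-1}(\alpha_j)=\alpha_i>0$.

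Let $W_{jk}$ be the rank-two parabolic subgroup and factor $w=uw'$ with $u\in W_{jk}$ and $w'$ minimal in $W_{jk}w$, so that $\ell(w)=\ell(u)+\ell(w')$. Minimality of $w'$ means $w'^{-1}(\alpha_j)>0$ and $w'^{-1}(\alpha_k)>0$. Hence $\beta:=w'(\alpha_i)=u^{-1}(\alpha_j)$ is a positive root lying in the rank-two root subsystem $\Delta_{jk}$.

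Since $\ell(w)\geq1$ forces $\ell(u)\geq1$, we have $\ell(w')<\ell(w)$. Here I expect the main technical issue: one must know that a positive root $u^{-1}(\alpha_j)$ of $\Delta_{jk}$ that is simple in the whole system is actually $\alpha_j$ or $\alpha_k$. Granting that, $\beta\in\{\alpha_j,\alpha_k\}$ is simple, so induction gives $T_{w'}(E_i)=E_{\beta}$. Then $T_w(E_i)=T_u(E_\beta)$ with $u(\beta)=\alpha_j$, and $u$ lies in the finite (or, for an affine rank-two pair such as $A_2^{(2)}$, infinite-order) group $W_{jk}$.

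\emph{Rank-two step.} It remains to prove the statement in rank two: for $u\in W_{jk}$ with $u(\alpha_l)=\alpha_j$, $l\in\{j,k\}$, we have $T_u(E_l)=E_j$. In the finite rank-two cases $A_1\times A_1,A_2,B_2,G_2$ this is a direct computation using \eqref{eq:braid-generator-formulas}. The model case is $A_2$: $T_jT_k(E_j)=E_k$, which follows by expanding $T_k(E_j)=E_kE_j-q^{-1}E_jE_k$ and applying $T_j$. The $B_2$ and $G_2$ cases are Lusztig's computations \cite[39.2]{L}. For the affine pair $\{0,1\}$ in $A_2^{(2)}$, $W_{jk}$ is infinite dihedral and no $u\neq1$ sends a simple root to a simple root, because all positive roots $u^{-1}(\alpha_j)$ for $u\neq1$ are non-simple. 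So only $u=1$ occurs, and the obstacle noted above disappears.

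\emph{$F$-statement and main obstacle.} The statement for $F$ follows by applying $\Omega$ and \eqref{eq:Omega-commutes-braid}. I expect the main obstacle to be the careful bookkeeping in the finite rank-two verifications for $G_2$ (relevant to $D_4^{(3)}$); I would cite Lusztig for it rather than recompute.
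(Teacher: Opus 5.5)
The paper gives no proof of this lemma; it quotes Beck. Your outline is the standard rank-two reduction that lies behind that citation, and most of it is sound: the length-additivity, the identity $T_wT_i=T_{ws_i}=T_{s_jw}=T_jT_w$, the passage to $F_j$ via $\Omega$, and the treatment of $A_2^{(2)}$.

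\textbf{The gap: simplicity of $\beta$.} It sits in the inductive step. To apply the induction hypothesis to $w'$ you need $\beta=w'(\alpha_i)$ to be a \emph{simple} root, and nothing you write shows this. A priori $\beta=u^{-1}(\alpha_j)$ is just some positive root of $\Delta_{jk}$, for instance $\alpha_j+\alpha_k$. The fact you propose to grant is that a positive root of $\Delta_{jk}$ which is simple in the whole system equals $\alpha_j$ or $\alpha_k$. That fact is trivial, and it presupposes exactly the conclusion that is missing. So the step ``Granting that, $\beta\in\{\alpha_j,\alpha_k\}$ is simple'' is circular, and the real difficulty is not the one you flag.

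\textbf{The fix.} Use the minimality of $w'$ a second time. Write $\beta=a\alpha_j+b\alpha_k$ with $a,b\in\mathbb Z_{\geq0}$, not both zero. Then
\[
 \alpha_i=(w')^{-1}(\beta)=a\,(w')^{-1}(\alpha_j)+b\,(w')^{-1}(\alpha_k).
\]
Minimality makes $(w')^{-1}(\alpha_j)$ and $(w')^{-1}(\alpha_k)$ positive roots, so comparing heights gives $1=\mathrm{ht}(\alpha_i)\geq a+b$. Hence $(a,b)\in\{(1,0),(0,1)\}$ and $\beta\in\{\alpha_j,\alpha_k\}$. With this in place the argument closes:
\begin{itemize}
\item induction gives $T_{w'}(E_i)=E_\beta$;
\item $\ell(w)=\ell(u)+\ell(w')$ gives $T_w=T_uT_{w'}$;
\item the rank-two case for $u$ finishes the proof.
\end{itemize}

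\textbf{Two smaller points.}
\begin{itemize}
\item Your displayed $T_k(E_j)=E_kE_j-q^{-1}E_jE_k$ is the negative of what \eqref{eq:braid-generator-formulas} gives, namely $-E_kE_j+q_k^{-1}E_jE_k$. Combined with the paper's $T_j$, your expression yields $T_jT_k(E_j)=-E_k$. The lemma is an exact identity with no sign, so the rank-two checks must be done consistently in the paper's normalization. There the $A_2$ computation does return $E_k$.
\item The $A_2^{(2)}$ assertion deserves its one-line proof. Since $(\alpha_0\mid\alpha_0)\neq(\alpha_1\mid\alpha_1)$, the condition $u(\alpha_l)=\alpha_j$ forces $l=j$. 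Then $u$ centralizes $s_j$ in the infinite dihedral group, so $u\in\{1,s_j\}$. Since $s_j(\alpha_j)=-\alpha_j$, we get $u=1$.
\end{itemize}
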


\subsection{The two integral forms}
Throughout this paper, an \emph{integral form} of a
\(\mathcal K\)-algebra \(U\) means a unital
\(\mathcal A\)-subalgebra \(U_{\mathcal A}\subseteq U\)
for which the natural map
\(\mathcal K\otimes_{\mathcal A}U_{\mathcal A}\to U\)
is an isomorphism.
Freeness as an \(\mathcal A\)-module is not required
in this definition.
We use the divided powers defined in
\eqref{eq:chevalley-divided-power-definition}.

\begin{definition}[Lusztig restricted integral form]
\label{def:Lusztig-restricted-integral-form}
Inside \(U_q^{\mathrm{DJ}}(\mathfrak g)\), define
\[
U_{\mathcal A}^{\mathrm{Lus}}(\mathfrak g)
:=
\mathcal A
\Bigl\langle
E_i^{(a)},F_i^{(a)},K_i^{\pm1}
\ \Bigm|\
i\in I,\ a\geq0
\Bigr\rangle.
\]
\end{definition}

\begin{definition}[Bare real-current core or integral form in Drinfeld presentation]
\label{def:bare-real-current-core}
Inside the fixed ambient algebra
\(U_q^{\mathrm{Dr}}(\mathfrak g)\), define, uniformly in all types,
\begin{equation}
\label{eq:bare-real-current-core}
U_{\mathcal A}^{\mathrm{Dr}}(\mathfrak g)
:=
\mathcal A
\Bigl\langle
K_\delta^{\pm1},K_i^{\pm1},
(x_{i,r}^{\pm})^{(a)}
\ \Bigm|\
i\in I_0,\ r\in I_i,\ a\geq0
\Bigr\rangle.
\end{equation}
\end{definition}

\begin{lemma}
\label{lem:forward-inclusion}
For every \(\mathfrak g\in\mathcal T\),
\[
 \UDr(\mathfrak g)
 \subseteq U_{\mathcal A}^{\mathrm{Lus}}(\mathfrak g).
\]
\end{lemma}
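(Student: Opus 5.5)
It suffices to check that each generator of \(\UDr(\mathfrak g)\) in \eqref{eq:bare-real-current-core} lies in \(\ULus(\mathfrak g)\). Since \(\ULus\) is an \(\A\)-subalgebra, this gives the inclusion. We identify generators via \(\Psi_{\mathfrak g}\) as in \eqref{eq:comparison-current-images}, and treat the toral and the current generators separately.

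\textbf{Toral generators.} The elements \(K_i^{\pm1}\) with \(i\in I_0\) are generators of \(\ULus\) by \cref{def:Lusztig-restricted-integral-form}. By definition, \(K_\delta=K_0\prod_{i\in I_0}K_i^{r_i}\) with \(r_i\in\mathbb Z\). Hence \(K_\delta^{\pm1}\) is a Laurent monomial in the \(K_i\), \(i\in I\), and lies in \(\ULus\).

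\textbf{Current divided powers.} Fix \(i\in I_0\), \(r\in I_i\), and write \(r=\nu_i m\). By \eqref{eq:comparison-current-images},
\[
 x_{i,r}^{+}=o(i)^m T_{\lambda_i}^{-m}(E_i),
 \qquad
 x_{i,r}^{-}=o(i)^m T_{\lambda_i}^{m}(F_i).
\]
Each \(T_{\lambda_i}^{\pm1}=T_{t_{\lambda_i}}^{\pm1}\) is an algebra automorphism of \(U_q(\mathfrak g)\) which is \(\K\)-linear. Therefore
\[
 (x_{i,r}^{+})^{a}=o(i)^{ma}\,T_{\lambda_i}^{-m}(E_i^{a}).
\]
The divided powers in \eqref{eq:current-divided-power-definition} and \eqref{eq:chevalley-divided-power-definition} use the same denominator \([a]_{q_i}!\), so dividing gives
\[
 (x_{i,r}^{+})^{(a)}=\pm T_{\lambda_i}^{-m}\bigl(E_i^{(a)}\bigr),
\]
and similarly \((x_{i,r}^{-})^{(a)}=\pm T_{\lambda_i}^{m}\bigl(F_i^{(a)}\bigr)\). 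Now \(E_i^{(a)},F_i^{(a)}\in\ULus\). The operator \(T_{\lambda_i}\) is the standard extended braid lift of \(t_{\lambda_i}\), built from the \(T_j\) and a diagram automorphism as in \eqref{eq:standard-braid-lift-definition}. By \eqref{eq:braid-integral-stability}, \(T_{\lambda_i}^{\pm m}\) preserves \(\ULus\). Hence all current divided powers lie in \(\ULus\).

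\textbf{Main point to check.} The only nontrivial ingredient is the braid stability \eqref{eq:braid-integral-stability} for extended lifts. This includes the diagram-automorphism factor \(T_\tau\), which simply permutes the generators \(E_j^{(a)},F_j^{(a)},K_j^{\pm1}\). It also includes stability under the inverses \(T_j^{-1}\). Both are covered by Lusztig's result as cited. It is also worth confirming that Beck's \(T_i\) agree with Lusztig's \(T''_{i,1}\) up to the conventions in \eqref{eq:braid-generator-formulas}, which is what \eqref{eq:braid-integral-stability} asserts. Granting this, no denominators arise and the inclusion holds over \(\A\) in every type in \(\mathcal T\).
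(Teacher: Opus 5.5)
Your proposal is correct and follows the paper's proof essentially verbatim. The toral generators are Laurent monomials in the $K_i$. Via the comparison map, each current divided power is $\pm$ the image of $E_i^{(a)}$ or $F_i^{(a)}$ under a power of the extended braid operator $T_{\lambda_i}$ (with the same denominator $[a]_{q_i}!$). That image lies in $U_{\mathcal A}^{\mathrm{Lus}}(\mathfrak g)$ by the cited braid stability \eqref{eq:braid-integral-stability}.
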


\begin{proof}
The toral generators in \eqref{eq:bare-real-current-core} already
belong to \(U_{\mathcal A}^{\mathrm{Lus}}(\mathfrak g)\).
For \(i\in I_0\) and \(r\in I_i\), write \(r=\nu_i k\), where
\(k\in\mathbb Z\). Equation~\eqref{eq:comparison-current-images}
gives, for every \(a\geq0\),
\begin{equation}
\label{eq:current-divided-powers-braid-formula}
\begin{aligned}
 (x_{i,r}^{+})^{(a)}
 &=o(i)^{ka}T_{\lambda_i}^{-k}\bigl(E_i^{(a)}\bigr),\\
 (x_{i,r}^{-})^{(a)}
 &=o(i)^{ka}T_{\lambda_i}^{k}\bigl(F_i^{(a)}\bigr).
\end{aligned}
\end{equation}
Since \(o(i)\in\{\pm1\}\) and the extended braid operators preserve
\(U_{\mathcal A}^{\mathrm{Lus}}(\mathfrak g)\), both divided powers
belong to \(U_{\mathcal A}^{\mathrm{Lus}}(\mathfrak g)\).
This proves the inclusion.
\end{proof}

\begin{lemma}[\(q\)-Weyl normal ordering and the highest contraction]
\label{lem:q-Weyl-highest-contraction}
Let \(D\) be an \(\mathcal A\)-subalgebra of a \(\mathcal K\)-algebra, let
\(Q\in\mathcal A^\times\), and suppose that \(A,B,Z\) satisfy
\[
 Z=AB-BA,
 \qquad
 AZ=Q^{-2}ZA,
 \qquad
 ZB=Q^{-2}BZ.
\]
For \(m\geq0\), set
\[
 A_Q^{(m)}=\frac{A^m}{[m]_Q!},
 \qquad
 B_Q^{(m)}=\frac{B^m}{[m]_Q!}.
\]
Then, for all \(M,N\geq0\),
\begin{equation}
 \label{eq:q-Weyl-normal-ordering}
 A_Q^{(M)}B_Q^{(N)}
 =\sum_{t=0}^{\min(M,N)}
 Q^{-t(M+N-2t)-\binom t2}
 B_Q^{(N-t)}\frac{Z^t}{[t]_Q!}A_Q^{(M-t)}.
\end{equation}
If \(A_Q^{(m)},B_Q^{(m)}\in D\) for every \(m\geq0\), then
\[
 \frac{Z^a}{[a]_Q!}\in D
 \qquad(a\geq0).
\]
\end{lemma}

\begin{proof}
The three defining relations first imply, by induction on \(N\), that
\begin{equation}
 AB^N=B^NA+Q^{-(N-1)}[N]_Q B^{N-1}Z
 \qquad(N\geq1).
 \label{eq:q-Weyl-first-commutation}
\end{equation}
The case \(N=1\) is precisely \(Z=AB-BA\).  Assume that
\eqref{eq:q-Weyl-first-commutation} holds for \(N\).  Multiplication on the
right by \(B\) gives
\begin{align*}
 AB^{N+1}
 &=\bigl(B^NA+Q^{-(N-1)}[N]_QB^{N-1}Z\bigr)B\\
 &=B^{N+1}A+B^NZ+Q^{-(N-1)}[N]_QB^{N-1}ZB\\
 &=B^{N+1}A+\bigl(1+Q^{-(N+1)}[N]_Q\bigr)B^NZ\\
 &=B^{N+1}A+Q^{-N}[N+1]_QB^NZ.
\end{align*}
Here the third equality uses \(ZB=Q^{-2}BZ\), and the last one uses
\[
 1+Q^{-(N+1)}[N]_Q=Q^{-N}[N+1]_Q.
\]
This completes the induction.  Dividing
\eqref{eq:q-Weyl-first-commutation} by \([N]_Q!\) yields
\begin{equation}
 AB_Q^{(N)}=B_Q^{(N)}A+Q^{-(N-1)}B_Q^{(N-1)}Z.
 \label{eq:q-Weyl-divided-commutation}
\end{equation}

We next prove \eqref{eq:q-Weyl-normal-ordering} by induction on \(M\).
The cases \(M=0\) and \(M=1\) follow from the definition and
\eqref{eq:q-Weyl-divided-commutation}, respectively.  Put
\[
 c_{M,N,t}:=Q^{-t(M+N-2t)-\binom t2}.
\]
Assume that
\begin{equation}
 A_Q^{(M)}B_Q^{(N)}
 =\sum_{s=0}^{\min(M,N)}
 c_{M,N,s}B_Q^{(N-s)}\frac{Z^s}{[s]_Q!}A_Q^{(M-s)}.
 \label{eq:q-Weyl-induction-hypothesis}
\end{equation}
Since
\[
 A_Q^{(M+1)}B_Q^{(N)}
 =\frac{1}{[M+1]_Q}
 A\bigl(A_Q^{(M)}B_Q^{(N)}\bigr),
\]
fix an index \(s\) in \eqref{eq:q-Weyl-induction-hypothesis}.  Using
\eqref{eq:q-Weyl-divided-commutation} together with
\[
 AZ^s=Q^{-2s}Z^sA,
 \qquad
 AA_Q^{(M-s)}=[M-s+1]_QA_Q^{(M-s+1)},
 \qquad
 \frac{Z^{s+1}}{[s]_Q!}
 =[s+1]_Q\frac{Z^{s+1}}{[s+1]_Q!},
\]
we obtain
\begin{align*}
 A\left(
 B_Q^{(N-s)}\frac{Z^s}{[s]_Q!}A_Q^{(M-s)}
 \right)
 &={Q^{-2s}[M-s+1]_Q}
 B_Q^{(N-s)}\frac{Z^s}{[s]_Q!}A_Q^{(M-s+1)}\\
 &\quad+{Q^{-(N-s-1)}[s+1]_Q}
 B_Q^{(N-s-1)}\frac{Z^{s+1}}{[s+1]_Q!}A_Q^{(M-s)}.
\end{align*}

The second summand is omitted when \(s = N\). Fix the standard term
\[
 B_Q^{(N-t)}\frac{Z^t}{[t]_Q!}A_Q^{(M+1-t)}
\]
in the desired formula for \(M+1\).  By the preceding identity, it can arise
only from the first
summand with \(s=t\), or from the second summand with \(s=t-1\).  Its
coefficient is therefore
\begin{equation}
 \frac{
 Q^{-2t}[M-t+1]_Qc_{M,N,t}
 +Q^{-(N-t)}[t]_Qc_{M,N,t-1}}
 {[M+1]_Q},
 \label{eq:q-Weyl-two-contributions}
\end{equation}
where an out-of-range term is understood to be zero.  Directly from the
definition of \(c_{M,N,t}\),
\[
 Q^{-2t}c_{M,N,t}=Q^{-t}c_{M+1,N,t},
 \qquad
 Q^{-(N-t)}c_{M,N,t-1}=Q^{M-t+1}c_{M+1,N,t}.
\]
Consequently \eqref{eq:q-Weyl-two-contributions} equals
\[
 c_{M+1,N,t}
 \frac{Q^{-t}[M-t+1]_Q+Q^{M-t+1}[t]_Q}{[M+1]_Q}
 =c_{M+1,N,t}.
\]
The last equality is the symmetric quantum-integer identity
\[
 [a+b]_Q=Q^{-b}[a]_Q+Q^a[b]_Q
\]
with \(a=M-t+1\) and \(b=t\).  If \(M<N\), the new endpoint
\(t=M+1\) arises only from the second summand with \(s=M\), and its
coefficient is
\[
 Q^{-(N-M-1)}c_{M,N,M}=c_{M+1,N,M+1}.
\]
If \(N\leq M\), the upper endpoint remains \(N\).  Thus every boundary
term is accounted for, and the induction proves
\eqref{eq:q-Weyl-normal-ordering}.

It remains to prove the integrality assertion.  We use strong induction on
\(a\).  The case \(a=0\) is immediate.  Let \(a>0\), and assume that
\[
 \frac{Z^t}{[t]_Q!}\in D
 \qquad(0\leq t<a).
\]
Setting \(M=N=a\) in \eqref{eq:q-Weyl-normal-ordering} gives
\[
 A_Q^{(a)}B_Q^{(a)}
 =\sum_{t=0}^{a}
 Q^{-t(2a-2t)-\binom t2}
 B_Q^{(a-t)}\frac{Z^t}{[t]_Q!}A_Q^{(a-t)}.
\]
For every \(t<a\), the corresponding term belongs to \(D\): the middle
factor belongs to \(D\) by the induction hypothesis, the two divided-power
factors belong to \(D\) by assumption, and the displayed power of \(Q\) is
a unit of \(\mathcal A\).  The left-hand side also lies in \(D\).  After
subtracting the terms indexed by \(t<a\), the remaining term is
\[
 Q^{-\binom a2}\frac{Z^a}{[a]_Q!}\in D,
\]
because \(A_Q^{(0)}=B_Q^{(0)}=1\).  Finally,
\(Q^{\binom a2}\in\mathcal A^\times\), and multiplication by this unit
gives \(Z^a/[a]_Q!\in D\).  This completes the strong induction.
\end{proof}

A similar result without the integrality assertion is given in
\cite[Theorem~3.3(1)]{DiP}, with different notation
and normalization. The lemma isolates the algebraic operation used in both exceptional
sufficiency arguments.  In Sections~\ref{sec:rank-one-sufficiency} and
\ref{sec:quantum-dp-scaled-roots}, two integral real-root families form a
\(q\)-Weyl cell, and the highest contraction produces the missing affine
direction.

\subsection{Ordinary localizations and the main theorem}

If \(M\) is an \(\mathcal A\)-submodule of the common ambient
\(\mathcal K\)-algebra and \(B=S^{-1}\mathcal A\subseteq\mathcal K\), write
\(BM\) for the image of
\[
 B\otimes_{\mathcal A}M\longrightarrow U_q^{\mathrm{DJ}}(\mathfrak g),
 \qquad b\otimes m\longmapsto bm.
\]
Localizing the inclusion
\(M\hookrightarrow U_q^{\mathrm{DJ}}(\mathfrak g)\) gives
\[
 S^{-1}M\hookrightarrow
 S^{-1}U_q^{\mathrm{DJ}}(\mathfrak g)
 =U_q^{\mathrm{DJ}}(\mathfrak g),
\]
because localization is flat and every element of \(S\) acts invertibly on
the ambient \(\mathcal K\)-algebra.  Thus the displayed multiplication map
is injective, and we identify \(BM\) with
\(B\otimes_{\mathcal A}M\).
The following proposition shows that if the two localized integral
forms coincide, then their coefficient rings must also coincide.

\begin{proposition}
Let \(B_1,B_2\subseteq\mathcal K\) be ordinary localizations of
\(\mathcal A\). If
\[
B_1U_{\mathcal A}^{\mathrm{Dr}}(\mathfrak g)
=
B_2U_{\mathcal A}^{\mathrm{Lus}}(\mathfrak g)
\]
inside \(U_q^{\mathrm{DJ}}(\mathfrak g)\), then
\[
B_1=B_2.
\]

\end{proposition}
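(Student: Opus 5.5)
The plan is to recover each coefficient ring from its localized integral form by applying a single \(\mathcal K\)-linear functional that maps both integral forms onto \(\mathcal A\). The natural choice is the counit of the Hopf algebra \(U_q^{\mathrm{DJ}}(\mathfrak g)\), namely the \(\mathcal K\)-algebra homomorphism
\[
 \varepsilon\colon U_q^{\mathrm{DJ}}(\mathfrak g)\to\mathcal K,\qquad
 \varepsilon(E_i)=\varepsilon(F_i)=0,\quad \varepsilon(K_i^{\pm1})=1 .
\]
It is compatible with the defining relations \eqref{eq:dj-defining-relations}. The relations \eqref{eq:dj-torus} and \eqref{eq:dj-cartan-action} hold trivially after applying \(\varepsilon\). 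In \eqref{eq:dj-mixed-commutator} both sides map to \(0\), since \(\varepsilon(K_i-K_i^{-1})=0\). In the Serre relations every term maps to \(0\).

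First I compute \(\varepsilon\bigl(\ULus(\mathfrak g)\bigr)\). On the generators we have \(\varepsilon(E_i^{(a)})=\varepsilon(F_i^{(a)})=\delta_{a,0}\) and \(\varepsilon(K_i^{\pm1})=1\). Because \(\varepsilon\) is a ring homomorphism and \(\ULus(\mathfrak g)\) is the \(\mathcal A\)-algebra generated by these elements, the image \(\varepsilon(\ULus(\mathfrak g))\) is an \(\mathcal A\)-subalgebra of \(\mathcal K\) generated by elements of \(\mathcal A\). Since it contains \(\varepsilon(1)=1\), it equals \(\mathcal A\).

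For the Drinfeld form I use \cref{lem:forward-inclusion}, which gives \(\UDr(\mathfrak g)\subseteq\ULus(\mathfrak g)\). Hence \(\varepsilon(\UDr(\mathfrak g))\subseteq\mathcal A\). It also contains \(1\), because \(\UDr\) is unital and \(\varepsilon(1)=1\). Since the image is an \(\mathcal A\)-submodule containing \(1\), it equals \(\mathcal A\). This step avoids computing \(\varepsilon\) on the current generators directly, which would require the braid formulas.

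Finally, \(\varepsilon\) is \(\mathcal K\)-linear, so for any \(\mathcal A\)-submodule \(M\) and any localization \(B\) we have \(\varepsilon(BM)=B\,\varepsilon(M)\). Here \(BM\) is understood as the subset of sums \(\sum b_km_k\) inside the ambient algebra, as in the identification just before the statement. Applying \(\varepsilon\) to the assumed equality gives
\[
 B_1=B_1\mathcal A=\varepsilon\bigl(B_1\UDr(\mathfrak g)\bigr)
 =\varepsilon\bigl(B_2\ULus(\mathfrak g)\bigr)=B_2\mathcal A=B_2 .
\]

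The only point I expect to need care is that the counit is well defined on the chosen Drinfeld--Jimbo presentation, in particular on the Serre relations written with divided powers. This is standard, because every term in those relations contains some \(E_i\) or \(F_i\) and therefore maps to \(0\). Everything else is formal.
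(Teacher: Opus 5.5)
Your proof is correct and follows the paper's argument: apply the counit, check that it maps each integral form onto \(\mathcal A\), and use \(\mathcal K\)-linearity to get \(\varepsilon(B_1\UDr(\mathfrak g))=B_1\) and \(\varepsilon(B_2\ULus(\mathfrak g))=B_2\). The only difference is cosmetic: you get \(\varepsilon(\UDr(\mathfrak g))\subseteq\mathcal A\) from \cref{lem:forward-inclusion} instead of evaluating the counit on the Drinfeld generators, and this is equally valid.
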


\begin{proof}
Let
\[
\varepsilon:
U_q^{\mathrm{DJ}}(\mathfrak g)\longrightarrow\mathcal K
\]
be the counit, and let \(1_U\) denote the identity element of
\(U_q^{\mathrm{DJ}}(\mathfrak g)\). Since both integral forms contain
\(\mathcal A1_U\) and the counit takes each of their defining
generators into \(\mathcal A\), we have
\[
\varepsilon\bigl(
U_{\mathcal A}^{\mathrm{Dr}}(\mathfrak g)
\bigr)
=
\mathcal A
=
\varepsilon\bigl(
U_{\mathcal A}^{\mathrm{Lus}}(\mathfrak g)
\bigr).
\]
Since \(\varepsilon\) is \(\mathcal K\)-linear, it follows that
\[
\varepsilon\bigl(
B_1U_{\mathcal A}^{\mathrm{Dr}}(\mathfrak g)
\bigr)
=
B_1,
\qquad
\varepsilon\bigl(
B_2U_{\mathcal A}^{\mathrm{Lus}}(\mathfrak g)
\bigr)
=
B_2.
\]
Applying \(\varepsilon\) to the assumed equality therefore gives
\[
B_1
=
\varepsilon\bigl(
B_1U_{\mathcal A}^{\mathrm{Dr}}(\mathfrak g)
\bigr)
=
\varepsilon\bigl(
B_2U_{\mathcal A}^{\mathrm{Lus}}(\mathfrak g)
\bigr)
=
B_2.
\]
\end{proof}

Thus allowing the two integral forms to be localized over different coefficient rings does not yield any additional
comparisons.
\begin{definition}[Sufficient ordinary localization and least sufficient ring]
\label[definition]{def:sufficient-localization}
Let \(S\subset\mathcal A\setminus\{0\}\) be a multiplicatively closed set
containing \(1\), and put \(B=S^{-1}\mathcal A\subset\mathcal K\).  We call
\(B\) a \emph{sufficient ordinary localization} for \(\mathfrak g\) if
\[
 B\UDr(\mathfrak g)
 =BU_{\mathcal A}^{\mathrm{Lus}}(\mathfrak g).
\]
If the poset of sufficient ordinary localizations, ordered by inclusion as
subrings of \(\mathcal K\), has a least element, we denote it by
\(B_{\min}(\mathfrak g)\).
\end{definition}

Set
\begin{equation}
 \label{eq:Bstar-Cstar}
\Bstar:=\mathcal A[\Phi_{8r}(q)^{-1}\mid r\geq1],
 \qquad
 \Cstar:=\mathcal A[\Phi_{4r}(q)^{-1}\mid r\geq1].
\end{equation}

\begin{theorem}[Uniform least-ring theorem]
\label{thm:uniform-least-ring}
For every \(\mathfrak g\in\mathcal T\), the least element in
Definition~\ref{def:sufficient-localization} exists, and
\[
 B_{\min}(\mathfrak g)=
 \begin{cases}
 \mathcal A,
   &\mathfrak g\notin\{A_{2m}^{(2)}\mid m\geq1\},\\
 \Cstar,
   &\mathfrak g=A_2^{(2)},\\
 \Bstar,
   &\mathfrak g=A_{2n}^{(2)},\ n\geq2.
 \end{cases}
\]
More precisely, for every ordinary localization
\(B=S^{-1}\mathcal A\subset\mathcal K\),
\[
 B\UDr(A_2^{(2)})
 =BU_{\mathcal A}^{\mathrm{Lus}}(A_2^{(2)})
 \quad\Longleftrightarrow\quad
 \Phi_{4r}(q)\in B^\times\quad(r\geq1),
\]
whereas, for \(n\geq2\),
\[
 B\UDr(A_{2n}^{(2)})
 =BU_{\mathcal A}^{\mathrm{Lus}}(A_{2n}^{(2)})
 \quad\Longleftrightarrow\quad
 \Phi_{8r}(q)\in B^\times\quad(r\geq1).
\]
\end{theorem}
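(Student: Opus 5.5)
The plan is to assemble the theorem from a sufficiency half and a necessity half, then extract the least ring formally. \Cref{lem:forward-inclusion} already gives \(\UDr\subseteq\ULus\), so \(B\UDr\subseteq B\ULus\) for every \(B\). Equality over \(B\) therefore only requires \(E_i^{(a)},F_i^{(a)}\in B\UDr\) for all \(i\in I\) and \(a\geq0\). For \(i\in I_0\) these are current modes at \(r=0\). The toral \(K_0^{\pm1}\) comes from \eqref{eq:K0-recovery}. So only the affine node \(i=0\) needs work, and by \(\Omega\) only \(E_0^{(a)}\).

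\emph{Sufficiency.}
\begin{itemize}
\item \emph{Non-\(A_{2n}^{(2)}\) types.} Choose a finite color \(i^\ast\) whose modes \(x_{i^\ast,0}^{+}\) and \(x_{i^\ast,\nu}^{-}\) give an \(A_1^{(1)}\)-type vertex. Show that \(\UDr\) is stable under \(T_{\lambda_i}^{\pm1}\) and under the finite \(T_j^{\pm1}\); the latter should follow because \eqref{eq:braid-generator-formulas} only involves divided powers of current zero modes. Then pick an extended Weyl element \(w\) with \(w(\alpha_{i^\ast})=\alpha_0\). By \cref{lem:simple-to-simple}, \(T_w(E_{i^\ast}^{(a)})=E_0^{(a)}\), and writing \(T_w\) as translations times finite braids keeps this inside \(\UDr\).
\item \emph{\(A_2^{(2)}\).} Apply \cref{lem:q-Weyl-highest-contraction} to two integral real-root families, for example \(E_{\delta-\alpha_1}\) and \(E_{\alpha_1}\)-type modes whose \(q\)-commutator is a multiple of \(E_0\). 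The contraction yields \(d_aE_0^{(a)}\in\UDr\), with \(d_a=\prod_{h\le a}(q^{2h}+1)(q^{4h}+1)\). Since \(q^{2h}+1=\prod_{k\mid h,\ h/k\text{ odd}}\Phi_{4k}\) up to units, and similarly \(q^{4h}+1\) is a product of \(\Phi_{8k}\)'s, \(d_a\) is a unit in \(\Cstar\).
\item \emph{\(A_{2n}^{(2)}\), \(n\geq2\).} Produce \(c_aE_{\eta_n}^{(a)}\) in a three-root cell, and transport it along braid operators that preserve \(\UDr\) with no denominators, ending at \(\eta_1=\alpha_0\). Here \(c_a=\prod(q^{4h}+1)\) is supported on the \(\Phi_{8r}\).
\end{itemize}
If \(B\) inverts the relevant cyclotomics, it contains \(\Cstar\) or \(\Bstar\), so \(E_0^{(a)}\in B\UDr\) and the forms agree.

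\emph{Necessity.} Suppose \(B\) is sufficient but some \(\Phi=\Phi_{8r}\) (resp.\ \(\Phi_{4r}\) in rank one) is not a unit in \(B\). Then \(\mathfrak p=(\Phi)\) is disjoint from \(S\). Since \(E_0^{(r)}\in B\UDr\), some \(s\in S\) lies in the coefficient ideal \(\mathfrak c_{n,r}\). I will show \(\mathfrak c_{n,r}\subseteq\mathfrak p\), a contradiction.

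For this, specialize at a primitive root of unity. Apply Lusztig's modified quantum Frobenius map attached to a scaled root datum, and grade the scaled root lattice by the parity of the \(r\alpha_0\)-coefficient.
\begin{itemize}
\item Every real-current divided power and toral element maps to an even-degree element or to zero, since current modes of finite colors carry \(\alpha_0\) only through multiples of \(\delta\) in an even way.
\item \(E_0^{(r)}1_0\) maps to a nonzero odd element.
\end{itemize}
Hence \(sE_0^{(r)}\in\UDr\) forces \(s\equiv0\) mod \(\mathfrak p\).

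For the endpoint primes \(\Phi_8\) (every rank) and \(\Phi_4\) (rank one), Frobenius is unavailable. Instead I replace it by the triangular quotient that kills the evaluations of bad words, and run the same parity separation there.

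\emph{Minimality.} Sufficient rings are exactly those \(B\) containing \(\Cstar\) (resp.\ \(\Bstar\), resp.\ \(\mathcal A\)), so these are least.

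The main obstacle is necessity: checking that \(E_0^{(r)}1_0\) survives nonzero and odd while all current blocks become even. The endpoint quotient is where I expect the real difficulty, since there the Gaussian-vanishing straightening must be shown to preserve badness.
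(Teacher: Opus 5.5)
Your architecture matches the paper's. On the sufficiency side: braid transport outside \(A_{2n}^{(2)}\); the \(q\)-Weyl contraction with multiplier \(d_a\) in rank one; and the three-root cell with \(c_a\) followed by transport in higher rank. On the necessity side: \(S\cap(\Phi)=\varnothing\) together with the coefficient ideal; modified Frobenius with affine-coordinate parity; and the bad-word quotient at \(\Phi_8\) and \(\Phi_4\). However, several steps do not go through as you justify them.

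\emph{Finite braid stability.} This does not follow from \eqref{eq:braid-generator-formulas}. Those formulas only give \(T_i(E_j)\), whereas \(\UDr\) is generated by divided powers of currents of all modes. For \(j\neq i\) you need three things: Lusztig's integrality of \(T_i^{\pm1}(E_j^{(a)})\) in the finite form, the commutation \(T_i^{\pm1}T_{\lambda_j}=T_{\lambda_j}T_i^{\pm1}\) (from \(t_{\lambda_j}(\alpha_i)=\alpha_i\)), and translation stability. For \(j=i\), \(T_i\) sends a color-\(i\) current of degree \(m\delta+\alpha_i\), \(m\neq0\), to degree \(m\delta-\alpha_i\), so zero modes give no control at all. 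The paper handles same-color currents with an \(A_1^{(1)}\) vertex \(\phi_j^{\mathrm{DJ}}\) for \emph{every} finite color \(j\), not only \(i^\ast\). It uses \(T_j\phi_j^{\mathrm{DJ}}=\phi_j^{\mathrm{DJ}}T_1\) and Lusztig stability of the source form. This is also why full stability is proved only for \(T_1,\ldots,T_{n-1}\) in \(A_{2n}^{(2)}\).

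\emph{The rank-one cell.} The partner of \(E_{\delta-\alpha_1}\) must be \(F_1=x_{1,0}^-\), not a positive color-\(1\) mode, because the contraction must land in degree \(\alpha_0=(\delta-\alpha_1)-\alpha_1\). One has \(E_{\delta-\alpha_1}F_1-F_1E_{\delta-\alpha_1}=-[4]_qK_1E_0\) with \(Q=q\). Moreover \(E_{\delta-\alpha_1}^{(s)}\in\UDr\) because it is a toral multiple of \((x_{1,1}^-)^{(s)}\) with the same parameter \(q\).

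\emph{Necessity at even indices in rank one.} Here the argument as written actually fails: you test the prime \(\Phi_{4r}\) in rank one with \(E_0^{(r)}\) for every \(r\). For even \(r=2s\), the scaled datum at \(\Phi_{4r}\) has \(\ell_0=s\). So \(E_0^{(r)}1_0\) maps to \(E_0^{\bullet,(2)}1_0^\bullet\), whose \(\alpha_0^\bullet\)-coordinate is even, and the parity detects nothing. The paper proves \(\mathfrak c_{1,r}\subseteq(\Phi_{4r}(q))\) only for odd \(r\). It covers even indices by \(\Phi_{4(2s)}=\Phi_{8s}\) together with \(\mathfrak c_{1,s}\subseteq(\Phi_{8s}(q))\), which holds in every rank including \(n=1\). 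In general you must test with \(E_0^{(\ell_0)}\).

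\emph{Evenness of current blocks.} This is not a feature of the \(\alpha_0\)-coefficient alone: that coefficient is \(sk\), and \(sk/\ell_0\) can be odd. You first need that a nonzero image of \(u1_\lambda\) has degree in the scaled lattice (\cref{lem:simple-block-degree}). Then divisibility of the short-node coordinate by \(\ell_n\) forces \(sk/\ell_0\) to be even. Here \(\ell_n=4r\), or \(2r\) at \(\Phi_{4r}\) in rank one, and when \(i=n\) the argument uses \(\gcd(k,2k+\varepsilon)=1\). Finally, propagation over products \(u_11_{\lambda_1}\cdots u_t1_0\) gives evenness of all of \(\overline{\UDr1_0}\).
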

The following theorem shows that adjoining \(D^{\pm1}\) does not change
the ordinary localizations over which the two integral forms coincide.
\begin{theorem}
\label{thm:degree-operator}
Let \(\widehat U_q^{\mathrm{DJ}}(\mathfrak g)\) be obtained from
\(U_q^{\mathrm{DJ}}(\mathfrak g)\) by adjoining the degree element
\(D^{\pm1}=q^{\pm d}\), where
\[
 \begin{aligned}
  DE_iD^{-1}&=q^{\delta_{i0}}E_i,
  &
  DF_iD^{-1}&=q^{-\delta_{i0}}F_i,\\
  Dx_{i,r}^{\pm}D^{-1}&=q^r x_{i,r}^{\pm},
  &
  DK_iD^{-1}&=K_i,
  \qquad
  DK_\delta D^{-1}=K_\delta .
 \end{aligned}
\]
For \(\bullet\in\{\mathrm{Dr},\mathrm{Lus}\}\), let
\[
 \widehat U_{\mathcal A}^{\bullet}(\mathfrak g)
 :=
 \bigl\langle
 U_{\mathcal A}^{\bullet}(\mathfrak g),D^{\pm1}
 \bigr\rangle_{\mathcal A}.
\]
Then, for every ordinary localization
\(B=S^{-1}\mathcal A\subset\mathcal K\),
\[
 B\widehat U_{\mathcal A}^{\mathrm{Dr}}(\mathfrak g)
 =
 B\widehat U_{\mathcal A}^{\mathrm{Lus}}(\mathfrak g)
 \quad\Longleftrightarrow\quad
 BU_{\mathcal A}^{\mathrm{Dr}}(\mathfrak g)
 =
 BU_{\mathcal A}^{\mathrm{Lus}}(\mathfrak g).
\]
Consequently, adjoining \(D^{\pm1}\) does not change any conclusion of
\cref{thm:uniform-least-ring}; in particular, the least sufficient rings
remain \(\mathcal A\), \(\Cstar\), and \(\Bstar\) in the respective cases
listed there.
\end{theorem}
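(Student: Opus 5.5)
The plan is to use the $\mathbb Z$-grading by the degree operator to split the enlarged forms along powers of $D$. Conjugation by $D$ multiplies each generator of either ordinary integral form by a power of $q$, which is a unit of $\mathcal A$. Hence $D U^{\bullet}_{\mathcal A}(\mathfrak g) D^{-1}=U^{\bullet}_{\mathcal A}(\mathfrak g)$ for $\bullet\in\{\mathrm{Dr},\mathrm{Lus}\}$. Every word in $U^\bullet_{\mathcal A}(\mathfrak g)$ and $D^{\pm1}$ can therefore be rewritten as an element of $U^\bullet_{\mathcal A}(\mathfrak g)$ times a power of $D$, which gives
\[
 \widehat U^{\bullet}_{\mathcal A}(\mathfrak g)=\sum_{k\in\mathbb Z}U^{\bullet}_{\mathcal A}(\mathfrak g)D^k .
\]
The same identity holds after applying $B$, since $B$ commutes with everything.

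Next I would show that this sum is direct inside $\widehat U_q^{\mathrm{DJ}}(\mathfrak g)$. More precisely, I want $\widehat U_q^{\mathrm{DJ}}(\mathfrak g)=\bigoplus_k U_q^{\mathrm{DJ}}(\mathfrak g)D^k$, with $U_q^{\mathrm{DJ}}(\mathfrak g)D^k\cong U_q^{\mathrm{DJ}}(\mathfrak g)$ as a left module. This is the standard smash-product, or Ore-extension, description of adjoining a group-like element acting by an automorphism. I would realize $\widehat U_q^{\mathrm{DJ}}$ as $U_q^{\mathrm{DJ}}\rtimes\mathbb Z$ for the automorphism $\mathrm{Ad}\,D$, so that linear independence of the powers $D^k$ over $U_q^{\mathrm{DJ}}$ holds by construction. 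If the paper intends some other construction of $\widehat U_q^{\mathrm{DJ}}(\mathfrak g)$, then the independence of the $D^k$ is the point I expect to need the most care. I would check it by letting the algebra act on a weight module on which $D$ acts diagonally with distinct eigenvalues $q^k$ on the $\delta$-graded pieces. A Vandermonde argument in the distinct characters $q^{k}$ then separates the components.

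With the direct sum in hand, I would prove both directions of the equivalence. For the implication $\Leftarrow$, if $BU^{\mathrm{Dr}}_{\mathcal A}=BU^{\mathrm{Lus}}_{\mathcal A}$, then multiplying on the right by $D^k$ and summing over $k$ gives $B\widehat U^{\mathrm{Dr}}_{\mathcal A}=B\widehat U^{\mathrm{Lus}}_{\mathcal A}$. For the implication $\Rightarrow$, I would project onto the $D^0$ component. An element $u\in BU^{\mathrm{Lus}}_{\mathcal A}$ lies in $B\widehat U^{\mathrm{Dr}}_{\mathcal A}=\bigoplus_k BU^{\mathrm{Dr}}_{\mathcal A}D^k$. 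Directness of the sum in the ambient algebra forces $u$ to equal its $k=0$ component, so $u\in BU^{\mathrm{Dr}}_{\mathcal A}$. The reverse inclusion follows by symmetry, or directly from Lemma~\ref{lem:forward-inclusion}.

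The final consequence is then immediate. The two sides are equivalent for every $B$, so the set of sufficient localizations is unchanged. Hence the least element identified in Theorem~\ref{thm:uniform-least-ring} is also the least element for the enlarged forms.
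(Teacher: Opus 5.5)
Your proposal is correct and follows the paper's proof: both show that $\operatorname{Ad}(D)^{\pm1}$ rescales every generator of each integral form by a unit $q^m$, deduce $\widehat U_{\mathcal A}^{\bullet}(\mathfrak g)=\bigoplus_{m}U_{\mathcal A}^{\bullet}(\mathfrak g)D^m$ (and likewise over $B$), and compare the $D^0$ components. The only difference is that you justify the directness of the sum over powers of $D$ via the skew Laurent (smash product) description of $\widehat U_q^{\mathrm{DJ}}(\mathfrak g)$, which the paper takes for granted. That construction already settles directness, so the representation-theoretic Vandermonde check you offer as a fallback is unnecessary, and it would in any case require a faithful module for $U_q^{\mathrm{DJ}}(\mathfrak g)$.
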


\begin{proof}
Let \(\tau=\operatorname{Ad}(D)\).  The defining relations give
\[
 \tau\bigl(E_i^{(a)}\bigr)
 =q^{a\delta_{i0}}E_i^{(a)},
 \qquad
 \tau\bigl(F_i^{(a)}\bigr)
 =q^{-a\delta_{i0}}F_i^{(a)},
 \qquad
 \tau\bigl((x_{i,r}^{\pm})^{(a)}\bigr)
 =q^{ar}(x_{i,r}^{\pm})^{(a)},
\]
and \(\tau\) fixes all toral generators.  Since the displayed powers of
\(q\) belong to \(\mathcal A^\times\), both integral forms are preserved
by \(\tau\) and \(\tau^{-1}\).  Hence
\[
 \widehat U_{\mathcal A}^{\bullet}(\mathfrak g)
 =
 \bigoplus_{m\in\mathbb Z}
 U_{\mathcal A}^{\bullet}(\mathfrak g)D^m,
 \qquad
 \bullet\in\{\mathrm{Dr},\mathrm{Lus}\},
\]
and therefore
\[
 B\widehat U_{\mathcal A}^{\bullet}(\mathfrak g)
 =
 \bigoplus_{m\in\mathbb Z}
 BU_{\mathcal A}^{\bullet}(\mathfrak g)D^m.
\]
Equality of the degree-zero components proves the forward implication;
the reverse implication follows by extending an equality in degree zero
to every \(D\)-degree.
\end{proof}
The remainder of the paper is devoted entirely to the proof of
Theorem~\ref{thm:uniform-least-ring}.

\part{The non-\texorpdfstring{$A_{2n}^{(2)}$}{A2n(2)} twisted types}
\label{part:nonexceptional}

\section{Root data and the equality theorem in the nonexceptional types}
\label{sec:nonexceptional-data}

Throughout Part~\ref{part:nonexceptional}, fix
\[
 \mathfrak g\in
 \left\{
 A_{2n-1}^{(2)}\ (n\geq3),\quad
 D_{n+1}^{(2)}\ (n\geq2),\quad
 E_6^{(2)},\quad D_4^{(3)}
 \right\}.
\]
We use the following numbering of the nodes and the parameters
\eqref{eq:local-quantum-parameters}. Write \(\delta=\alpha_0+\theta\).  The following table records the finite root
data and fixes the finite node \(i^*\) and the Weyl-group element \(w\) used
in Section~\ref{sec:finite-Weyl-endpoint}.
\begin{equation}
\label{eq:nonexceptional-type-table}
\begin{array}{c|c|c|c}
 \mathfrak g&(d_0;d_1,\ldots,d_n)&i^*&w\\
 \hline
 A_{2n-1}^{(2)}
 &(1;2,1,\ldots,1)
 &n
 &(s_{n-1}\cdots s_2)s_1(s_2\cdots s_{n-1})\\
 D_{n+1}^{(2)}
 &(1;1,2,\ldots,2)
 &1
 &s_n\cdots s_2\\
 E_6^{(2)}
 &(1;1,1,2,2)
 &1
 &s_1s_2s_3s_4s_2s_3s_2\\
 D_4^{(3)}
 &(1;1,3)
 &1
 &s_1s_2
\end{array}
\end{equation}

The corresponding affine Dynkin diagrams are displayed in
\cref{fig:nonexceptional-dynkin}.

\begin{figure}[htbp]
\centering
\renewcommand{\arraystretch}{2.15}
\begin{tabular}{c@{\qquad}c}
\(A_5^{(2)}\ (n=3)\) &
\begin{tikzpicture}[x=1cm,y=1cm,baseline=(current bounding box.center)]
  \DynkinNode{a51}{(0,0)}{1}
  \DynkinNode{a52}{(1.05,0)}{2}
  \DynkinNode{a53}{(2.10,0)}{3}
  \DynkinNodeAbove{a50}{(1.05,.82)}{0}
  \DynkinDouble{a51}{a52}\DynkinArrow{a51}{a52}
  \draw[dynkin bond] (a52)--(a53) (a52)--(a50);
\end{tikzpicture}
\\
\(A_{2n-1}^{(2)}\ (n\geq4)\) &
\begin{tikzpicture}[x=1cm,y=1cm,baseline=(current bounding box.center)]
  \DynkinNode{a1}{(0,0)}{1}
  \DynkinNode{a2}{(1.05,0)}{2}
  \DynkinNode{am}{(3.15,0)}{n-1}
  \DynkinNode{an}{(4.20,0)}{n}
  \DynkinNodeAbove{a0}{(3.15,.82)}{0}
  \DynkinDouble{a1}{a2}\DynkinArrow{a1}{a2}
  \draw[dynkin bond] (a2)--(1.65,0);
  \node at (2.10,0) {\(\cdots\)};
  \draw[dynkin bond] (2.55,0)--(am);
  \draw[dynkin bond] (am)--(an) (am)--(a0);
\end{tikzpicture}
\\
\(D_3^{(2)}\ (n=2)\) &
\begin{tikzpicture}[x=1cm,y=1cm,baseline=(current bounding box.center)]
  \DynkinNode{d31}{(0,0)}{1}
  \DynkinNode{d32}{(1.05,0)}{2}
  \DynkinNode{d30}{(2.10,0)}{0}
  \DynkinDouble{d31}{d32}\DynkinArrow{d32}{d31}
  \DynkinDouble{d32}{d30}\DynkinArrow{d32}{d30}
\end{tikzpicture}
\\
\(D_{n+1}^{(2)}\ (n\geq3)\) &
\begin{tikzpicture}[x=1cm,y=1cm,baseline=(current bounding box.center)]
  \DynkinNode{d1}{(0,0)}{1}
  \DynkinNode{d2}{(1.05,0)}{2}
  \DynkinNode{dn}{(3.15,0)}{n}
  \DynkinNode{d0}{(4.20,0)}{0}
  \DynkinDouble{d1}{d2}\DynkinArrow{d2}{d1}
  \draw[dynkin bond] (d2)--(1.65,0);
  \node at (2.10,0) {\(\cdots\)};
  \draw[dynkin bond] (2.55,0)--(dn);
  \DynkinDouble{dn}{d0}\DynkinArrow{dn}{d0}
\end{tikzpicture}
\\
\(E_6^{(2)}\) &
\begin{tikzpicture}[x=1cm,y=1cm,baseline=(current bounding box.center)]
  \DynkinNode{e0}{(0,0)}{0}
  \DynkinNode{e1}{(1.05,0)}{1}
  \DynkinNode{e2}{(2.10,0)}{2}
  \DynkinNode{e3}{(3.15,0)}{3}
  \DynkinNode{e4}{(4.20,0)}{4}
  \draw[dynkin bond] (e0)--(e1)--(e2);
  \DynkinDouble{e2}{e3}\DynkinArrow{e3}{e2}
  \draw[dynkin bond] (e3)--(e4);
\end{tikzpicture}
\\
\(D_4^{(3)}\) &
\begin{tikzpicture}[x=1cm,y=1cm,baseline=(current bounding box.center)]
  \DynkinNode{g0}{(0,0)}{0}
  \DynkinNode{g1}{(1.05,0)}{1}
  \DynkinNode{g2}{(2.35,0)}{2}
  \draw[dynkin bond] (g0)--(g1);
  \DynkinTriple{g1}{g2}\DynkinArrow{g2}{g1}
\end{tikzpicture}
\end{tabular}
\caption{The non-\(A_{2n}^{(2)}\) twisted affine Dynkin diagrams.  The low-rank isomorphism
\(A_3^{(2)}\cong D_3^{(2)}\) is represented by the \(D_3^{(2)}\) row.}
\label{fig:nonexceptional-dynkin}
\end{figure}

\begin{theorem}[equality in the nonexceptional types]
\label{thm:nonexceptional-equality}
For each of the four families above, after fixing the comparison isomorphism
\eqref{eq:comparison-isomorphism},
\[
 \UDr(\mathfrak g)
 =U_{\mathcal A}^{\mathrm{Lus}}(\mathfrak g)
 \qquad\text{inside }U_q^{\mathrm{DJ}}(\mathfrak g).
\]
\end{theorem}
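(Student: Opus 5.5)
The inclusion \(\UDr(\mathfrak g)\subseteq\ULus(\mathfrak g)\) is \cref{lem:forward-inclusion}. So the plan is to show that \(\UDr(\mathfrak g)\) contains every generator of \(\ULus(\mathfrak g)\): the toral elements \(K_i^{\pm1}\) for \(i\in I\), and the divided powers \(E_i^{(a)},F_i^{(a)}\) for \(i\in I\).

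\textbf{Toral elements and finite divided powers.} These are immediate. \(K_0^{\pm1}\) is recovered from \(K_\delta^{\pm1}\) and the \(K_i^{\pm1}\) by \eqref{eq:K0-recovery}. For \(i\in I_0\), the zero modes give \(E_i^{(a)}=(x_{i,0}^+)^{(a)}\) and \(F_i^{(a)}=(x_{i,0}^-)^{(a)}\) by \eqref{eq:comparison-current-images}. The only real work is therefore to produce \(E_0^{(a)}\) and \(F_0^{(a)}\). By \(\Omega\)-symmetry it suffices to treat \(E_0^{(a)}\): \(\Omega\) preserves \(\UDr\) up to the sign convention \(o(i)\) and the relation \(\Omega T_w=T_w\Omega\).

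\textbf{Braid stability of the core.} I would show that \(\UDr\) is stable under \(T_{\lambda_i}^{\pm1}\) and under the finite braid operators \(T_j^{\pm1}\), \(j\in I_0\).

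For translations, \eqref{eq:current-divided-powers-braid-formula} shows that \(T_{\lambda_i}^{\mp1}\) shifts the mode of \(x_{i,r}^{\pm}\) by \(\pm\nu_i\), up to sign. For the currents of other colors \(j\neq i\), I would use the standard fact (Beck) that \(T_{\lambda_i}(E_j)=E_j\) for \(j\neq i\) and that the translations commute. Combined with \eqref{eq:comparison-current-images}, this gives \(T_{\lambda_i}^{\pm1}\bigl((x^\pm_{j,r})^{(a)}\bigr)\in\UDr\). The toral generators are fixed up to \(K_\delta\)-powers.

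For finite braids \(T_j\), \(j\in I_0\), I would use the rank-one structure first. For each color \(i\), the elements \(x^\pm_{i,r}\) together with \(K_i\) and \(K_\delta^{\nu_i}\) generate a copy of \(U_{q_i}(A_1^{(1)})\). The nonexceptional condition \(\nu_i=d_i\) ensures that the relation \eqref{eq:dr-same-current} is exactly the untwisted \(A_1^{(1)}\) relation in \(q_i\). By the untwisted rank-one result of Beck--Chari--Pressley \cite{BCP} (as recorded in \cite{CJ}), the divided powers of the currents generate the Lusztig form of this \(A_1^{(1)}\), and hence contain \(T_i^{\pm1}(E_i^{(a)})\) and the affine simple divided powers \(E_{\nu_i\delta-\alpha_i}^{(a)}\) of that vertex.

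For \(j\neq i\), I would verify \(T_j\bigl((x^\pm_{i,r})^{(a)}\bigr)\in\UDr\) as follows. Since \(T_jT_{\lambda_i}=T_{\lambda_i}T_j\) when \(\langle\lambda_i,\alpha_j\rangle=0\), this reduces to \(T_j(E_i^{(a)})\). The latter is a divided power of a real root vector in the finite part, which lies in the finite Lusztig form generated by the \(E_k^{(b)}\), \(k\in I_0\), and hence in \(\UDr\).

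\textbf{Transport to the affine node.} With this stability in hand, I would use the element \(w\) of \eqref{eq:nonexceptional-type-table} to move a starting divided power to \(E_0^{(a)}\). One checks from the table that the extended element \(\tilde w=t_{\lambda_{i^*}}\,w\) (adjusting \(\tilde w\) by a finite element or sign if needed) satisfies \(\tilde w(\alpha_{i^*})=\alpha_0\). Equivalently, \(w(\alpha_{i^*})=\theta_s\) is the relevant root with \(\nu_{i^*}\delta-\theta_s=\alpha_0\) in the twisted normalization.

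Applying \cref{lem:simple-to-simple} then gives \(T_{\tilde w}(E_{i^*}^{(a)})=E_0^{(a)}\). Since \(T_{\tilde w}\) factors into translation operators and finite braid operators, each of which preserves \(\UDr\), we obtain \(E_0^{(a)}\in\UDr\).

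\textbf{Main obstacle.} I expect the hard step to be the stability of \(\UDr\) under \(T_j\) and under the translations for colors \(j\neq i\). One must ensure that \(T_j\) applied to nonzero modes does not introduce non-integral combinations. I would first try to reduce everything to zero modes via the commutation \(T_jT_{\lambda_i}=T_{\lambda_i}T_j\) for \(i\neq j\). For \(j=i\), I would use the \(A_1^{(1)}\) vertex identity \(T_i^{-1}T_{\lambda_i}\) acting as a diagram-type shift on the \(i\)-th currents, which sends \(x_{i,r}^+\) to a multiple of \(x_{i,-r-\nu_i}^-K\). The delicate check is the explicit root computation \(\tilde w(\alpha_{i^*})=\alpha_0\) in each of the four families.
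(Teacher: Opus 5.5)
Your skeleton matches the paper's: the forward inclusion, then stability of $\UDr$ under $T_{\lambda_h}^{\pm1}$ and under the finite $T_j^{\pm1}$, then braid transport from $i^\ast$ to the affine node using the $w$ of \eqref{eq:nonexceptional-type-table}. Stability is handled for the same colour through an embedded $A_1^{(1)}$ vertex, and for different colours through $T_jT_{\lambda_i}=T_{\lambda_i}T_j$ and braid stability of the finite Lusztig form. \textbf{The transport step, however, fails as written.}

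\emph{Your element does not hit $\alpha_0$.} Since $w(\alpha_{i^\ast})=\theta$ and $\lambda_{i^\ast}=\omega_{i^\ast}^\vee$, you get $t_{\lambda_{i^\ast}}w(\alpha_{i^\ast})=\theta-c\,\delta$, where $c=\langle\omega_{i^\ast}^\vee,\theta\rangle$ is the $\alpha_{i^\ast}$-coefficient of $\theta$. This equals $-\alpha_0$ in types $A_{2n-1}^{(2)}$ and $D_{n+1}^{(2)}$ (where $c=1$), and $\theta-2\delta$ in types $E_6^{(2)}$ and $D_4^{(3)}$ (where $c=2$).

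Your hedge does not rescue it. For any finite $u$, put $\gamma=u(\alpha_{i^\ast})$; then $t_{\lambda_{i^\ast}}u(\alpha_{i^\ast})=\gamma-\langle\omega_{i^\ast}^\vee,\gamma\rangle\delta$. Equality with $\alpha_0=\delta-\theta$ forces $\gamma=-\theta$ and $c=1$, which fails in the last two families.

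The translation must act on $-\alpha_{i^\ast}$ before $w$ is applied, so that only $\langle\lambda_{i^\ast},\alpha_{i^\ast}\rangle=d_{i^\ast}=1$ enters. The correct element is $\widehat w=w\,t_{\lambda_{i^\ast}}s_{i^\ast}$, and then $\widehat w(\alpha_{i^\ast})=w(\delta-\alpha_{i^\ast})=\delta-\theta=\alpha_0$.

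\emph{The factorization of the lift is unproved.} Lemma~\ref{lem:simple-to-simple} is a statement about the \emph{standard} lift $T_{\widehat w}$. A product of $T_\lambda$'s and $T_j$'s along a non-length-additive factorization is a different braid-group element and need not send $E_{i^\ast}$ to $E_0$. So your sentence ``$T_{\tilde w}$ factors into translation and finite braid operators'' needs proof.

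The paper supplies it in Lemma~\ref{lem:finite-endpoint-lift}. One has $\ell(wt_{\lambda_{i^\ast}})=\ell(w)+\ell(t_{\lambda_{i^\ast}})$, because $t_{-\lambda_{i^\ast}}(\gamma)=\gamma+(\lambda_{i^\ast}\mid\gamma)\delta>0$ for every positive finite root $\gamma$. Also $t_{\lambda_{i^\ast}}(\alpha_{i^\ast})=\alpha_{i^\ast}-\delta<0$. Together these give $T_{\widehat w}=T_wT_{\lambda_{i^\ast}}T_{i^\ast}^{-1}$. Alternatively, you could cite the Bernstein-type fact that the extended braid group is generated by the $T_j^{\pm1}$ ($j\in I_0$) and the $T_{\lambda_j}^{\pm1}$, but it has to be stated.

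With this factorization the translation part becomes harmless. Indeed $T_{\lambda_{i^\ast}}T_{i^\ast}^{-1}(E_{i^\ast}^{(a)})=E_{\beta_0}^{(a)}$, which is a unit times $K_{\beta_0}^{a}(x^-_{i^\ast,1})^{(a)}$ and so already lies in $\UDr$. Only the finite operator $T_w$ then needs to be applied, and this is how the paper runs the endpoint.

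\emph{Two smaller remarks.} First, same-colour stability needs $T_i^{\pm1}\bigl((x^\pm_{i,r})^{(a)}\bigr)\in\UDr$ for every $r$, not only $T_i^{\pm1}(E_i^{(a)})$. This follows from the intertwining $T_i\phi_i^{\mathrm{DJ}}=\phi_i^{\mathrm{DJ}}T_1$ together with $T_1$-stability of the source Lusztig form, and you should state it. Second, the appeal to Beck--Chari--Pressley and Chari--Jing is unnecessary. The image of the source $E_0^{(a)}$ is $E_{\beta_i}^{(a)}$, which is already a unit times $K_{\beta_i}^{a}(x^-_{i,d_i})^{(a)}$. Hence the image of the source Lusztig form lies in $\UDr$ by inspection.
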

\noindent
Sections~\ref{sec:real-current-start}
and~\ref{sec:finite-Weyl-endpoint}
are devoted to the proof of
Theorem~\ref{thm:nonexceptional-equality}.

\section{Braid stability and the
\texorpdfstring{$\beta_0$}{beta0}-root divided powers}
\label{sec:real-current-start}
Use the translations in \eqref{eq:fundamental-translation-definition};
for these four types, \(\nu_i=d_i\).
Writing \((\lambda\mid\alpha):=\langle\lambda,\alpha\rangle\)
for the canonical pairing with the finite root lattice, we have
\begin{equation}\label{eq:translation-lattice-pairing}
 (\lambda_i\mid\alpha_j)=d_i\delta_{ij}.
\end{equation}
The sign and zero-mode conventions are
\eqref{eq:current-sign-convention} and
\eqref{eq:dr-zero-mode-convention}.

The current formulas in the proof of
Lemma~\ref{lem:forward-inclusion}, together with
\eqref{eq:Omega-commutes-braid}, give
\begin{equation}
\label{eq:Omega-current-core}
\Omega\bigl((x_{i,d_ir}^{\pm})^{(a)}\bigr)
=(x_{i,-d_ir}^{\mp})^{(a)},
\qquad
\Omega\bigl(U_{\mathcal A}^{\mathrm{Dr}}(\mathfrak g)\bigr)
=U_{\mathcal A}^{\mathrm{Dr}}(\mathfrak g).
\end{equation}

\begin{lemma}
\label{lem:translation-stability}
For every \(h\in I_0\),
\[
T_{\lambda_h}
\bigl(U_{\mathcal A}^{\mathrm{Dr}}(\mathfrak g)\bigr)
=
U_{\mathcal A}^{\mathrm{Dr}}(\mathfrak g).
\]
\end{lemma}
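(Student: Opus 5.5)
It suffices to show that \(T_{\lambda_h}\) and \(T_{\lambda_h}^{-1}\) each map the generators of \(\UDr(\mathfrak g)\) into \(\UDr(\mathfrak g)\). Both maps are algebra automorphisms of the common ambient algebra that commute with \(\mathcal A\)-scalars, so once generators go into the core, the whole core does. Applying the inclusion for \(T_{\lambda_h}^{-1}\) and then \(T_{\lambda_h}\) gives the reverse inclusion, hence equality.

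\emph{Toral generators.} \(T_{\lambda_h}(K_\mu)=K_{t_{\lambda_h}(\mu)}\). The translation \(t_{\lambda_h}\) acts on the root lattice by \(\alpha\mapsto\alpha-\langle\lambda_h,\alpha\rangle\delta\), so
\[
T_{\lambda_h}(K_i)=K_iK_\delta^{-\langle\lambda_h,\alpha_i\rangle}=K_iK_\delta^{-d_h\delta_{hi}},\qquad T_{\lambda_h}(K_\delta)=K_\delta .
\]
Both are products of \(K_\delta^{\pm1}\) and \(K_i^{\pm1}\). The same holds for \(T_{\lambda_h}^{-1}\).

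\emph{Current divided powers.} By \eqref{eq:current-divided-powers-braid-formula}, \((x_{i,d_ik}^{+})^{(a)}=o(i)^{ka}T_{\lambda_i}^{-k}(E_i^{(a)})\), and similarly for the minus sign. The key input is that the translation operators \(T_{\lambda_h}\), \(h\in I_0\), pairwise commute. This is Beck's result that \(\lambda\mapsto T_{t_\lambda}\) is a homomorphism on the dominant cone, extended to the lattice; the \(\lambda_h\) are dominant coweights and \(\ell(t_\lambda t_\mu)=\ell(t_\lambda)+\ell(t_\mu)\) for dominant \(\lambda,\mu\). Then
\[
T_{\lambda_h}\bigl((x_{i,d_ik}^{+})^{(a)}\bigr)=o(i)^{ka}T_{\lambda_i}^{-k}T_{\lambda_h}(E_i^{(a)}),
\]
so everything reduces to computing \(T_{\lambda_h}(E_i)\) and \(T_{\lambda_h}(F_i)\) for \(i\in I_0\).

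\emph{Case \(h\neq i\).} Here \(t_{\lambda_h}(\alpha_i)=\alpha_i\). Write \(t_{\lambda_h}=s_{j_1}\cdots s_{j_p}\tau\) reduced with \(t_{\lambda_h}(\alpha_i)=\alpha_i\). Then \cref{lem:simple-to-simple} gives \(T_{\lambda_h}(E_i)=E_i\) and \(T_{\lambda_h}(F_i)=F_i\), so \(T_{\lambda_h}\) fixes the divided powers and the current generator is mapped to itself.

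\emph{Case \(h=i\).} Here the current modes shift. \(T_{\lambda_i}(E_i^{(a)})\) equals \(o(i)^{a}(x_{i,-d_i}^{+})^{(a)}\), which lies in the core but involves the nonzero mode. This is immediate from the formula with \(k=-1\). Then
\[
T_{\lambda_i}\bigl((x_{i,d_ik}^{+})^{(a)}\bigr)=o(i)^{ka}T_{\lambda_i}^{-(k-1)}(E_i^{(a)})=\pm(x_{i,d_i(k-1)}^{+})^{(a)}.
\]
The minus case is identical, or follows via \(\Omega\) and \eqref{eq:Omega-commutes-braid}. Thus in this case there is nothing to check beyond the defining formula.

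\emph{Main obstacle.} The main obstacle is the commutation \(T_{\lambda_h}T_{\lambda_i}=T_{\lambda_i}T_{\lambda_h}\) in the twisted setting, together with the claim \(T_{\lambda_h}(E_i)=E_i\) for \(h\neq i\). The latter needs a reduced word of \(t_{\lambda_h}\) compatible with \cref{lem:simple-to-simple}; concretely, one needs \(t_{\lambda_h}s_i=s_it_{\lambda_h}\) as length-additive products. My first attempt is to derive both from Beck's \cite{B} lemmas: the homomorphism property on dominant translations, and \(T_{\lambda}(E_i)=E_i\) whenever \(\langle\lambda,\alpha_i\rangle=0\). Using \(\nu_i=d_i\), I would check that these lemmas hold for the twisted lattice spanned by the \(\lambda_i\). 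If an extended element with a nontrivial \(\tau\) appears, I would verify that \(\tau\) fixes the relevant node \(i\).
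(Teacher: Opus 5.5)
Your proposal is correct and follows essentially the paper's own argument: a generator-by-generator check using the explicit action of \(T_{\lambda_h}^{\pm1}\) on the toral elements and on the divided current modes. Your derivation of the mode-shift formulas, via commutativity of the dominant translation lifts and Lemma~\ref{lem:simple-to-simple}, makes explicit what the paper's displayed formulas take for granted. That lemma already covers extended affine Weyl group elements, so the worry in your last paragraph about the length-zero part \(\tau\) is unnecessary.
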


\begin{proof}
Under the comparison isomorphism \(\Psi_{\mathfrak g}\), for
\(\varepsilon\in\{\pm1\}\),
\begin{equation}
\label{eq:translation-action}
\begin{aligned}
T_{\lambda_h}^{\varepsilon}(K_\delta)
  & =K_\delta,\\
T_{\lambda_h}^{\varepsilon}(K_j)
  & =K_jK_\delta^{-\varepsilon\delta_{hj}d_h},\\
T_{\lambda_h}^{\varepsilon}
 \bigl((x_{j,s}^{+})^{(a)}\bigr)
  & =
o(h)^{a\delta_{hj}}
 \bigl(x_{j,s-\varepsilon\delta_{hj}d_h}^{+}\bigr)^{(a)},\\
T_{\lambda_h}^{\varepsilon}
 \bigl((x_{j,s}^{-})^{(a)}\bigr)
  & =
o(h)^{a\delta_{hj}}
 \bigl(x_{j,s+\varepsilon\delta_{hj}d_h}^{-}\bigr)^{(a)}.
\end{aligned}
\end{equation}
If \(s\in d_j\mathbb Z\), then every mode occurring on the
right-hand side is again in \(d_j\mathbb Z\).  Hence both
\(T_{\lambda_h}\) and \(T_{\lambda_h}^{-1}\) preserve the generators
in \eqref{eq:bare-real-current-core}, proving the equality.
\end{proof}

For \(j\in I_0\), set
\[
\mathcal A_j:=\mathbb Z[q_j,q_j^{-1}],
\qquad
\beta_j:=d_j\delta-\alpha_j,
\qquad
K_{\beta_j}:=K_\delta^{d_j}K_j^{-1}.
\]
By \eqref{eq:translation-real-root-vectors} and
\eqref{eq:real-root-vector-definition},
\begin{equation}
\label{eq:beta-j-root-vectors}
E_{\beta_j}=T_{\lambda_j}T_j^{-1}(E_j),
\qquad
F_{\beta_j}=\Omega(E_{\beta_j}).
\end{equation}
There is an \(A_1^{(1)}\)-vertex homomorphism \cite{D1}\cite{JZ}
\[
\phi_j^{\mathrm{DJ}}:
U_{q_j}^{\mathrm{DJ}}(A_1^{(1)})
\longrightarrow
U_q^{\mathrm{DJ}}(\mathfrak g)
\]
sending the two source simple pairs
\((E_1,F_1)\) and \((E_0,F_0)\) to
\((E_j,F_j)\) and \((E_{\beta_j},F_{\beta_j})\) respectively, and the corresponding toral elements to
\(K_j\) and \(K_{\beta_j}\).

Choose the sign normalization on the source ordinary vertex so that
its finite current has sign \(o(j)\).  Write \(\delta^{(1)}\) for the
null root of the source \(A_1^{(1)}\). The homomorphism
\(\phi_j^{\mathrm{DJ}}\) satisfies
\begin{equation}
\label{eq:ordinary-vertex-intertwining}
\begin{gathered}
\phi_j^{\mathrm{DJ}}(K_{\delta^{(1)}})
=
K_\delta^{d_j},
\qquad
\phi_j^{\mathrm{DJ}}(K_1)
=
K_j,
\\
\phi_j^{\mathrm{DJ}}(x_{1,r}^{\pm})
=
x_{j,d_jr}^{\pm}
\qquad(r\in\mathbb Z),
\qquad
T_j\phi_j^{\mathrm{DJ}}
=
\phi_j^{\mathrm{DJ}}T_1.
\end{gathered}
\end{equation}
Here \(x_{1,r}^{\pm}\) and \(T_1\) denote, respectively, the real
currents and the Beck braid operator in the source
\(U_{q_j}^{\mathrm{DJ}}(A_1^{(1)})\).  

Let \(V_{j,\mathcal A}\) denote the image of
\[
\mathcal A\otimes_{\mathcal A_j}
U_{\mathcal A_j}^{\mathrm{Lus}}(A_1^{(1)})
\xrightarrow{\;1\otimes\phi_j^{\mathrm{DJ}}\;}
U_q^{\mathrm{DJ}}(\mathfrak g).
\]

\begin{lemma}
\label{lem:integral-ordinary-vertex}
For every \(j\in I_0\),
\[
V_{j,\mathcal A}
\subseteq
U_{\mathcal A}^{\mathrm{Dr}}(\mathfrak g),
\]
and \(V_{j,\mathcal A}\) contains
\[
(x_{j,d_jr}^{\pm})^{(a)}
\qquad
(r\in\mathbb Z,\ a\geq0).
\]
\end{lemma}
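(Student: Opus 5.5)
The plan is to reduce both assertions to the untwisted simply-laced case \(A_1^{(1)}\) with parameter \(q_j\), where the equality of the Drinfeld and Lusztig integral forms is known by Beck--Chari--Pressley and Chari--Jing \cite{BCP,CJ}. Then we transport this equality along \(\phi_j^{\mathrm{DJ}}\) using the intertwining relations \eqref{eq:ordinary-vertex-intertwining}.

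\emph{Step 1 (source equality).} For \(U_{q_j}^{\mathrm{DJ}}(A_1^{(1)})\), \cite[Prop.~1.2]{CJ} gives
\[
U_{\mathcal A_j}^{\mathrm{Lus}}(A_1^{(1)})
=\mathcal A_j\bigl\langle K_{\delta^{(1)}}^{\pm1},K_1^{\pm1},(x_{1,r}^{\pm})^{(a)}\bigm| r\in\mathbb Z,\ a\geq0\bigr\rangle,
\]
where the divided powers are taken with respect to \(q_j\). The sign normalization chosen on the source vertex (finite current sign \(o(j)\)) only changes generators by signs \(\pm1\), so the generated ring is unaffected.

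\emph{Step 2 (generators map into the core).} Since \(\phi_j^{\mathrm{DJ}}\) is a \(\mathcal K\)-algebra homomorphism with \(\phi_j^{\mathrm{DJ}}(x_{1,r}^{\pm})=x_{j,d_jr}^{\pm}\), and the source factorials \([a]_{q_j}!\) agree with the target normalization \eqref{eq:current-divided-power-definition} for color \(j\), we get
\[
\phi_j^{\mathrm{DJ}}\bigl((x_{1,r}^{\pm})^{(a)}\bigr)=(x_{j,d_jr}^{\pm})^{(a)}.
\]
Here \(d_jr\in I_j=\nu_j\mathbb Z\) because \(\nu_j=d_j\), so these are generators of \(\UDr(\mathfrak g)\). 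Moreover \(\phi_j^{\mathrm{DJ}}(K_1^{\pm1})=K_j^{\pm1}\) and \(\phi_j^{\mathrm{DJ}}(K_{\delta^{(1)}}^{\pm1})=K_\delta^{\pm d_j}\), both of which lie in the core. Since \(\mathcal A_j\subseteq\mathcal A\), the image of \(\mathcal A\otimes_{\mathcal A_j}U^{\mathrm{Lus}}_{\mathcal A_j}\) is the \(\mathcal A\)-algebra generated by these images. This yields \(V_{j,\mathcal A}\subseteq\UDr(\mathfrak g)\). The containment of the displayed divided powers in \(V_{j,\mathcal A}\) follows directly from the same formula.

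\emph{Main obstacle.} The delicate point is the precise compatibility in Step 2, namely that \(\phi_j^{\mathrm{DJ}}\) carries the source current \(x_{1,r}^{\pm}\) exactly to \(x_{j,d_jr}^{\pm}\) and not to a rescaled element. This is recorded in \eqref{eq:ordinary-vertex-intertwining}. If one wanted to verify it directly, one would use \(T_j\phi_j^{\mathrm{DJ}}=\phi_j^{\mathrm{DJ}}T_1\) together with the fact that the source translation \(T_{\omega_1^\vee}\) corresponds to \(T_{\lambda_j}\) restricted to the vertex. The sign \(o(j)^r\) in \eqref{eq:comparison-current-images} is matched by the chosen source sign.

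A second subtlety is the use of the untwisted equality with ground ring \(\mathcal A_j=\mathbb Z[q_j^{\pm1}]\) rather than \(\mathbb Z[q^{\pm1}]\). This is harmless, because the statement over \(\mathcal A_j\) is the standard one in the variable \(q_j\).
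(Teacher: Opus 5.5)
Your argument is correct, but it takes a different route from the paper.

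\textbf{Your route.} You import the full rank-one equality of the Lusztig and Drinfeld integral forms of \(A_1^{(1)}\) over \(\mathcal A_j\) from Beck--Chari--Pressley and Chari--Jing. You then push the Drinfeld-type generators forward along \(\phi_j^{\mathrm{DJ}}\).

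\textbf{The paper's route.} The paper proves the two needed inclusions for the vertex directly.

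\emph{The divided currents.} Here it uses only the easy direction. The source formulas
\((x_{1,r}^{+})^{(a)}=o(j)^{ra}T_{\lambda_1}^{-r}(E_1^{(a)})\) and
\((x_{1,r}^{-})^{(a)}=o(j)^{ra}T_{\lambda_1}^{r}(F_1^{(a)})\),
together with Lusztig's braid stability, place them in \(U_{\mathcal A_j}^{\mathrm{Lus}}(A_1^{(1)})\).

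\emph{The inclusion \(V_{j,\mathcal A}\subseteq\UDr(\mathfrak g)\).} Here it checks the source Lusztig generators one by one. The only nontrivial ones are the affine divided powers, and these follow from the explicit identity
\[
E_{\beta_j}=-o(j)K_{\beta_j}x_{j,d_j}^{-},
\qquad
E_{\beta_j}^{(a)}=(-o(j))^aq_j^{-a(a-1)}K_{\beta_j}^{a}\bigl(x_{j,d_j}^{-}\bigr)^{(a)},
\]
together with \(\Omega\)-stability of the core for \(F_{\beta_j}^{(a)}\). So in the \(A_1^{(1)}\) vertex the affine simple root vector is already a toral multiple of a single real current mode. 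The hard direction of the untwisted comparison is therefore trivial in rank one, and no external theorem is needed.

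\textbf{What each buys.} Your version is shorter, but it outsources exactly this content to the literature. It also requires that the cited generating set coincide with the bare real-current core. That means no Heisenberg or imaginary generators, no toral quantum binomials, and current normalizations agreeing with the paper's up to signs and central toral units; the paper is deliberately careful about this point. The paper's version is self-contained and needs only Lusztig's braid stability plus a one-line computation.
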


\begin{proof}
For \(r\in\mathbb Z\) and \(a\geq0\), the source
\(A_1^{(1)}\) current formulas give
\[
(x_{1,r}^{+})^{(a)}
=
o(j)^{ra}T_{\lambda_1}^{-r}(E_1^{(a)}),
\qquad
(x_{1,r}^{-})^{(a)}
=
o(j)^{ra}T_{\lambda_1}^{r}(F_1^{(a)}).
\]
The braid-integrality statement \eqref{eq:braid-integral-stability},
applied over \(\mathcal A_j\), shows that Beck's braid operators and
their inverses preserve
\(U_{\mathcal A_j}^{\mathrm{Lus}}(A_1^{(1)})\). Hence
\[
(x_{1,r}^{\pm})^{(a)}
\in
U_{\mathcal A_j}^{\mathrm{Lus}}(A_1^{(1)}).
\]
Equation~\eqref{eq:ordinary-vertex-intertwining} now gives
\[
\phi_j^{\mathrm{DJ}}
\bigl((x_{1,r}^{\pm})^{(a)}\bigr)
=
(x_{j,d_jr}^{\pm})^{(a)}
\in V_{j,\mathcal A}.
\]
It remains to prove
\(V_{j,\mathcal A}\subseteq
U_{\mathcal A}^{\mathrm{Dr}}(\mathfrak g)\).
For the source finite simple generators, one has
\[
\phi_j^{\mathrm{DJ}}(E_1^{(a)})
=
E_j^{(a)}
=
(x_{j,0}^{+})^{(a)},
\qquad
\phi_j^{\mathrm{DJ}}(F_1^{(a)})
=
F_j^{(a)}
=
(x_{j,0}^{-})^{(a)}.
\]
Hence their images belong to
\(U_{\mathcal A}^{\mathrm{Dr}}(\mathfrak g)\).
For the other positive simple generator,
\eqref{eq:translation-action},
\eqref{eq:beta-j-root-vectors}, and
\(T_j^{-1}(E_j)=-K_j^{-1}F_j\) give
\[
\begin{aligned}
E_{\beta_j}
&=
T_{\lambda_j}T_j^{-1}(E_j)\\
&=
-T_{\lambda_j}(K_j^{-1})
 T_{\lambda_j}(F_j)\\
&=
-o(j)K_{\beta_j}x_{j,d_j}^{-}.
\end{aligned}
\]
Moreover,
\[
K_{\beta_j}x_{j,d_j}^{-}
=
q_j^2x_{j,d_j}^{-}K_{\beta_j}.
\]
Consequently,
\[
E_{\beta_j}^{(a)}
=
(-o(j))^a q_j^{-a(a-1)}
K_{\beta_j}^{a}(x_{j,d_j}^{-})^{(a)}
\in
U_{\mathcal A}^{\mathrm{Dr}}(\mathfrak g).
\]
Equation \eqref{eq:Omega-current-core} gives
\[
F_{\beta_j}^{(a)}
=
\Omega(E_{\beta_j}^{(a)})
\in
U_{\mathcal A}^{\mathrm{Dr}}(\mathfrak g).
\]
Finally, the source toral generators are sent to
\(K_j^{\pm1}\) and \(K_{\beta_j}^{\pm1}\), which already belong to
\(U_{\mathcal A}^{\mathrm{Dr}}(\mathfrak g)\).
Thus the images of all defining generators of the source Lusztig
integral form lie in
\(U_{\mathcal A}^{\mathrm{Dr}}(\mathfrak g)\), proving the inclusion.
\end{proof}

Let
\[
U_{\mathcal A,0}^{\mathrm{Lus}}
:=
\mathcal A\Bigl\langle
E_j^{(a)},F_j^{(a)},K_j^{\pm1}
\ \Bigm|\
j\in I_0,\ a\geq0
\Bigr\rangle
\]
be the finite Lusztig integral form. By \eqref{eq:current-divided-powers-braid-formula} with \(k=0\),
\[
E_j^{(a)}=(x_{j,0}^{+})^{(a)},
\qquad
F_j^{(a)}=(x_{j,0}^{-})^{(a)}
\qquad(j\in I_0,\ a\geq0).
\]
Together with the defining toral generators \(K_j^{\pm1}\), this gives
\[
U_{\mathcal A,0}^{\mathrm{Lus}}
\subseteq
U_{\mathcal A}^{\mathrm{Dr}}(\mathfrak g).
\]

\begin{proposition}[Stability under finite braid operators]
\label{prop:finite-braid-stability}
For every \(i\in I_0\),
\[
T_i
\bigl(U_{\mathcal A}^{\mathrm{Dr}}(\mathfrak g)\bigr)
=
U_{\mathcal A}^{\mathrm{Dr}}(\mathfrak g).
\]
\end{proposition}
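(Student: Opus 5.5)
The plan is to show \(T_i^{\pm1}\bigl(\UDr(\mathfrak g)\bigr)\subseteq\UDr(\mathfrak g)\) for each \(i\in I_0\); the two inclusions together give the equality. Since \(T_i^{\pm1}\) are algebra automorphisms, it suffices to check the defining generators in \eqref{eq:bare-real-current-core}.

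\emph{Toral generators.} We have \(T_i^{\pm1}(K_j)=K_{s_i(\alpha_j)}\), which is a Laurent monomial in the \(K_h\), \(h\in I_0\). Also \(s_i(\delta)=\delta\), so \(T_i^{\pm1}(K_\delta)=K_\delta\). All of these lie in \(\UDr\).

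\emph{Currents of a different color, \(j\neq i\).} Write \(s=d_jr\). By \eqref{eq:current-divided-powers-braid-formula},
\[
(x_{j,s}^{+})^{(a)}=o(j)^{ra}\,T_{\lambda_j}^{-r}\bigl(E_j^{(a)}\bigr),
\]
and similarly for \(x^-\) with \(F_j^{(a)}\). I would use Beck's commutation \(T_iT_{\lambda_j}=T_{\lambda_j}T_i\) for \(i\neq j\) \cite{B}. On the Weyl-group side this holds because \(s_i(\lambda_j)=\lambda_j\), so \(s_it_{\lambda_j}s_i^{-1}=t_{\lambda_j}\). Moreover \(\ell(s_it_{\lambda_j})=\ell(t_{\lambda_j})+1=\ell(t_{\lambda_j}s_i)\), since \(t_{\lambda_j}(\alpha_i)=\alpha_i>0\). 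Hence both products are reduced lifts of the same element and the braid lifts agree. It then follows that
\[
T_i^{\pm1}\bigl((x_{j,s}^{+})^{(a)}\bigr)=o(j)^{ra}\,T_{\lambda_j}^{-r}\bigl(T_i^{\pm1}(E_j^{(a)})\bigr).
\]
By \eqref{eq:braid-generator-formulas}, \(T_i^{\pm1}(E_j^{(a)})\) is an \(\mathcal A\)-polynomial in finite divided powers, using Lusztig's formulas for divided powers \cite[Section~37.1]{L}. It therefore lies in \(U_{\mathcal A,0}^{\mathrm{Lus}}\subseteq\UDr\). Lemma~\ref{lem:translation-stability} then returns the result to \(\UDr\). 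The \(x^-\) case is identical, or follows by applying \(\Omega\) via \eqref{eq:Omega-commutes-braid} and \eqref{eq:Omega-current-core}.

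\emph{Currents of the same color, \(j=i\).} Here I would use the ordinary \(A_1^{(1)}\) vertex. By Lemma~\ref{lem:integral-ordinary-vertex}, \((x_{i,d_ir}^{\pm})^{(a)}=\phi_i^{\mathrm{DJ}}\bigl((x_{1,r}^{\pm})^{(a)}\bigr)\), with \((x_{1,r}^{\pm})^{(a)}\in U_{\mathcal A_i}^{\mathrm{Lus}}(A_1^{(1)})\). The intertwining relation \(T_i\phi_i^{\mathrm{DJ}}=\phi_i^{\mathrm{DJ}}T_1\) from \eqref{eq:ordinary-vertex-intertwining}, together with its inverse form \(T_i^{-1}\phi_i^{\mathrm{DJ}}=\phi_i^{\mathrm{DJ}}T_1^{-1}\), gives
\[
T_i^{\pm1}\bigl((x_{i,d_ir}^{\pm})^{(a)}\bigr)=\phi_i^{\mathrm{DJ}}\bigl(T_1^{\pm1}(x_{1,r}^{\pm})^{(a)}\bigr).
\]
Here \(T_1^{\pm1}\) preserves the source Lusztig form by \eqref{eq:braid-integral-stability} over \(\mathcal A_i\). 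Thus the image lies in \(V_{i,\mathcal A}\), which is contained in \(\UDr\) by Lemma~\ref{lem:integral-ordinary-vertex}.

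\emph{Expected obstacle.} The delicate step is the commutation \(T_iT_{\lambda_j}=T_{\lambda_j}T_i\). The fixed reduced word for \(t_{\lambda_j}\) may carry a length-zero diagram automorphism \(\tau\), so I must check that \(\tau\) fixes \(i\). This holds because \(t_{\lambda_j}\tau^{-1}\) preserves the set of finite simple roots other than \(\alpha_j\) up to translation. I would then invoke Lemma~\ref{lem:simple-to-simple} with \(w=t_{\lambda_j}\) and \(w(\alpha_i)=\alpha_i\), which yields \(T_{\lambda_j}T_i=T_iT_{\lambda_j}\) directly. This is the route I would try first.
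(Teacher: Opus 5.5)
Your proposal is correct and follows the paper's proof essentially step for step. The toral generators are checked directly; the color-$i$ currents go through the $A_1^{(1)}$ vertex via $T_i^{\pm1}\phi_i^{\mathrm{DJ}}=\phi_i^{\mathrm{DJ}}T_1^{\pm1}$; and the other colors use the commutation $T_i^{\pm1}T_{\lambda_j}^{r}=T_{\lambda_j}^{r}T_i^{\pm1}$ together with integrality of the finite Lusztig form, Lemma~\ref{lem:translation-stability}, and $\Omega$ for the negative currents.

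One correction to your ``expected obstacle'': no check that $\tau(i)=i$ is needed. Length additivity of $s_it_{\lambda_j}=t_{\lambda_j}s_i$, or Lemma~\ref{lem:simple-to-simple}, already gives the commutation, with $T_\tau T_iT_\tau^{-1}=T_{\tau(i)}$ absorbing $\tau$. The claim is also false in general: in $A_5^{(2)}$, $t_{\lambda_1}$ carries the swap $0\leftrightarrow3$ while fixing $\alpha_3$.
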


\begin{proof}
We prove that \(T_i^\varepsilon\) preserves
\(U_{\mathcal A}^{\mathrm{Dr}}(\mathfrak g)\) for
\(\varepsilon\in\{\pm1\}\).
First consider the currents of color \(i\).  Equation
\eqref{eq:ordinary-vertex-intertwining} and its inverse give
\[
T_i^\varepsilon\phi_i^{\mathrm{DJ}}
=
\phi_i^{\mathrm{DJ}}T_1^\varepsilon
\qquad(\varepsilon\in\{\pm1\}),
\]
where \(T_1\) denotes the corresponding braid operator at the finite node of the
source \(U_{q_i}^{\mathrm{DJ}}(A_1^{(1)})\).
Since
\[
T_1^\varepsilon
\bigl(U_{\mathcal A_i}^{\mathrm{Lus}}(A_1^{(1)})\bigr)
=
U_{\mathcal A_i}^{\mathrm{Lus}}(A_1^{(1)}),
\]
it follows that
\[
T_i^\varepsilon(V_{i,\mathcal A})
=
V_{i,\mathcal A}.
\]
Lemma~\ref{lem:integral-ordinary-vertex} therefore handles all
divided real currents of color \(i\).

Now let \(j\neq i\).  By
\eqref{eq:translation-lattice-pairing} and
\eqref{eq:fundamental-translation-definition},
\[
t_{\lambda_j}(\alpha_i)=\alpha_i,
\qquad
s_it_{\lambda_j}=t_{\lambda_j}s_i.
\]
Since
\[
t_{\lambda_j}^{\pm1}(\alpha_i)=\alpha_i>0,
\]
the corresponding factorizations are length additive.  Hence the
standard braid lifts commute, and therefore
\begin{equation}
\label{eq:finite-translation-commutation}
T_i^\varepsilon T_{\lambda_j}^{r}
=
T_{\lambda_j}^{r}T_i^\varepsilon
\qquad
(\varepsilon\in\{\pm1\},\ r\in\mathbb Z).
\end{equation}
Applying \cite[Section~41.1.2]{L} to the finite Cartan datum
\((a_{jk})_{j,k\in I_0}\), we obtain
\[
T_i^\varepsilon
\bigl(U_{\mathcal A,0}^{\mathrm{Lus}}\bigr)
=
U_{\mathcal A,0}^{\mathrm{Lus}}
\qquad
(\varepsilon\in\{\pm1\}).
\]
Consequently,
\[
T_i^\varepsilon(E_j^{(a)})
\in
U_{\mathcal A,0}^{\mathrm{Lus}}
\subseteq
U_{\mathcal A}^{\mathrm{Dr}}(\mathfrak g).
\]
Using \eqref{eq:current-divided-powers-braid-formula},
\eqref{eq:finite-translation-commutation}, and
Lemma~\ref{lem:translation-stability}, we obtain
\[
\begin{aligned}
T_i^\varepsilon
 \bigl((x_{j,d_jr}^{+})^{(a)}\bigr)
&=
o(j)^{ra}
T_i^\varepsilon T_{\lambda_j}^{-r}(E_j^{(a)})\\
&=
o(j)^{ra}
T_{\lambda_j}^{-r}
 \bigl(T_i^\varepsilon(E_j^{(a)})\bigr)\\
&\in
U_{\mathcal A}^{\mathrm{Dr}}(\mathfrak g).
\end{aligned}
\]
The corresponding assertion for the negative currents follows from
\[
\Omega T_i^\varepsilon
=
T_i^\varepsilon\Omega
\]
and \eqref{eq:Omega-current-core}.

Finally, \(T_i^\varepsilon\) fixes \(K_\delta\) and sends every
\(K_j^{\pm1}\) to a Laurent monomial in the finite toral generators.
Thus both \(T_i\) and \(T_i^{-1}\) preserve
\(U_{\mathcal A}^{\mathrm{Dr}}(\mathfrak g)\), proving the equality.
\end{proof}

Fix the node \(i^\ast\) and the element \(w\) in
\eqref{eq:nonexceptional-type-table}.  The table gives
\begin{equation}
\label{eq:chosen-node-length}
d_{i^\ast}=d_0=1.
\end{equation}
Set
\[
\beta_0:=\delta-\alpha_{i^\ast}=\beta_{i^\ast}.
\]
The vertex homomorphism \(\phi_{i^\ast}^{\mathrm{DJ}}\) sends the
affine simple positive generator \(E_0\) of
\(U_{q_{i^\ast}}^{\mathrm{DJ}}(A_1^{(1)})\) to
\(E_{\beta_{i^\ast}}=E_{\beta_0}\).  Consequently,
\[
(1\otimes\phi_{i^\ast}^{\mathrm{DJ}})
(1\otimes E_0^{(a)})
=
E_{\beta_0}^{(a)}.
\]
Lemma~\ref{lem:integral-ordinary-vertex} therefore gives
\begin{equation}
\label{eq:beta-zero-integrality}
E_{\beta_0}^{(a)}
\in
V_{i^\ast,\mathcal A}
\subseteq
U_{\mathcal A}^{\mathrm{Dr}}(\mathfrak g)
\qquad(a\geq0).
\end{equation}

\section{Recovery of the affine node}
\label{sec:finite-Weyl-endpoint}

\begin{lemma}
\label{lem:explicit-Weyl-elements}
For each row of \eqref{eq:nonexceptional-type-table}, the displayed
Weyl-group element satisfies
\[
 w(\alpha_{i^*})=\theta.
\]
\end{lemma}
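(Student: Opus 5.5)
The identity \(w(\alpha_{i^*})=\theta\) is a finite root-system computation, and I would verify it row by row from the table \eqref{eq:nonexceptional-type-table}. First, in each type I read off \(\theta=\delta-\alpha_0\) from the standard null-root coefficients of the twisted affine diagram in Figure~\ref{fig:nonexceptional-dynkin}. Second, I compute the finite Cartan entries \(a_{kl}=\langle h_k,\alpha_l\rangle\) from the symmetrizers \(d_k\) through \(d_ka_{kl}=d_la_{lk}\). Third, I apply the reflections \(s_k(\beta)=\beta-\langle h_k,\beta\rangle\alpha_k\) to \(\alpha_{i^*}\) one at a time. Since Weyl-group products act from right to left, the rightmost reflection acts first. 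At each step I only need the pairing of the current root with \(h_k\), and that pairing is a short sum of Cartan entries.

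The expected outcomes are as follows.
\begin{itemize}
\item \(D_4^{(3)}\). Here \(\theta=2\alpha_1+\alpha_2\), \(a_{21}=-1\) and \(a_{12}=-3\). Then \(s_2(\alpha_1)=\alpha_1+\alpha_2\), and \(s_1(\alpha_1+\alpha_2)=2\alpha_1+\alpha_2\).
\item \(D_{n+1}^{(2)}\). Here \(\theta=\alpha_1+\cdots+\alpha_n\) and \(a_{21}=-1\). Then \(s_2(\alpha_1)=\alpha_1+\alpha_2\). By induction on \(k\), \(s_k\) applied to \(\alpha_1+\cdots+\alpha_{k-1}\) has pairing \(-1\), hence adds \(\alpha_k\), up to \(k=n\).
\item \(A_{2n-1}^{(2)}\). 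Here \(\theta=\alpha_1+2\alpha_2+\cdots+2\alpha_{n-1}+\alpha_n\). The inner string \(s_2\cdots s_{n-1}\) produces \(\alpha_2+\cdots+\alpha_n\). Since \(a_{12}=-1\), the reflection \(s_1\) adds \(\alpha_1\). Each subsequent \(s_k\) with \(2\leq k\leq n-1\) then meets pairing \(-1\); for \(k=2\) this pairing is \(a_{21}+2+a_{23}=-2+2-1\). So each such \(s_k\) adds one more \(\alpha_k\), producing the doubled coefficients.
\item \(E_6^{(2)}\). Here \(\theta=2\alpha_1+3\alpha_2+2\alpha_3+\alpha_4\). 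I track the seven-step chain \(s_1s_2s_3s_4s_2s_3s_2\) starting from \(\alpha_1\) in the same way, using \(a_{23}=-2\), \(a_{32}=-1\) and the simple bonds.
\end{itemize}

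Each step should also be a positive root whose height increases by the pairing coefficient. This is a consistency check that the word is reduced along \(\alpha_{i^*}\), which is the property needed later for Lemma~\ref{lem:simple-to-simple}.

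The only real obstacle is bookkeeping in the \(E_6^{(2)}\) chain. There the double bond makes \(s_2\) and \(s_3\) add two copies of a simple root at some steps, so I would tabulate the coefficient vector after each of the seven reflections. As a final check, I would confirm that \(\theta\) has the same length as \(\alpha_{i^*}\); this holds because \(d_{i^*}=d_0=1\) by \eqref{eq:chosen-node-length}, and it is necessary since \(\theta=\delta-\alpha_0\) must be a real root of the same length as \(\alpha_0\).
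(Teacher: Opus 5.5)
Your approach is the paper's: a row-by-row check by successive simple reflections. The paper uses $\varepsilon$-coordinates for the two classical families, while you use Cartan pairings in simple-root coordinates throughout; that difference is only cosmetic. Your $D_4^{(3)}$, $D_{n+1}^{(2)}$ and $A_{2n-1}^{(2)}$ chains are correct.

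The one incomplete row is $E_6^{(2)}$, which you only promise to compute, and your forecast for it is wrong. The relevant entries are $a_{12}=a_{21}=-1$, $a_{23}=-2$ and $a_{32}=a_{34}=a_{43}=-1$. With these, every one of the seven pairings equals $-1$. At steps 3 and 6 the entry $a_{23}=-2$ is cancelled by the diagonal term; for example, $\langle h_2,\alpha_1+2\alpha_2+2\alpha_3+\alpha_4\rangle=-1+4-4=-1$. So each reflection adds exactly one simple root, and the chain is $1000\to1100\to1110\to1210\to1211\to1221\to1321\to2321=\theta$. This is also forced by counting: seven steps take height $1$ to height $8$, the height of $\theta$, so no step can add two copies. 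Writing out these seven pairings completes the proof.

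Two small points. Height increases by minus the pairing, not by the pairing. The remark about reducedness is not needed for this lemma: the later arguments simply choose some reduced expression of $w$.
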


\begin{proof}
We verify the four types separately.  In type \(A_{2n-1}^{(2)}\), use the
coordinate realization
\[
 \alpha_1=2e_1,
 \qquad
 \alpha_r=e_r-e_{r-1}
 \quad(2\leq r\leq n).
\]
Here \(\theta=e_{n-1}+e_n\).  Applying the word in
\eqref{eq:nonexceptional-type-table} from right to left to
\(\alpha_n=e_n-e_{n-1}\) gives
\[
 e_n-e_{n-1}
 \xrightarrow{s_{n-1}}e_n-e_{n-2}
 \xrightarrow{}\cdots
 \xrightarrow{s_2}e_n-e_1
 \xrightarrow{s_1}e_n+e_1
 \xrightarrow{s_2}\cdots
 \xrightarrow{s_{n-1}}e_n+e_{n-1}.
\]
Thus \(w(\alpha_n)=\theta\).

In type \(D_{n+1}^{(2)}\), take
\[
 \alpha_1=e_1,
 \qquad
 \alpha_r=e_r-e_{r-1}
 \quad(2\leq r\leq n).
\]
Now \(\theta=e_n\), and
\[
 e_1\xrightarrow{s_2}e_2
 \xrightarrow{s_3}\cdots
 \xrightarrow{s_n}e_n.
\]
Therefore \(w(\alpha_1)=\theta\).

For type \(E_6^{(2)}\), the finite Cartan matrix and an \(F_4\) coordinate
realization are
\[
 \begin{pmatrix}
 2&-1&0&0\\
 -1&2&-2&0\\
 0&-1&2&-1\\
 0&0&-1&2
 \end{pmatrix},
 \qquad
 \alpha_1=\tfrac12(e_1-e_2-e_3-e_4),
 \quad
 \alpha_2=e_4,
 \quad
 \alpha_3=e_3-e_4,
 \quad
 \alpha_4=e_2-e_3.
\]
Here \(\theta=e_1=(2,3,2,1)\) in simple-root coordinates.  Beginning with
\(\alpha_1\) and applying, from right to left,
\(s_2,s_3,s_2,s_4,s_3,s_2,s_1\), the successive simple-root coordinates
are
written in the compact form
\(a_1a_2a_3a_4=a_1\alpha_1+\cdots+a_4\alpha_4\):
\[
\begin{array}{c|cccccccc}
 \text{step}&0&1&2&3&4&5&6&7\\
 \hline
 \text{root}&1000&1100&1110&1210&1211&1221&1321&2321.
\end{array}
\]
Thus \(w(\alpha_1)=\theta\).

Finally, in type \(D_4^{(3)}\), the finite \(G_2\) Cartan matrix is
\[
 \begin{pmatrix}2&-3\\-1&2\end{pmatrix},
\]
with \(\alpha_1\) short and \(\alpha_2\) long.  Direct calculation gives
\[
 \alpha_1\xrightarrow{s_2}\alpha_1+\alpha_2
 \xrightarrow{s_1}2\alpha_1+\alpha_2=\theta.
\]
Again \(w(\alpha_1)=\theta\).
\end{proof}

\begin{lemma}
\label{lem:finite-endpoint-lift}
Fix the node \(i=i^*\) and the Weyl-group element \(w\) from
\eqref{eq:nonexceptional-type-table}, and set
\[
 y=wt_{\lambda_i},
 \qquad
 \widehat w=ys_i,
 \qquad
 \widehat w_0=t_{\lambda_i}s_i.
\]
Then
\begin{align}
 \ell(wt_{\lambda_i})
 &=\ell(w)+\ell(t_{\lambda_i}),
 \label{eq:endpoint-length-additivity}\\
 \ell(\widehat w)
 &=\ell(w)+\ell(t_{\lambda_i})-1,
 \label{eq:endpoint-right-descent}\\
 T_{\widehat w}
 &=T_wT_{\lambda_i}T_i^{-1},
 \qquad
 \widehat w(\alpha_i)=\alpha_0,
 \label{eq:endpoint-standard-lift}\\
 T_{\widehat w_0}
 &=T_{\lambda_i}T_i^{-1}.
 \label{eq:beta0-standard-lift}
\end{align}
In particular,
\begin{equation}
 \label{eq:z-endpoint-root-vector}
 T_w(E_{\beta_0})
 =T_{\widehat w}(E_i)
 \in (U_q^+)_{\alpha_0}.
\end{equation}
\end{lemma}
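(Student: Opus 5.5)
The plan is to reduce everything to length computations via inversion sets. Recall that \(\ell(uv)=\ell(u)+\ell(v)\) if and only if \(v^{-1}\) sends no positive root in the inversion set of \(u\) to a negative root. Equivalently, \(u(\gamma)>0\) for every positive root \(\gamma\) inverted by \(v^{-1}\).

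\emph{Step 1: length additivity of \(wt_{\lambda_i}\).} The inversions of \(t_{\lambda_i}^{-1}=t_{-\lambda_i}\) are the positive affine real roots \(\gamma=\alpha+m\delta\) with \(t_{\lambda_i}(\gamma)<0\). These are the roots \(\alpha+m\delta\) with \(\alpha\) a finite root having \(\langle\lambda_i,\alpha\rangle>0\) and \(0\le m<\langle\lambda_i,\alpha\rangle\). Since \(w\) is a finite Weyl element, \(w(\alpha+m\delta)=w(\alpha)+m\delta\). This is positive for \(m\ge1\). For \(m=0\), we need \(w(\alpha)>0\) whenever \(\alpha\) is a positive finite root involving \(\alpha_i\) with positive coefficient. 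Thus it suffices that every finite inversion of \(w\), i.e.\ every positive \(\alpha\) with \(w(\alpha)<0\), has zero \(\alpha_{i^*}\)-coefficient. In each row of \eqref{eq:nonexceptional-type-table} the word for \(w\) uses only the letters \(s_j\) with \(j\ne i^*\), so \(w\) lies in the parabolic subgroup \(W_{I_0\setminus\{i^*\}}\). Its inversions therefore lie in the span of \(\{\alpha_j\}_{j\ne i^*}\). This proves \eqref{eq:endpoint-length-additivity}, and also shows that the displayed words are reduced; the latter is a routine check, since each is a standard minimal coset-type word, or can be confirmed from the root chains in Lemma~\ref{lem:explicit-Weyl-elements}.

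\emph{Step 2: the right descent.} We have \(t_{\lambda_i}(\alpha_i)=\alpha_i-d_i\delta=\alpha_i-\delta<0\) by \eqref{eq:chosen-node-length}, so \(s_i\) is a right descent of \(t_{\lambda_i}\). Hence \(\ell(\widehat w_0)=\ell(t_{\lambda_i})-1\) and \(T_{\lambda_i}=T_{\widehat w_0}T_i\), which gives \eqref{eq:beta0-standard-lift}. Moreover \(y(\alpha_i)=w(\alpha_i-\delta)=\theta-\delta=-\alpha_0<0\) by Lemma~\ref{lem:explicit-Weyl-elements}. So \(s_i\) is a right descent of \(y\), and combining with Step~1 gives \eqref{eq:endpoint-right-descent}. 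For the standard lifts, note that \(\widehat w=w\widehat w_0\) and
\[\ell(w)+\ell(\widehat w_0)=\ell(w)+\ell(t_{\lambda_i})-1=\ell(\widehat w).\]
This factorization is length additive, so \(T_{\widehat w}=T_wT_{\widehat w_0}=T_wT_{\lambda_i}T_i^{-1}\). Here the length-zero automorphism \(\tau\) in the reduced expression of \(t_{\lambda_i}\) has to be placed on the right, consistently with \eqref{eq:standard-braid-lift-definition}; since \(T_\tau\) is multiplicative in the usual way, this is only bookkeeping. Finally,
\[\widehat w(\alpha_i)=y s_i(\alpha_i)=-y(\alpha_i)=\alpha_0.\]

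\emph{Step 3: the root vector.} By \eqref{eq:beta-j-root-vectors} and \eqref{eq:beta0-standard-lift}, \(E_{\beta_0}=T_{\lambda_i}T_i^{-1}(E_i)\). Applying \(T_w\) gives \(T_w(E_{\beta_0})=T_{\widehat w}(E_i)\). Since \(\widehat w(\alpha_i)=\alpha_0\) is simple, Lemma~\ref{lem:simple-to-simple} yields \(T_{\widehat w}(E_i)=E_0\), which lies in \((U_q^+)_{\alpha_0}\).

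The main obstacle I expect is Step~1. Lusztig's length criterion must be applied carefully in the extended affine Weyl group, where \(t_{\lambda_i}\) may involve a diagram automorphism. I would handle this by working with inversion sets, which are insensitive to \(\tau\), and by using the explicit description of the inversions of \(t_{-\lambda_i}\) given above.
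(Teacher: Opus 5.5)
\textbf{The gap is in Step~1.} By your own criterion, \eqref{eq:endpoint-length-additivity} needs $w(\gamma)>0$ for the positive real roots $\gamma$ with $t_{-\lambda_i}(\gamma)<0$. The set you describe, the roots $\alpha+m\delta$ with $\langle\lambda_i,\alpha\rangle>0$ and $0\le m<\langle\lambda_i,\alpha\rangle$, is instead the set of positive roots made negative by $t_{\lambda_i}$. This sign slip creates a spurious $m=0$ case.

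Your treatment of that case is false. In the rows $E_6^{(2)}$ and $D_4^{(3)}$ one has $i^\ast=1$, and the displayed words $s_1s_2s_3s_4s_2s_3s_2$ and $s_1s_2$ both contain $s_1$. No other word can help. Every element of $W_{I_0\setminus\{i^\ast\}}$ sends $\alpha_{i^\ast}$ to a root with $\alpha_{i^\ast}$-coefficient $1$, whereas $\theta$ has $\alpha_1$-coefficient $2$ in both types. The condition you reduce to genuinely fails. For example, in $D_4^{(3)}$ one has $s_1s_2(\alpha_1+\alpha_2)=-\alpha_1$, so $\alpha_1+\alpha_2$ is a finite inversion of $w$ with nonzero $\alpha_1$-coefficient.

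Hence Step~1 as written does not prove \eqref{eq:endpoint-length-additivity} in two of the four families. Everything that uses it inherits the gap: the length count \eqref{eq:endpoint-right-descent}, the length-additivity of $\widehat w=w\widehat w_0$ behind the lift identity in \eqref{eq:endpoint-standard-lift}, and therefore \eqref{eq:z-endpoint-root-vector}. Apart from this dependence, Steps~2 and~3 are correct and match the paper. Your factorization $T_{\widehat w}=T_wT_{\widehat w_0}$ is equivalent to the paper's $T_wT_{\lambda_i}=T_y=T_{\widehat w}T_i$.

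\textbf{The repair.} Correcting the inversion set removes any hypothesis on $w$. Since $t_{-\lambda_i}(\alpha+m\delta)=\alpha+(m+\langle\lambda_i,\alpha\rangle)\delta$ and $\lambda_i$ is dominant, the positive real roots made negative by $t_{-\lambda_i}$ are the $\alpha+m\delta$ with $\alpha$ a negative finite root and $1\le m\le\langle\lambda_i,-\alpha\rangle$. All of them have $m\ge1$. Thus $w(\gamma)=w(\alpha)+m\delta>0$ for \emph{every} finite Weyl element $w$, and your own remark about $m\ge1$ already finishes the proof.

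This is what the paper does with a reduced word $w=s_{j_L}\cdots s_{j_1}$. Setting $\gamma_r=w_{r-1}^{-1}(\alpha_{j_r})>0$, one gets $(w_{r-1}t_{\lambda_i})^{-1}(\alpha_{j_r})=\gamma_r+\langle\lambda_i,\gamma_r\rangle\delta>0$, so each letter raises the length by one.

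Finally, your side claim that Step~1 shows the displayed words are reduced is unsupported. It is also unnecessary, since only $\ell(w)$ and $T_w$ enter, and $T_w$ is independent of the reduced expression.
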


\begin{proof}
By Lemma~\ref{lem:explicit-Weyl-elements}, \(w(\alpha_i)=\theta\).  Choose a
reduced expression
\(w=s_{j_L}\cdots s_{j_1}\), and put
\(w_r=s_{j_r}\cdots s_{j_1}\), with \(w_0=1\).  For \(r\geq1\),
\[
 \gamma_r=w_{r-1}^{-1}(\alpha_{j_r})
\]
is a positive finite root.  If
\(\gamma_r=\sum_hc_h\alpha_h\), then
\((\lambda_i\mid\gamma_r)=d_ic_i\geq0\), and hence
\[
 (w_{r-1}t_{\lambda_i})^{-1}(\alpha_{j_r})
 =t_{-\lambda_i}(\gamma_r)
 =\gamma_r+(\lambda_i\mid\gamma_r)\delta>0.
\]
Consequently,
\[
 \ell(w_rt_{\lambda_i})
 =\ell(w_{r-1}t_{\lambda_i})+1
 \qquad(1\leq r\leq L).
\]
Induction from \(w_0=1\) gives
\eqref{eq:endpoint-length-additivity}.

Since \(d_i=1\) and \(w(\alpha_i)=\theta\),
\[
 y(\alpha_i)
 =w(\alpha_i-\delta)
 =\theta-\delta
 =-\alpha_0<0.
\]
Thus
\[
\ell(ys_i)=\ell(y)-1.
\] So
\(y=\widehat ws_i\) and \(\ell(y)=\ell(\widehat w)+1\).  This proves
\eqref{eq:endpoint-right-descent}.  The multiplication rule for standard
braid lifts gives
\[
 T_wT_{\lambda_i}=T_y=T_{\widehat w}T_i,
\]
and hence the braid identity in \eqref{eq:endpoint-standard-lift}.  Moreover,
\[
 \widehat w(\alpha_i)
 =wt_{\lambda_i}(-\alpha_i)
 =w(\delta-\alpha_i)
 =\delta-\theta
 =\alpha_0.
\]

Similarly,
\[
 t_{\lambda_i}(\alpha_i)
 =\alpha_i-\delta
 =-\beta_0<0.
\]
Hence \(t_{\lambda_i}=\widehat w_0s_i\) is a length-additive
factorization, which proves \eqref{eq:beta0-standard-lift}.

Finally, \(\widehat w(\alpha_i)=\alpha_0\), so
Lemma~\ref{lem:simple-to-simple} gives
\[
 T_{\widehat w}(E_i)=E_0\in(U_q^+)_{\alpha_0}.
\]
By
\eqref{eq:beta0-standard-lift},
\(E_{\beta_0}=T_{\widehat w_0}(E_i)\), and hence
\[
 T_w(E_{\beta_0})
 =T_wT_{\lambda_i}T_i^{-1}(E_i)
 =T_{\widehat w}(E_i).
\]
This proves \eqref{eq:z-endpoint-root-vector}.
\end{proof}

\begin{proposition}
\label{prop:endpoint-Laurent-unit}
For every \(a\geq0\), one has
\begin{equation}
\label{eq:endpoint-unit}
 T_w(E_{\beta_0}^{(a)})
 =
 E_0^{(a)}
 \in\UDr.
\end{equation}
\end{proposition}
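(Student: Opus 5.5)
We prove the equality and the membership separately; both reduce to results already established. For the equality, note first that \(T_w\) is a \(\mathcal K\)-algebra automorphism, so it commutes with taking powers and with division by the scalar \([a]_{q_{i^\ast}}!\). By \eqref{eq:chosen-node-length} we have \(d_{i^\ast}=d_0=1\), hence \(q_{i^\ast}=q_0=q\). Consequently both \(E_{\beta_0}^{(a)}=E_{\beta_0}^a/[a]_q!\) (the divided power transported through \(\phi_{i^\ast}^{\mathrm{DJ}}\), whose source parameter is \(q_{i^\ast}=q\)) and \(E_0^{(a)}=E_0^a/[a]_{q_0}!\) use the same factorial \([a]_q!\). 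By \eqref{eq:z-endpoint-root-vector} together with Lemma~\ref{lem:simple-to-simple} applied to \(\widehat w(\alpha_{i^\ast})=\alpha_0\), we get \(T_w(E_{\beta_0})=T_{\widehat w}(E_{i^\ast})=E_0\). Raising to the \(a\)-th power and dividing by \([a]_q!\) then gives \(T_w(E_{\beta_0}^{(a)})=E_0^{(a)}\). This step is routine; the only point needing care is that the two factorials agree, which is exactly why \(i^\ast\) was chosen with \(d_{i^\ast}=d_0\).

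For the membership, \eqref{eq:beta-zero-integrality} gives \(E_{\beta_0}^{(a)}\in\UDr\). It then suffices to show that \(T_w\) preserves \(\UDr\). The element \(w\) in \eqref{eq:nonexceptional-type-table} is written as a word in the finite simple reflections \(s_j\) with \(j\in I_0\). Choose a reduced expression \(w=s_{j_L}\cdots s_{j_1}\) with all \(j_r\in I_0\), so that the standard lift factors as \(T_w=T_{j_L}\cdots T_{j_1}\) by \eqref{eq:standard-braid-lift-definition}. We should check that the displayed words are reduced. Even if one is not, the lift \(T_w\) is still a product of finite \(T_j\)'s taken along any reduced word of \(w\), and such a word uses only finite nodes because \(w\) lies in the finite Weyl group. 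Proposition~\ref{prop:finite-braid-stability} says that each finite \(T_j\) maps \(\UDr\) onto itself. Composing these, \(T_w(\UDr)=\UDr\), and hence \(E_0^{(a)}=T_w(E_{\beta_0}^{(a)})\in\UDr\).

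The main obstacle is already absorbed by earlier results: finite braid stability of the bare core (Proposition~\ref{prop:finite-braid-stability}) and the integrality of \(E_{\beta_0}^{(a)}\) from the ordinary vertex. What remains is bookkeeping. We need \(T_w\) to be the standard lift used in Lemma~\ref{lem:finite-endpoint-lift}, which holds because the braid identity \(T_wT_{\lambda_i}=T_y\) was proved using this same lift. We also need that no sign or power of \(q\) arises when transporting \(E_{\beta_0}\), which is guaranteed by Lemma~\ref{lem:simple-to-simple}. By applying \(\Omega\) with \eqref{eq:Omega-commutes-braid} and \eqref{eq:Omega-current-core}, the same argument gives \(F_0^{(a)}\in\UDr\).
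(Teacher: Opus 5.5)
Your proposal is correct and follows the paper's own proof essentially step for step. You identify $T_w(E_{\beta_0})=T_{\widehat w}(E_{i^\ast})=E_0$ via Lemma~\ref{lem:finite-endpoint-lift} and Lemma~\ref{lem:simple-to-simple}, use $q_{i^\ast}=q_0=q$ to match the two factorials, and transport \eqref{eq:beta-zero-integrality} through Proposition~\ref{prop:finite-braid-stability} along a reduced word for $w$. Your extra observation, that every reduced word of an element of the finite Weyl group involves only finite nodes, is a correct clarification of the step the paper states as ``along a reduced expression for $w$.''
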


\begin{proof}
By Lemma~\ref{lem:finite-endpoint-lift},
\[
T_w(E_{\beta_0})=T_{\widehat w}(E_{i^\ast}),
\qquad
\widehat w(\alpha_{i^\ast})=\alpha_0.
\]
Lemma~\ref{lem:simple-to-simple} therefore gives
\begin{equation}
\label{eq:endpoint-scalar}
T_w(E_{\beta_0})=T_{\widehat w}(E_{i^\ast})=E_0.
\end{equation}

By \eqref{eq:beta-zero-integrality},
\(E_{\beta_0}^{(a)}\in\UDr\) for every \(a\geq0\).
Applying Proposition~\ref{prop:finite-braid-stability}
along a reduced expression for \(w\), we obtain
\begin{equation}
\label{eq:transported-divided-power-integrality}
T_w(E_{\beta_0}^{(a)})\in\UDr
\qquad(a\geq0).
\end{equation}
Since \eqref{eq:chosen-node-length} gives
\(q_{i^\ast}=q_0=q\), it follows that
\[
T_w(E_{\beta_0}^{(a)})
=
\frac{T_w(E_{\beta_0})^a}{[a]_{q_0}!}
=
E_0^{(a)}
\in\UDr
\qquad(a\geq0).
\]
\end{proof}

\begin{proof}[Proof of Theorem~\ref{thm:nonexceptional-equality}]
Proposition~\ref{prop:endpoint-Laurent-unit} gives
\(E_0^{(a)}\in \UDr\) for every \(a\geq0\).  Applying
 \(\Omega\) gives
\(F_0^{(a)}\in \UDr\).  The divided powers at every finite
simple node, as well as the elements \(K_j^{\pm1}\), already belong to
\(\UDr\) by the zero-current modes and its definition.
The identity \eqref{eq:K0-recovery} and its inverse expression
\[
 K_0^{-1}=K_\delta^{-1}\prod_{i\in I_0}K_i^{r_i}
\]
show that \(K_0^{\pm1}\in \UDr\).  Hence
\[
 U_{\mathcal A}^{\mathrm{Lus}}(\mathfrak g)
 \subseteq \UDr(\mathfrak g).
\]
The reverse inclusion is by Lemma~\ref{lem:forward-inclusion}.  Therefore the two integral forms are
equal.
\end{proof}

\part{The twisted type \texorpdfstring{$A_{2n}^{(2)}$}{A2n(2)}
and cyclotomic minimality}
\label{part:a2n}

In type \(A_{2n}^{(2)}\) we need to use a different method to reach the affine simple root \(\alpha_0\) because \(\alpha_0\) is strictly longer than every finite simple root.  As 
Weyl-group transport preserves root length, the method of
Part~\ref{part:nonexceptional} does not work.

\section{Root data and real currents in type
\texorpdfstring{$A_{2n}^{(2)}$}{A2n(2)}}
\label{sec:a2n-root-data}

From this section onward, fix $n\geq 1$.  When $n\geq 2$, the finite part
is of type $B_n$, and
\begin{equation}
 (d_0,d_1,\ldots,d_n)
 =(4,2,\ldots,2,1),
 \qquad
 \delta=\alpha_0+2\alpha_1+\cdots+2\alpha_n,
 \steptag{5.1}\label{eq:a2n-symmetrizers}
\end{equation}
and the nonzero off-diagonal Cartan entries are
\begin{align}
 (a_{01},a_{10})&=(-1,-2),
 &
 (a_{i,i+1},a_{i+1,i})&=(-1,-1)
 &&(1\leq i\leq n-2),
 \notag\\
 (a_{n-1,n},a_{n,n-1})&=(-1,-2).
 \steptag{5.2}\label{eq:a2n-cartan}
\end{align}
For $n=1$, we instead prescribe
\begin{equation}
 (a_{ij})=
 \begin{pmatrix}
  2&-1\\
  -4&2
 \end{pmatrix},
 \qquad
 (d_0,d_1)=(4,1),
 \qquad
 \delta=\alpha_0+2\alpha_1.
 \steptag{5.3}\label{eq:a2-rank-one-data}
\end{equation}

These conventions are summarized by
\cref{fig:a2n-dynkin}.  As above, arrows point toward the shorter simple
root.  The rank-one quadruple bond is displayed separately because
\eqref{eq:a2-rank-one-data} is not obtained by substituting \(n=1\) into
\eqref{eq:a2n-cartan}.

\begin{figure}[htbp]
\centering
\renewcommand{\arraystretch}{2.15}
\begin{tabular}{c@{\qquad}c}
\(A_2^{(2)}\ (n=1)\) &
\begin{tikzpicture}[x=1cm,y=1cm,baseline=(current bounding box.center)]
  \DynkinNode{r0}{(0,0)}{0}
  \DynkinNode{r1}{(1.45,0)}{1}
  \DynkinQuadruple{r0}{r1}\DynkinArrow{r0}{r1}
\end{tikzpicture}
\\
\(A_4^{(2)}\ (n=2)\) &
\begin{tikzpicture}[x=1cm,y=1cm,baseline=(current bounding box.center)]
  \DynkinNode{s0}{(0,0)}{0}
  \DynkinNode{s1}{(1.05,0)}{1}
  \DynkinNode{s2}{(2.10,0)}{2}
  \DynkinDouble{s0}{s1}\DynkinArrow{s0}{s1}
  \DynkinDouble{s1}{s2}\DynkinArrow{s1}{s2}
\end{tikzpicture}
\\
\(A_{2n}^{(2)}\ (n\geq3)\) &
\begin{tikzpicture}[x=1cm,y=1cm,baseline=(current bounding box.center)]
  \DynkinNode{b0}{(0,0)}{0}
  \DynkinNode{b1}{(1.05,0)}{1}
  \DynkinNode{bm}{(3.15,0)}{n-1}
  \DynkinNode{bn}{(4.20,0)}{n}
  \DynkinDouble{b0}{b1}\DynkinArrow{b0}{b1}
  \draw[dynkin bond] (b1)--(1.65,0);
  \node at (2.10,0) {\(\cdots\)};
  \draw[dynkin bond] (2.55,0)--(bm);
  \DynkinDouble{bm}{bn}\DynkinArrow{bm}{bn}
\end{tikzpicture}
\end{tabular}
\caption{The affine Dynkin diagrams of \(A_{2n}^{(2)}\) used in Part~\ref{part:a2n}.}
\label{fig:a2n-dynkin}
\end{figure}

Retain and specialize the regular root
datum fixed in \cref{sec:common} to this Cartan matrix:
\begin{equation}
 (X,Y,I,\langle\ ,\ \rangle,
   \{\alpha_i\}_{i\in I},\{h_i\}_{i\in I}),
 \qquad I=\{0,1,\ldots,n\},
 \steptag{5.4}\label{eq:a2n-root-datum}
\end{equation}
where $X$ and $Y$ are free abelian groups in perfect duality,
$\langle h_i,\alpha_j\rangle=a_{ij}$, and
$\bigoplus_{i\in I}\mathbb Z\alpha_i\subset X$.  We abbreviate
\[
 U_{\mathcal A}^{\mathrm{Lus},(n)}
 :=U_{\mathcal A}^{\mathrm{Lus}}(A_{2n}^{(2)}).
\]

Specializing \eqref{eq:fundamental-translation-definition} to
\(\nu_i=1\), write
\(\widetilde\omega_i:=t_{\lambda_i}=t_{\omega_i^\vee}\) and
\(T_{\widetilde\omega_i}:=T_{\lambda_i}\).
Then \eqref{eq:current-divided-powers-braid-formula} with \(a=1\)
gives
\begin{equation}
 x_{i,k}^{+}
 =o(i)^k(T_{\widetilde\omega_i})^{-k}(E_i),
 \qquad
 x_{i,k}^{-}
 =o(i)^k(T_{\widetilde\omega_i})^{k}(F_i)
 \qquad(k\in\mathbb Z).
 \steptag{5.5}\label{eq:damiani-current-normalization}
\end{equation}
Equation~\eqref{eq:translation-real-root-vectors} gives
\[
\begin{aligned}
 E_{k\delta+\alpha_i}
 &=(T_{\widetilde\omega_i})^{-k}(E_i)
  =o(i)^k x_{i,k}^{+}
 &&(k\geq0),\\
 E_{k\delta-\alpha_i}
 &=(T_{\widetilde\omega_i})^kT_i^{-1}(E_i)
  =-o(i)^kK_{k\delta-\alpha_i}x_{i,k}^{-}
 &&(k>0).
\end{aligned}
\]
In particular,
\begin{equation}
 \deg x_{i,k}^{+}=k\delta+\alpha_i,
 \qquad
 \deg x_{i,k}^{-}=k\delta-\alpha_i.
 \steptag{5.6}\label{eq:current-degrees}
\end{equation}

Write
\(\UDrn{n}:=U_{\mathcal A}^{\mathrm{Dr}}(A_{2n}^{(2)})\).
Since \(I_i=\mathbb Z\), \eqref{eq:bare-real-current-core} becomes
\begin{equation}
 \UDrn{n}
 =\mathcal A\Bigl\langle
 K_\delta^{\pm1},K_i^{\pm1},(x_{i,k}^{\pm})^{(a)}
 \ \Bigm|\
 i\in I_0,\ k\in\mathbb Z,\ a\geq0
 \Bigr\rangle.
 \steptag{5.7}\label{eq:bare-current-a2n}
\end{equation}
Finally, \eqref{eq:Omega-commutes-braid} and
\eqref{eq:current-divided-powers-braid-formula} give
\begin{equation}
 \Omega\bigl((x_{i,k}^{\pm})^{(a)}\bigr)
 =(x_{i,-k}^{\mp})^{(a)},
 \qquad
 \Omega(\UDrn{n})=\UDrn{n}.
 \steptag{5.7a}\label{eq:a2n-Omega-stability}
\end{equation}
\begin{lemma}
\label{lem:affine-recovery-criterion}
Let $\mathscr X$ be an $\mathcal A$-subalgebra satisfying
\[
U_{\mathcal A}^{\mathrm{Dr},(n)}
\subseteq \mathscr X
\subseteq
U_{\mathcal A}^{\mathrm{Lus},(n)},
\]
and let $B=S^{-1}\mathcal A\subset\mathcal K$.
If, for every $a\geq0$,
\begin{equation}
 E_0^{(a)},F_0^{(a)}\in B\mathscr X,
 \steptag{5.8}\label{eq:affine-recovery-hyp}
\end{equation}
then
\[
 B\mathscr X=B U_{\mathcal A}^{\mathrm{Lus},(n)}.
\]
\end{lemma}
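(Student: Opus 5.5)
The plan is to prove the two inclusions \(B\mathscr X\subseteq BU_{\mathcal A}^{\mathrm{Lus},(n)}\) and \(BU_{\mathcal A}^{\mathrm{Lus},(n)}\subseteq B\mathscr X\) separately, working inside the common ambient algebra where, as explained before the proposition on ordinary localizations, \(BM\) is identified with \(B\otimes_{\mathcal A}M\) and is a \(B\)-submodule of \(U_q^{\mathrm{DJ}}(\mathfrak g)\). The first step is to record that \(B\mathscr X\) is a \(B\)-subalgebra: since \(\mathscr X\) is an \(\mathcal A\)-subalgebra and \(B\) is commutative and central, a product of \(b_1x_1\) and \(b_2x_2\) equals \((b_1b_2)(x_1x_2)\). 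The same holds for \(BU_{\mathcal A}^{\mathrm{Lus},(n)}\).

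The forward inclusion is immediate from the hypothesis \(\mathscr X\subseteq U_{\mathcal A}^{\mathrm{Lus},(n)}\). Multiplying by \(B\) preserves inclusions of submodules of the ambient algebra, so \(B\mathscr X\subseteq BU_{\mathcal A}^{\mathrm{Lus},(n)}\).

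For the reverse inclusion, it suffices to show that every defining generator of \(U_{\mathcal A}^{\mathrm{Lus},(n)}\) in Definition~\ref{def:Lusztig-restricted-integral-form} lies in \(B\mathscr X\). Indeed, \(B\mathscr X\) is a \(B\)-subalgebra, so it would then contain the \(\mathcal A\)-algebra those generators produce, namely \(U_{\mathcal A}^{\mathrm{Lus},(n)}\), and hence its \(B\)-span \(BU_{\mathcal A}^{\mathrm{Lus},(n)}\). The generators fall into three groups, which I handle as follows.

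\begin{enumerate}
\item For the finite nodes \(i\in I_0\), the case \(k=0\) of \eqref{eq:current-divided-powers-braid-formula} gives \(E_i^{(a)}=(x_{i,0}^{+})^{(a)}\) and \(F_i^{(a)}=(x_{i,0}^{-})^{(a)}\). These belong to \(\UDrn{n}\) by \eqref{eq:bare-current-a2n}, and hence to \(\mathscr X\subseteq B\mathscr X\). The elements \(K_i^{\pm1}\) are defining generators of \(\UDrn{n}\).
\item For the affine toral elements, \eqref{eq:K0-recovery} with \(\delta=\alpha_0+2\sum_{i\in I_0}\alpha_i\) (or \(\delta=\alpha_0+2\alpha_1\) when \(n=1\)) gives \(K_0^{\pm1}=K_\delta^{\pm1}\prod_{i\in I_0}K_i^{\mp2}\). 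This is a product of generators of \(\UDrn{n}\), so it lies in \(\mathscr X\).
\item The affine divided powers \(E_0^{(a)}\) and \(F_0^{(a)}\) lie in \(B\mathscr X\) exactly by hypothesis \eqref{eq:affine-recovery-hyp}.
\end{enumerate}

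I do not expect a genuine obstacle. The only point needing care is that \(B\mathscr X\) really is closed under multiplication, so that containing the generators forces containing the whole algebra. This is settled by the identification of \(BM\) with a submodule of the ambient \(\mathcal K\)-algebra, already made in the paper, together with the centrality of scalars.
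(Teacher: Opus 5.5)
Your proposal is correct and follows the paper's proof essentially step for step. The forward inclusion comes from \(\mathscr X\subseteq U_{\mathcal A}^{\mathrm{Lus},(n)}\); the reverse comes from checking that \(B\mathscr X\) contains every Lusztig generator: the finite divided powers via the zero modes, \(K_0^{\pm1}\) via \eqref{eq:K0-recovery}, and \(E_0^{(a)},F_0^{(a)}\) by hypothesis. Your explicit remark that \(B\mathscr X\) is a \(B\)-subalgebra, so that containing the generators suffices, makes precise a step the paper leaves implicit.
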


\begin{proof}
Setting $k=0$ in \eqref{eq:damiani-current-normalization} gives
\[
E_i^{(a)}=(x_{i,0}^{+})^{(a)},
\qquad
F_i^{(a)}=(x_{i,0}^{-})^{(a)}
\in\UDrn{n}
\qquad(i\in I_0,\ a\geq0).
\]
The elements $K_\delta^{\pm1},K_i^{\pm1}$ $(i\in I_0)$ also belong to
$\UDrn{n}\subseteq\mathscr X$, while
\eqref{eq:affine-recovery-hyp} supplies the affine simple divided powers.
Moreover,
\[
K_0^{\pm1}
=
\left(
K_\delta\prod_{i\in I_0}K_i^{-r_i}
\right)^{\pm1}
\in B\mathscr X.
\]
Thus $B\mathscr X$ contains $B$ and all defining generators of
$U_{\mathcal A}^{\mathrm{Lus},(n)}$, so
\[
B U_{\mathcal A}^{\mathrm{Lus},(n)}
\subseteq B\mathscr X.
\]
The reverse inclusion follows from
$\mathscr X\subseteq U_{\mathcal A}^{\mathrm{Lus},(n)}$.
\end{proof}

\section{Quantum scaled roots when \texorpdfstring{$n=1$}{n=1} and
\texorpdfstring{$C^{\ast}$}{C*}-recovery}
\label{sec:rank-one-sufficiency}
\subsection{The rank-one \texorpdfstring{$q$}{q}-Weyl cell}

By \eqref{eq:a2-rank-one-data},
\[
 q_0=q^4,
 \qquad
 q_1=q,
 \qquad
 \delta-\alpha_1=\alpha_0+\alpha_1.
\]
The root \(\delta-\alpha_1\) has the same root parameter \(q\) as
\(\alpha_1\).  Thus the divided powers of
\(E_{\delta-\alpha_1}\) and \(F_1\) are normalized by \([s]_q!\),
whereas those of \(E_0\) and \(F_0\) are normalized by
\([a]_{q^4}!\).

Let
\[
 W:=K_1E_0.
\]

Using \eqref{eq:a2n-initial-translation-word},
\eqref{eq:real-root-vector-definition}, and
\eqref{eq:braid-generator-formulas}, we get that
\(E_{\delta-\alpha_1}=T_0(E_1)
=-E_0E_1+q^{-4}E_1E_0\).
By the $q$-commutator techniques of 
\cite{J2}, we have that 
\[
 E_{\delta-\alpha_1}F_1-F_1E_{\delta-\alpha_1}
 =-[4]_qW.
\]
Since \(E_0F_1=F_1E_0\) and
\[
 K_1F_1K_1^{-1}=q^{-2}F_1,
\]
we have
\[
 WF_1
 =K_1E_0F_1
 =K_1F_1E_0
 =q^{-2}F_1K_1E_0
 =q^{-2}F_1W.
\]
Note that our real root vectors differ
from Akasaka's only by signs. It follows from \cite[Lemma~3.9(2)]{A} that
\[
 E_0E_{\delta-\alpha_1}
 =q^4E_{\delta-\alpha_1}E_0,
\]
while the weight of \(E_{\delta-\alpha_1}\) gives
\[
 K_1E_{\delta-\alpha_1}K_1^{-1}
 =q^{-2}E_{\delta-\alpha_1},
 \qquad\text{equivalently}\qquad
 E_{\delta-\alpha_1}K_1
 =q^2K_1E_{\delta-\alpha_1}.
\]
Consequently,
\[
 E_{\delta-\alpha_1}W
 =E_{\delta-\alpha_1}K_1E_0
 =q^2K_1E_{\delta-\alpha_1}E_0
 =q^{-2}K_1E_0E_{\delta-\alpha_1}
 =q^{-2}WE_{\delta-\alpha_1}.
\]
We have therefore obtained
\begin{equation}
 \begin{aligned}
 E_{\delta-\alpha_1}F_1-F_1E_{\delta-\alpha_1}
   &=-[4]_qW,\\
 WF_1
   &=q^{-2}F_1W,\\
 E_{\delta-\alpha_1}W
   &=q^{-2}WE_{\delta-\alpha_1}.
 \end{aligned}
 \steptag{6.1}\label{eq:rank-one-weyl}
\end{equation}

Formula \eqref{eq:damiani-current-normalization} gives
\begin{equation}
 x_{1,1}^-
 =-K_\delta^{-1}K_1E_{\delta-\alpha_1},
 \qquad
 K_1E_{\delta-\alpha_1}
 =q^{-2}E_{\delta-\alpha_1}K_1.
 \steptag{6.2}\label{eq:rank-one-dictionary}
\end{equation}
The first identity follows from
\eqref{eq:damiani-current-normalization} and
\eqref{eq:translation-real-root-vectors}; the second
is the Cartan relation above.

We now express the divided powers of \(E_{\delta-\alpha_1}\) in terms
of the current generator \(x_{1,1}^-\).  From
\[
 E_{\delta-\alpha_1}K_1
 =q^2K_1E_{\delta-\alpha_1},
\]
induction on \(s\) gives
\[
 (K_1E_{\delta-\alpha_1})^s
 =
 q^{2\binom{s}{2}}
 K_1^sE_{\delta-\alpha_1}^s.
\]
Indeed, assuming the formula for \(s\), one has
\begin{align*}
 (K_1E_{\delta-\alpha_1})^{s+1}
 &=
 (K_1E_{\delta-\alpha_1})^s
 K_1E_{\delta-\alpha_1}\\
 &=
 q^{2\binom{s}{2}}
 K_1^sE_{\delta-\alpha_1}^s
 K_1E_{\delta-\alpha_1}\\
 &=
 q^{2\binom{s}{2}+2s}
 K_1^{s+1}E_{\delta-\alpha_1}^{s+1}\\
 &=
 q^{2\binom{s+1}{2}}
 K_1^{s+1}E_{\delta-\alpha_1}^{s+1}.
\end{align*}
Since \(K_\delta\) is central,
\eqref{eq:rank-one-dictionary} therefore gives
\[
 (x_{1,1}^-)^s
 =
 (-1)^sK_\delta^{-s}
 (K_1E_{\delta-\alpha_1})^s
 =
 (-1)^s q^{2\binom{s}{2}}
 K_\delta^{-s}K_1^sE_{\delta-\alpha_1}^s.
\]
Left multiplication by \(K_\delta^sK_1^{-s}\) and division by
\([s]_q!\) yield
\[
 E_{\delta-\alpha_1}^{(s)}
 =
 (-1)^s q^{-2\binom{s}{2}}
 K_\delta^sK_1^{-s}(x_{1,1}^-)^{(s)}.
\]
All factors on the right belong to \(\UDrn{1}\).  Furthermore,
\eqref{eq:damiani-current-normalization} gives \(x_{1,0}^-=F_1\).
Hence, for every \(s\geq0\),
\begin{equation}
 E_{\delta-\alpha_1}^{(s)}
 =
 (-1)^s q^{-2\binom{s}{2}}
 K_\delta^sK_1^{-s}(x_{1,1}^-)^{(s)}
 \in\UDrn{1},
 \qquad
 F_1^{(s)}
 =(x_{1,0}^-)^{(s)}
 \in\UDrn{1}.
 \steptag{6.3}\label{eq:YF-integral}
\end{equation}

\subsection{A direct multiplier for the affine simple root}

\begin{proposition}[Quantum scaled roots when \texorpdfstring{$n=1$}{n=1}]
\label{prop:rank-one-direct-multiplier}
For \(a\geq0\), define
\begin{equation}
 d_a:=\prod_{h=1}^a(q^{2h}+1)(q^{4h}+1),
 \qquad
 d_0=1.
 \steptag{6.4}\label{eq:def-da}
\end{equation}
Then \(d_a\in\mathcal A\) and
\begin{equation}
 d_aE_0^{(a)},\qquad
 d_aF_0^{(a)}
 \in\UDrn{1}.
 \steptag{6.5}\label{eq:da-affine-powers}
\end{equation}
\end{proposition}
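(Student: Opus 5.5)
The plan is to apply Lemma~\ref{lem:q-Weyl-highest-contraction} directly to the rank-one $q$-Weyl cell \eqref{eq:rank-one-weyl}, with $D=\UDrn{1}$ and $Q=q$. I take
\[
 A=E_{\delta-\alpha_1},\qquad B=F_1,\qquad Z=AB-BA=-[4]_qW.
\]
The hypotheses of the lemma are the two $q$-commutations in \eqref{eq:rank-one-weyl}. Rescaling $W$ by the scalar $-[4]_q$ does not affect them:
\[
 AZ=q^{-2}ZA,\qquad ZB=q^{-2}BZ.
\]
The integrality hypothesis needs $A_q^{(m)},B_q^{(m)}\in\UDrn{1}$ with divided powers normalized by $[m]_q!$. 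This is exactly \eqref{eq:YF-integral}, since both $\delta-\alpha_1$ and $\alpha_1$ have root parameter $q$. The lemma then yields
\[
 \frac{Z^a}{[a]_q!}=(-[4]_q)^a\,\frac{W^a}{[a]_q!}\in\UDrn{1}
 \qquad(a\geq0).
\]

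Next I straighten $W^a=(K_1E_0)^a$. From $K_1E_0K_1^{-1}=q_1^{a_{10}}E_0=q^{-4}E_0$, the same induction used for $(K_1E_{\delta-\alpha_1})^s$ gives
\[
 W^a=q^{-4\binom a2}K_1^aE_0^a .
\]
Since $K_1^{-a}\in\UDrn{1}$ and powers of $q$ are units, I may multiply on the left by $q^{4\binom a2}K_1^{-a}$. Then I write $E_0^a=[a]_{q^4}!\,E_0^{(a)}$, because $q_0=q^4$. This gives
\[
 [4]_q^a\,\frac{[a]_{q^4}!}{[a]_q!}\,E_0^{(a)}\in\UDrn{1}.
\]

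The key arithmetic step, and the only point needing care, is identifying this coefficient with $d_a$ up to a unit. The factor $[4]_q^a$ is not an extra multiplier: it is absorbed by the change of base in the factorial. Indeed $[h]_{q^4}=[4h]_q/[4]_q$, so
\[
 [4]_q^a\frac{[a]_{q^4}!}{[a]_q!}=\prod_{h=1}^a\frac{[4h]_q}{[h]_q}
 =\prod_{h=1}^a(q^h+q^{-h})(q^{2h}+q^{-2h}).
\]
Using $q^h+q^{-h}=q^{-h}(q^{2h}+1)$ and $q^{2h}+q^{-2h}=q^{-2h}(q^{4h}+1)$, this product equals $q^{-3\binom{a+1}2}d_a$. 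Multiplying by the unit $q^{3\binom{a+1}2}$ gives $d_aE_0^{(a)}\in\UDrn{1}$. Clearly $d_a\in\mathbb Z[q]\subset\mathcal A$.

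For the negative half I apply $\Omega$, which preserves $\UDrn{1}$ by \eqref{eq:a2n-Omega-stability}. The factorial $[a]_{q^4}!$ is bar-invariant, so $\Omega(E_0^{(a)})=F_0^{(a)}$. Also $\Omega(d_a)=d_a(q^{-1})=q^{-3a(a+1)}d_a$, because each factor $q^{2h}+1$ and $q^{4h}+1$ is palindromic. Hence $d_aF_0^{(a)}$ differs from $\Omega(d_aE_0^{(a)})$ by a unit, so $d_aF_0^{(a)}\in\UDrn{1}$.

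I expect no real obstacle beyond tracking the $q$-power units. The substantive inputs — the relation $Z=-[4]_qW$ and the commutation of $E_0$ with $E_{\delta-\alpha_1}$ — were already established before the statement.
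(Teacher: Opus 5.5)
Your proposal is correct and follows the paper's proof step for step. Both arguments use the $q$-Weyl highest contraction on $(E_{\delta-\alpha_1},F_1,Z_W)$ with $Q=q$, straighten $W^a$, apply $[4]_q[h]_{q^4}=[4h]_q=q^{-3h}(q^{2h}+1)(q^{4h}+1)[h]_q$, and finish with $\Omega$ for the negative half. The one slip is the sign of an exponent. From $E_0K_1=q^4K_1E_0$ one gets $W^a=q^{4\binom a2}K_1^aE_0^a$, not $q^{-4\binom a2}K_1^aE_0^a$; this is harmless, since the factor is a unit in $\mathcal A$.
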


\begin{proof}
Write
$Z_W:=E_{\delta-\alpha_1}F_1-F_1E_{\delta-\alpha_1}$. Then it follows from \eqref{eq:rank-one-weyl} that
\[
 E_{\delta-\alpha_1}Z_W
 =q^{-2}Z_WE_{\delta-\alpha_1},
 \qquad
 Z_WF_1
 =q^{-2}F_1Z_W.
\]

Apply Lemma~\ref{lem:q-Weyl-highest-contraction} with
\[
 D=\UDrn{1},
 \qquad
 Q=q,
 \qquad
 A=E_{\delta-\alpha_1},
 \qquad
 B=F_1,
 \qquad
 Z=Z_W.
\]
The required divided-power hypotheses follow from
\eqref{eq:YF-integral}.  Hence
\begin{equation}
 \frac{Z_W^a}{[a]_q!}
 =
 (-1)^a\frac{([4]_q)^aW^a}{[a]_q!}
 \in\UDrn{1}.
 \steptag{6.6}\label{eq:ZW-contraction}
\end{equation}

We next rewrite \(W^a\) in terms of the affine simple divided power.
The Cartan relation
\[
 K_1E_0K_1^{-1}=q^{-4}E_0
\]
is equivalent to
\[
 E_0K_1=q^4K_1E_0.
\]
It follows by induction that
\[
 W^a=(K_1E_0)^a
 =q^{4\binom{a}{2}}K_1^aE_0^a.
\]
Indeed, if the identity holds for \(a\), then
\begin{align*}
 (K_1E_0)^{a+1}
 &=
 q^{4\binom{a}{2}}
 K_1^aE_0^aK_1E_0\\
 &=
 q^{4\binom{a}{2}+4a}
 K_1^{a+1}E_0^{a+1}\\
 &=
 q^{4\binom{a+1}{2}}
 K_1^{a+1}E_0^{a+1}.
\end{align*}
Since \(q_0=q^4\), one has
\[
 E_0^a=[a]_{q^4}!E_0^{(a)}.
\]
Therefore
\begin{equation}
 W^a
 =(K_1E_0)^a
 =q^{4\binom{a}{2}}K_1^aE_0^a
 =q^{4\binom{a}{2}}[a]_{q^4}!\,
 K_1^aE_0^{(a)}.
 \steptag{6.7}\label{eq:W-power}
\end{equation}

Substituting \eqref{eq:W-power} into
\eqref{eq:ZW-contraction} gives
\[
 (-1)^aq^{4\binom{a}{2}}K_1^a
 \frac{([4]_q)^a[a]_{q^4}!}{[a]_q!}
 E_0^{(a)}
 \in\UDrn{1}.
\]
Put
\[
 \widetilde d_a
 :=\frac{([4]_q)^a[a]_{q^4}!}{[a]_q!}.
\]
The scalar coefficient can be computed factor by factor:
\begin{align*}
 \widetilde d_a
 &=
 \prod_{h=1}^a
 \frac{[4]_q[h]_{q^4}}{[h]_q}=
 \prod_{h=1}^a
 \frac{[4h]_q}{[h]_q},
\end{align*}
where
\[
 [4]_q[h]_{q^4}
 =
 \frac{q^4-q^{-4}}{q-q^{-1}}
 \frac{q^{4h}-q^{-4h}}{q^4-q^{-4}}
 =
 \frac{q^{4h}-q^{-4h}}{q-q^{-1}}
 =[4h]_q.
\]
For every \(h\geq1\),
\begin{align}
 \frac{[4h]_q}{[h]_q}
 &=
 \frac{q^{4h}-q^{-4h}}{q^h-q^{-h}}
 \notag\\
 &=
 q^{-3h}\frac{q^{8h}-1}{q^{2h}-1}
 \notag\\
 &=
 q^{-3h}
 \bigl(q^{6h}+q^{4h}+q^{2h}+1\bigr)
 \notag\\
 &=
 q^{-3h}(q^{2h}+1)(q^{4h}+1).
 \steptag{6.8}\label{eq:da-factor}
\end{align}
Equations \eqref{eq:def-da} and \eqref{eq:da-factor} give
\[
 \widetilde d_a
 =q^{-\frac{3a(a+1)}{2}}d_a.
\]
Thus
\[
 (-1)^a q^{\frac{a(a-7)}{2}}
 d_aK_1^aE_0^{(a)}
 \in\UDrn{1}.
\]
Multiplication by the unit
\[
 (-1)^a q^{\frac{a(7-a)}{2}}K_1^{-a}
\]
gives
\[
 d_aE_0^{(a)}\in\UDrn{1}.
\]
The definition \eqref{eq:def-da} makes
\(d_a\in\mathbb Z[q]\subseteq\mathcal A\) immediate.

Finally, \eqref{eq:a2n-Omega-stability} gives
\[
 \Omega(\UDrn{1})=\UDrn{1}.
\]
The polynomial \(d_a\) satisfies
\[
 \Omega(d_a)
 =\prod_{h=1}^a(q^{-2h}+1)(q^{-4h}+1)
 =q^{-3a(a+1)}d_a.
\]
Moreover,
\[
 \Omega(E_0^{(a)})
 =
 \frac{F_0^a}{[a]_{q^{-4}}!}
 =
 \frac{F_0^a}{[a]_{q^4}!}
 =
 F_0^{(a)}.
\]
Applying \(\Omega\) to
\(d_aE_0^{(a)}\in\UDrn{1}\) therefore gives
\[
 q^{-3a(a+1)}d_aF_0^{(a)}
 =
 \Omega\bigl(d_aE_0^{(a)}\bigr)
 \in\UDrn{1}.
\]
Multiplication by the unit \(q^{3a(a+1)}\) gives
\(d_aF_0^{(a)}\in\UDrn{1}\).
This proves \eqref{eq:da-affine-powers}.
\end{proof}
\medskip

\begin{theorem}[Rank-one \(\Cstar\)-sufficiency]
\label{thm:rank-one-Cstar}
Recall $C^*=\mathcal A[\Phi_{4r}(q)^{-1}\mid r\geq1]$.
One has
\begin{equation}
 \Cstar\UDrn{1}
 =
 \Cstar U_{\mathcal A}^{\mathrm{Lus},(1)}.
 \steptag{6.9}\label{eq:rank-one-Cstar}
\end{equation}
\end{theorem}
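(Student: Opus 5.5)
The plan is to apply Lemma~\ref{lem:affine-recovery-criterion} with \(\mathscr X=\UDrn{1}\) and \(B=\Cstar\). The required inclusions \(\UDrn{1}\subseteq\mathscr X\subseteq U_{\mathcal A}^{\mathrm{Lus},(1)}\) are immediate: the first is trivial and the second is Lemma~\ref{lem:forward-inclusion}. It therefore suffices to verify hypothesis \eqref{eq:affine-recovery-hyp}, namely that \(E_0^{(a)},F_0^{(a)}\in\Cstar\UDrn{1}\) for every \(a\geq0\).

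By Proposition~\ref{prop:rank-one-direct-multiplier}, \(d_aE_0^{(a)}\) and \(d_aF_0^{(a)}\) lie in \(\UDrn{1}\). So the whole argument reduces to showing that \(d_a\in(\Cstar)^\times\); multiplying by \(d_a^{-1}\in\Cstar\) then gives the two required memberships.

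To check that \(d_a\) is a unit, factor each term into cyclotomic polynomials. For \(h\geq1\) we have
\[
 (q^{2h}+1)(q^{4h}+1)=\frac{q^{8h}-1}{q^{2h}-1}
 =\prod_{m\mid 8h,\ m\nmid 2h}\Phi_m(q).
\]
Suppose \(m\mid 8h\) but \(m\nmid 2h\). Write \(v_2\) for the \(2\)-adic valuation. The odd part of \(m\) divides the odd part of \(h\), so the failure \(m\nmid 2h\) forces \(v_2(m)>v_2(2h)=v_2(h)+1\geq1\). Hence \(v_2(m)\geq2\), that is, \(4\mid m\), and so \(m=4r\) for some \(r\geq1\). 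Every factor \(\Phi_{4r}(q)\) is inverted in \(\Cstar\), so \(d_a\) is a product of units of \(\Cstar\), up to no extra constant since the factorization is exact.

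The main point needing care is exactly this cyclotomic bookkeeping, in particular confirming that no factor \(\Phi_m\) with \(4\nmid m\) (such as \(\Phi_2\) or \(\Phi_1\)) survives in the quotient. The identity \(q^{8h}-1=\prod_{m\mid 8h}\Phi_m\) together with the valuation argument above settles this. Everything else is a direct invocation of earlier results.
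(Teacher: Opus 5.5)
Your proof is correct and follows the paper's own argument: Proposition~\ref{prop:rank-one-direct-multiplier} gives \(d_aE_0^{(a)},d_aF_0^{(a)}\in\UDrn{1}\), you show \(d_a\in(\Cstar)^\times\), and you conclude with Lemma~\ref{lem:affine-recovery-criterion}. The only difference is cosmetic. You factor \((q^{2h}+1)(q^{4h}+1)=(q^{8h}-1)/(q^{2h}-1)\) in one step with a \(2\)-adic valuation argument, whereas the paper factors \(q^{2h}+1\) and \(q^{4h}+1\) separately into the \(\Phi_{4s}(q)\) and the \(\Phi_{8s}(q)=\Phi_{4(2s)}(q)\).
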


\begin{proof}
The forward inclusion
\[
 \Cstar\UDrn{1}
 \subseteq
 \Cstar U_{\mathcal A}^{\mathrm{Lus},(1)}
\]
follows from
\(\UDrn{1}\subseteq U_{\mathcal A}^{\mathrm{Lus},(1)}\).
It remains to recover the affine simple divided powers after extending
scalars to \(\Cstar\).
By \eqref{eq:da-factor}, 
 $d_a
 =
 \prod_{h=1}^a(q^{2h}+1)(q^{4h}+1)$.
We determine the cyclotomic factors of the two polynomials occurring
here.  The standard factorization
\[
 q^N-1=\prod_{d\mid N}\Phi_d(q)
\]
gives
\[
 q^{2h}+1
 =
 \frac{q^{4h}-1}{q^{2h}-1}
 =
 \prod_{\substack{d\mid4h\\d\nmid2h}}\Phi_d(q).
\]
Write \(h=2^\nu m\), where \(m\) is odd.  An integer \(d\) divides
\(4h\) but not \(2h\) precisely when
\[
 d=2^{\nu+2}t
 \qquad\text{for some }t\mid m.
\]
Writing \(s=2^\nu t\), this is equivalent to
\[
 d=4s,
 \qquad
 s\mid h,
 \qquad
 h/s\ \text{odd}.
\]
Therefore
\[
 q^{2h}+1
 =
 \prod_{\substack{s\mid h\\h/s\ \mathrm{odd}}}
 \Phi_{4s}(q).
\]

Similarly,
\[
 q^{4h}+1
 =
 \frac{q^{8h}-1}{q^{4h}-1}
 =
 \prod_{\substack{d\mid8h\\d\nmid4h}}\Phi_d(q).
\]
The same \(2\)-adic argument shows that the surviving indices are
\(d=8s\), where \(s\mid h\) and \(h/s\) is odd.  Hence
\[
 q^{4h}+1
 =
 \prod_{\substack{s\mid h\\h/s\ \mathrm{odd}}}
 \Phi_{8s}(q)
 =
 \prod_{\substack{s\mid h\\h/s\ \mathrm{odd}}}
 \Phi_{4(2s)}(q).
\]
It follows that
\[
 d_a
 =
 \prod_{h=1}^a
 \prod_{\substack{s\mid h\\h/s\ \mathrm{odd}}}
 \Phi_{4s}(q)\Phi_{8s}(q).
\]
Every factor \(\Phi_{4s}(q)\) is invertible in
\(\Cstar\), and
\[
 \Phi_{8s}(q)=\Phi_{4(2s)}(q)
\]
is invertible there as well.  Consequently,
\[
 d_a\in(\Cstar)^\times
 \qquad(a\geq0).
\]

Proposition~\ref{prop:rank-one-direct-multiplier} gives
\[
 d_aE_0^{(a)},\qquad
 d_aF_0^{(a)}
 \in\UDrn{1}
 \subseteq\Cstar\UDrn{1}.
\]
Multiplication by \(d_a^{-1}\in\Cstar\) therefore yields
\[
 E_0^{(a)},\qquad
 F_0^{(a)}
 \in\Cstar\UDrn{1}
 \qquad(a\geq0).
\]
Thus the hypothesis \eqref{eq:affine-recovery-hyp} of
Lemma~\ref{lem:affine-recovery-criterion} holds with
\[
 \mathscr X=\UDrn{1},
 \qquad
 B=\Cstar.
\]
Applying that lemma gives
\[
 \Cstar\UDrn{1}
 =
 \Cstar U_{\mathcal A}^{\mathrm{Lus},(1)}.
\]
\end{proof}
The sufficiency in Rank-one is now complete. Under the classical specialization \(q\mapsto1\), \(E_0\mapsto e_0\),
we have \(d_a(1)=4^a\). Thus \(d_aE_0^{(a)}\) specializes to
\((4e_0)^{(a)}\), recovering the divided powers of classical scaled-roots 
in \cite[Remark~C.3]{DP}.
\section{Quantum scaled roots when
\texorpdfstring{$n\geq2$}{n>=2} and higher-rank
\texorpdfstring{$B^{\ast}$}{B*}-recovery}
\label{sec:quantum-dp-scaled-roots}

We next establish the corresponding
higher-rank statement.  The mechanism is genuinely different: the
required multiplier is produced by a three-root cell, and its nonunit factors
have only \(\Phi_{8r}\)-support.

Throughout this section, fix \(n\geq2\), retain the integral form \(\UDrn{n}\) of \eqref{eq:bare-current-a2n}, and put \(Q=q^2\).

\subsection{Braid group stability of
\texorpdfstring{$U_{\mathcal A}^{\mathrm{Dr},(n)}$}{the Drinfeld integral form}}
\label{subsec:a2n-braid-stability}

\begin{lemma}
\label[lemma]{lem:required-standard-lifts}
For \(1\leq h\leq n\) and \(\varepsilon\in\{\pm1\}\),
\begin{equation}
 (T_{\widetilde\omega_h})^\varepsilon
   \bigl(\UDrn{n}\bigr)=\UDrn{n},
 \qquad
 T_i\bigl(\UDrn{n}\bigr)\subseteq\UDrn{n}
 \quad(1\leq i<n).
 \label{eq:required-standard-lifts}
\end{equation}
Moreover, if \(1\leq j<n\), then
\[
 T_n\!\left((x_{j,r}^{\pm})^{(a)}\right)
 \in\UDrn{n}
 \qquad(r\in\mathbb Z,\ a\geq0).
\]
\end{lemma}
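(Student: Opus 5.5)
The plan is to follow the proof of Lemma~\ref{lem:translation-stability} and Proposition~\ref{prop:finite-braid-stability}, now in type \(A_{2n}^{(2)}\), where \(\nu_i=1\) for every \(i\in I_0\).

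\emph{Translations.} Under the comparison isomorphism, \eqref{eq:damiani-current-normalization} gives the analogue of \eqref{eq:translation-action}:
\[
 T_{\widetilde\omega_h}^{\varepsilon}\bigl((x_{j,s}^{\pm})^{(a)}\bigr)
 =o(h)^{a\delta_{hj}}\bigl(x_{j,s\mp\varepsilon\delta_{hj}}^{\pm}\bigr)^{(a)}
\]
for \(j=h\). The toral elements transform by \(T_{\widetilde\omega_h}^\varepsilon(K_j)=K_jK_\delta^{-\varepsilon\delta_{hj}}\), and \(K_\delta\) is fixed. For \(j\neq h\) we have \(t_{\omega_h^\vee}(\alpha_j)=\alpha_j\), so the two translations commute as length-additive lifts. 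Then \(T_{\widetilde\omega_h}^\varepsilon T_{\widetilde\omega_j}^{-k}(E_j)=T_{\widetilde\omega_j}^{-k}T_{\widetilde\omega_h}^\varepsilon(E_j)\). By Lemma~\ref{lem:simple-to-simple} applied to \(t_{\omega_h^\vee}^{\pm1}(\alpha_j)=\alpha_j\), the inner factor is \(E_j\), since the translations commute with \(s_j\). Hence every generator of \eqref{eq:bare-current-a2n} is mapped to a generator, and both \(T_{\widetilde\omega_h}^{\pm1}\) preserve \(\UDrn{n}\), which gives equality.

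\emph{Finite nodes \(i<n\).} Since \(d_i=2\) and \(\alpha_i\) is long, the affine root \(\delta-\alpha_i\) has parameter \(q^2=q_i\). I would use an ordinary \(A_1^{(1)}\)-vertex \(\phi_i\colon U_{q^2}(A_1^{(1)})\to U_q\) sending \((E_1,E_0)\mapsto(E_i,E_{\delta-\alpha_i})\), with \(x^\pm_{1,r}\mapsto x^\pm_{i,r}\) and \(T_i\phi_i=\phi_iT_1\), exactly as in \eqref{eq:ordinary-vertex-intertwining}. The argument of Lemma~\ref{lem:integral-ordinary-vertex} then shows that the image \(V_{i,\mathcal A}\) lies in \(\UDrn{n}\) and is \(T_i\)-stable. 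It contains all \((x_{i,r}^\pm)^{(a)}\), and \(E_{\delta-\alpha_i}^{(a)}=\pm q^{\ast}K^a(x^-_{i,1})^{(a)}\).

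For \(j\neq i\), write \((x_{j,r}^+)^{(a)}=\pm T_{\widetilde\omega_j}^{-r}(E_j^{(a)})\). Commuting \(T_i\) past \(T_{\widetilde\omega_j}^{-r}\) uses \(t_{\omega_j^\vee}(\alpha_i)=\alpha_i\). Then \(T_i(E_j^{(a)})\) lies in the finite \(B_n\) Lusztig form, which is generated by the zero modes and so lies in \(\UDrn{n}\) by \cite[41.1.2]{L}. The first part shows that \(T_{\widetilde\omega_j}^{-r}\) preserves \(\UDrn{n}\). Negative currents follow from \(\Omega\) and \eqref{eq:a2n-Omega-stability}, and toral elements go to monomials. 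Note that only \(T_i\), not \(T_i^{-1}\), is claimed; the same argument gives both.

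\emph{\(T_n\) on colors \(j<n\).} Here the same commutation \(T_nT_{\widetilde\omega_j}^{-r}=T_{\widetilde\omega_j}^{-r}T_n\) holds because \(t_{\omega_j^\vee}(\alpha_n)=\alpha_n\). The element \(T_n(E_j^{(a)})\) lies in the finite Lusztig form, hence in \(\UDrn{n}\), and the translation stability proved above finishes the argument. I expect the main obstacle to be the node-\(n\) currents themselves, whose vertex is not an ordinary \(A_1^{(1)}\) but \(A_2^{(2)}\)-like. This is exactly why the statement only asks about colors \(j<n\) for \(T_n\), so the plan deliberately avoids that case. The other delicate point is justifying the vertex homomorphism \(\phi_i\) for long \(i<n\), including its braid intertwining. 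I would cite \cite{D1,JZ} as in Section~\ref{sec:real-current-start}.
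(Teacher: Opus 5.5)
Your proposal is correct and follows essentially the same route as the paper. You check translation stability generator by generator via \eqref{eq:damiani-current-normalization}, and you use the embedded ordinary $A_1^{(1)}$ vertex at $q_i=q^2$ to handle the color-$i$ currents for $i<n$. For all other colors, and for $T_n$ on colors $j<n$, you use the different-color commutation argument of Proposition~\ref{prop:finite-braid-stability}, with $t_{\lambda_j}$ replaced by $\widetilde\omega_j$ and $\Omega$ supplying the negative currents.

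Your explicit appeal to Lemma~\ref{lem:simple-to-simple}, showing that $T_{\widetilde\omega_h}^{\pm1}$ fixes $E_j$ and $F_j$ for $j\neq h$, only spells out a step the paper leaves implicit. Like the paper, you take the vertex homomorphism and its braid intertwining from the cited sources rather than proving them.
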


\begin{proof}
The translation assertion follows from the same generator-by-generator
argument as in Lemma~\ref{lem:translation-stability}, now using
\eqref{eq:damiani-current-normalization}.  Indeed,
\((T_{\widetilde\omega_h})^{\pm1}\) shifts only the mode
of a color-\(h\) current, up to a sign and central or toral Laurent
monomials.  Hence both operators preserve the generators of
\eqref{eq:bare-current-a2n}, and therefore
\[
 (T_{\widetilde\omega_h})^\varepsilon
 (\UDrn{n})=\UDrn{n}
 \qquad(\varepsilon=\pm1).
\]

Now let \(1\leq i<n\).  After the finite-node relabelling, the
color-\(i\) currents lie in an ordinary \(A_1^{(1)}\) vertex.  If \(p\)
and \(q\) are its two nodes, then
\[
 a_{pq}=a_{qp}=-2.
\]
The argument is therefore similar to the proof of
Proposition~\ref{prop:finite-braid-stability}: the embedded
\(A_1^{(1)}\) vertex treats the color-\(i\) currents, while the
different-color argument applies with
\[
 t_{\lambda_j}\ \text{replaced by}\ 
 \widetilde\omega_j=t_{\omega_j^\vee}
\]
and uses \eqref{eq:damiani-current-normalization}.  The negative-current
case follows from
\[
 \Omega T_i=T_i\Omega,
 \qquad
 \Omega(\UDrn{n})=\UDrn{n},
\]
and the toral generators are treated exactly as in that proof.  Hence
\[
 T_i\bigl(\UDrn{n}\bigr)\subseteq\UDrn{n}
 \qquad(1\leq i<n).
\]

It remains to prove the last assertion.  Let \(1\leq j<n\).  Since
\[
 \langle\omega_j^\vee,\alpha_n\rangle=0,
 \qquad
 s_n\widetilde\omega_j
 =\widetilde\omega_js_n,
\]
the different-color part of the proof of
Proposition~\ref{prop:finite-braid-stability} applies with \(i=n\).
Together with \eqref{eq:damiani-current-normalization} and
\eqref{eq:a2n-Omega-stability}, it gives
\[
 T_n\bigl((x_{j,r}^{\pm})^{(a)}\bigr)\in\UDrn{n}
 \qquad
 (1\leq j<n,\ r\in\mathbb Z,\ a\geq0).
\]
\end{proof}
No assertion is made here for the color-\(n\) currents: the embedded
\(A_1^{(1)}\) argument is unavailable at the node \(n\), and only the
different-color case is needed in the following.

\subsection{Transport of the non-extra-long roots}
\label{subsec:transport-to-beta-prime}
\noindent\textbf{Remark}. We explain why the higher rank case needs only the
\(\Phi_{8r}\), not the \(\Phi_{4r}\).
In our numbering, there is an embedding
\[
 \iota_n:U_q(A_2^{(2)})\lhook\joinrel\longrightarrow
 U_q(A_{2n}^{(2)})
\]
that satisfies
\[
 \iota_n(E_1)=E_n,
 \qquad
 \iota_n(E_0)=E_{\delta-2\alpha_n},
 \qquad
 T_n \circ \iota_n=\iota_n\circ T_1.
\]
(cf. \cite[Defn.~4.1.1, Props.~4.3.11,
4.4.9--4.4.10]{D1}).  This embedding does not reduce
\cref{thm:higher-rank-Bstar} to \cref{thm:rank-one-Cstar}.  Indeed, the
image of \(\iota_n\) is graded by
\(\mathbb Z\alpha_n+\mathbb Z\delta\), in which every element has even
\(\alpha_{n-1}\)-coefficient, whereas that of
\(\beta'=\delta-(\alpha_{n-1}+2\alpha_n)\) equals one; neither
\(\beta=\alpha_{n-1}\) nor \(\beta'\) lies in that image.  The
three-root cell \(\{\beta,\beta',\eta_n\}\) of
\cref{lem:special-three-root-cell} thus has at its disposal generators
that are unavailable in rank one, and this is precisely what sharpens
the multiplier: the cell produces the coefficient \(c_a\) of
\eqref{eq:eta-and-ca}, whose nonunit factors are exactly the
\(\Phi_{8d}(q)\), whereas the rank-one multiplier \(d_a\) of
\eqref{eq:def-da} carries in addition the factors \(\Phi_{4s}(q)\) with
\(s\) odd.

Let
\[
 \mu=\delta-\alpha_{n-1},
 \qquad
 \beta'=\delta-(\alpha_{n-1}+2\alpha_n),
 \qquad
 K_\mu:=K_\delta K_{n-1}^{-1}.
\]

\begin{proposition}
\label[proposition]{prop:E-mu-integral}
One has \((\mu\mid\mu)=4\),
\[
 K_\mu x_{n-1,1}^{-}K_\mu^{-1}=Q^{2}x_{n-1,1}^{-},
\]
and, for every \(a\geq0\),
\begin{equation}
 E_\mu=K_\mu x_{n-1,1}^{-},
 \qquad
 E_\mu^{(a)}
 =Q^{-2\binom a2}K_\mu^{a}\bigl(x_{n-1,1}^{-}\bigr)^{(a)}
 \in\UDrn{n}.
 \label{eq:E-mu-integral}
\end{equation}
\end{proposition}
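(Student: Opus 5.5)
The plan is to imitate the rank-one computation leading to \eqref{eq:YF-integral}, now at the node \(n-1\). First, \((\mu\mid\mu)=(\alpha_{n-1}\mid\alpha_{n-1})=2d_{n-1}=4\), because \(\delta\) is isotropic and orthogonal to every \(\alpha_i\). For the conjugation formula, note that \(K_\mu=K_\delta K_{n-1}^{-1}\) and that \(K_\delta\) is central. By \eqref{eq:dr-cartan-action}, \(K_{n-1}x_{n-1,1}^-K_{n-1}^{-1}=q_{n-1}^{-a_{n-1,n-1}}x_{n-1,1}^-=q^{-4}x_{n-1,1}^-\). Hence \(K_\mu x_{n-1,1}^-K_\mu^{-1}=q^4x_{n-1,1}^-=Q^2x_{n-1,1}^-\).

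Next, identify \(E_\mu\). By the displayed consequence of \eqref{eq:translation-real-root-vectors} following \eqref{eq:damiani-current-normalization}, with \(i=n-1\) and \(k=1\),
\[
E_{\delta-\alpha_{n-1}}=T_{\widetilde\omega_{n-1}}T_{n-1}^{-1}(E_{n-1})=-o(n-1)K_{\delta-\alpha_{n-1}}x_{n-1,1}^-.
\]
This rests on \(T_{n-1}^{-1}(E_{n-1})=-K_{n-1}^{-1}F_{n-1}\), on \(T_{\widetilde\omega_{n-1}}(K_{n-1}^{-1})=K_\delta K_{n-1}^{-1}\), and on \(T_{\widetilde\omega_{n-1}}(F_{n-1})=o(n-1)x_{n-1,1}^-\). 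Since \(o(n-1)=(-1)^{n-(n-1)}=-1\) by \eqref{eq:current-sign-convention}, the prefactor \(-o(n-1)\) equals \(1\), so \(E_\mu=K_\mu x_{n-1,1}^-\) with no sign. This sign bookkeeping is the only delicate point. It must match the fixed normalization of \(E_\mu\) used in the statement, which I take to be the real root vector \(E_{\delta-\alpha_{n-1}}\) of \eqref{eq:translation-real-root-vectors}. I would double-check that \(T_{\widetilde\omega_{n-1}}\) acts on \(K_{n-1}\) by the translation \(t_{\omega^\vee_{n-1}}\), giving \(K_{n-1}\mapsto K_{n-1}K_\delta^{-1}\).

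For the divided powers, set \(X=x_{n-1,1}^-\), so that \(XK_\mu=Q^{-2}K_\mu X\). Induction on \(a\), exactly as in the derivation of \eqref{eq:YF-integral}, gives
\[
(K_\mu X)^a=Q^{-2\binom a2}K_\mu^aX^a .
\]
Indeed, moving each new \(K_\mu\) to the left past \(a\) copies of \(X\) contributes a factor \(Q^{-2a}\). The root \(\mu\) has parameter \(q^{(\mu\mid\mu)/2}=q^2=q_{n-1}\), so the divided powers of \(E_\mu\) are taken with respect to \([a]_{q_{n-1}}!\). Dividing by this factorial, which is the one in \eqref{eq:current-divided-power-definition} for color \(n-1\), gives
\[
E_\mu^{(a)}=Q^{-2\binom a2}K_\mu^a(x_{n-1,1}^-)^{(a)} .
\]
The right-hand side lies in \(\UDrn{n}\) by \eqref{eq:bare-current-a2n}, since \(K_\delta^{\pm1}\), \(K_{n-1}^{\pm1}\) and the divided current powers are generators there and \(Q\) is a unit of \(\mathcal A\).
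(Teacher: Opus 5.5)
Your proposal is correct and follows essentially the same route as the paper. It uses the same factorization $E_\mu=T_{\widetilde\omega_{n-1}}T_{n-1}^{-1}(E_{n-1})$ with $o(n-1)=-1$ cancelling the minus sign from $T_{n-1}^{-1}(E_{n-1})=-K_{n-1}^{-1}F_{n-1}$, the same induction $(K_\mu x)^a=Q^{-2\binom a2}K_\mu^a x^a$, and the same observation that $\mu$ has root parameter $Q=q_{n-1}$; the only cosmetic difference is that you get the factor $Q^2$ from the Drinfeld Cartan relation plus centrality of $K_\delta$, whereas the paper reads it off from $\deg x_{n-1,1}^{-}=\mu$ and $(\mu\mid\mu)=4$.
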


\begin{proof}
Since \(\langle\omega_{n-1}^{\vee},\alpha_{n-1}\rangle=1\) and
\(\widetilde\omega_{n-1}=t_{\omega_{n-1}^{\vee}}\),
\begin{equation}
 \widetilde\omega_{n-1}(\alpha_{n-1})=\alpha_{n-1}-\delta,
 \qquad
 \widetilde\omega_{n-1}(\delta)=\delta.
 \label{eq:omega-on-alpha-n-1}
\end{equation}
Equations~\eqref{eq:translation-real-root-vectors} and
\eqref{eq:braid-generator-formulas} give
\[
 E_\mu
 =E_{\delta-\alpha_{n-1}}
 =T_{\widetilde\omega_{n-1}}
   \bigl(T_{n-1}\bigr)^{-1}(E_{n-1}),
 \qquad
 \bigl(T_{n-1}\bigr)^{-1}(E_{n-1})
 =-K_{n-1}^{-1}F_{n-1}.
\]
By \cite[Section~37.1.3]{L} the braid group operators act
on the torus by \(T_w(K_\eta)=K_{w(\eta)}\), so
\eqref{eq:omega-on-alpha-n-1} gives
\[
 T_{\widetilde\omega_{n-1}}\bigl(K_{n-1}^{-1}\bigr)
 =K_{\delta-\alpha_{n-1}}
 =K_\delta K_{n-1}^{-1}
 =K_\mu.
\]
The transported sign function satisfies \(o(n-1)=-1\), so
\eqref{eq:damiani-current-normalization} at \(k=1\) gives
\(T_{\widetilde\omega_{n-1}}(F_{n-1})=-x_{n-1,1}^{-}\).
Multiplying the two images yields
\[
 E_\mu=K_\mu x_{n-1,1}^{-}.
\]

By \eqref{eq:current-degrees}, \(\deg x_{n-1,1}^{-}=\delta-\alpha_{n-1}=\mu\).
Since \((\delta\mid\gamma)=0\) for every \(\gamma\) in the affine root
lattice,
\[
 (\mu\mid\mu)
 =(\alpha_{n-1}\mid\alpha_{n-1})
 =2d_{n-1}
 =4,
\]
and therefore
\begin{equation}
 K_\mu x_{n-1,1}^{-}K_\mu^{-1}
 =q^{(\mu\mid\mu)}x_{n-1,1}^{-}
 =Q^{2}x_{n-1,1}^{-},
 \qquad\text{equivalently}\qquad
 x_{n-1,1}^{-}K_\mu=Q^{-2}K_\mu x_{n-1,1}^{-}.
 \label{eq:K-mu-commutation}
\end{equation}
Writing \(x=x_{n-1,1}^{-}\), induction on \(a\) gives
\[
 (K_\mu x)^{a}=Q^{-2\binom a2}K_\mu^{a}x^{a}:
\]
the case \(a=0\) is trivial, and if the formula holds for \(a\), then
\eqref{eq:K-mu-commutation} gives \(x^{a}K_\mu=Q^{-2a}K_\mu x^{a}\),
whence
\[
 (K_\mu x)^{a+1}
 =Q^{-2\binom a2}K_\mu^{a}x^{a}K_\mu x
 =Q^{-2\binom a2-2a}K_\mu^{a+1}x^{a+1}
 =Q^{-2\binom{a+1}2}K_\mu^{a+1}x^{a+1}.
\]
Finally, \(\mu\) has root parameter
\(q^{(\mu\mid\mu)/2}=q^{2}=Q=q_{n-1}\), so \(E_\mu^{(a)}\) and
\(x^{(a)}\) are normalized by the same factorial \([a]_Q!\).  Dividing
\(E_\mu^{a}=(K_\mu x)^{a}\) by \([a]_Q!\) therefore gives the
displayed formula for \(E_\mu^{(a)}\).  Its right-hand side lies in
\(\UDrn{n}\), because \(K_\mu^{a}=(K_\delta K_{n-1}^{-1})^{a}\) is a
monomial in the group-like toral generators, \(x^{(a)}\) is a defining
generator of \eqref{eq:bare-current-a2n}, and
\(Q^{-2\binom a2}\in\mathcal A^{\times}\).
\end{proof}

\begin{proposition}[Transport to the root \(\beta'\)]
\label{prop:Bsharp-integral}
Put
\[
 \widehat w_\beta:=s_n\widetilde\omega_{n-1}s_{n-1},
 \qquad
 \mathcal T_\beta
 :=T_nT_{\widetilde\omega_{n-1}}
   \bigl(T_{n-1}\bigr)^{-1},
 \qquad
 B^{\sharp}:=\mathcal T_\beta(E_{n-1}).
\]
Then \(\widehat w_\beta(\alpha_{n-1})=\beta'\), the operator
\(\mathcal T_\beta\) is the standard lift
\(T_{\widehat w_\beta}\), and
\begin{equation}
 B^{\sharp}=T_n(E_\mu),
 \qquad
 \bigl(B^{\sharp}\bigr)^{(a)}
 =T_n\bigl(E_\mu^{(a)}\bigr)
 \in\UDrn{n}
 \qquad(a\geq0).
 \label{eq:Bsharp-integral}
\end{equation}
\end{proposition}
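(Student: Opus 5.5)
The proof splits into three steps: a Weyl-group computation, a length-additivity check identifying \(\mathcal T_\beta\) with \(T_{\widehat w_\beta}\), and an integrality statement obtained by applying \(T_n\) to \eqref{eq:E-mu-integral}.

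\emph{Step 1: the root.} Since \(\widetilde\omega_{n-1}=t_{\omega_{n-1}^\vee}\) acts by \(\alpha\mapsto\alpha-\langle\omega_{n-1}^\vee,\alpha\rangle\delta\), we have
\[
 s_{n-1}(\alpha_{n-1})=-\alpha_{n-1},
 \qquad
 \widetilde\omega_{n-1}(-\alpha_{n-1})=\delta-\alpha_{n-1}=\mu .
\]
Next, \(\langle h_n,\mu\rangle=-a_{n,n-1}=2\), because \(\langle h_n,\delta\rangle=0\) and \(a_{n,n-1}=-2\) by \eqref{eq:a2n-cartan}. Hence
\[
 s_n(\mu)=\mu-2\alpha_n=\beta',
\]
which proves \(\widehat w_\beta(\alpha_{n-1})=\beta'\).

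\emph{Step 2: the standard lift.} First, \(\widetilde\omega_{n-1}s_{n-1}\) is length additive in the form \(\ell(\widetilde\omega_{n-1})=\ell(\widetilde\omega_{n-1}s_{n-1})+1\). Indeed, \(\widetilde\omega_{n-1}(\alpha_{n-1})=\alpha_{n-1}-\delta<0\), so \(s_{n-1}\) is a right descent of \(\widetilde\omega_{n-1}\). This is exactly the argument giving \eqref{eq:beta0-standard-lift}, and it yields
\[
 T_{\widetilde\omega_{n-1}s_{n-1}}=T_{\widetilde\omega_{n-1}}T_{n-1}^{-1}.
\]
Second, we need \(\ell(s_n\,\widetilde\omega_{n-1}s_{n-1})=\ell(\widetilde\omega_{n-1}s_{n-1})+1\). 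It suffices to check that \((\widetilde\omega_{n-1}s_{n-1})^{-1}(\alpha_n)>0\). Since \(\langle\omega_{n-1}^\vee,\alpha_n\rangle=0\), we get \(\widetilde\omega_{n-1}^{-1}(\alpha_n)=\alpha_n\), and then \(s_{n-1}(\alpha_n)=\alpha_n+2\alpha_{n-1}\) (using \(a_{n-1,n}=-1\)), which is positive. The multiplication rule for standard lifts then gives \(T_{\widehat w_\beta}=T_nT_{\widetilde\omega_{n-1}}T_{n-1}^{-1}=\mathcal T_\beta\). A length-zero component of \(\widetilde\omega_{n-1}\), if any, is handled as in \eqref{eq:standard-braid-lift-definition}.

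\emph{Step 3: integrality.} By Proposition~\ref{prop:E-mu-integral}, \(E_\mu=T_{\widetilde\omega_{n-1}}T_{n-1}^{-1}(E_{n-1})\), so \(B^\sharp=T_n(E_\mu)\). Because \(T_n\) is an algebra automorphism, \((B^\sharp)^a=T_n(E_\mu^a)\). Both divided powers use the factorial \([a]_Q!\), since \((\beta'\mid\beta')=(\mu\mid\mu)=4\) by Weyl invariance. Hence \((B^\sharp)^{(a)}=T_n(E_\mu^{(a)})\).

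For membership in \(\UDrn{n}\) we apply \(T_n\) to the formula \eqref{eq:E-mu-integral}:
\[
 T_n\bigl(E_\mu^{(a)}\bigr)
 =Q^{-2\binom a2}\,T_n(K_\mu)^a\,
 T_n\bigl((x_{n-1,1}^-)^{(a)}\bigr).
\]
The toral factor is \(T_n(K_\mu)=K_{s_n\mu}=K_\delta K_{n-1}^{-1}K_n^{-2}\), a Laurent monomial in the generators of \(\UDrn{n}\). The current factor lies in \(\UDrn{n}\) by the last assertion of Lemma~\ref{lem:required-standard-lifts} with \(j=n-1<n\).

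This last point is the one delicate step. Stability of \(\UDrn{n}\) under \(T_n\) is not available in general, and we avoid needing it: the argument only ever applies \(T_n\) to a different-color current, which is exactly the case Lemma~\ref{lem:required-standard-lifts} covers. Everything else is routine root-system bookkeeping.
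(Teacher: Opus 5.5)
Your proof is correct and follows the paper's argument step for step: the root computation, the right descent at \(s_{n-1}\) together with the left ascent at \(s_n\) to identify \(\mathcal T_\beta\) with \(T_{\widehat w_\beta}\), and then applying \(T_n\) to \eqref{eq:E-mu-integral} while invoking only the different-color case of Lemma~\ref{lem:required-standard-lifts}. One small slip: since \(a_{n-1,n}=-1\), one has \(s_{n-1}(\alpha_n)=\alpha_n+\alpha_{n-1}\), not \(\alpha_n+2\alpha_{n-1}\); this does not affect the argument, because you only use that the root is positive.
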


\begin{proof}
By \eqref{eq:a2n-cartan}, \(a_{n,n-1}=-2\) and \(a_{n-1,n}=-1\), so
\begin{equation}
 s_n(\alpha_{n-1})=\alpha_{n-1}+2\alpha_n,
 \qquad
 s_{n-1}(\alpha_n)=\alpha_n+\alpha_{n-1}.
 \label{eq:a2n-rank-two-reflections}
\end{equation}
Using \eqref{eq:omega-on-alpha-n-1}, \(s_n(\delta)=\delta\), and
\eqref{eq:a2n-rank-two-reflections}, we compute
\[
 \alpha_{n-1}
 \xmapsto{\ s_{n-1}\ }-\alpha_{n-1}
 \xmapsto{\ \widetilde\omega_{n-1}\ }\delta-\alpha_{n-1}=\mu
 \xmapsto{\ s_n\ }\delta-(\alpha_{n-1}+2\alpha_n)=\beta',
\]
which proves \(\widehat w_\beta(\alpha_{n-1})=\beta'\).

Next we identify the standard lift.  Let
\(y:=\widetilde\omega_{n-1}s_{n-1}\).  By
\eqref{eq:omega-on-alpha-n-1},
\(\widetilde\omega_{n-1}(\alpha_{n-1})=\alpha_{n-1}-\delta<0\), so the
right-descent criterion gives
\(\ell(y)=\ell(\widetilde\omega_{n-1})-1\) and hence
\[
 T_y
 =T_{\widetilde\omega_{n-1}}
  \bigl(T_{n-1}\bigr)^{-1}.
\]
Moreover \(\langle\omega_{n-1}^{\vee},\alpha_n\rangle=0\) gives
\(\widetilde\omega_{n-1}^{-1}(\alpha_n)=\alpha_n\), so by
\eqref{eq:a2n-rank-two-reflections}
\[
 y^{-1}(\alpha_n)
 =s_{n-1}\widetilde\omega_{n-1}^{-1}(\alpha_n)
 =s_{n-1}(\alpha_n)
 =\alpha_n+\alpha_{n-1}>0,
\]
whence \(\ell(s_ny)=\ell(y)+1\) and
\[
 T_{\widehat w_\beta}
 =T_nT_y
 =T_nT_{\widetilde\omega_{n-1}}
   \bigl(T_{n-1}\bigr)^{-1}
 =\mathcal T_\beta.
\]
Combining this with the factorization of \(E_\mu\) in the proof of
\cref{prop:E-mu-integral} gives
\[
 B^{\sharp}
 =T_n
   \Bigl(T_{\widetilde\omega_{n-1}}
     \bigl(T_{n-1}\bigr)^{-1}(E_{n-1})\Bigr)
 =T_n(E_\mu).
\]

Since \(s_n\) preserves the invariant form,
\[
 (\beta'\mid\beta')=(s_n\mu\mid s_n\mu)=(\mu\mid\mu)=4,
\]
so \(\beta'\) and \(\mu\) have the same root parameter \(Q\), and both
\(E_\mu^{(a)}\) and \((B^{\sharp})^{(a)}\) are normalized by
\([a]_Q!\).  As \(T_n\) is a \(\mathcal K\)-algebra
automorphism,
\[
 \bigl(B^{\sharp}\bigr)^{(a)}
 =\frac{T_n(E_\mu)^{a}}{[a]_Q!}
 =T_n\bigl(E_\mu^{(a)}\bigr).
\]
Applying \(T_n\) to \eqref{eq:E-mu-integral} and using
\(T_n(K_\mu)=K_{s_n(\mu)}=K_{\beta'}\) gives the explicit
expression
\[
 \bigl(B^{\sharp}\bigr)^{(a)}
 =Q^{-2\binom a2}
  K_{\beta'}^{a}\,
  T_n\Bigl(\bigl(x_{n-1,1}^{-}\bigr)^{(a)}\Bigr),
 \qquad
 K_{\beta'}=K_\delta K_{n-1}^{-1}K_n^{-2}.
\]
Here \(K_{\beta'}^{\pm a}\in\UDrn{n}\), and
\(T_n\bigl((x_{n-1,1}^{-})^{(a)}\bigr)\in\UDrn{n}\) by the
last assertion of \cref{lem:required-standard-lifts}, applied with the
color \(n-1\neq n\).  This proves \eqref{eq:Bsharp-integral}.
\end{proof}

\noindent
Proposition~\ref{prop:Bsharp-integral} completes the integral transport for this
non-extra-long root.  It remains to identify \(B^{\sharp}\) with the
root vector \(E_{\beta'}\) fixed by the convex order.

Let
\begin{equation}
 \beta=\alpha_{n-1},
 \qquad
 \eta_j=\delta-2(\alpha_j+\cdots+\alpha_n)
 \quad(1\leq j\leq n),
 \qquad
 c_a=\prod_{h=1}^{a}(q^{4h}+1),
 \qquad c_0=1.
 \label{eq:eta-and-ca}
\end{equation}
The roots \(\eta_j\) are extra-long real roots with root parameter \(Q^2\),
whereas \(\beta\) and \(\beta'\) have root parameter \(Q\).

\begin{lemma}
\label{lem:exact-non-extra-long-endpoint}
With \(\mu=\delta-\alpha_{n-1}\),
\(\beta'=\delta-(\alpha_{n-1}+2\alpha_n)\), and
\(B^{\sharp}=T_n(E_\mu)\) as above, one has
\[
 B^{\sharp}=E_{\beta'}.
\]
Consequently \((E_{\beta'})^{(a)}\in\UDrn{n}\) for every \(a\geq0\).
\end{lemma}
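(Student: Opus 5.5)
The plan is to identify \(B^{\sharp}=T_{\widehat w_\beta}(E_{n-1})\) with the root vector \(E_{\beta'}=T_{w_r}(E_{i_r})\) attached to \(\beta'=\rho_r\) by the fixed word \eqref{eq:fixed-translation-word}. I will do this by factoring \(\widehat w_\beta\) through the prefix \(w_r\) and then invoking Lemma~\ref{lem:simple-to-simple}. Once \(B^{\sharp}=E_{\beta'}\) is proved, the integrality \((E_{\beta'})^{(a)}\in\UDrn{n}\) follows at once from \eqref{eq:Bsharp-integral}.

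\emph{Step 1: locate \(\beta'\) in the enumeration.} In type \(A_{2n}^{(2)}\) the blocks are taken in decreasing order of colours, so the word begins with the initial block \((s_0s_1\cdots s_n)^n=t_{\lambda_n}\) of \eqref{eq:a2n-initial-translation-word}. The roots \(\rho_1,\dots,\rho_{\ell(t_{\lambda_n})}\) are exactly the positive real roots \(\gamma\) with \(t_{\lambda_n}^{-1}(\gamma)<0\). Write \(\gamma=m\delta+\bar\gamma\). Then \(t_{\lambda_n}^{-1}(\gamma)=\gamma+\langle\omega_n^\vee,\bar\gamma\rangle\delta\). For \(\beta'\) we have \(m=1\) and \(\langle\omega_n^\vee,\bar\gamma\rangle=-2\), so \(\beta'\) is an inversion of this first block. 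I will then run through the prefixes of \((s_0\cdots s_n)^n\) explicitly to find the index \(r\) with \(\rho_r=\beta'\). I expect this to occur in the first or second sweep \(s_0s_1\cdots s_n\), with \(i_r\in\{n-1,n\}\); the bookkeeping is the same style as Lemma~\ref{lem:explicit-Weyl-elements}. This also gives the colour \(i_r\) and the prefix \(w_r=s_{i_1}\cdots s_{i_{r-1}}\).

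\emph{Step 2: factor \(\widehat w_\beta\) through \(w_r\).} Put \(u:=w_r^{-1}\widehat w_\beta\). Then \(u(\alpha_{n-1})=w_r^{-1}(\beta')=\alpha_{i_r}\). The goal is the length identity \(\ell(\widehat w_\beta)=\ell(w_r)+\ell(u)\). Granting it, the multiplication rule for standard lifts gives \(T_{\widehat w_\beta}=T_{w_r}T_u\), and Lemma~\ref{lem:simple-to-simple} gives \(T_u(E_{n-1})=E_{i_r}\). Hence \(B^{\sharp}=T_{w_r}(E_{i_r})=E_{\beta'}\).

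To prove additivity I will compare inversion sets: it suffices that \(N(w_r^{-1})\subseteq N(\widehat w_\beta^{-1})\), where \(N(x)=\{\gamma>0: x(\gamma)<0\}\). The inversion set \(N(\widehat w_\beta^{-1})\) is computable from the factorization \(\widehat w_\beta=s_n\,\widetilde\omega_{n-1}s_{n-1}\) already established in Proposition~\ref{prop:Bsharp-integral}: it is \(\{\alpha_n\}\) together with \(s_n\) applied to the inversions of \((\widetilde\omega_{n-1}s_{n-1})^{-1}\). Those inversions are roots \(m\delta+\bar\gamma\) with \(m<\langle\omega_{n-1}^\vee,\bar\gamma\rangle\), minus the root \(\alpha_{n-1}\) adjusted appropriately. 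The inversion set \(N(w_r^{-1})=\{\rho_1,\dots,\rho_{r-1}\}\) comes from Step 1, and I will check the containment root by root.

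\emph{Main obstacle.} The difficulty is the length/inversion bookkeeping in Step 2. In particular, the identity \(T_{\widehat w_\beta}=T_nT_{\widetilde\omega_{n-1}}T_{n-1}^{-1}\) has to be reconciled with prefixes of the \(t_{\lambda_n}\)-block, even though \(\widetilde\omega_{n-1}\) itself belongs to a later block. If the direct containment proves awkward, I will first try a fallback. Since both \(\widehat w_\beta\) and \(w_rs_{i_r}\) are length-additive lifts sending a simple root to \(\beta'\), I would rewrite \(\widehat w_\beta\) as a reduced word by braid moves (using that \(\widetilde\omega_{n-1}\widetilde\omega_n^{-1}\) commutes appropriately) until it begins with \(w_r\). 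Any residual sign would be ruled out by applying both sides to the counit-free leading term in \((U_q^+)_{\beta'}\); note that \(o(n-1)=-1\) was already absorbed in Proposition~\ref{prop:E-mu-integral}.
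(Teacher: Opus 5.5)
Your skeleton is the paper's: write \(\widehat w_\beta\) as a prefix \(w_r\) of the initial block \((s_0s_1\cdots s_n)^n\) times an element \(u\) with \(u(\alpha_{n-1})=\alpha_{i_r}\), prove length additivity, and finish with Lemma~\ref{lem:simple-to-simple}. However, two concrete things you wrote are wrong and would derail the bookkeeping.

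First, \(\beta'\) is not in the first or second sweep with \(i_r\in\{n-1,n\}\); that guess is right only for \(n=2\). Take \(w=s_0s_1\cdots s_n\) and \(e_i=\alpha_i+\cdots+\alpha_n\). Then \(w(e_i)=e_{i+1}\) for \(i<n\), hence
\[
 w^{n-2}s_0(\alpha_1)=w^{n-2}(\delta-e_1-e_2)=\delta-e_{n-1}-e_n=\beta'.
\]
So \(w_r=w^{n-2}s_0\), \(i_r=1\), and \(r=(n-2)(n+1)+2\), which lies in the \((n-1)\)st sweep.

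Second, the inversions of \(\widetilde\omega_{n-1}^{-1}\) are the real roots \(m\delta+\bar\gamma\) with \(0<m\le-\langle\omega_{n-1}^\vee,\bar\gamma\rangle\). Your condition \(m<\langle\omega_{n-1}^\vee,\bar\gamma\rangle\) describes \(N(\widetilde\omega_{n-1})\) instead. With it, the containment check already fails at \(\rho_1=\alpha_0=\delta-2e_1\). Also, the root to delete when passing to \(y=\widetilde\omega_{n-1}s_{n-1}\) is \(y(\alpha_{n-1})=\mu\).

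With these corrections, your containment \(N(w_r^{-1})\subseteq N(\widehat w_\beta^{-1})\) does hold. It has the mild advantage of never needing an explicit reduced word for \(u=w_r^{-1}\widehat w_\beta\). The price is checking \((n-2)(n+1)+1\) roots one at a time.

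The paper is shorter here. It writes \(u=(s_2\cdots s_n)(s_1\cdots s_n)\) explicitly and checks \(\widehat w_\beta=w^{n-2}s_0u\) on the basis \(\delta,e_1,\ldots,e_n\). Reducedness then comes for free, because this word has \((n-1)(n+2)=\ell(\widetilde\omega_{n-1})=\ell(\widehat w_\beta)\) letters.

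Your fallback about a residual sign is unnecessary. Once length additivity holds, \(T_{\widehat w_\beta}=T_{w_r}T_u\) and \(T_u(E_{n-1})=E_1\) are exact identities.
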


\begin{proof}
Let \(e_i=\alpha_i+\cdots+\alpha_n\) and
\(w=s_0s_1\cdots s_n\). After transporting the finite-node
numbering, \cite[Lemma~4.2.3]{D1} gives
\begin{equation}
 w^m(e_i)=
 \begin{cases}
  e_{i+m},&i+m\leq n,\\
  e_{i+m-n}-\delta,&i+m>n,
 \end{cases}
 \qquad 0\leq m\leq n.
 \label{eq:endpoint-w-power-action}
\end{equation}
Set \(u=(s_2\cdots s_n)(s_1\cdots s_n)\). Direct calculation gives
\[
 u(e_i)=
 \begin{cases}
  e_{i+2},&1\leq i\leq n-2,\\
  -e_2,&i=n-1,\\
  -e_1,&i=n.
 \end{cases}
\]
Since \(\widehat w_\beta=s_n\widetilde\omega_{n-1}s_{n-1}\),
the translation action gives
\[
\begin{aligned}
 \widehat w_\beta(e_i)
 &=s_n(e_i-\delta)=e_i-\delta
 &&(1\leq i\leq n-2),\\
 \widehat w_\beta(e_{n-1})
 &=s_n(e_n)=-e_n,\\
 \widehat w_\beta(e_n)
 &=s_n(e_{n-1}-\delta)=e_{n-1}-\delta.
\end{aligned}
\]
On the other hand, \(s_0(e_1)=\delta-e_1\) and
\(s_0(e_j)=e_j\) for \(j\geq2\), so
\eqref{eq:endpoint-w-power-action} gives
\[
\begin{aligned}
 w^{n-2}s_0u(e_i)
 &=w^{n-2}(e_{i+2})=e_i-\delta
 &&(1\leq i\leq n-2),\\
 w^{n-2}s_0u(e_{n-1})
 &=-w^{n-2}(e_2)=-e_n,\\
 w^{n-2}s_0u(e_n)
 &=w^{n-2}(e_1-\delta)=e_{n-1}-\delta.
\end{aligned}
\]
Both elements also fix \(\delta\). Since
\(\delta,e_1,\ldots,e_n\) form a basis of the affine root lattice,
their actions coincide. The Weyl-group action is faithful, so
\begin{equation}
 \widehat w_\beta=w^{n-2}s_0u.
 \label{eq:endpoint-weyl-identity}
\end{equation}

For \(\widetilde\omega_{n-1}\), the short and extra-long root
strings each contribute \(n-1\) inversions, while the long strings
contribute \(2\binom{n-1}{2}+2(n-1)=n(n-1)\). Hence
\[
 \ell(\widetilde\omega_{n-1})
 =(n-1)+n(n-1)+(n-1)=(n-1)(n+2).
\]
The descent and ascent calculations in the proof of
Proposition~\ref{prop:Bsharp-integral} give
\[
 \ell(\widehat w_\beta)
 =\ell(\widetilde\omega_{n-1}s_{n-1})+1
 =\ell(\widetilde\omega_{n-1}).
\]
The word \(w^{n-2}s_0(s_2\cdots s_n)(s_1\cdots s_n)\)
has exactly
\[
 (n-2)(n+1)+1+(n-1)+n=(n-1)(n+2)
\]
letters, so \eqref{eq:endpoint-weyl-identity} is a reduced
factorization. Proposition~\ref{prop:Bsharp-integral} therefore gives
\begin{equation}
 T_nT_{\widetilde\omega_{n-1}}T_{n-1}^{-1}
 =T_{\widehat w_\beta}
 =T_{w^{n-2}}T_0T_u.
 \label{eq:braid-endpoint-identity}
\end{equation}

Since \(u(\alpha_{n-1})=\alpha_1\),
Lemma~\ref{lem:simple-to-simple} gives
\(T_u(E_{n-1})=E_1\). And since \(s_0(\alpha_1)=\alpha_0+\alpha_1=\delta-e_1-e_2\),
\eqref{eq:endpoint-w-power-action} gives
\[
 w^{n-2}s_0(\alpha_1)
 =\delta-e_{n-1}-e_n
 =\delta-(\alpha_{n-1}+2\alpha_n)
 =\beta'.
\]
By \eqref{eq:a2n-initial-translation-word} and
\eqref{eq:real-root-vector-definition},
\(w^{n-2}s_0\) is a prefix followed by \(s_1\), so
\[
 E_{\beta'}=T_{w^{n-2}}T_0(E_1).
\]
Applying \eqref{eq:braid-endpoint-identity} to \(E_{n-1}\) yields
\begin{equation}
 B^{\sharp}=T_n(E_\mu)
 =T_{w^{n-2}}T_0T_u(E_{n-1})
 =T_{w^{n-2}}T_0(E_1)=E_{\beta'}.
 \label{eq:Bsharp-normalization}
\end{equation}
The divided-power assertion follows from \eqref{eq:Bsharp-integral}.
\end{proof}

\subsection{The special three-root cell}
\label{subsec:special-three-root-cell}

\begin{lemma}[The special three-root cell]
\label[lemma]{lem:special-three-root-cell}
Put
\[
 A=E_\beta,
 \qquad
 B=E_{\beta'},
 \qquad
 G=E_{\eta_n}.
\]
Then \(A^{(a)},B^{(a)}\in\UDrn{n}\) for every \(a\geq0\), and
\begin{equation}
 AB-BA=[2]_QG,
 \qquad
 AG=Q^{-2}GA,
 \qquad
 GB=Q^{-2}BG.
 \label{eq:special-cell-relations}
\end{equation}
\end{lemma}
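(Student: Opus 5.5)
The integrality half of the statement is already in hand. Since \(\beta=\alpha_{n-1}\), we have \(A=E_{n-1}=(x_{n-1,0}^{+})\), so \(A^{(a)}=(x_{n-1,0}^{+})^{(a)}\in\UDrn{n}\) by \eqref{eq:bare-current-a2n}. For \(B=E_{\beta'}\), \cref{lem:exact-non-extra-long-endpoint} gives \(B^{(a)}\in\UDrn{n}\). Both \(\beta\) and \(\beta'\) have root parameter \(Q\), so these divided powers are normalized by \([a]_Q!\), matching Lemma~\ref{lem:q-Weyl-highest-contraction}. What remains is the three relations \eqref{eq:special-cell-relations}, which I would prove by reducing to a rank-two computation.

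The plan is to transport everything by the braid element from \cref{lem:exact-non-extra-long-endpoint}. The factorization \(\widehat w_\beta=w^{n-2}s_0u\) gives \(E_{\beta'}=T_{w^{n-2}}T_0(E_1)\). I would look for a single standard lift \(T_x\), with \(x\) a prefix of the convex word \eqref{eq:a2n-initial-translation-word}, that carries a rank-two pair of simple root vectors onto \(\beta\) and \(\beta'\). A natural choice is \(x=w^{n-2}\). By \eqref{eq:endpoint-w-power-action}, \(w^{-(n-2)}\) sends \(\beta'\) to \(s_0(\alpha_1)=\alpha_0+\alpha_1\) and sends \(\beta=e_{n-1}-e_n\) to \(e_1-e_2=\alpha_1\). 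Likewise \(\eta_n=\delta-2e_n\) is sent to \(\delta-2e_2=\alpha_0+2\alpha_1\). The triple \(\{\beta,\beta',\eta_n\}\) is then the image of the triple \(\{\alpha_1,\alpha_0+\alpha_1,\alpha_0+2\alpha_1\}\) inside the rank-two subdiagram \(\{0,1\}\), which is of type \(B_2=C_2\) with \(a_{01}=-1\), \(a_{10}=-2\). This is exactly why \(\beta'\) sits outside the image of \(\iota_n\), as noted in the Remark.

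One must check that \(T_{w^{n-2}}\) maps \(E_1\) to \(E_\beta\) and \(T_0(E_1)\) to \(E_{\beta'}\). The second holds by construction. The first follows from Lemma~\ref{lem:simple-to-simple} if \(w^{n-2}(\alpha_1)=\alpha_{n-1}\) is a simple root, which \eqref{eq:endpoint-w-power-action} confirms. For \(G\), I would similarly verify \(E_{\eta_n}=T_{w^{n-2}}(E_{\alpha_0+2\alpha_1})\), where the rank-two root vectors are taken in the convex order \(\alpha_0,\ \alpha_0+\alpha_1,\ \alpha_0+2\alpha_1,\ \alpha_1\) of the \(B_2\) subalgebra, namely \(T_0(E_1)\) and \(T_0T_1(E_0)\). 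This agreement must be compared with the definition \eqref{eq:real-root-vector-definition} via the convex order. Any discrepancy should be at most a sign, to be absorbed into \(G\) if necessary.

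Everything then reduces to the finite \(U_q(B_2)\) generated by \(E_0,E_1\), with \(q_0=q^4=Q^2\) and \(q_1=Q\). There I would compute \(E_1\cdot T_0(E_1)-T_0(E_1)\cdot E_1\) directly from \(T_0(E_1)=-E_0E_1+Q^{-2}E_1E_0\) and the Serre relation. Its leading part should be \([2]_Q\) times the middle root vector \(T_0T_1(E_0)\), up to a power of \(Q\) and sign that must be pinned down. The two \(q\)-commutation relations \(AG=Q^{-2}GA\) and \(GB=Q^{-2}BG\) are Levendorskii–Soibelman relations for adjacent roots in the convex order, for which no intermediate root exists. Their exponent is \(-(\gamma\mid\gamma')\), which equals \(-4=\) the exponent of \(Q^{-2}\), since \((\alpha_1\mid\alpha_0+2\alpha_1)=-4+4=0\)? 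This needs care. The exponent should be checked carefully and, if needed, the relation rewritten using \(W\)-type toral twists as in \eqref{eq:rank-one-weyl}.

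The main obstacle is this rank-two bookkeeping: getting the exact scalar \([2]_Q\) and the exponents \(Q^{-2}\) right under the paper's sign and orientation conventions for \(T_i\). I would first do the \(B_2\) computation by hand using \eqref{eq:braid-generator-formulas} and the Serre relations. The results would then be cross-checked against the rank-one analogue \eqref{eq:rank-one-weyl}, which has the same shape.
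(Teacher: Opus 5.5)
Your reduction is the same as the paper's. You get integrality of $A^{(a)}$ and $B^{(a)}$ from the zero mode and Lemma~\ref{lem:exact-non-extra-long-endpoint}, then transport the rank-two triple $a=E_1$, $b=T_0(E_1)$, $g=T_0T_1(E_0)$ by $\mathcal W=T_{w^{n-2}}$. However, the relations \eqref{eq:special-cell-relations} are the entire content of the lemma, and you leave all three unresolved (``up to a power of $Q$ and sign that must be pinned down'', ``this needs care'').

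The missing idea for the commutator is an explicit formula for $g$. Since $s_1s_0s_1(\alpha_0)=\alpha_0$, Lemma~\ref{lem:simple-to-simple} gives $T_1T_0T_1(E_0)=E_0$. Hence, by \eqref{eq:braid-generator-formulas},
\[
 g=T_1^{-1}(E_0)=E_0E_1^{(2)}-Q^{-1}E_1E_0E_1+Q^{-2}E_1^{(2)}E_0 .
\]
Together with $b=-E_0E_1+Q^{-2}E_1E_0$, this yields
\[
 ab-ba=E_0E_1^2-(1+Q^{-2})E_1E_0E_1+Q^{-2}E_1^2E_0=[2]_Q\,g
\]
exactly. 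No Serre relation is used, and there is no ``leading part''. Without such a formula your direct computation has nothing explicit to compare against. Applying $T_0$ termwise to $T_1(E_0)$ produces $T_0(E_0)=-F_0K_0$, and the result would still have to be straightened through $[E_0,F_0]$.

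You also cannot ``absorb a sign into $G$'', since $G=E_{\eta_n}$ is fixed. Fortunately no sign arises. Here $w^{n-1}$ is a prefix of \eqref{eq:a2n-initial-translation-word} followed by $s_0$, so $G=T_{w^{n-2}}T_0T_1\cdots T_n(E_0)=T_{w^{n-2}}T_0T_1(E_0)$, using $T_j(E_0)=E_0$ for $j\geq2$.

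For the $q$-commutations, your pairing is wrong. Since $(\alpha_1\mid\alpha_1)=2d_1=4$, you get $(\alpha_1\mid\alpha_0+2\alpha_1)=-4+8=4$, not $0$. Likewise $(\alpha_0+\alpha_1\mid\alpha_0+2\alpha_1)=8-12+8=4$. With these values the Levendorskii--Soibelman relation for adjacent roots does give $Q^{\pm2}$. You would still have to fix its orientation for the convention $T_i(E_i)=-F_iK_i$.

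The paper avoids this by an elementary transport. From $[E_0,F_1]=0$ and $K_1E_0K_1^{-1}=Q^{-2}E_0$ one gets $T_1(E_1)E_0=Q^{-2}E_0T_1(E_1)$ and $E_0T_1^{-1}(E_1)=Q^{-2}T_1^{-1}(E_1)E_0$. Applying $T_1^{-1}$ to the first gives $ag=Q^{-2}ga$, and applying $T_0T_1$ to the second gives $gb=Q^{-2}bg$. Then $\mathcal W$ carries all three relations to $A$, $B$, $G$.
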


\begin{proof}
The divided powers of \(A=E_{n-1}=x_{n-1,0}^{+}\) belong to
\(\UDrn{n}\) by \eqref{eq:bare-current-a2n}; those of \(B\)
belong to \(\UDrn{n}\) by
Lemma~\ref{lem:exact-non-extra-long-endpoint}.

Let \(\mathcal W=T_{w^{n-2}}\) and
\begin{equation}
 a=E_1,\qquad b=T_0(E_1),\qquad g=T_0T_1(E_0).
 \label{eq:cell-generators}
\end{equation}
Equation~\eqref{eq:endpoint-w-power-action} and
Lemma~\ref{lem:simple-to-simple} give \(\mathcal W(a)=A\).
The proof of Lemma~\ref{lem:exact-non-extra-long-endpoint} gives
\(\mathcal W(b)=B\). By \eqref{eq:endpoint-w-power-action},
\[
 w^{n-1}(\alpha_0)
 =w^{n-1}(\delta-2e_1)
 =\delta-2e_n
 =\eta_n.
\]
By \eqref{eq:a2n-initial-translation-word}, \(w^{n-1}\) is a prefix
followed by \(s_0\); \eqref{eq:real-root-vector-definition} therefore gives
\[
\begin{aligned}
 G
 &=T_{w^{n-1}}(E_0)\\
 &=\mathcal W T_0T_1\cdots T_n(E_0)\\
 &=\mathcal W T_0T_1(E_0)
 =\mathcal W(g),
\end{aligned}
\]
where \(T_j(E_0)=E_0\) for \(j\geq2\), since \(a_{j0}=0\).

Moreover, \(s_1s_0s_1\) is reduced in the finite Weyl subgroup
of type \(B_2\) generated by \(s_0,s_1\), and
\[
 s_1s_0s_1(\alpha_0)
 =s_1s_0(\alpha_0+2\alpha_1)
 =s_1(\alpha_0+2\alpha_1)
 =\alpha_0.
\]
Lemma~\ref{lem:simple-to-simple} therefore gives
\[
 T_1T_0T_1(E_0)=E_0.
\]
Applying \(T_1^{-1}\), we obtain
\[
 g=T_0T_1(E_0)=T_1^{-1}(E_0).
\]
It therefore suffices to prove
\begin{equation}
 ab-ba=[2]_Q g,\qquad ag=Q^{-2}ga,\qquad gb=Q^{-2}bg.
 \label{eq:rank-two-cell}
\end{equation}

Since \(a_{01}=-1\), \(a_{10}=-2\), \(q_0=Q^2\), and \(q_1=Q\),
equation~\ref{eq:braid-generator-formulas} gives
\[
\begin{aligned}
 b=T_0(E_1)
 &=-E_0E_1+q_0^{-1}E_1E_0
  =-E_0E_1+Q^{-2}E_1E_0,\\
 g=T_1^{-1}(E_0)
 &=\sum_{r=0}^{2}(-1)^r q_1^{-r}
   E_1^{(r)}E_0E_1^{(2-r)}\\
 &=E_0E_1^{(2)}-Q^{-1}E_1E_0E_1
   +Q^{-2}E_1^{(2)}E_0.
\end{aligned}
\]
Consequently,
\[
 ab-ba
 =E_0E_1^2-(1+Q^{-2})E_1E_0E_1+Q^{-2}E_1^2E_0
 =[2]_Q g.
\]
For the remaining relations, the Chevalley relations give
\[
 [E_0,F_1]=0,\qquad K_1E_0K_1^{-1}=Q^{-2}E_0.
\]
Since \(T_1(a)=-F_1K_1\) and \(T_1^{-1}(a)=-K_1^{-1}F_1\), we obtain
\[
 T_1(a)E_0=Q^{-2}E_0T_1(a),\qquad
 E_0T_1^{-1}(a)=Q^{-2}T_1^{-1}(a)E_0.
\]
Applying \(T_1^{-1}\) to the first equality and \(T_0T_1\) to the
second gives \(ag=Q^{-2}ga\) and \(gb=Q^{-2}bg\), respectively.
Applying the algebra automorphism \(\mathcal W\) to
\eqref{eq:rank-two-cell} proves \eqref{eq:special-cell-relations}.
\end{proof}

\begin{proposition}[Quantum scaled roots when
\texorpdfstring{$n\geq2$}{n>=2}]
\label{prop:special-cell-base-scaled}
For every \(a\geq0\),
\begin{equation}
 c_aE_{\eta_n}^{(a)}\in\UDrn{n}.
 \label{eq:eta-n-scaled}
\end{equation}
\end{proposition}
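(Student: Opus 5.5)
The plan is to apply Lemma~\ref{lem:q-Weyl-highest-contraction} to the special three-root cell of Lemma~\ref{lem:special-three-root-cell}, in exactly the way the rank-one multiplier $d_a$ was produced in Proposition~\ref{prop:rank-one-direct-multiplier}. We take $D=\UDrn{n}$, the parameter $Q=q^2$, $A=E_\beta$, $B=E_{\beta'}$, and $Z:=AB-BA$. By \eqref{eq:special-cell-relations}, $Z=[2]_QG$, and since $[2]_Q$ is a central scalar the relations $AG=Q^{-2}GA$ and $GB=Q^{-2}BG$ give $AZ=Q^{-2}ZA$ and $ZB=Q^{-2}BZ$. The divided-power hypotheses concern $A_Q^{(m)}=E_\beta^{(m)}$ and $B_Q^{(m)}=E_{\beta'}^{(m)}$. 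Both roots have root parameter $Q$, so these are the divided powers normalized by $[m]_Q!$, and Lemma~\ref{lem:special-three-root-cell} places them in $\UDrn{n}$. The lemma therefore gives
\[
 \frac{Z^a}{[a]_Q!}=\frac{([2]_Q)^a G^a}{[a]_Q!}\in\UDrn{n}\qquad(a\geq0).
\]

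Next I rewrite this in terms of the divided power of $G=E_{\eta_n}$. The root $\eta_n$ is extra-long with parameter $Q^2$, so $G^a=[a]_{Q^2}!\,E_{\eta_n}^{(a)}$. This yields
\[
 \widetilde c_a E_{\eta_n}^{(a)}\in\UDrn{n},\qquad
 \widetilde c_a:=\frac{([2]_Q)^a[a]_{Q^2}!}{[a]_Q!}=\prod_{h=1}^a\frac{[2]_Q[h]_{Q^2}}{[h]_Q}.
\]
Unlike the rank-one computation, no toral factor is involved here, so no $K$-commutation step is needed.

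It remains to evaluate the scalar factor by factor, as in \eqref{eq:da-factor}. First, $[2]_Q[h]_{Q^2}=[2h]_Q$. Then
\[
 \frac{[2h]_Q}{[h]_Q}=Q^h+Q^{-h}=q^{-2h}(q^{4h}+1).
\]
Hence $\widetilde c_a=q^{-a(a+1)}c_a$, and multiplying by the unit $q^{a(a+1)}\in\mathcal A^\times$ gives \eqref{eq:eta-n-scaled}.

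The main point needing care is the normalization bookkeeping, which I expect to be the only real obstacle. One must check that the $[m]_Q!$-divided powers in Lemma~\ref{lem:q-Weyl-highest-contraction} are exactly the divided powers of $E_\beta$ and $E_{\beta'}$ already shown to be integral, which is true because $(\beta\mid\beta)=(\beta'\mid\beta')=4$. One must also use that $E_{\eta_n}^{(a)}$ is normalized by $[a]_{Q^2}!=[a]_{q^4}!$, which follows from $(\eta_n\mid\eta_n)=(2\alpha_n\mid2\alpha_n)=8$. Everything else is a direct substitution.
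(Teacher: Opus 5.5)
Your proposal is correct and is essentially the paper's proof. You apply Lemma~\ref{lem:q-Weyl-highest-contraction} with \(D=\UDrn{n}\) to the cell \(A=E_\beta\), \(B=E_{\beta'}\), \(Z=[2]_QG\) of Lemma~\ref{lem:special-three-root-cell}, then compare \([2]_Q^a/[a]_Q!\) with \([a]_{Q^2}!\) to obtain \(\widetilde c_a=\prod_{h=1}^a(Q^h+Q^{-h})=q^{-a(a+1)}c_a\); the only cosmetic difference is that you use \([2]_Q[h]_{Q^2}=[2h]_Q\), whereas the paper writes the equivalent identity \([h]_{Q^2}=[h]_Q(Q^h+Q^{-h})/[2]_Q\).
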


\begin{proof}
Set \(Z=AB-BA=[2]_QG\). By
Lemma~\ref{lem:special-three-root-cell},
\[
 AZ=Q^{-2}ZA,
 \qquad
 ZB=Q^{-2}BZ,
 \qquad
 A^{(m)},B^{(m)}\in\UDrn{n}\quad(m\geq0).
\]
Apply Lemma~\ref{lem:q-Weyl-highest-contraction} with
\(D=\UDrn{n}\). It gives
\begin{equation}
 \frac{Z^a}{[a]_Q!}\in\UDrn{n}.
 \label{eq:Z-integral}
\end{equation}
For every \(h\geq1\), the definitions of the symmetric quantum integers give
\[
 [h]_{Q^2}=[h]_Q\frac{Q^h+Q^{-h}}{[2]_Q}.
\]
Multiplying this identity for \(h=1,\ldots,a\) gives
\begin{equation}
 [a]_{Q^2}!
 =\frac{[a]_Q!}{[2]_Q^a}\widetilde c_a,
 \qquad
 \widetilde c_a:=\prod_{h=1}^{a}(Q^h+Q^{-h}).
 \label{eq:factorial-conversion-Q}
\end{equation}
Since \(Q=q^2\), the definition \eqref{eq:eta-and-ca} gives
\[
 \widetilde c_a
 =\prod_{h=1}^a q^{-2h}(q^{4h}+1)
 =q^{-a(a+1)}c_a.
\]
Since \(G=E_{\eta_n}\) has root parameter \(Q^2\), equations
\eqref{eq:Z-integral}--\eqref{eq:factorial-conversion-Q} give
\[
 \frac{Z^a}{[a]_Q!}
 =\widetilde c_a\frac{G^a}{[a]_{Q^2}!}
 =q^{-a(a+1)}c_aE_{\eta_n}^{(a)}\in\UDrn{n}.
\]
Multiplication by the unit \(q^{a(a+1)}\) proves
\eqref{eq:eta-n-scaled}.
\end{proof}

Therefore, the coefficient \(c_a\) is created entirely inside the special three-root
cell.

\subsection{Weyl transport of the scaled roots}
\label{subsec:scaled-root-weyl-transport}

\begin{lemma}
\label{lem:exact-scaled-root-endpoint}
Let
\[
 w_\eta=s_1s_2\cdots s_{n-1},
\]
Then
\begin{equation}
 w_\eta(\eta_n)=\eta_1=\alpha_0,
 \qquad
 T_{w_\eta}(E_{\eta_n})=E_0.
 \label{eq:exact-scaled-root-endpoint}
\end{equation}
\end{lemma}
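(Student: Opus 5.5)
The proof has two parts: a root computation for the first identity, and a reduction of the braid identity to Lemma~\ref{lem:simple-to-simple}. Since \(\eta_n\) is not a simple root, that lemma cannot be applied to \(T_{w_\eta}\) directly. Instead I would write \(E_{\eta_n}=T_{w^{n-1}}(E_0)\), as in the proof of Lemma~\ref{lem:special-three-root-cell}, and apply the lemma to the product \(w_\eta w^{n-1}\). Throughout, \(e_j=\alpha_j+\cdots+\alpha_n\) and \(w=s_0s_1\cdots s_n\), as in the proof of Lemma~\ref{lem:exact-non-extra-long-endpoint}.

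\emph{Root identity.} For \(1\leq j\leq n-1\), the Cartan data \eqref{eq:a2n-cartan} give \(\langle h_j,e_{j+1}\rangle=a_{j,j+1}=-1\). This holds because \(\alpha_{j+1}\) is the only summand of \(e_{j+1}\) adjacent to node \(j\), and \(a_{j,j+1}=-1\) also for \(j=n-1\). Hence \(s_j(e_{j+1})=e_{j+1}+\alpha_j=e_j\). Every finite reflection fixes \(\delta\). Applying \(w_\eta=s_1\cdots s_{n-1}\) from right to left therefore gives the chain
\[
e_n\mapsto e_{n-1}\mapsto\cdots\mapsto e_1 .
\]
Consequently
\[
w_\eta(\eta_n)=\delta-2e_1=\eta_1=\alpha_0,
\]
by \(\delta=\alpha_0+2e_1\) from \eqref{eq:a2n-symmetrizers}. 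For \(n=2\), \(w_\eta=s_1\) and the same computation applies.

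\emph{Braid identity.} By the proof of Lemma~\ref{lem:special-three-root-cell}, \(E_{\eta_n}=T_{w^{n-1}}(E_0)\), where \(T_{w^{n-1}}\) is the standard lift of a reduced prefix of \eqref{eq:a2n-initial-translation-word}. Moreover \(w^{n-1}(\alpha_0)=\eta_n\), so \(w_\eta w^{n-1}(\alpha_0)=\alpha_0\). I would then prove the length additivity
\[
\ell(w_\eta w^{n-1})=\ell(w_\eta)+\ell(w^{n-1}).
\]
Given this, the multiplication rule for standard lifts yields \(T_{w_\eta}T_{w^{n-1}}=T_{w_\eta w^{n-1}}\). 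Lemma~\ref{lem:simple-to-simple} then gives
\[
T_{w_\eta}(E_{\eta_n})=T_{w_\eta w^{n-1}}(E_0)=E_0 .
\]

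\emph{Length additivity.} This is the step that needs actual checking. I would use the criterion that \(\ell(xy)=\ell(x)+\ell(y)\) if and only if \(y^{-1}(\gamma)>0\) for every positive root \(\gamma\) with \(x^{-1}(\gamma)<0\). For \(x=w_\eta\), these inversion roots are
\[
\alpha_1+\cdots+\alpha_k=e_1-e_{k+1}\qquad(1\leq k\leq n-1).
\]
By \eqref{eq:endpoint-w-power-action} with \(m=n-1\), we have \(w^{n-1}(e_1)=e_n\) and \(w^{n-1}(e_i)=e_{i-1}-\delta\) for \(i\geq2\). Inverting these,
\[
w^{-(n-1)}(e_n)=e_1,\qquad w^{-(n-1)}(e_j)=e_{j+1}+\delta\quad(j\leq n-1).
\]
For \(k\leq n-2\) this gives
\[
w^{-(n-1)}(e_1-e_{k+1})=e_2-e_{k+2}=\alpha_2+\cdots+\alpha_{k+1}>0.
\]
For \(k=n-1\) it gives
\[
w^{-(n-1)}(e_1-e_n)=e_2+\delta-e_1=\delta-\alpha_1>0.
\]
Hence the factorization is length additive, which completes the argument. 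The only delicate point is to track the sign conventions in \eqref{eq:endpoint-w-power-action} correctly when inverting.
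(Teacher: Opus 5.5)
Your root computation is fine and agrees with the paper's. For the braid identity you take a different route. The paper quotes Damiani's Corollary~4.2.8 in [D1] in the form $E_{\eta_n}=T_{w_\eta}^{-1}(E_0)$, using $s_0ws_n=w_\eta$. You instead combine $E_{\eta_n}=T_{w^{n-1}}(E_0)$, length additivity of $w_\eta w^{n-1}$, and Lemma~\ref{lem:simple-to-simple}. That would make the step self-contained.

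However, the length-additivity step is not valid as written, because the criterion you state is false. Take simple reflections $s_1,s_2$ with $a_{12}=a_{21}=-1$, and set $x=s_1s_2$, $y=s_1$. Then $\ell(xy)=3=\ell(x)+\ell(y)$. Yet $\gamma=\alpha_1$ satisfies $x^{-1}(\alpha_1)=-(\alpha_1+\alpha_2)<0$ and $y^{-1}(\alpha_1)=-\alpha_1<0$. The correct statement is that $\ell(xy)=\ell(x)+\ell(y)$ if and only if no positive root $\gamma$ satisfies both $x(\gamma)<0$ and $y^{-1}(\gamma)<0$. Accordingly, the roots $\alpha_1+\cdots+\alpha_k$ you tested form the inversion set of $w_\eta^{-1}$, not of $w_\eta$. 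The two sets coincide only for $n=2$; for $n\geq3$ they share only $\alpha_1+\cdots+\alpha_{n-1}$. So your computation does not establish what you need.

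The repair is short. The inversion set $\{\gamma>0\mid w_\eta(\gamma)<0\}$ of $w_\eta=s_1\cdots s_{n-1}$ is $\{\alpha_k+\cdots+\alpha_{n-1}=e_k-e_n\mid 1\leq k\leq n-1\}$. Your inverted formulas $w^{-(n-1)}(e_n)=e_1$ and $w^{-(n-1)}(e_j)=e_{j+1}+\delta$ for $j\leq n-1$ are correct. They give
\[
w^{-(n-1)}(e_k-e_n)=e_{k+1}+\delta-e_1=\delta-(\alpha_1+\cdots+\alpha_k),
\]
which is a positive root for every $1\leq k\leq n-1$. Hence $\ell(w_\eta w^{n-1})=\ell(w_\eta)+\ell(w^{n-1})$ does hold, and so $T_{w_\eta}T_{w^{n-1}}=T_{w_\eta w^{n-1}}$. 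With $w_\eta w^{n-1}(\alpha_0)=\alpha_0$ and Lemma~\ref{lem:simple-to-simple}, the rest of your argument goes through unchanged.
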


\begin{proof}
In the finite coordinates
\(\alpha_i=e_i-e_{i+1}\) for \(i<n\) and \(\alpha_n=e_n\), one has
\(\eta_j=\delta-2e_j\). Hence
\(s_{j-1}(\eta_j)=\eta_{j-1}\) for \(2\leq j\leq n\),
and the first identity follows.

After transporting the finite-node numbering,
\cite[Cor.~4.2.8]{D1} gives
\[
 E_{\eta_n}=T_{w_\eta}^{-1}(E_0),
\]
since \(s_0ws_n=s_1\cdots s_{n-1}=w_\eta\).
Applying \(T_{w_\eta}\) yields
\[
 T_{w_\eta}(E_{\eta_n})=E_0,
\]
which proves the second identity in
\eqref{eq:exact-scaled-root-endpoint}.
\end{proof}

\begin{proposition}
\label{prop:scaled-affine-divided-powers}
For every \(a\geq0\),
\begin{equation}
 c_aE_0^{(a)},
 \qquad
 c_aF_0^{(a)}
 \in\UDrn{n}.
 \label{eq:scaled-affine-powers}
\end{equation}
\end{proposition}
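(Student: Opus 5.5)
The plan is to transport \eqref{eq:eta-n-scaled} along the braid lift of \(w_\eta=s_1s_2\cdots s_{n-1}\) and then handle \(F_0\) with \(\Omega\).

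First, Lemma~\ref{lem:exact-scaled-root-endpoint} gives \(T_{w_\eta}(E_{\eta_n})=E_0\). Since \(T_{w_\eta}\) is a \(\mathcal K\)-algebra automorphism, we get \(T_{w_\eta}(E_{\eta_n}^a)=E_0^a\). The root \(\eta_n\) is extra-long with root parameter \(Q^2=q^4=q_0\), so \(E_{\eta_n}^{(a)}\) and \(E_0^{(a)}\) are normalized by the same factorial \([a]_{q^4}!\). Hence
\[
T_{w_\eta}\bigl(c_aE_{\eta_n}^{(a)}\bigr)=c_aE_0^{(a)}.
\]

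Next, I need \(T_{w_\eta}\) to preserve \(\UDrn{n}\). The reduced word \(s_1s_2\cdots s_{n-1}\) uses only the nodes \(1\le i<n\). For these nodes, Lemma~\ref{lem:required-standard-lifts} gives \(T_i(\UDrn{n})\subseteq\UDrn{n}\). Composing these inclusions along \(T_{w_\eta}=T_1T_2\cdots T_{n-1}\), we get \(T_{w_\eta}(\UDrn{n})\subseteq\UDrn{n}\). Applying this to Proposition~\ref{prop:special-cell-base-scaled} yields \(c_aE_0^{(a)}\in\UDrn{n}\). The node \(n\) is never used, which matters because Lemma~\ref{lem:required-standard-lifts} gives no stability for \(T_n\) on color-\(n\) currents. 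The only real point to check is that \(T_{w_\eta}\) is indeed this product of standard lifts with no \(T_n\) or \(T_0\) factor. This holds because the word is reduced in the finite type \(A_{n-1}\) subgroup. I also need to confirm that the identification from \cite[Cor.~4.2.8]{D1} introduces no sign or scalar, which Lemma~\ref{lem:exact-scaled-root-endpoint} already asserts.

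For the negative part, apply \(\Omega\) and use \eqref{eq:a2n-Omega-stability}, exactly as at the end of Proposition~\ref{prop:rank-one-direct-multiplier}. We have \(\Omega(E_0^{(a)})=F_0^a/[a]_{q^{-4}}!=F_0^{(a)}\), and
\[
\Omega(c_a)=\prod_{h=1}^a(q^{-4h}+1)=q^{-2a(a+1)}c_a,
\]
which is \(c_a\) times a unit of \(\mathcal A\). Therefore \(q^{-2a(a+1)}c_aF_0^{(a)}\in\UDrn{n}\), and multiplying by the unit \(q^{2a(a+1)}\) gives \(c_aF_0^{(a)}\in\UDrn{n}\).
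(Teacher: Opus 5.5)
Your proposal is correct and follows the paper's proof essentially verbatim. You transport \(c_aE_{\eta_n}^{(a)}\) by \(T_{w_\eta}=T_1\cdots T_{n-1}\), using Lemma~\ref{lem:required-standard-lifts} (which involves no node \(n\)) and Lemma~\ref{lem:exact-scaled-root-endpoint}, and then apply \(\Omega\) together with \(c_a(q^{-1})=q^{-2a(a+1)}c_a\) and a unit rescaling to obtain the \(F_0\) statement.
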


\begin{proof}
Proposition~\ref{prop:special-cell-base-scaled} gives
\(c_aE_{\eta_n}^{(a)}\in\UDrn{n}\).
Lemmas~\ref{lem:required-standard-lifts} and
\ref{lem:exact-scaled-root-endpoint} yield
\[
 c_aE_0^{(a)}
 =T_{w_\eta}\!\left(c_aE_{\eta_n}^{(a)}\right)
 \in\UDrn{n}.
\]
Finally,
\[
 c_a(q^{-1})=q^{-2a(a+1)}c_a(q).
\]
Applying \eqref{eq:a2n-Omega-stability} to the positive-root assertion gives
\[
 q^{-2a(a+1)}c_aF_0^{(a)}\in\UDrn{n}.
\]
Multiplication by the unit \(q^{2a(a+1)}\) yields the negative-root
assertion.
\end{proof}

\subsection{Cyclotomic support and higher-rank recovery}
\label{subsec:cyclotomic-support-recovery}

\begin{theorem}[Higher-rank \(\Bstar\)-sufficiency]
\label{thm:higher-rank-Bstar}
Recall $B^*=\mathcal A[\Phi_{8r}(q)^{-1}\mid r\geq1]$. For every \(n\geq2\),
\begin{equation}
 \Bstar\UDrn{n}
 =\Bstar U_{\A}^{\mathrm{Lus},(n)}.
 \label{eq:higher-rank-Bstar}
\end{equation}
\end{theorem}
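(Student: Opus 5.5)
The argument follows the proof of \cref{thm:rank-one-Cstar}, with the multiplier \(d_a\) replaced by \(c_a\) and Proposition~\ref{prop:scaled-affine-divided-powers} in place of Proposition~\ref{prop:rank-one-direct-multiplier}.

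The inclusion \(\Bstar\UDrn{n}\subseteq\Bstar U_{\A}^{\mathrm{Lus},(n)}\) follows by extending scalars in Lemma~\ref{lem:forward-inclusion}. For the reverse inclusion it suffices, by Lemma~\ref{lem:affine-recovery-criterion} with \(\mathscr X=\UDrn{n}\) and \(B=\Bstar\), to show
\[
 E_0^{(a)},\ F_0^{(a)}\in\Bstar\UDrn{n}\qquad(a\geq0).
\]
Proposition~\ref{prop:scaled-affine-divided-powers} already gives \(c_aE_0^{(a)},c_aF_0^{(a)}\in\UDrn{n}\). So the whole proof reduces to one arithmetic claim: \(c_a\in(\Bstar)^\times\) for every \(a\geq0\).

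To prove the claim I will use the cyclotomic factorization of \(q^{4h}+1\) that was derived in the proof of \cref{thm:rank-one-Cstar}. Writing \(q^{4h}+1=(q^{8h}-1)/(q^{4h}-1)\) and applying the \(2\)-adic argument, one gets
\[
 q^{4h}+1=\prod_{\substack{s\mid h\\ h/s\ \mathrm{odd}}}\Phi_{8s}(q).
\]
Taking the product over \(h=1,\dots,a\) gives
\[
 c_a=\prod_{h=1}^a\ \prod_{\substack{s\mid h\\ h/s\ \mathrm{odd}}}\Phi_{8s}(q).
\]
This is a finite product of polynomials \(\Phi_{8s}(q)\) with \(s\geq1\), and each of them is invertible in \(\Bstar\) by definition. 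Hence \(c_a\in(\Bstar)^\times\).

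Multiplying the two memberships from Proposition~\ref{prop:scaled-affine-divided-powers} by \(c_a^{-1}\in\Bstar\) places \(E_0^{(a)}\) and \(F_0^{(a)}\) in \(\Bstar\UDrn{n}\). Lemma~\ref{lem:affine-recovery-criterion} then gives equality \eqref{eq:higher-rank-Bstar}.

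None of these steps is difficult. The substantive input, the integrality of \(c_aE_0^{(a)}\), comes from the three-root cell together with the braid transport along \(w_\eta\), and that was established earlier. The only point needing care is the \(2\)-adic bookkeeping: I must check that every divisor \(d\) of \(8h\) that does not divide \(4h\) has the form \(8s\), with \(s\mid h\) and \(h/s\) odd. This is the same computation as in rank one.
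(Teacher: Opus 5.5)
Your proposal is correct and follows the paper's proof: Proposition~\ref{prop:scaled-affine-divided-powers} combined with the factorization $q^{4h}+1=\prod_{s\mid h,\ h/s\ \mathrm{odd}}\Phi_{8s}(q)$ makes $c_a$ a unit of $\Bstar$, and Lemma~\ref{lem:affine-recovery-criterion} then gives the equality. Your separate forward-inclusion step is harmless but redundant, since that lemma already supplies the reverse containment from $\mathscr X\subseteq U_{\A}^{\mathrm{Lus},(n)}$.
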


\begin{proof}
By \eqref{eq:eta-and-ca} and the cyclotomic factorization used in the proof of Theorem~\ref{thm:rank-one-Cstar}, one has
\[
 c_a=\prod_{h=1}^{a}(q^{4h}+1),
 \qquad
 q^{4h}+1
 =\prod_{\substack{d\mid h\\ h/d\ \mathrm{odd}}}\Phi_{8d}(q).
\]
Thus every nonunit irreducible factor of \(c_a\) is some
\(\Phi_{8d}(q)\), so \(c_a\in(\Bstar)^{\times}\).
Proposition~\ref{prop:scaled-affine-divided-powers} therefore gives
\[
 E_0^{(a)},F_0^{(a)}\in\Bstar\UDrn{n}
 \qquad(a\geq0).
\]
Applying Lemma~\ref{lem:affine-recovery-criterion} with
\(\mathscr X=\UDrn{n}\) and \(B=\Bstar\) proves
\eqref{eq:higher-rank-Bstar}.
\end{proof}
This completes the sufficiency direction in twisted type \(A_{2n}^{(2)}\).Under the classical specialization \(q\mapsto1\), \(E_0\mapsto e_0\),
we have \(c_a(1)=2^a\). Thus \(c_aE_0^{(a)}\) specializes to
\((2e_0)^{(a)}\), recovering the divided powers of the classical
scaled roots in \cite[Remark~28]{P}.
\section{Coefficient ideals, root-degree detection, and parity separation
in modified form}
\label{sec:coefficient-ideals}

We now turn
to necessity.  The present section isolates the common mechanism used in
Sections~\ref{sec:scaled-root-datum}
and~\ref{sec:endpoint-obstructions}.
The successive implications are supplied by
\cref{lem:simple-block-degree,lem:real-current-parity-propagation,%
lem:parity-coefficient-obstruction,lem:coefficient-prime-invertible}.

For $n,r\geq1$, define
\begin{equation}
 \mathfrak c_{n,r}
 :=(\UDrn{n}:_{\mathcal A}E_0^{(r)})
 =\{a\in\mathcal A\mid aE_0^{(r)}\in \UDrn{n}\}.
 \steptag{8.1}\label{eq:coefficient-ideal}
\end{equation}
Since \(U_{\mathcal A}^{\mathrm{Dr},(n)}\) is an
\(\mathcal A\)-submodule of \(U_{\mathcal A}^{\mathrm{Lus},(n)}\), the quotient
\[
 U_{\mathcal A}^{\mathrm{Lus},(n)}
 \big/
 U_{\mathcal A}^{\mathrm{Dr},(n)}
\]
is an \(\mathcal A\)-module. Writing \([E_0^{(r)}]\) for the class of
\(E_0^{(r)}\) in this quotient, we have
\[
 \mathfrak c_{n,r}
 =
 \operatorname{Ann}_{\mathcal A}\!\bigl([E_0^{(r)}]\bigr).
\]
Hence \(\mathfrak c_{n,r}\) is an \(\mathcal A\)-ideal.

Unlike the explicit multipliers \(d_r\) and \(c_r\), the ideal
\(\mathfrak c_{n,r}\) records every coefficient that recovers
\(E_0^{(r)}\) from the bare real-current core.

\begin{lemma}
\label[lemma]{lem:coefficient-prime-invertible}
For every ordinary localization
\(B=S^{-1}\mathcal A\subseteq\mathcal K\) and every
\((m,n,r)\in\mathbb Z_{\geq1}^{3}\),
if \(\mathfrak c_{n,r}\subseteq(\Phi_m(q))\) and the integral
forms $\UDrn{n}$ and $U_{\mathcal A}^{\mathrm{Lus},(n)}$ coincide over \(B\), then \(\Phi_m(q)\) is invertible in \(B\).
\end{lemma}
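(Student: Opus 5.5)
The argument is a pure commutative-algebra statement about the ideal \(\mathfrak c_{n,r}\) and the localization \(B\); no quantum-group structure beyond the definition \eqref{eq:coefficient-ideal} is needed. Assume \(B\UDrn{n}=BU_{\mathcal A}^{\mathrm{Lus},(n)}\). Since \(E_0^{(r)}\in U_{\mathcal A}^{\mathrm{Lus},(n)}\), we get \(E_0^{(r)}\in B\UDrn{n}\).

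The key step is to clear denominators. Every element of \(B\UDrn{n}=S^{-1}\UDrn{n}\) has the form \(s^{-1}u\) with \(s\in S\) and \(u\in\UDrn{n}\). This holds because a finite sum \(\sum b_k u_k\) with \(b_k=a_k/s_k\) can be brought to a common denominator \(s=\prod s_k\in S\), using that \(\UDrn{n}\) is an \(\mathcal A\)-module. Writing \(E_0^{(r)}=s^{-1}u\), we obtain \(sE_0^{(r)}=u\in\UDrn{n}\). Hence \(s\in\mathfrak c_{n,r}\subseteq(\Phi_m(q))\).

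Now \(s\in S\) is invertible in \(B\), and \(s=\Phi_m(q)\,g\) for some \(g\in\mathcal A\). Therefore \(\Phi_m(q)\cdot(g s^{-1})=1\) in \(B\), so \(\Phi_m(q)\in B^\times\). Here \(g\in\mathcal A\subseteq B\) and \(s^{-1}\in B\), so \(gs^{-1}\in B\).

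The only point needing care is the identification of \(B\UDrn{n}\) with \(S^{-1}\UDrn{n}\) inside the ambient algebra, which was established in the discussion preceding the Proposition on ordinary localizations via flatness. Given that identification, there is no real obstacle: the argument uses only that \(\mathfrak c_{n,r}\) is the full conductor of \(E_0^{(r)}\) into \(\UDrn{n}\), so every denominator-clearing element of \(S\) lies in it.
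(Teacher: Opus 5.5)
Your proposal is correct and follows essentially the same route as the paper: place \(E_0^{(r)}\) in \(S^{-1}\UDrn{n}\), clear denominators to obtain some \(s\in S\cap\mathfrak c_{n,r}\subseteq(\Phi_m(q))\), write \(s=\Phi_m(q)b\), and conclude \(\Phi_m(q)^{-1}=bs^{-1}\in B\). Your explicit common-denominator justification just spells out the step the paper calls ``denominator clearing.''
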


\begin{proof}
Since
\[
 E_0^{(r)}
 \in B U_{\mathcal A}^{\mathrm{Lus},(n)}
 =B\UDrn{n}
 =S^{-1}\UDrn{n},
\]
denominator clearing gives
\[
 \exists\,s\in S:
 \qquad
 sE_0^{(r)}\in\UDrn{n}.
\]
Hence
\[
 s\in S\cap\mathfrak c_{n,r}
 \subseteq(\Phi_m(q)),
 \qquad
 \exists\,b\in\mathcal A:
 \quad s=\Phi_m(q)b.
\]
Since \(s^{-1}\in B\),
\[
 \Phi_m(q)^{-1}=bs^{-1}\in B,
 \qquad
 \Phi_m(q)\in B^\times.
\]
\end{proof}

We next fix the grading conventions for modified quantum groups.  Let
$\dot U_{\mathcal A}$ be the Lusztig modified integral form for the original regular \(A_{2n}^{(2)}\) root datum fixed in
\eqref{eq:a2n-root-datum}.  If
$z\in U_{\mathcal A}^{\mathrm{Lus},(n)}$ is homogeneous of root degree
$\nu$, first interpret in $\dot U_{\mathcal A}$ the identity
\begin{equation}
 z1_\lambda=1_{\lambda+\nu}z1_\lambda,
 \steptag{8.2}\label{eq:modified-homogeneous-block}
\end{equation}
and only then specialize at a root of unity.  We do not require the
ordinary form to embed globally into the modified form after
specialization.

Let \(\ell_i\in\mathbb Z_{>0}\).  Denote the weight lattice, root
lattice, and simple roots of a scaled root datum by
\[
 X^\sharp,
 \qquad
 Q^\sharp,
 \qquad
 \alpha_i^\sharp=\ell_i\alpha_i,
\]
respectively.

For
\[
 \epsilon\in
 \operatorname{Hom}_{\mathbb Z}
 \bigl(Q^\sharp,\mathbb Z/2\mathbb Z\bigr),
 \qquad
 \bar e\in\mathbb Z/2\mathbb Z,
\]
put
\begin{equation}
 \mathcal H_{\bar e}
 :=
 \bigoplus_{\substack{\lambda,\mu\in X^\sharp\\
                       \mu-\lambda\in Q^\sharp,\ 
                       \epsilon(\mu-\lambda)=\bar e}}
 1_\mu^\sharp\dot U^\sharp1_\lambda^\sharp.
 \steptag{8.3}\label{eq:target-parity-spaces}
\end{equation}
The modified block decomposition
\cite[Sect.~23.1.2 and Lemma~23.2.2(b)]{L} gives
\begin{equation}
 \dot U^\sharp
 =\bigoplus_{\substack{\lambda,\mu\in X^\sharp\\
                       \mu-\lambda\in Q^\sharp}}
  1_\mu^\sharp\dot U^\sharp1_\lambda^\sharp
 =\mathcal H_{\bar0}\oplus\mathcal H_{\bar1},
 \qquad
 \mathcal H_{\bar e}\mathcal H_{\bar f}
 \subseteq\mathcal H_{\overline{e+f}}.
 \steptag{8.4}\label{eq:target-parity-grading}
\end{equation}
Indeed,
\[
 \bigl(1_\mu^\sharp\dot U^\sharp1_\lambda^\sharp\bigr)
 \bigl(1_\rho^\sharp\dot U^\sharp1_\sigma^\sharp\bigr)
 =0
 \qquad(\lambda\neq\rho),
\]
whereas, for \(\lambda=\rho\),
\[
 \bigl(1_\mu^\sharp\dot U^\sharp1_\lambda^\sharp\bigr)
 \bigl(1_\lambda^\sharp\dot U^\sharp1_\sigma^\sharp\bigr)
 \subseteq
 1_\mu^\sharp\dot U^\sharp1_\sigma^\sharp,
 \qquad
 \epsilon(\mu-\sigma)
 =\epsilon(\mu-\lambda)+\epsilon(\lambda-\sigma).
\]

For every scalar extension \(\Bbbk\otimes_{\mathcal A}M\) used below, put
\[
 \overline y:=1\otimes y
 \in\Bbbk\otimes_{\mathcal A}M
 \qquad(y\in M).
\]
This convention is distinct from an overline on an integer, which denotes
its class in \(\mathbb Z/2\mathbb Z\).

\begin{lemma}
\label[lemma]{lem:simple-block-degree}
For  $\pi\in\{\Phi_h(q)\mid h\in\mathbb Z_{\geq1}\}$, let $\Bbbk:=\operatorname{Frac}\bigl(\mathcal A/(\pi)\bigr), 
 \dot U_{\Bbbk}:=\Bbbk\otimes_{\mathcal A}\dot U_{\mathcal A}$,  and
let \(\dot U^\sharp\) be the modified \(\Bbbk\)-form attached to the
scaled root datum, and let
\[
 \mathcal F\in
 \operatorname{Hom}_{\Bbbk\text{-}\mathrm{alg}}
 \bigl(\dot U_{\Bbbk},\dot U^\sharp\bigr).
\]
Assume
\begin{equation}
 \forall\,\mu\in X,
 \qquad
 \mathcal F(\overline{1_\mu})
 =
 \begin{cases}
  1_\mu^\sharp,&\mu\in X^\sharp,\\
  0,&\mu\notin X^\sharp.
 \end{cases}
 \steptag{8.5}\label{eq:simple-block-map}
\end{equation}
Then
let \(\nu\in Q\), \(z\in
\bigl(U_{\mathcal A}^{\mathrm{Lus},(n)}\bigr)_\nu\),
and \(\lambda\in X\).
If \(\mathcal F(\overline{z1_\lambda})\neq0\), then
\(\nu\in Q^\sharp\), and
\(\mathcal F(\overline{z1_\lambda})\) is a nonzero homogeneous
element of \(\dot U^\sharp\) of degree \(\nu\).
\end{lemma}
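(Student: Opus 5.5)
The argument uses only the idempotent structure of the modified forms and the hypothesis \eqref{eq:simple-block-map}. The homogeneity identity \eqref{eq:modified-homogeneous-block} is an identity in \(\dot U_{\mathcal A}\), and we transport it along \(\mathcal F\).

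First I would apply the scalar extension \(y\mapsto\overline y\) to \eqref{eq:modified-homogeneous-block}. This map is a ring homomorphism \(\dot U_{\mathcal A}\to\dot U_\Bbbk\), so
\[
 \overline{z1_\lambda}
 =\overline{1_{\lambda+\nu}}\;\overline{z1_\lambda}\;\overline{1_\lambda}
\]
in \(\dot U_\Bbbk\). Here I use \(z1_\lambda=1_{\lambda+\nu}z1_\lambda=1_{\lambda+\nu}z1_\lambda 1_\lambda\), since \(1_\lambda^2=1_\lambda\). Since \(\mathcal F\) is multiplicative, applying it gives
\[
 \mathcal F(\overline{z1_\lambda})
 =\mathcal F(\overline{1_{\lambda+\nu}})\,
  \mathcal F(\overline{z1_\lambda})\,
  \mathcal F(\overline{1_\lambda}).
\]

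Next I would invoke \eqref{eq:simple-block-map}. If \(\lambda\notin X^\sharp\) or \(\lambda+\nu\notin X^\sharp\), then one of the outer factors is zero, and hence \(\mathcal F(\overline{z1_\lambda})=0\). So nonvanishing forces \(\lambda,\lambda+\nu\in X^\sharp\), and the image equals \(1^\sharp_{\lambda+\nu}\mathcal F(\overline{z1_\lambda})1^\sharp_\lambda\), which lies in the block \(1^\sharp_{\lambda+\nu}\dot U^\sharp1^\sharp_\lambda\).

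It remains to show \(\nu\in Q^\sharp\). By the block decomposition \eqref{eq:target-parity-grading} (Lusztig, Lemma 23.2.2(b) for the scaled datum), \(\dot U^\sharp\) is the direct sum of the blocks \(1^\sharp_\mu\dot U^\sharp1^\sharp_{\lambda'}\) with \(\mu-\lambda'\in Q^\sharp\). Any block \(1_\mu^\sharp\dot U^\sharp1_{\lambda'}^\sharp\) with \(\mu-\lambda'\notin Q^\sharp\) is therefore zero. Indeed, by idempotent orthogonality \(1^\sharp_\mu x1^\sharp_{\lambda'}\) is the projection of \(x\) onto that summand, and no summand with that index pair appears. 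Since our image is nonzero and lies in the block indexed by \((\lambda+\nu,\lambda)\), we get \(\nu=(\lambda+\nu)-\lambda\in Q^\sharp\). By definition, a nonzero element of \(1^\sharp_{\lambda+\nu}\dot U^\sharp1^\sharp_\lambda\) is homogeneous of degree \(\nu\) in the \(Q^\sharp\)-grading.

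The only point that needs care is this last step: the vanishing of off-lattice blocks has to be read from the exact form of Lusztig's decomposition, where \(\dot U^\sharp=\bigoplus_{\lambda',\mu}{}_{\mu}\dot U_{\lambda'}\) is indexed by pairs with \(\mu-\lambda'\) in the root lattice. The rest is formal. I do not need any injectivity of specialization, matching the paper's remark that the ordinary form need not embed after specialization: everything happens at the level of the single element \(z1_\lambda\in\dot U_{\mathcal A}\).
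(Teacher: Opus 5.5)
Your proposal is correct and follows essentially the same route as the paper: push the identity $z1_\lambda=1_{\lambda+\nu}z1_\lambda1_\lambda$ through $\mathcal F$, use the idempotent hypothesis to force $\lambda,\lambda+\nu\in X^\sharp$, and then read off $\nu\in Q^\sharp$ and homogeneity of degree $\nu$ from Lusztig's block decomposition of $\dot U^\sharp$. Your orthogonality argument for the vanishing of blocks $1^\sharp_\mu\dot U^\sharp1^\sharp_{\lambda'}$ with $\mu-\lambda'\notin Q^\sharp$ simply spells out what the paper obtains by citing the modified PBW block decomposition.
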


\begin{proof}
Let
\[
 y:=\mathcal F(\overline{z1_\lambda}).
\]
By \eqref{eq:modified-homogeneous-block},
\[
 \overline{z1_\lambda}
 =\overline{1_{\lambda+\nu}z1_\lambda1_\lambda},
 \qquad
 y
 =\mathcal F(\overline{1_{\lambda+\nu}})
  y\mathcal F(\overline{1_\lambda}).
\]
Hence
\[
 y\neq0
 \quad\Longrightarrow\quad
 \lambda,\lambda+\nu\in X^\sharp,
 \qquad
 0\neq y
 \in
 1_{\lambda+\nu}^\sharp
 \dot U^\sharp
 1_\lambda^\sharp.
\]
The modified PBW block decomposition gives
\[
 1_\mu^\sharp\dot U^\sharp1_\lambda^\sharp\neq0
 \quad\Longrightarrow\quad
 \mu-\lambda\in Q^\sharp,
 \qquad
 1_\mu^\sharp\dot U^\sharp1_\lambda^\sharp
 \subseteq(\dot U^\sharp)_{\mu-\lambda}.
\]
Taking \(\mu=\lambda+\nu\) yields
\[
 \nu\in Q^\sharp,
 \qquad
 0\neq y\in(\dot U^\sharp)_\nu.
\]
\end{proof}

For \(n\in\mathbb Z_{\geq1}\), let
\[
 \begin{aligned}
 \mathscr C_n
 :={}&
 \left\{
  (x_{i,k}^{\varepsilon})^{(s)}
  \ \middle|\
  1\leq i\leq n,\ k\in\mathbb Z,\
  s\in\mathbb Z_{\geq0},\
  \varepsilon\in\{1,-1\}
 \right\}\\
 &\cup\{K_\delta^{\pm1}\}
 \cup\{K_i^{\pm1}\mid1\leq i\leq n\}.
 \end{aligned}
\]
be the set of generators of \(U_{\mathcal A}^{\mathrm{Dr},(n)}\)
\begin{lemma}
\label[lemma]{lem:real-current-parity-propagation}
Let \(\Bbbk\) be an \(\mathcal A\)-algebra, let
\[
 D=D_{\bar0}\oplus D_{\bar1},
 \qquad
 D_{\bar e}D_{\bar f}\subseteq D_{\overline{e+f}},
\]
be a \(\mathbb Z/2\mathbb Z\)-graded \(\Bbbk\)-algebra, and let
\[
 \varphi\in
 \operatorname{Hom}_{\Bbbk\text{-}\mathrm{alg}}
 \left(
  \Bbbk\otimes_{\mathcal A}\dot U_{\mathcal A},D
 \right).
\]
If
any nonzero image \(\varphi(\overline{u1_\lambda})\),
with \(\lambda\in X\) and \(u\in\mathscr C_n\),
belongs to \(D_{\bar0}\),
then
\[
 \varphi(\overline{\UDrn{n}1_0})\subseteq D_{\bar0}.
\]
\end{lemma}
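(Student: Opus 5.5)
The plan is to reduce everything to products of generators inserted between weight idempotents. Every element of \(\UDrn{n}\) is an \(\mathcal A\)-linear combination of finite words \(u_1u_2\cdots u_t\) with \(u_j\in\mathscr C_n\), and the empty word is \(1\). Since \(\varphi\) is \(\Bbbk\)-linear and \(D_{\bar0}\) is a \(\Bbbk\)-subspace, it suffices to prove
\[
\varphi\bigl(\overline{u_1\cdots u_t1_0}\bigr)\in D_{\bar0}
\]
for every such word. The empty word contributes \(\varphi(\overline{1_0})\). Here \(1_0=1\cdot1_0\) with \(1=K_\delta K_\delta^{-1}\), or directly \(1_0=K_\delta1_0\), so this case is covered by the hypothesis: the image is either zero or lies in \(D_{\bar0}\).

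Next, I would factor a word through weight idempotents. Each generator \(u\in\mathscr C_n\) is homogeneous of some root degree \(\nu(u)\in Q\). For the currents this degree is \(k\delta\pm\alpha_i\), by \eqref{eq:current-degrees}, and for the toral generators it is \(0\). In \(\dot U_{\mathcal A}\), relation \eqref{eq:modified-homogeneous-block} gives \(u1_\lambda=1_{\lambda+\nu(u)}u1_\lambda\). Put \(\lambda_t=0\) and \(\lambda_{j-1}=\lambda_j+\nu(u_j)\). Induction on \(t\) then gives the factorization, valid already in \(\dot U_{\mathcal A}\) before any scalar extension:
\[
u_1\cdots u_t1_0=(u_11_{\lambda_1})(u_21_{\lambda_2})\cdots(u_t1_{\lambda_t}).
\]
Applying \(\overline{\,\cdot\,}\), which is multiplicative, and then the algebra map \(\varphi\), we get
\[
\varphi\bigl(\overline{u_1\cdots u_t1_0}\bigr)=\prod_{j=1}^t\varphi\bigl(\overline{u_j1_{\lambda_j}}\bigr).
\]
Each factor is either zero or, by hypothesis, lies in \(D_{\bar0}\). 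If any factor is zero, the product is \(0\in D_{\bar0}\). Otherwise \(D_{\bar0}D_{\bar0}\subseteq D_{\bar0}\) puts the product in \(D_{\bar0}\).

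The step needing the most care is the meaning of \(z1_\lambda\) for an element \(z\) of the ordinary form inside the modified form, since \(\UDrn{n}\) is not literally a subalgebra of \(\dot U_{\mathcal A}\). I would take the convention already fixed before Lemma~\ref{lem:simple-block-degree}: the ordinary form acts on \(\dot U_{\mathcal A}\) by left multiplication, \(z\mapsto z1_\lambda\), so that \((zz')1_\lambda=z(z'1_\lambda)\). Here \(\UDrn{n}\subseteq U_{\mathcal A}^{\mathrm{Lus},(n)}\) by Lemma~\ref{lem:forward-inclusion}. This makes \(z\mapsto z1_0\) \(\mathcal A\)-linear, so the reduction to words is legitimate. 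It also justifies inserting the idempotents in the factorization above, through the identity \(z'1_\lambda=1_{\lambda+\nu}z'1_\lambda\) applied to homogeneous \(z'\).
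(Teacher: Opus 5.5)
Your proof is correct and follows the paper's argument. You factor each monomial \(u_1\cdots u_t1_0\) into generator blocks \((u_11_{\lambda_1})\cdots(u_t1_0)\) via the homogeneous-block identity \eqref{eq:modified-homogeneous-block}, use multiplicativity of \(\varphi\) together with \(D_{\bar0}D_{\bar0}\subseteq D_{\bar0}\), and conclude by \(\Bbbk\)-linearity. The only cosmetic difference is the empty word: the paper writes \(1=(x_{1,0}^{+})^{(0)}\in\mathscr C_n\), while you use \(K_\delta1_0=1_0\) (or \(1=K_\delta K_\delta^{-1}\)), and either works.
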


\begin{proof}
For
\[
 M=u_1\cdots u_t,
 \qquad
 u_j\in\mathscr C_n,
 \qquad
 \gamma_j:=\deg u_j,
 \qquad
 \lambda_j:=\sum_{p=j+1}^{t}\gamma_p,
\]
one has
\[
 M1_0=(u_11_{\lambda_1})\cdots(u_t1_0).
\]
Thus
\[
 \varphi(\overline{M1_0})
 =0
 \quad\text{or}\quad
 \varphi(\overline{M1_0})
 \in(D_{\bar0})^t\subseteq D_{\bar0}.
\]
For \(t=0\), use \(1=(x_{1,0}^{+})^{(0)}\in\mathscr C_n\).
Since
\[
 \UDrn{n}
 =\operatorname{span}_{\mathcal A}
 \{u_1\cdots u_t\mid t\geq0,\ u_j\in\mathscr C_n\},
\]
\(\Bbbk\)-linearity yields the assertion.
\end{proof}

\begin{lemma}
\label[lemma]{lem:parity-coefficient-obstruction}
Let $
 n,r\in\mathbb Z_{\geq1},
 \pi\in\{\Phi_h(q)\mid h\in\mathbb Z_{\geq1}\},
 \Bbbk:=\operatorname{Frac}\bigl(\mathcal A/(\pi)\bigr)$, 
and let
\[
 \chi:
 \Bbbk\otimes_{\mathcal A}
 \bigl(U_{\mathcal A}^{\mathrm{Lus},(n)}1_0\bigr)
 \longrightarrow D_{\bar0}\oplus D_{\bar1}
\]
be \(\Bbbk\)-linear, where the sum is internal.  Then
\[
 \left.
 \begin{gathered}
  \chi(\overline{\UDrn{n}1_0})\subseteq D_{\bar0},\\
  0\neq\chi(\overline{E_0^{(r)}1_0})\in D_{\bar1}
 \end{gathered}
 \right\}
 \quad\Longrightarrow\quad
 \mathfrak c_{n,r}\subseteq(\pi).
 \steptag{8.6}\label{eq:parity-coefficient-inclusion}
\]
\end{lemma}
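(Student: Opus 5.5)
Let \(a\in\mathfrak c_{n,r}\); we show \(a\in(\pi)\). By the definition \eqref{eq:coefficient-ideal}, \(aE_0^{(r)}\in\UDrn{n}\), and right multiplication by \(1_0\) inside \(U_{\mathcal A}^{\mathrm{Lus},(n)}1_0\) gives \(aE_0^{(r)}1_0\in\UDrn{n}1_0\). Tensoring with \(\Bbbk\) over \(\mathcal A\) and applying the \(\Bbbk\)-linear map \(\chi\) will place the same element in two incompatible homogeneous components.

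\textbf{Step 1 (evaluate \(\chi\)).} Since \(\chi\) is \(\Bbbk\)-linear and \(\overline{ay}=\bar a\,\overline{y}\) in \(\Bbbk\otimes_{\mathcal A}M\), we have
\[
 \bar a\,\chi\bigl(\overline{E_0^{(r)}1_0}\bigr)
 =\chi\bigl(\overline{aE_0^{(r)}1_0}\bigr)\in\chi\bigl(\overline{\UDrn{n}1_0}\bigr)\subseteq D_{\bar0},
\]
by the first hypothesis of \eqref{eq:parity-coefficient-inclusion}. Here \(\bar a\) denotes the image of \(a\) under \(\mathcal A\to\mathcal A/(\pi)\to\Bbbk\).

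\textbf{Step 2 (parity separation).} The left-hand side also lies in \(D_{\bar1}\), because \(D_{\bar1}\) is a \(\Bbbk\)-subspace and \(\chi(\overline{E_0^{(r)}1_0})\in D_{\bar1}\). Since the sum \(D_{\bar0}\oplus D_{\bar1}\) is internal and direct, \(D_{\bar0}\cap D_{\bar1}=0\). Hence \(\bar a\,\chi(\overline{E_0^{(r)}1_0})=0\). This vector is nonzero by hypothesis, and \(\Bbbk\) is a field, so \(\bar a=0\) in \(\Bbbk\).

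\textbf{Step 3 (return to \(\mathcal A\)).} Since \(\pi=\Phi_h(q)\) is irreducible in the UFD \(\mathcal A\) and is not a unit, \((\pi)\) is prime. Therefore \(\mathcal A/(\pi)\) is a domain and injects into its fraction field \(\Bbbk\). From \(\bar a=0\) in \(\Bbbk\) we get \(a\in(\pi)\), which proves \(\mathfrak c_{n,r}\subseteq(\pi)\).

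The only point needing care is Step 3, namely that the map \(\mathcal A/(\pi)\to\Bbbk\) is injective. It reduces to the primality of \((\pi)\), which follows from the irreducibility of cyclotomic polynomials over \(\mathbb Q\) together with their primitivity, since their leading coefficient is \(1\). A secondary bookkeeping point is that \(\overline{\,\cdot\,}\) is taken in \(\Bbbk\otimes_{\mathcal A}(U_{\mathcal A}^{\mathrm{Lus},(n)}1_0)\), so membership \(aE_0^{(r)}1_0\in\UDrn{n}1_0\) is needed only at the level of the images of elements, not as an injectivity statement for the tensor product.
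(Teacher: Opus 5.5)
Your proposal is correct and is essentially the paper's argument: multiply $\chi(\overline{E_0^{(r)}1_0})$ by $\bar a$, which lands the same vector in $D_{\bar0}\cap D_{\bar1}=0$. The paper phrases this as a contradiction (if $a\notin(\pi)$ then $\bar a\in\Bbbk^\times$), whereas you deduce $\bar a=0$ directly. Your Step 3, the injectivity of $\mathcal A/(\pi)\to\Bbbk$ via primality of $\Phi_h(q)$ in $\mathcal A$, makes explicit a point the paper uses without comment at this stage; the paper only establishes that primality later, in the proof of Corollary~\ref{cor:no-finite-denominator-set}.
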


\begin{proof}
For \(a\in\mathfrak c_{n,r}\), put
\[
 c:=aE_0^{(r)}\in\UDrn{n}.
\]
Then
\[
 \chi(\overline{c1_0})
 =\bar a\,\chi(\overline{E_0^{(r)}1_0})
 \in D_{\bar0}.
\]
If \(a\notin(\pi)\), then \(\bar a\in\Bbbk^\times\), whence
\[
 0\neq\chi(\overline{c1_0})
 \in D_{\bar0}\cap D_{\bar1}=\{0\},
\]
a contradiction.  Therefore
\[
 a\in(\pi),
 \qquad
 \mathfrak c_{n,r}\subseteq(\pi).
\]
\end{proof}

\section{The scaled root datum and cyclotomic obstructions for
\texorpdfstring{$r\geq2$}{r>=2}}
\label{sec:scaled-root-datum}

The preceding lemmas are quite general, now we apply them to our case of the
nonendpoint cyclotomic primes by means of Lusztig's modified quantum
Frobenius. Recall the coefficient ideals \(\mathfrak c_{n,a}\) from
\eqref{eq:coefficient-ideal}. In this section, we use scaled root data and modified
quantum Frobenius to prove
\[
 \mathfrak c_{n,r}\subseteq(\Phi_{8r}(q))
 \quad(n\geq1,\ r\geq2),
 \qquad
 \mathfrak c_{1,r}\subseteq(\Phi_{4r}(q))
 \quad(r\geq3\text{ odd}).
\]
The scaled-data formulas below will also be used at \(r=1\) in
Section~\ref{sec:endpoint-obstructions}. We therefore retain that value
in the definitions and lattice calculations; the Frobenius applications
in the present section are restricted to the higher indices stated above.

\subsection{The scaled datum at \texorpdfstring{$\Phi_{8r}(q)$}{Phi8r(q)}}
\label{subsec:phi8r-scaled-datum}

Fix $r\geq 1$. Consider
$\Bbbk_r^{(8)}:=\operatorname{Frac}\bigl(\mathcal A/(\Phi_{8r}(q))\bigr)$ and fix
 \begin{equation}
 \zeta_r:=q\bmod \Phi_{8r}(q).
 \label{eq:kr-zeta-r}
\end{equation}
Then
\[
 \operatorname{ord}(\zeta_r)=8r,
 \qquad
 \operatorname{ord}(\zeta_r^2)=4r.
\]
Write
\[
 (d_0,d_1,\ldots,d_n)
 =(4,2,\ldots,2,1).
\]
The local quantum parameter is $q_i=q^{d_i}$.  Hence Lusztig's
even-order Frobenius integer is
\begin{equation}
 \ell_i
 :=\operatorname{ord}\bigl((q_i\bmod\Phi_{8r}(q))^2\bigr)
 =\operatorname{ord}(\zeta_r^{2d_i})
 =\frac{8r}{\gcd(8r,2d_i)}
 =\frac{4r}{\gcd(4r,d_i)}.
 \label{eq:ell-i-phi8r}
\end{equation}
Therefore
\begin{equation}
 \ell_0=r,
 \qquad
 \ell_i=2r\quad(1\leq i<n),
 \qquad
 \ell_n=4r.
 \label{eq:ell-values-phi8r}
\end{equation}
The middle family is empty when $n=1$.

Retain the regular \(A_{2n}^{(2)}\) root datum
\((X,Y,I,\langle\ ,\ \rangle)\) fixed in \eqref{eq:a2n-root-datum} as the source root datum.  Define
\begin{align}
 X^{\bullet}
 &:=\{\lambda\in X\mid
       \langle h_i,\lambda\rangle\in\ell_i\mathbb Z
       \text{ for every }i\in I\},
 \label{eq:scaled-X-lattice}\\
 Y^{\bullet}
 &:=\operatorname{Hom}_{\mathbb Z}(X^{\bullet},\mathbb Z),
 \label{eq:scaled-lattices}\\
 \alpha_i^{\bullet}&:=\ell_i\alpha_i,
 \quad \langle h_i^{\bullet},\lambda\rangle
 :=\ell_i^{-1}\langle h_i,\lambda\rangle
 \qquad(\lambda\in X^{\bullet}).
 \label{eq:scaled-simple-data}
\end{align}
If
\[
 i\cdot j:=d_i a_{ij}=d_j a_{ji}
\]
is the symmetric form on $\mathbb Z[I]$, put
\begin{equation}
 i\mathbin{\cdot_{\bullet}}j:=\ell_i\ell_j(i\cdot j),
 \qquad
 d_i^{\bullet}:=d_i\ell_i^2.
 \label{eq:scaled-symmetric-form}
\end{equation}
The coefficients $d_i^{\bullet}$ are the integral symmetrizers of the
scaled Cartan datum.

\begin{lemma}
\label[lemma]{lem:scaled-datum-regular}
With the data of
\eqref{eq:scaled-X-lattice}--\eqref{eq:scaled-symmetric-form},
\[
 \Bigl(
  X^{\bullet},Y^{\bullet},I,\langle\ ,\ \rangle,
  (\alpha_i^{\bullet})_{i\in I},(h_i^{\bullet})_{i\in I}
 \Bigr)
 \quad\text{is a regular root datum}.
\]
For every \((i,j)\in I^2\),
\begin{equation}
 a_{ij}^{\bullet}
 :=\langle h_i^{\bullet},\alpha_j^{\bullet}\rangle
 =\ell_i^{-1}\ell_j a_{ij}=a_{ji}.
 \label{eq:scaled-cartan-transpose}
\end{equation}
Moreover,
\begin{equation}
 L_r^{\bullet}
 :=\bigoplus_{i\in I}\mathbb Z\alpha_i^{\bullet}
 =r\mathbb Z\alpha_0
   \oplus\bigoplus_{i=1}^{n-1}2r\mathbb Z\alpha_i
   \oplus4r\mathbb Z\alpha_n.
 \label{eq:scaled-root-lattice}
\end{equation}
\end{lemma}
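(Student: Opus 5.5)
The plan is to check the axioms of a root datum directly, following Lusztig's construction in \cite[Section~35.1.2]{L}, while keeping the explicit values \eqref{eq:ell-values-phi8r} of the \(\ell_i\) at hand. Lusztig's general recipe for the Frobenius datum is exactly \(X^\bullet\), \(Y^\bullet\), \(\alpha_i^\bullet=\ell_i\alpha_i\), \(h_i^\bullet=\ell_i^{-1}h_i\) restricted to \(X^\bullet\). Hence most of the statement is routine bookkeeping, and the real content is the transpose identity \eqref{eq:scaled-cartan-transpose}.

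The steps, in order:

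\begin{enumerate}
\item \emph{Well-definedness.} For \(\lambda\in X^\bullet\), \(\langle h_i,\lambda\rangle\in\ell_i\mathbb Z\), so \(\ell_i^{-1}\langle h_i,\cdot\rangle\) is an integer-valued linear form on \(X^\bullet\). Thus \(h_i^\bullet\in Y^\bullet\).

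\item \(\alpha_j^\bullet\in X^\bullet\). This amounts to \(\ell_i\mid \ell_j a_{ij}\) for all \(i,j\), which I check edge by edge.
 \begin{itemize}
 \item Diagonal: \(\ell_i\mid 2\ell_i\).
 \item \((0,1)\): \(\ell_0=r\) divides \(\ell_1a_{01}=-2r\), and \(\ell_1=2r\) divides \(\ell_0a_{10}=-2r\).
 \item \((i,i+1)\) with \(1\le i\le n-2\): equal \(\ell\)'s, so the condition is trivial.
 \item \((n-1,n)\): \(2r\mid 4r\cdot(-1)\) and \(4r\mid 2r\cdot(-2)\).
 \item Rank one: \(r\mid r\cdot(-1)\) and \(r\mid r\cdot(-4)\), since \(\ell_1=\ell_n=4r\) and \(\ell_0=r\).
 \end{itemize}

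\item The identity \(\ell_i^{-1}\ell_ja_{ij}=a_{ji}\), equivalently \(\ell_ja_{ij}=\ell_ia_{ji}\). I would derive it uniformly from the observation that \(d_i\ell_i = 4r\) in every case: \(4r\), \(2\cdot 2r\), and \(1\cdot 4r\). Then \(d_ia_{ij}=d_ja_{ji}\) gives \(\ell_ja_{ij}=\ell_ia_{ji}\) after multiplying by \(4r/(d_id_j)\). The same identity shows that \(d_i^\bullet=d_i\ell_i^2\) symmetrizes the new Cartan matrix, since \(d_i^\bullet a_{ij}^\bullet=\ell_i\ell_j d_ia_{ij}\), which is symmetric. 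It also shows that \(a_{ii}^\bullet=2\) and that the off-diagonal entries are nonpositive, so \((I,\cdot_\bullet)\) is a Cartan datum.

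\item Root-datum axioms. The pairing \(X^\bullet\times Y^\bullet\to\mathbb Z\) is perfect because \(X^\bullet\) has finite index in \(X\), so it is free of full rank and \(Y^\bullet\) is its dual. The compatibility \(\langle h_i^\bullet,\alpha_j^\bullet\rangle=2\,(i\cdot_\bullet j)/(i\cdot_\bullet i)\) is a one-line check using step 3.

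\item Regularity. Linear independence of \(\{\alpha_i^\bullet\}\) in \(X^\bullet\) is inherited from independence of the \(\alpha_i\) in \(X\) after nonzero scaling. If the paper's regularity also demands independence of the \(h_i^\bullet\), this is inherited in the same way, since \(h_i^\bullet\) is a nonzero multiple of \(h_i\) on the full-rank sublattice.

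\item The lattice formula \eqref{eq:scaled-root-lattice}. This follows immediately from \(\alpha_i^\bullet=\ell_i\alpha_i\) together with \eqref{eq:ell-values-phi8r}.
\end{enumerate}

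The main obstacle I expect is step 3 together with the rank-one case. There the Cartan matrix \eqref{eq:a2-rank-one-data} is not the \(n=1\) specialization of \eqref{eq:a2n-cartan}, so the identity \(d_i\ell_i=4r\) must be rechecked with \((d_0,d_1)=(4,1)\) and \(\ell_1=4r\). It holds there as well, but this is the place to be careful. A secondary point is to confirm that the paper's notion of regular root datum, taken from Lusztig, needs nothing beyond linear independence. If it needs \(X^\bullet\) to contain enough weights, that follows from \(\ell X\subseteq X^\bullet\) with \(\ell=4r\) and from regularity of the original datum.
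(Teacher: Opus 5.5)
Your proposal is correct and follows the paper's proof. Both identify the data with Lusztig's $4r$-modified root datum and use $d_i\ell_i=4r$ to turn $d_ia_{ij}=d_ja_{ji}$ into $\ell_ja_{ij}=\ell_ia_{ji}$; you verify the root-datum and regularity axioms by hand, where the paper cites Lusztig's construction. One small slip: the rank-one line of your step 2 should read $r\mid 4r\cdot(-1)$ and $4r\mid r\cdot(-4)$. That step is redundant anyway, since $\ell_ja_{ij}=\ell_ia_{ji}$ already gives $\ell_ja_{ij}\in\ell_i\mathbb Z$.
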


\begin{proof}
By \eqref{eq:ell-i-phi8r},
\[
 \ell_i
 =\min\{m\in\mathbb Z_{\geq1}\mid md_i\in4r\mathbb Z\}.
\]
Thus \eqref{eq:scaled-X-lattice}--\eqref{eq:scaled-symmetric-form}
are Lusztig's \(4r\)-modified root datum
\cite[\S\S2.2.4--2.2.5]{L}.  Since the source datum is regular, so is
the modified datum.  Moreover,
\[
 d_i\ell_i=4r
 \qquad(i\in I),
\]
and hence
\[
 a_{ij}^\bullet
 =\ell_i^{-1}\ell_j a_{ij}
 =\frac{d_i}{d_j}a_{ij}
 =a_{ji}.
\]
Finally, \eqref{eq:ell-values-phi8r} gives
\[
 L_r^\bullet
 =r\mathbb Z\alpha_0
  \oplus\bigoplus_{i=1}^{n-1}2r\mathbb Z\alpha_i
  \oplus4r\mathbb Z\alpha_n,
\]
\end{proof}

Define
\begin{equation}
 \epsilon_r:L_r^{\bullet}\longrightarrow\mathbb Z/2\mathbb Z,
 \qquad
 \epsilon_r\!\left(\sum_{i=0}^{n}b_i\alpha_i^{\bullet}\right)
 :=\overline{b_0}.
 \label{eq:epsilon-r}
\end{equation}

\begin{lemma}[Uniform evenness of real-current degrees]
\label[lemma]{lem:uniform-current-evenness}
Let \(s\in\mathbb Z_{\geq0}\), \(k\in\mathbb Z\), \(\varepsilon\in\{1,-1\}\), and \(1\leq i\leq n\). Then \(\deg\!\left((x_{i,k}^{\varepsilon})^{(s)}\right)=s(k\delta+\varepsilon\alpha_i)\), and if \(s(k\delta+\varepsilon\alpha_i)\in L_r^{\bullet}\), then \(\epsilon_r\bigl(s(k\delta+\varepsilon\alpha_i)\bigr)=\overline0\).
\end{lemma}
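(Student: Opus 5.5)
The plan is to compute degrees directly and then read off the \(\alpha_0^\bullet\)-coefficient. By \eqref{eq:current-degrees}, \(x_{i,k}^{\varepsilon}\) is homogeneous of degree \(k\delta+\varepsilon\alpha_i\). Since \((x_{i,k}^{\varepsilon})^{(s)}\) is a scalar multiple of the \(s\)-th power, its degree is \(s(k\delta+\varepsilon\alpha_i)\). This settles the first assertion.

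For the parity, I will expand in simple roots. For \(n\geq2\), \eqref{eq:a2n-symmetrizers} gives
\[
 s(k\delta+\varepsilon\alpha_i)
 =sk\,\alpha_0+\sum_{j=1}^{n}2sk\,\alpha_j+\varepsilon s\,\alpha_i .
\]
For \(n=1\), \eqref{eq:a2-rank-one-data} gives \(sk\,\alpha_0+(2sk+\varepsilon s)\alpha_1\). In either case the \(\alpha_0\)-coefficient is \(sk\), because \(i\geq1\) means \(\alpha_i\) contributes nothing to it.

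Now assume \(\gamma:=s(k\delta+\varepsilon\alpha_i)\in L_r^\bullet\). By \eqref{eq:scaled-root-lattice} we can write \(\gamma=\sum_j b_j\alpha_j^\bullet\) with \(\alpha_0^\bullet=r\alpha_0\). This forces \(sk=rb_0\), so \(b_0=sk/r\), and it remains to show that \(b_0\) is even.

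The key input is the membership condition at the node \(i\) itself, together with the other finite nodes. Compare the \(\alpha_i\)-coefficients first.
\begin{itemize}
\item If \(n\geq2\) and \(i<n\), the \(\alpha_i\)-coefficient is \(2sk+\varepsilon s\), and it must lie in \(2r\mathbb Z\). The \(\alpha_n\)-coefficient is \(2sk\), which must lie in \(4r\mathbb Z\), so \(sk\in 2r\mathbb Z\) and \(b_0\) is even.
\item If \(i=n\geq2\), the \(\alpha_1\)-coefficient is \(2sk\in 2r\mathbb Z\), which only gives \(sk\in r\mathbb Z\). Instead use the \(\alpha_n\)-coefficient: \(2sk+\varepsilon s\in4r\mathbb Z\). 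This makes \(s\) even, so \(s=2s'\). Dividing by \(2\) gives \(2s'k+\varepsilon s'\in2r\mathbb Z\). From \(\alpha_1\) we also have \(2s'k\cdot 2\in 2r\mathbb Z\), i.e. \(2s'k\in r\mathbb Z\), and subtracting gives \(s'\in r\mathbb Z\). Hence \(sk=2s'k\in 2r\mathbb Z\).
\item If \(n=1\), the condition is \(2sk+\varepsilon s\in 4r\mathbb Z\). This makes \(s\) even, \(s=2s'\), with \(2s'k+\varepsilon s'\in 2r\mathbb Z\). Together with \(sk=2s'k\in r\mathbb Z\) (from \(\alpha_0\)), this gives \(s'\in r\mathbb Z\) and again \(sk\in2r\mathbb Z\).
\end{itemize}
In every case \(b_0=sk/r\) is even, so \(\epsilon_r(\gamma)=\overline0\).

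The main obstacle is the case \(i=n\), and also \(n=1\). There the \(\alpha_0\)-condition alone gives only \(sk\in r\mathbb Z\), and evenness must be extracted from the \(4r\)-divisibility at the short node. That requires a careful parity bookkeeping of \(s\) and \(s'\). I would double-check it with small cases: \(r=1\), and \(r\) even versus odd.
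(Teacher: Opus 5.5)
Your proof is correct and follows essentially the same route as the paper. You use the same case split into \(i<n\), where the \(4r\)-divisibility of the \(\alpha_n\)-coordinate \(2sk\) gives \(2r\mid sk\), and \(i=n\) (including \(n=1\)), where you combine \(r\mid sk\) with \(4r\mid s(2k+\varepsilon)\), and the remaining difference is cosmetic: the paper first gets \(r\mid s\) from \(\gcd(k,2k+\varepsilon)=1\) and then \(4\mid s/r\) (so \(b_0\in4\mathbb Z\)), whereas you first halve \(s\) using the oddness of \(2k+\varepsilon\) and then get \(r\mid s/2\) by subtraction, which gives the sufficient conclusion \(b_0\in2\mathbb Z\).
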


\begin{proof}
Let
\[
 \nu:=s(k\delta+\varepsilon\alpha_i).
\]
Since
\[
 \delta=\alpha_0+2\alpha_1+\cdots+2\alpha_n,
\]
we have
\begin{equation*}
 \nu
 =sk\alpha_0
  +\sum_{\substack{1\leq j\leq n\\j\neq i}}2sk\alpha_j
  +s(2k+\varepsilon)\alpha_i.
\end{equation*}
If $\nu\in L_r^{\bullet}$, its $\alpha_0^{\bullet}=r\alpha_0$
coordinate is
\[
 b_0=\frac{sk}{r}\in\mathbb Z.
\]
It remains to prove $b_0\in2\mathbb Z$.

If $i<n$, the $\alpha_n$-coordinate of $\nu$ is $2sk$.  Hence
\[
 \nu\in L_r^{\bullet}
 \quad\Longrightarrow\quad
 4r\mid2sk
 \quad\Longrightarrow\quad
 2r\mid sk
 \quad\Longrightarrow\quad
 b_0=\frac{sk}{r}\in2\mathbb Z.
\]

Suppose $i=n$.  The $\alpha_0$- and $\alpha_n$-coordinates give
\begin{equation*}
 \nu\in L_r^{\bullet}
 \quad\Longrightarrow\quad
 r\mid sk,
 \qquad
 4r\mid s(2k+\varepsilon).
\end{equation*}
Since
\[
 \gcd(k,2k+\varepsilon)=\gcd(k,\varepsilon)=1,
\]
\[
 r\mid sk,\quad r\mid s(2k+\varepsilon)
 \quad\Longrightarrow\quad
 r\mid s.
\]
Write $s=rt$.  Then
\[
 4r\mid rt(2k+\varepsilon)
 \quad\Longrightarrow\quad
 4\mid t(2k+\varepsilon).
\]
Because $2k+\varepsilon$ is odd,
\[
 \gcd(4,2k+\varepsilon)=1
 \quad\Longrightarrow\quad
 4\mid t.
\]
Consequently
\[
 b_0=\frac{sk}{r}=tk\in4\mathbb Z\subseteq2\mathbb Z.
\]
Thus $\epsilon_r(\nu)=\overline0$ in both cases.  The same chains include
$s=0$ and $k=0$.
\end{proof}

Let's explain why the affine-coordinate parity is chosen.
The parity \(\epsilon_r\) detects the affine coordinate: the Frobenius
image of \(E_0^{(r)}1_0\) has \(\alpha_0^\bullet\)-coordinate one, whereas
the lattice divisibilities force every surviving real-current degree to
have even affine coordinate.  Lemma~\ref{lem:uniform-current-evenness}
remains valid for \(r=1\).  The restriction \(r\geq2\) below comes solely from the numerical inequalities \(\ell_i\geq1-a_{ij}\) required in Lusztig's Frobenius theorem for \(i\neq j\) with \(\ell_j\geq2\).  This explains why \(\Phi_8\) requires a different detector rather than a different parity.

We now assume $r\geq2$.  Let $\dot U_{\mathcal A}^{\Lus,(n)}$ be the
source modified integral form, and let
$\dot U_{\mathcal A,r}^{\bullet}$ be the modified integral form attached to
the scaled datum.  Let
\begin{equation}
 \dot U_{\zeta_r}
 :=\Bbbk_r^{(8)}\otimes_{\mathcal A,\,q\mapsto\zeta_r}
      \dot U_{\mathcal A}^{\Lus,(n)},
 \qquad
 \dot U_r^{\bullet}
 :=\Bbbk_r^{(8)}\otimes_{\mathcal A,\,q\mapsto\zeta_r}
      \dot U_{\mathcal A,r}^{\bullet}.
 \label{eq:source-target-specializations}
\end{equation}
For \(\bar e\in\mathbb Z/2\mathbb Z\), let
\begin{equation}
 \mathcal H_{\bar e}^{(r)}
 :=
 \bigoplus_{\substack{\lambda,\mu\in X^\bullet\\
                       \mu-\lambda\in L_r^\bullet,\ 
                       \epsilon_r(\mu-\lambda)=\bar e}}
 1_\mu^\bullet\dot U_r^\bullet1_\lambda^\bullet.
 \label{eq:higher-target-parity-spaces}
\end{equation}
Then \eqref{eq:target-parity-grading} gives
\begin{equation}
 \dot U_r^\bullet
 =\mathcal H_{\bar0}^{(r)}\oplus\mathcal H_{\bar1}^{(r)},
 \qquad
 \mathcal H_{\bar e}^{(r)}\mathcal H_{\bar f}^{(r)}
 \subseteq\mathcal H_{\overline{e+f}}^{(r)}.
 \label{eq:higher-target-parity-grading}
\end{equation}

The following is \cite[\S\S35.1.1--35.1.3;
Thm.~35.1.9]{L}, written in the parameter (q).

\begin{frobeniuslemma}[Lusztig's modified quantum Frobenius criterion in the
$q$-parameter]
\label[lemma]{lem:modified-quantum-frobenius}
Let \(\Bbbk\) be a field and \(\bar q\in\Bbbk^\times\).  Assume and set
\[
 \operatorname{char}\Bbbk=0,
 \qquad
 N:=\operatorname{ord}(\bar q^2)\in2\mathbb Z_{>0},
 \qquad
 \mathcal A_N':=\mathcal A/(\Phi_{2N}(q)),
\]
and assume
\[
 \left(\mathcal A\xrightarrow{\ q\mapsto\bar q\ }\Bbbk\right)
 =
 \left(
  \mathcal A\twoheadrightarrow\mathcal A_N'
  \longrightarrow\Bbbk
 \right).
\]
Let
\[
 (d_i)_{i\in I}\in\mathbb Z_{>0}^{I}
 \quad\text{(the source symmetrizers)},
 \qquad
 d_i a_{ij}=d_j a_{ji}
 \quad\bigl((i,j)\in I^2\bigr),
\]
and, for every \(i\in I\), define
\[
 \ell_i
 :=\min\{m\in\mathbb Z_{\geq1}\mid md_i\in N\mathbb Z\}
 =\frac{N}{\gcd(N,d_i)}
 =\operatorname{ord}(\bar q^{2d_i}).
\]
Let \(\dot U_{\Bbbk}\) be the specialized source modified form and
\(\dot U_{\Bbbk}^{\bullet}\) the specialized modified form for the
associated \((\ell_i)_{i\in I}\)-scaled root datum.  Assume
\begin{enumerate}[label=\textup{(\roman*)}]
 \item
 There exists a function \(o:I\longrightarrow\{1,-1\}\) such that
 \(o(i)=-o(j)\) whenever \(i\neq j\) and \(a_{ij}\neq0\).

 \item
 For all \(i,j\in I\) with \(i\neq j\) and \(\ell_j\geq2\), one has
 \begin{equation}
  \ell_i\geq1-a_{ij}.
  \label{eq:frobenius-size-condition}
 \end{equation}
\end{enumerate}
Then there exists a \(\Bbbk\)-algebra homomorphism
\(\operatorname{Fr}:\dot U_{\Bbbk}\longrightarrow
\dot U_{\Bbbk}^{\bullet}\).
For every \(\lambda\in X\), it sends \(1_\lambda\) to
\(1_\lambda^{\bullet}\) if \(\lambda\in X^{\bullet}\), and to zero
otherwise. Moreover, for all \(i\in I\), \(s\geq0\), and
\(\lambda\in X\), with \(s\) an integer, it satisfies
\[
\begin{aligned}
 \operatorname{Fr}(E_i^{(s)}1_\lambda)
 &=
 \begin{cases}
  E_i^{\bullet,(s/\ell_i)}1_\lambda^{\bullet},
    &\lambda\in X^{\bullet},\ \ell_i\mid s,\\
  0,&\text{otherwise},
 \end{cases}\\[4pt]
 \operatorname{Fr}(F_i^{(s)}1_\lambda)
 &=
 \begin{cases}
  F_i^{\bullet,(s/\ell_i)}1_\lambda^{\bullet},
    &\lambda\in X^{\bullet},\ \ell_i\mid s,\\
  0,&\text{otherwise}.
 \end{cases}
\end{aligned}
\]
\end{frobeniuslemma}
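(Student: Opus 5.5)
This lemma is a restatement of Lusztig's quantum Frobenius theorem for modified forms \cite[\S\S35.1.1--35.1.3, Thm.~35.1.9]{L}, rewritten in the parameter \(q\). The plan is therefore to translate conventions carefully rather than to reprove the theorem.

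First, I would match Lusztig's parameter and ring. Lusztig works over \(\mathbb Z[v,v^{-1}]\) with \(v_i=v^{i\cdot i/2}=v^{d_i}\). He specializes at \(v\mapsto\bar q\), where \(\bar q^2\) has order \(\ell'\), and uses \(\ell_i\) equal to the order of \(\bar q^{2d_i}\). With \(v=q\), our \(q_i=q^{d_i}\) agree with his \(v_i\). The hypothesis \(N=\operatorname{ord}(\bar q^2)\) even, together with the factorization through \(\mathcal A'_N=\mathcal A/(\Phi_{2N}(q))\), is exactly his even-order setting in \S35.1.1. Our formula \(\ell_i=N/\gcd(N,d_i)=\min\{m\mid md_i\in N\mathbb Z\}\) is then an elementary computation of \(\operatorname{ord}(\bar q^{2d_i})\).

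Second, I would check that the scaled datum attached to \((\ell_i)\) is the one Lusztig constructs in \S2.2.4--2.2.5, namely \(X^\bullet\), \(Y^\bullet\), \(\alpha_i^\bullet=\ell_i\alpha_i\), \(h_i^\bullet=\ell_i^{-1}h_i\), and \(i\cdot_\bullet j=\ell_i\ell_j(i\cdot j)\). That comparison was already made in the proof of Lemma~\ref{lem:scaled-datum-regular}.

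Third, I would verify Lusztig's standing hypotheses in \S35.1.2, which are exactly (i) and (ii). Hypothesis (i) is his bipartite sign function, used to normalize the Frobenius signs. Hypothesis (ii) is his inequality \(\ell_i\geq 1-a_{ij}\) for \(\ell_j\geq2\). Once these hold, Thm.~35.1.9 gives an algebra homomorphism \(\dot U_{\mathcal A'}\to\dot U^\bullet_{\mathcal A'}\) with the stated values on \(1_\lambda\), \(E_i^{(s)}1_\lambda\), and \(F_i^{(s)}1_\lambda\). Base change along \(\mathcal A'_N\to\Bbbk\) then produces \(\operatorname{Fr}\).

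The main obstacle is the sign and parameter bookkeeping. Lusztig's construction passes through the \(\bar q\) with \(\bar q^{2}\) of order \(\ell'\) and possibly twists by \(\bar q^{\ell_i^2 d_i}=\pm1\) on the target. I would confirm that, in characteristic \(0\) and with the chosen \(o\), the target generators are the plain \(E_i^{\bullet,(s/\ell_i)}\) rather than a rescaled version; this is the point where (i) enters. Any residual sign \(o(i)^{s/\ell_i}\) could in any case be absorbed by an automorphism of \(\dot U^\bullet\), and it is harmless for the later block-degree and parity arguments.
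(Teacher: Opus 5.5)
Your proposal takes the same route as the paper, which gives no independent proof either: it identifies the lemma with \cite[\S\S35.1.1--35.1.3, Thm.~35.1.9]{L} by setting $v=q$ and $v_i=q^{d_i}$, matching the scaled datum of \cite[\S\S2.2.4--2.2.5]{L} (as in Lemma~\ref{lem:scaled-datum-regular}) and the hypotheses (i)--(ii), and then specializing. Your extra remark that a uniform sign $o(i)^{s/\ell_i}$ could be removed by the automorphism $E_i^{\bullet}\mapsto o(i)E_i^{\bullet}$, $F_i^{\bullet}\mapsto o(i)F_i^{\bullet}$ of $\dot U^{\bullet}$ is correct and harmless. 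One small correction: $\bar q$ has order $2N$ and $\bar q^{2}$ has order $N$, so the integer defining the $\ell_i$ is $N$ (the order of $\bar q^{2}$), not the order of $\bar q$.
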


\medskip
\noindent\textbf{Application to the $\Phi_{8r}(q)$ specialization.}
\begin{align}
 \forall\,(n,r)\in
 \mathbb Z_{\geq1}\times\mathbb Z_{\geq2},
 \qquad
 \exists\,\operatorname{Fr}_{n,r}\in
 \operatorname{Hom}_{\Bbbk_r^{(8)}\text{-}\mathrm{alg}}
 \bigl(\dot U_{\zeta_r},\dot U_r^{\bullet}\bigr):
 \label{eq:higher-frobenius}\\
 \forall\,\lambda\in X,
 \qquad
 \operatorname{Fr}_{n,r}(1_\lambda)=
 \begin{cases}
  1_\lambda^{\bullet},&\lambda\in X^{\bullet},\\
  0,&\lambda\notin X^{\bullet},
 \end{cases}
 \label{eq:higher-frobenius-idempotents}\\
 \operatorname{Fr}_{n,r}(E_0^{(r)}1_0)
 =E_0^{\bullet}1_0^{\bullet}\neq0.
 \label{eq:E0-higher-frobenius-nonzero}
\end{align}
\par

\begin{proof}
Here
\[
 N=\operatorname{ord}(\zeta_r^2)=4r,
 \qquad
 2N=8r
\]
Since \(\xi_{8r}\in\mathbb C\) is a primitive \(8r\)-th root of unity,
\[
 \mathcal A/(\Phi_{8r}(q))
 \cong
 \mathbb Z[\xi_{8r},\xi_{8r}^{-1}]
 =
 \mathbb Z[\xi_{8r}].
\]
This is an integral domain of characteristic zero.  Consequently,
\[
 \Bbbk_r^{(8)}
 =
 \operatorname{Frac}\!\left(\mathcal A/(\Phi_{8r}(q))\right)
 \cong
 \mathbb Q(\xi_{8r})
\]
is a cyclotomic field of characteristic zero.  Thus the field hypothesis in the criterion holds.  Moreover,
$N$ is even and
\[
 \mathcal A\longrightarrow\Bbbk_r^{(8)}
\]
factors through
$\mathcal A_N'=\mathcal A/(\Phi_{8r}(q))$ by
\eqref{eq:kr-zeta-r}.
Set
\[
 o_{\mathrm{Fr}}(i):=(-1)^{n-i}
 \qquad(i\in I).
\]
Then
\[
 i\neq j,\quad a_{ij}\neq0
 \quad\Longrightarrow\quad
 |i-j|=1
 \quad\Longrightarrow\quad
 o_{\mathrm{Fr}}(i)=-o_{\mathrm{Fr}}(j).
\]
Since \(r\geq2\), \eqref{eq:ell-values-phi8r} and the Cartan matrices
fixed in \cref{sec:a2n-root-data} give, for \(i\neq j\),
\[
 1-a_{ij}
 \leq
 \begin{cases}
  2,&i=0,\\
  3,&1\leq i<n \\
  5,&i=n
 \end{cases}
 \leq
 \begin{cases}
  r,&i=0,\\
  2r,&1\leq i<n,\\
  4r,&i=n
 \end{cases}
 =\ell_i.
\]
Thus conditions \textup{(i)}--\textup{(ii)} of the Frobenius criterion
hold, and the criterion gives \eqref{eq:higher-frobenius}--
\eqref{eq:higher-frobenius-idempotents}.  Since
\[
 0\in X^{\bullet},
 \qquad
 \ell_0=r,
\]
its positive block formula gives
\[
 \operatorname{Fr}_{n,r}(E_0^{(r)}1_0)
 =E_0^{\bullet,(r/\ell_0)}1_0^{\bullet}
 =E_0^{\bullet}1_0^{\bullet}.
\]
The final element is nonzero by the modified PBW basis
\cite[\S\S23.2]{L}; this proves
\eqref{eq:E0-higher-frobenius-nonzero}.
\end{proof}

\begin{theorem}
\label{thm:phi8r-coefficient-ideal}
For any integers \(n\geq1\) and \(r\geq2\), one has
\begin{equation}
 \mathfrak c_{n,r}\subseteq(\Phi_{8r}(q)).
 \label{eq:phi8r-coefficient-containment}
\end{equation}
\end{theorem}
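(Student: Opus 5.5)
The plan is to apply \cref{lem:parity-coefficient-obstruction} with \(\pi=\Phi_{8r}(q)\), \(\Bbbk=\Bbbk_r^{(8)}\), and target algebra \(D=\dot U_r^\bullet=\mathcal H_{\bar0}^{(r)}\oplus\mathcal H_{\bar1}^{(r)}\) from \eqref{eq:higher-target-parity-grading}. Let \(\operatorname{Fr}_{n,r}\) be the Frobenius homomorphism of \eqref{eq:higher-frobenius}, which exists because \(r\geq2\). The linear map \(\chi\) will be \(\overline{u1_0}\mapsto\operatorname{Fr}_{n,r}(\overline{u1_0})\), obtained by first viewing \(u1_0\) in \(\dot U_{\mathcal A}^{\Lus,(n)}\) via \eqref{eq:modified-homogeneous-block} and then specializing. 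This map is well defined on \(\Bbbk\otimes_{\mathcal A}(U_{\mathcal A}^{\mathrm{Lus},(n)}1_0)\) because \(u\mapsto u1_0\) is \(\mathcal A\)-linear into the modified form. Two hypotheses must then be checked: evenness on the real-current core, and nonzero odd image of \(E_0^{(r)}1_0\).

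The second hypothesis is immediate. By \eqref{eq:E0-higher-frobenius-nonzero}, \(\operatorname{Fr}_{n,r}(E_0^{(r)}1_0)=E_0^\bullet1_0^\bullet\neq0\). This element lies in the block \(1^\bullet_{r\alpha_0}\dot U_r^\bullet1_0^\bullet\), whose degree \(r\alpha_0=\alpha_0^\bullet\) has \(\epsilon_r\)-value \(\bar1\). Hence the image lies in \(\mathcal H^{(r)}_{\bar1}\).

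For the first hypothesis I will invoke \cref{lem:real-current-parity-propagation} with \(\varphi=\operatorname{Fr}_{n,r}\). This reduces the claim to showing that every nonzero \(\operatorname{Fr}_{n,r}(\overline{u1_\lambda})\) with \(u\in\mathscr C_n\) lies in \(\mathcal H_{\bar0}^{(r)}\). Such a \(u\) is homogeneous: either toral of degree \(0\), or a divided current \((x^\varepsilon_{i,k})^{(s)}\) of degree \(\nu=s(k\delta+\varepsilon\alpha_i)\) by \eqref{eq:current-degrees}. Moreover \(u\in U_{\mathcal A}^{\mathrm{Lus},(n)}\) by \cref{lem:forward-inclusion}. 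The idempotent condition \eqref{eq:simple-block-map} holds by \eqref{eq:higher-frobenius-idempotents}, so \cref{lem:simple-block-degree} applies: a nonzero image forces \(\nu\in L_r^\bullet\), and the image lies in \(1^\bullet_{\lambda+\nu}\dot U_r^\bullet1^\bullet_\lambda\). \Cref{lem:uniform-current-evenness} then gives \(\epsilon_r(\nu)=\bar0\), so the image lies in \(\mathcal H^{(r)}_{\bar0}\). For toral \(u\) the degree is \(0\), which is even.

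Applying \cref{lem:parity-coefficient-obstruction} yields \(\mathfrak c_{n,r}\subseteq(\Phi_{8r}(q))\). The step needing care is the compatibility of the ordinary elements \(u1_\lambda\) with the modified form before specialization: images must be computed inside \(\dot U_{\mathcal A}\) and then tensored, as the paper stipulates, without assuming the ordinary form embeds after specialization. Once \(\chi\) and \(\varphi\) are both taken to be \(\operatorname{Fr}_{n,r}\) composed with this specialization, the remaining steps are direct citations.
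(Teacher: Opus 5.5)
Your proposal is correct and follows the paper's proof essentially step by step. Lemma~\ref{lem:simple-block-degree} together with Lemma~\ref{lem:uniform-current-evenness} makes every nonzero generator block even, Lemma~\ref{lem:real-current-parity-propagation} propagates this to $\operatorname{Fr}_{n,r}(\overline{\UDrn{n}1_0})\subseteq\mathcal H_{\bar0}^{(r)}$, and the nonzero odd image $E_0^\bullet1_0^\bullet$ of $E_0^{(r)}1_0$ then feeds Lemma~\ref{lem:parity-coefficient-obstruction}. You also make explicit two points the paper leaves implicit when it calls $\chi_{n,r}$ a restriction: that each $u\in\mathscr C_n$ lies in $U_{\mathcal A}^{\mathrm{Lus},(n)}$ by Lemma~\ref{lem:forward-inclusion}, and that $\chi$ is $u\mapsto u1_0$ composed with specialization and $\operatorname{Fr}_{n,r}$.
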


\begin{proof}
Fix
\[
 (n,r)\in
 \mathbb Z_{\geq1}\times\mathbb Z_{\geq2},
 \qquad
 (\lambda,u)\in X\times\mathscr C_n,
 \qquad
 \nu:=\deg u.
\]
equation \eqref{eq:higher-frobenius} supplies the algebra
homomorphism required in \cref{lem:simple-block-degree}, while
\eqref{eq:higher-frobenius-idempotents} is precisely its
block-idempotent hypothesis \eqref{eq:simple-block-map}.  If
\[
 \operatorname{Fr}_{n,r}(\overline{u1_\lambda})\neq0
\]
and \(u\) is a real-current generator, then
\[
 \nu\in L_r^\bullet
 \quad\text{by }\cref{lem:simple-block-degree},
 \qquad
 \epsilon_r(\nu)=\overline0
 \quad\text{by }\cref{lem:uniform-current-evenness},
\]
and hence
\[
 0\neq
 \operatorname{Fr}_{n,r}(\overline{u1_\lambda})
 \in\mathcal H_{\bar0}^{(r)}.
\]
For a toral generator,
\[
 \nu=0,
 \qquad
 \operatorname{Fr}_{n,r}(\overline{u1_\lambda})
 \in\mathcal H_{\bar0}^{(r)}.
\]
Thus \cref{lem:real-current-parity-propagation} gives
\begin{equation}
 \operatorname{Fr}_{n,r}
 \bigl(\overline{\UDrn{n}1_0}\bigr)
 \subseteq\mathcal H_{\bar0}^{(r)}.
 \label{eq:higher-core-even}
\end{equation}
Moreover,
\[
 r\alpha_0=\alpha_0^\bullet\in L_r^\bullet,
 \qquad
 \epsilon_r(r\alpha_0)=\overline1,
\]
and \eqref{eq:E0-higher-frobenius-nonzero} gives
\[
 0\neq
 \operatorname{Fr}_{n,r}(\overline{E_0^{(r)}1_0})
 =E_0^\bullet1_0^\bullet
 \in\mathcal H_{\bar1}^{(r)}.
\]
Hence \cref{lem:parity-coefficient-obstruction}, applied to
\[
 \chi_{n,r}
 :=\left.\operatorname{Fr}_{n,r}\right|_{
 \Bbbk_r^{(8)}\otimes_{\mathcal A}
 (U_{\mathcal A}^{\mathrm{Lus},(n)}1_0)},
\]
gives
\[
 \mathfrak c_{n,r}\subseteq(\Phi_{8r}(q)),
\]
which is \eqref{eq:phi8r-coefficient-containment}.
\end{proof}

\subsection{The rank-one primes \texorpdfstring{$\Phi_{4r}(q)$}{Phi4r(q)} for odd
\texorpdfstring{$r\geq3$}{r at least 3}}
\label{subsec:rank-one-phi4r}

We now take $n=1$ and let $r\geq3$ be odd.  Let
\begin{equation}
 \Bbbk_r^{(4)}
 :=\operatorname{Frac}\bigl(\mathcal A/(\Phi_{4r}(q))\bigr),
 \qquad
 \xi_r:=q\bmod\Phi_{4r}(q).
 \label{eq:k4r-xi-r}
\end{equation}
Then
\[
 \operatorname{ord}(\xi_r)=4r,
 \qquad
 \operatorname{ord}(\xi_r^2)=2r.
\]
For $(d_0,d_1)=(4,1)$,
\[
 \ell_i=\operatorname{ord}(\xi_r^{2d_i})
       =\frac{4r}{\gcd(4r,2d_i)}.
\]
Since $r$ is odd,
\[
 \gcd(4r,8)=4,
 \qquad
 \gcd(4r,2)=2,
\]
and hence
\begin{equation}
 \ell_0=r,
 \qquad
 \ell_1=2r.
 \label{eq:rank-one-ell-phi4r}
\end{equation}

Set
\begin{equation}
 X_r^{\star}
 :=\{\lambda\in X\mid
       \langle h_i,\lambda\rangle\in\ell_i\mathbb Z
       \text{ for }i=0,1\},
 \qquad
 \alpha_i^{\star}:=\ell_i\alpha_i.
 \label{eq:rank-one-scaled-datum}
\end{equation}
Set also
\begin{equation}
 Y_r^{\star}:=\operatorname{Hom}_{\mathbb Z}(X_r^{\star},\mathbb Z),
 \qquad
 \langle h_i^{\star},\lambda\rangle
 :=\ell_i^{-1}\langle h_i,\lambda\rangle,
 \qquad
 L_r^{\star}:=r\mathbb Z\alpha_0\oplus2r\mathbb Z\alpha_1.
 \label{eq:rank-one-starred-complete-datum}
\end{equation}
If $i\cdot j=d_i a_{ij}=d_j a_{ji}$ is the source symmetric form, put
\begin{equation}
 i\mathbin{\cdot_{\star}}j:=\ell_i\ell_j(i\cdot j),
 \qquad
 d_i^{\star}:=d_i\ell_i^2.
 \label{eq:rank-one-starred-symmetric-form}
\end{equation}
The coefficients $d_i^{\star}$ are the integral symmetrizers of the
starred Cartan datum.

\begin{lemma}
\label[lemma]{lem:rank-one-scaled-datum-regular}
With the data of
\eqref{eq:rank-one-scaled-datum}--
\eqref{eq:rank-one-starred-symmetric-form},
\[
 \Bigl(
  X_r^{\star},Y_r^{\star},\{0,1\},\langle\ ,\ \rangle,
  (\alpha_i^{\star})_{i=0,1},(h_i^{\star})_{i=0,1}
 \Bigr)
 \quad\text{is a regular root datum},
\]
and
\[
 (a_{ij}^{\star})_{i,j=0,1}
 =\begin{pmatrix}2&-2\\-2&2\end{pmatrix},
 \qquad
 (d_0^{\star},d_1^{\star})=(4r^2,4r^2),
 \qquad
 \bigoplus_{i=0}^{1}\mathbb Z\alpha_i^{\star}
 =L_r^{\star}\subseteq X_r^{\star}.
\]
\end{lemma}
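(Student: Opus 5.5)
The plan is to follow the proof of Lemma~\ref{lem:scaled-datum-regular} closely, replacing $N=4r$ by $N=\operatorname{ord}(\xi_r^2)=2r$. First identify the starred data as Lusztig's $N$-modified root datum \cite[\S\S2.2.4--2.2.5]{L}. By the computation leading to \eqref{eq:rank-one-ell-phi4r},
\[
 \ell_i=\frac{2r}{\gcd(2r,d_i)}=\min\{m\in\mathbb Z_{\geq1}\mid md_i\in2r\mathbb Z\}.
\]
This holds because $\gcd(2r,4)=2$ and $\gcd(2r,1)=1$ for odd $r$, which reproduces $\ell_0=r$ and $\ell_1=2r$. So \eqref{eq:rank-one-scaled-datum}--\eqref{eq:rank-one-starred-symmetric-form} are exactly Lusztig's construction, and regularity transfers from the regular source datum \eqref{eq:a2n-root-datum}, just as in the $\Phi_{8r}$ case.

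Next compute the Cartan entries directly from $a^\star_{ij}=\langle h_i^\star,\alpha_j^\star\rangle=\ell_i^{-1}\ell_j a_{ij}$, using \eqref{eq:a2-rank-one-data}. This gives
\[
 a^\star_{01}=r^{-1}(2r)(-1)=-2,\qquad a^\star_{10}=(2r)^{-1}r(-4)=-2,
\]
and the diagonal entries equal $2$. Unlike Lemma~\ref{lem:scaled-datum-regular}, here $d_i\ell_i$ is not constant ($4r$ versus $2r$), so the result is not the transpose; it comes instead from this direct substitution. The symmetrizers are $d_0^\star=4r^2$ and $d_1^\star=1\cdot4r^2=4r^2$. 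As a consistency check, $d_0^\star a^\star_{01}=-8r^2=d_1^\star a^\star_{10}$.

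Finally, the lattice statement. By definition, $\bigoplus_i\mathbb Z\alpha_i^\star=r\mathbb Z\alpha_0\oplus2r\mathbb Z\alpha_1=L_r^\star$; the sum is direct because $\alpha_0,\alpha_1$ are linearly independent in $X$. For the inclusion $L_r^\star\subseteq X_r^\star$, it suffices to check $\langle h_i,\ell_j\alpha_j\rangle=\ell_ja_{ij}\in\ell_i\mathbb Z$ for all $i,j$, which is just the integrality $a^\star_{ij}\in\mathbb Z$ already computed. In fact this is all part of Lusztig's verification that $\alpha_j^\star\in X^\star$, so it holds in general.

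The main obstacle is only bookkeeping: one must confirm that Lusztig's regularity statement applies to the $2r$-modified datum, where $N=2r$ is even but $d_i\ell_i$ is non-constant. Lusztig's construction does not require $d_i\ell_i$ to be constant, so I expect no genuine difficulty.
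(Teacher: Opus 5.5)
Your proposal is correct and follows essentially the same route as the paper: you identify the starred data with Lusztig's $2r$-modified root datum, so regularity is inherited from the source datum, and then compute $a^\star_{ij}=\ell_i^{-1}\ell_j a_{ij}$, $d_i^\star=d_i\ell_i^2$, and $L_r^\star$ directly. Two of your steps are correct additions that the paper leaves implicit: the explicit check of $L_r^\star\subseteq X_r^\star$ via integrality of the $a^\star_{ij}$, and the remark that $d_i\ell_i$ is not constant here (so the transpose shortcut of Lemma~\ref{lem:scaled-datum-regular} is unavailable).
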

\begin{proof}
Since \(r\) is odd,
\[
 \ell_i
 =\min\{m\in\mathbb Z_{\geq1}\mid md_i\in2r\mathbb Z\},
 \qquad
 (\ell_0,\ell_1)=(r,2r),
\]
Hence \eqref{eq:rank-one-scaled-datum}--
\eqref{eq:rank-one-starred-symmetric-form} are Lusztig's
\(2r\)-modified root datum \cite[\S\S2.2.4--2.2.5]{L}, and regularity
follows from that of the source datum.  Directly,
\[
 (a_{ij}^\star)_{i,j=0,1}
 =\left(\ell_i^{-1}\ell_j a_{ij}\right)_{i,j=0,1}
 =\begin{pmatrix}2&-2\\-2&2\end{pmatrix}.
\]
\[
 (d_0^\star,d_1^\star)
 =(4r^2,4r^2),
 \qquad
 \bigoplus_{i=0}^{1}\mathbb Z\alpha_i^\star
 =r\mathbb Z\alpha_0\oplus2r\mathbb Z\alpha_1
 =L_r^\star.
\]
\end{proof}

Define
\begin{equation}
 \epsilon_r^{\star}:L_r^{\star}\longrightarrow\mathbb Z/2\mathbb Z,
 \qquad
 \epsilon_r^{\star}
 (b_0\alpha_0^{\star}+b_1\alpha_1^{\star})=\overline{b_0}.
 \label{eq:rank-one-starred-parity}
\end{equation}
Let $\dot U_{\mathcal A,r}^{\star}$ be the modified integral form for the
starred datum, and put
\begin{equation}
 \dot U_r^{\star}
 :=\Bbbk_r^{(4)}\otimes_{\mathcal A,\,q\mapsto\xi_r}
      \dot U_{\mathcal A,r}^{\star}.
 \label{eq:rank-one-scaled-target}
\end{equation}
For \(\bar e\in\mathbb Z/2\mathbb Z\), put
\begin{equation}
 \mathcal G_{\bar e}^{(r)}
 :=
 \bigoplus_{\substack{\lambda,\mu\in X_r^\star\\
                       \mu-\lambda\in L_r^\star,\ 
                       \epsilon_r^\star(\mu-\lambda)=\bar e}}
 1_\mu^\star\dot U_r^\star1_\lambda^\star.
 \label{eq:rank-one-target-parity-spaces}
\end{equation}
Then
\begin{equation}
 \dot U_r^\star
 =\mathcal G_{\bar0}^{(r)}\oplus\mathcal G_{\bar1}^{(r)},
 \qquad
 \mathcal G_{\bar e}^{(r)}\mathcal G_{\bar f}^{(r)}
 \subseteq\mathcal G_{\overline{e+f}}^{(r)}.
 \label{eq:rank-one-target-parity-grading}
\end{equation}

\medskip
\noindent\textbf{Application to the rank-one $\Phi_{4r}(q)$ specialization.}
\begin{align}
 &\forall\,r\in(2\mathbb Z+1)\cap\mathbb Z_{\geq3},
 \qquad
 \begin{gathered}
 \dot U_{\xi_r}^{(1)}
 :=\Bbbk_r^{(4)}\otimes_{\mathcal A,\,q\mapsto\xi_r}
       \dot U_{\mathcal A}^{\Lus,(1)},\\[-1pt]
 \exists\,\operatorname{Fr}_{r}^{(4)}\in
 \operatorname{Hom}_{\Bbbk_r^{(4)}\text{-}\mathrm{alg}}
 \bigl(\dot U_{\xi_r}^{(1)},\dot U_r^{\star}\bigr):
 \end{gathered}
 \label{eq:rank-one-phi4r-frobenius}\\
 &
 \forall\,\lambda\in X,
 \qquad
 \operatorname{Fr}_{r}^{(4)}(1_\lambda)=
 \begin{cases}
  1_\lambda^{\star},&\lambda\in X_r^{\star},\\
  0,&\lambda\notin X_r^{\star},
 \end{cases}
 \notag\\
 &
 \operatorname{Fr}_{r}^{(4)}(E_0^{(r)}1_0)
 =E_0^{\star}1_0^{\star}\neq0.
 \notag
\end{align}
\par

\begin{proof}
Here
\[
 N=\operatorname{ord}(\xi_r^2)=2r,
 \qquad
 2N=4r
\]
Similar to the proof above, $\Bbbk_r^{(4)}$ has characteristic zero.  Thus the field hypothesis in the criterion holds.  Moreover,
$N$ is even and
\[
 \mathcal A\longrightarrow\Bbbk_r^{(4)}
\]
factors through
$\mathcal A_N'=\mathcal A/(\Phi_{4r}(q))$ by
\eqref{eq:k4r-xi-r}.
The rank-one graph has no cycle.  Since $a_{01}=-1$ and $a_{10}=-4$,
the two ordered size conditions are
\[
 \ell_0=r\geq3\geq2=1-a_{01},
 \qquad
 \ell_1=2r\geq6\geq5=1-a_{10}.
\]
Thus Lusztig's modified Frobenius criterion gives
\eqref{eq:rank-one-phi4r-frobenius} and the displayed idempotent formula.
Since
\[
 0\in X_r^{\star},\qquad \ell_0=r,\qquad r\mid r,
\]
the positive simple-block formula gives
\[
 \operatorname{Fr}_{r}^{(4)}(E_0^{(r)}1_0)
 =E_0^{\star,(r/\ell_0)}1_0^{\star}
 =E_0^{\star}1_0^{\star}.
\]
Its nonvanishing follows from the target modified PBW basis
\cite[\S\S23.2.1--23.2.2]{L}.
\end{proof}

\begin{proposition}
\label[proposition]{prop:phi4r-coefficient-ideal}
For every odd integer \(r\geq3\), one has
\begin{equation}
 \mathfrak c_{1,r}\subseteq(\Phi_{4r}(q)).
 \label{eq:phi4r-coefficient-containment}
\end{equation}
\end{proposition}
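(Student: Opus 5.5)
The plan is to mirror the proof of \cref{thm:phi8r-coefficient-ideal}, replacing the \(\Phi_{8r}\)-datum by the starred datum of \cref{lem:rank-one-scaled-datum-regular} and the parity \(\epsilon_r\) by \(\epsilon_r^\star\). Fix an odd \(r\geq3\). The homomorphism \(\operatorname{Fr}_r^{(4)}\) of \eqref{eq:rank-one-phi4r-frobenius} satisfies the idempotent hypothesis \eqref{eq:simple-block-map} with \(X^\sharp=X_r^\star\) and \(Q^\sharp=L_r^\star\). So \cref{lem:simple-block-degree} applies: if \(u\in\mathscr C_1\) has degree \(\nu\) and \(\operatorname{Fr}_r^{(4)}(\overline{u1_\lambda})\neq0\), then \(\nu\in L_r^\star\) and the image is a nonzero homogeneous element in a block \(1^\star_{\lambda+\nu}\dot U_r^\star1^\star_\lambda\).

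The key new input is a rank-one analogue of \cref{lem:uniform-current-evenness}. Take \(\nu=s(k\delta+\varepsilon\alpha_1)\), with \(\delta=\alpha_0+2\alpha_1\). Then
\[
 \nu=sk\,\alpha_0+s(2k+\varepsilon)\alpha_1 .
\]
Membership \(\nu\in L_r^\star=r\mathbb Z\alpha_0\oplus2r\mathbb Z\alpha_1\) forces \(r\mid sk\) and \(2r\mid s(2k+\varepsilon)\). Since \(\gcd(k,2k+\varepsilon)=1\), we get \(r\mid s\). Writing \(s=rt\), the second condition becomes \(2\mid t(2k+\varepsilon)\), and since \(2k+\varepsilon\) is odd, \(2\mid t\). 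The \(\alpha_0^\star\)-coordinate is then \(b_0=sk/r=tk\), which is even, so \(\epsilon_r^\star(\nu)=\bar0\). This divisibility step is the only point where the argument differs from the \(\Phi_{8r}\) case. I expect it to be the main thing to check, because the coarser lattice \(2r\mathbb Z\alpha_1\) (instead of \(4r\)) only yields \(2\mid t\), not \(4\mid t\). That is still enough for evenness.

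Next, toral generators have degree \(0\), so their nonzero images lie in \(\mathcal G_{\bar0}^{(r)}\). Using the grading \eqref{eq:rank-one-target-parity-grading}, \cref{lem:real-current-parity-propagation} then gives
\[
 \operatorname{Fr}_r^{(4)}\bigl(\overline{\UDrn{1}1_0}\bigr)\subseteq\mathcal G_{\bar0}^{(r)}.
\]
On the other side, \eqref{eq:rank-one-phi4r-frobenius} gives \(\operatorname{Fr}_r^{(4)}(\overline{E_0^{(r)}1_0})=E_0^\star1_0^\star\neq0\). This element has degree \(r\alpha_0=\alpha_0^\star\), with \(\epsilon_r^\star=\bar1\), so it lies in \(\mathcal G_{\bar1}^{(r)}\).

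Finally, apply \cref{lem:parity-coefficient-obstruction} with \(\pi=\Phi_{4r}(q)\), \(\Bbbk=\Bbbk_r^{(4)}\), and \(\chi\) the restriction of \(\operatorname{Fr}_r^{(4)}\) to \(\Bbbk_r^{(4)}\otimes_{\mathcal A}(U_{\mathcal A}^{\mathrm{Lus},(1)}1_0)\). This yields \(\mathfrak c_{1,r}\subseteq(\Phi_{4r}(q))\), which is \eqref{eq:phi4r-coefficient-containment}.
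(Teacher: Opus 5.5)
Your proposal is correct and follows the paper's proof essentially step for step. You use \cref{lem:simple-block-degree} with the starred datum, and the same divisibility argument: $r\mid s$ from $\gcd(k,2k+\varepsilon)=1$, then $2\mid t$ from the oddness of $2k+\varepsilon$, giving $\epsilon_r^\star(\nu)=\overline{tk}=\overline0$. The conclusion then comes, as in the paper, from \cref{lem:real-current-parity-propagation} and \cref{lem:parity-coefficient-obstruction} applied to the restriction of $\operatorname{Fr}_r^{(4)}$.
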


\begin{proof}
Fix
\[
 r\in(2\mathbb Z+1)\cap\mathbb Z_{\geq3},
 \qquad
 (\lambda,u)\in X\times\mathscr C_1,
 \qquad
 \nu:=\deg u.
\]
The idempotent formula in
\eqref{eq:rank-one-phi4r-frobenius} verifies
\eqref{eq:simple-block-map}.  Suppose
\[
 \operatorname{Fr}_{r}^{(4)}(\overline{u1_\lambda})\neq0.
\]
If
\[
 u=(x_{1,k}^{\varepsilon})^{(s)},
 \qquad
 \nu=sk\alpha_0+s(2k+\varepsilon)\alpha_1,
\]
then \cref{lem:simple-block-degree} gives
\[
 \nu\in L_r^\star,
 \qquad
 r\mid sk,
 \qquad
 2r\mid s(2k+\varepsilon).
\]
Since
\[
 \gcd(k,2k+\varepsilon)=1,
\]
one has \(r\mid s\).  Writing \(s=rt\) gives
\[
 2\mid t(2k+\varepsilon)
 \Longrightarrow
 2\mid t
 \Longrightarrow
 \epsilon_r^\star(\nu)
 =\overline{tk}
 =\overline0.
\]
Thus every nonzero real-current block image belongs to
\(\mathcal G_{\bar0}^{(r)}\).  For a toral generator,
\[
 \nu=0,
 \qquad
 \operatorname{Fr}_{r}^{(4)}(\overline{u1_\lambda})
 \in\mathcal G_{\bar0}^{(r)}.
\]
Hence \cref{lem:real-current-parity-propagation} gives
\begin{equation}
 \operatorname{Fr}_{r}^{(4)}
 \bigl(\overline{\UDrn{1}1_0}\bigr)
 \subseteq\mathcal G_{\bar0}^{(r)}.
 \label{eq:rank-one-core-even}
\end{equation}
Moreover,
\[
 r\alpha_0=\alpha_0^\star\in L_r^\star,
 \qquad
 \epsilon_r^\star(r\alpha_0)=\overline1,
\]
and
\[
 0\neq
 \operatorname{Fr}_{r}^{(4)}(\overline{E_0^{(r)}1_0})
 =E_0^\star1_0^\star
 \in\mathcal G_{\bar1}^{(r)}.
\]
Thus \cref{lem:parity-coefficient-obstruction}, applied to
\[
 \chi_r^{(4)}
 :=\left.\operatorname{Fr}_{r}^{(4)}\right|_{
 \Bbbk_r^{(4)}\otimes_{\mathcal A}
 (U_{\mathcal A}^{\mathrm{Lus},(1)}1_0)},
\]
gives
\[
 \mathfrak c_{1,r}\subseteq(\Phi_{4r}(q)),
\]
which is \eqref{eq:phi4r-coefficient-containment}.
\end{proof}

The restriction to odd $r$ is essential for the displayed starred datum.
If $r$ were even, then
\[
 \ell_0=\operatorname{ord}(\xi_r^8)
 =\frac{4r}{\gcd(4r,8)}=\frac r2,
\]
so the scaled affine root would be
$\alpha_0^{\star}=(r/2)\alpha_0$ and, when the Frobenius criterion applies,
\[
 E_0^{(r)}1_0\longmapsto
 E_0^{\star,(r/\ell_0)}1_0^{\star}
 =E_0^{\star,(2)}1_0^{\star}.
\]
Its $\alpha_0^{\star}$-coordinate is even, rather than the required odd
coordinate.  For an even index $2s\geq4$,
\[
 \Phi_{4(2s)}(q)=\Phi_{8s}(q),
\]
and $s\geq2$, so the corresponding prime is already covered by
\cref{thm:phi8r-coefficient-ideal}.  The remaining $\Phi_4$ and $\Phi_8$
endpoints, for which the affine scaling integer $\ell_0$ equals $1$, are
treated in
\cref{sec:endpoint-obstructions}.
\section{The scaled root datum and cyclotomic obstructions for
\texorpdfstring{$r=1$}{r=1}}
\label{sec:endpoint-obstructions}

Throughout Sections~\ref{sec:endpoint-obstructions}--
\ref{sec:least-ordinary-localization} and
Appendix~\ref{app:endpoint-technical}, we use
\(\mathbb N=\mathbb Z_{\geq0}\).
At the two remaining primes, the scaled affine integer is
\(\ell_0=1\), so the size condition
\eqref{eq:frobenius-size-condition} fails and the Frobenius criterion of
Section~\ref{sec:scaled-root-datum} does not apply. Using the root data
of that section at \(r=1\), we replace the Frobenius target
\(\dot U_{\Bbbk}^{\bullet}\) by a common triangular quotient of the
specialized source form. Its technical construction is given in
Appendix~\ref{app:endpoint-technical}.
Subsection~\ref{subsec:endpoint-parity-obstruction} then applies
Lemmas~\ref{lem:real-current-parity-propagation}
and~\ref{lem:parity-coefficient-obstruction} to prove
\[
 \mathfrak c_{n,1}\subseteq(\Phi_8(q))
 \quad(n\geq1),
 \qquad
 \mathfrak c_{1,1}\subseteq(\Phi_4(q)).
\]

\subsection{The two endpoint instances}
\label{subsec:endpoint-instances}

\begin{convention}[Endpoint data]
\label[convention]{conv:endpoint-data}
Fix \(n\in\mathbb Z_{\geq1}\) and retain the regular \(A_{2n}^{(2)}\)
root datum \((X,Y,I,\langle\ ,\ \rangle)\) of
\eqref{eq:a2n-root-datum} as the source datum, with symmetrizers
\((d_i)_{i\in I}\) and \(q_i=q^{d_i}\).  Assume given a field

\begin{equation}
 \Bbbk,
 \qquad
 \bar q\in\Bbbk^{\times},
 \qquad
 \operatorname{char}\Bbbk=0,
 \qquad
 N:=\operatorname{ord}(\bar q^{\,2})\in2\mathbb Z_{>0},
 \label{eq:endpoint-ground-field}
\end{equation}
Here \(\bar q\) denotes the specialized value of \(q\) in \(\Bbbk\). Let \((\ell_i)_{i\in I}\), \(X^{\bullet}\), \(\alpha_i^{\bullet}\),
\(h_i^{\bullet}\), \(L^{\bullet}\) be the associated scaled data of
\eqref{eq:scaled-X-lattice}--\eqref{eq:scaled-simple-data} and
\eqref{eq:scaled-root-lattice}, that is
\begin{equation}
 \ell_i
 =\operatorname{ord}\bigl(\bar q^{\,2d_i}\bigr)
 =\frac{N}{\gcd(N,d_i)},
 \qquad
 \alpha_i^{\bullet}=\ell_i\alpha_i,
 \qquad
 L^{\bullet}=\bigoplus_{i\in I}\mathbb Z\alpha_i^{\bullet},
 \label{eq:endpoint-scaled-data}
\end{equation}
\begin{equation}
 X^{\bullet}
 =\bigl\{\lambda\in X\bigm|
   \langle h_i,\lambda\rangle\in\ell_i\mathbb Z
   \ \text{for every }i\in I\bigr\}.
 \label{eq:endpoint-scaled-weights}
\end{equation}
We assume throughout
Section~\ref{sec:endpoint-obstructions}
and Appendix~\ref{app:endpoint-technical} that
\begin{equation}
 \bigl(X^{\bullet},Y^{\bullet},I,\langle\ ,\ \rangle,
 (\alpha_i^{\bullet})_{i\in I},(h_i^{\bullet})_{i\in I}\bigr)
 \ \text{is a regular root datum},
 \qquad
 L^{\bullet}\subseteq X^{\bullet},
 \label{eq:endpoint-datum-regular}
\end{equation}
\begin{equation}
 \ell_0=1 .
 \label{eq:endpoint-affine-scaling-one}
\end{equation}
Write \(q_{\Bbbk,i}:=\bar q^{\,d_i}\) for the specialized local
parameter, so that \(\ell_i=\operatorname{ord}(q_{\Bbbk,i}^{2})\).
\end{convention}

\begin{lemma}[The two endpoint instances]
\label[lemma]{lem:endpoint-instances}
Convention~\ref{conv:endpoint-data} is satisfied in the following two ways,
both obtained from \cref{sec:scaled-root-datum} at \(r=1\).
\begin{enumerate}[label=\textup{(\alph*)}]
 \item \emph{The \(\Phi_8(q)\) endpoint.}
 Let \(n\in\mathbb Z_{\geq1}\) and put
 \(
  (\Bbbk,\bar q):=\bigl(\Bbbk_1^{(8)},\zeta_1\bigr)
 \)
 as in \eqref{eq:kr-zeta-r} with \(r=1\).  Then \(N=4\),
 \[
  (\ell_0,\ell_1,\ldots,\ell_n)=(1,2,\ldots,2,4),
  \qquad
  L^{\bullet}=L_1^{\bullet}
  =\mathbb Z\alpha_0
   \oplus\bigoplus_{i=1}^{n-1}2\mathbb Z\alpha_i
   \oplus4\mathbb Z\alpha_n ,
 \]
 the middle sum being empty for \(n=1\).
 \item \emph{The \(\Phi_4(q)\) endpoint.}
 Let \(n=1\) and put
 \(
  (\Bbbk,\bar q):=\bigl(\Bbbk_1^{(4)},\xi_1\bigr)
 \)
 as in \eqref{eq:k4r-xi-r} with \(r=1\).  Then \(N=2\),
 \[
  (\ell_0,\ell_1)=(1,2),
  \qquad
  X^{\bullet}=X_1^{\star},
  \qquad
  L^{\bullet}=L_1^{\star}
  =\mathbb Z\alpha_0\oplus2\mathbb Z\alpha_1 .
 \]
\end{enumerate}
\end{lemma}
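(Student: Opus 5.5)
The statement asks us to verify the hypotheses of Convention~\ref{conv:endpoint-data} in two concrete situations. I would check them one by one, reusing the computations of \cref{sec:scaled-root-datum} with \(r=1\). Those definitions and lattice calculations were deliberately stated for all \(r\geq1\); only the Frobenius applications required \(r\geq2\). The hypotheses to verify are:
\begin{itemize}
 \item the field conditions \eqref{eq:endpoint-ground-field};
 \item the values of \(\ell_i\) and \(L^\bullet\);
 \item regularity of the scaled datum together with \(L^\bullet\subseteq X^\bullet\), as in \eqref{eq:endpoint-datum-regular};
 \item \(\ell_0=1\), as in \eqref{eq:endpoint-affine-scaling-one}.
\end{itemize}

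For (a), I would begin with the field. As in the proof of \eqref{eq:higher-frobenius}, \(\mathcal A/(\Phi_8(q))\cong\mathbb Z[\xi_8]\) is a domain, so \(\Bbbk_1^{(8)}\cong\mathbb Q(\xi_8)\) has characteristic zero. The element \(\zeta_1\) has order \(8\), so \(N=\operatorname{ord}(\zeta_1^2)=4\), which is even. Next, \eqref{eq:ell-i-phi8r} at \(r=1\) gives \(\ell_i=4/\gcd(4,d_i)\). With \((d_0,\ldots,d_n)=(4,2,\ldots,2,1)\) this yields \((1,2,\ldots,2,4)\); in rank one \((d_0,d_1)=(4,1)\) gives \((1,4)\), consistent with the display since the middle family is empty. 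This agrees with \eqref{eq:endpoint-scaled-data}, because \(\ell_i=N/\gcd(N,d_i)\). Regularity and the lattice formula are exactly \cref{lem:scaled-datum-regular}, whose proof was valid for any \(r\geq1\): it identifies the datum with Lusztig's \(4r\)-modified datum via \(d_i\ell_i=4r\), and regularity is inherited from the source.

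The inclusion \(L^\bullet\subseteq X^\bullet\) is the step I would spell out. I need \(\langle h_i,\ell_j\alpha_j\rangle=\ell_ja_{ij}\in\ell_i\mathbb Z\) for all \(i,j\). Since \(d_i\ell_i=d_j\ell_j=4\), we have \(\ell_ja_{ij}=\ell_i a_{ji}\), and \(\ell_i a_{ji}\) is visibly a multiple of \(\ell_i\). Equivalently, this is \eqref{eq:scaled-cartan-transpose}, \(a_{ij}^\bullet=a_{ji}\in\mathbb Z\). Finally \(\ell_0=1\) is read off directly.

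For (b) the argument is parallel. \(\Bbbk_1^{(4)}\cong\mathbb Q(i)\) has characteristic zero, \(\xi_1\) has order \(4\), and \(N=2\). From \((d_0,d_1)=(4,1)\) we get \(\ell_0=2/\gcd(2,4)=1\) and \(\ell_1=2/\gcd(2,1)=2\). The definitions \eqref{eq:rank-one-scaled-datum}--\eqref{eq:rank-one-starred-complete-datum} at \(r=1\) then coincide literally with \(X^\bullet\) and \(L^\bullet\), giving \(X^\bullet=X_1^\star\) and \(L^\bullet=L_1^\star=\mathbb Z\alpha_0\oplus2\mathbb Z\alpha_1\). The point needing care is that \cref{lem:rank-one-scaled-datum-regular} was stated for odd \(r\), which includes \(r=1\). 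Its proof uses only \(d_i\ell_i=2r\) for \(r\) odd, which holds here since \(4\cdot1=1\cdot2\cdot2\)? I would check this directly: \(d_0\ell_0=4\) and \(d_1\ell_1=2\). These are not equal, so the \(\ell_i\) are defined by \(\ell_i=\min\{m\mid md_i\in2\mathbb Z\}\) rather than by that equation. Lusztig's \(N\)-modified datum with \(N=2\) is still regular, and \(a^\star=\begin{pmatrix}2&-2\\-2&2\end{pmatrix}\) is integral. This gives \(\ell_ja_{ij}\in\ell_i\mathbb Z\), that is, \(L^\star\subseteq X^\star\).

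The main (mild) obstacle is therefore confirming that the regularity claims and \(L^\bullet\subseteq X^\bullet\) survive at \(r=1\) even though the Frobenius size condition fails there. I would stress that these are purely lattice-theoretic statements, from \cite[\S\S2.2.4--2.2.5]{L}, and that none of them invoked \eqref{eq:frobenius-size-condition}.
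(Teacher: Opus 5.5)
Your proposal is correct and follows the paper's proof. In (a), everything is read off from \eqref{eq:ell-values-phi8r} and \cref{lem:scaled-datum-regular} at $r=1$; your explicit identity $\ell_j a_{ij}=\ell_i a_{ji}$ for $L^\bullet\subseteq X^\bullet$ just makes visible the integrality $a^\bullet_{ij}=a_{ji}$. In (b), you rightly note $d_0\ell_0\neq d_1\ell_1$ and take the $\ell_i$ from the minimum formula. The one difference is that you get regularity in (b) by running the proof of \cref{lem:rank-one-scaled-datum-regular} at $r=1$ (Lusztig's $2$-modified datum), whereas the paper checks it directly via $\alpha_0^\bullet,\alpha_1^\bullet\in X^\bullet$, $h_1^\bullet=\tfrac12 h_1|_{X^\bullet}\in Y^\bullet$, and the finite-index inclusion $2X\subseteq X^\bullet\subseteq X$; either works.
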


\begin{proof}
We verify the two instances separately.

(a)  In \cref{subsec:phi8r-scaled-datum}, \eqref{eq:kr-zeta-r} gives
\(\operatorname{ord}(\zeta_1)=8\), hence \(N=4\);
\eqref{eq:ell-values-phi8r} gives \((\ell_0,\ldots,\ell_n)=(1,2,\ldots,2,4)\);
and \cref{lem:scaled-datum-regular}, which is stated for every \(r\geq1\),
gives \eqref{eq:endpoint-datum-regular} together with \(L^{\bullet}=L_1^{\bullet}\).

(b)Here
\(\Bbbk=\operatorname{Frac}(\mathcal A/(\Phi_4(q)))\)
has characteristic zero, and \(\bar q=\xi_1\) has order \(4\).
Thus
\[
 N=\operatorname{ord}(\bar q^2)=2,
 \qquad
 (\ell_0,\ell_1)
 =
 \left(
 \frac{2}{\gcd(2,4)},
 \frac{2}{\gcd(2,1)}
 \right)
 =(1,2).
\]
Evaluating the defining formulas of
\cref{subsec:rank-one-phi4r} at \(r=1\), we obtain
\[
 X^\bullet=X_1^\star
 =\{\lambda\in X\mid
       \langle h_1,\lambda\rangle\in2\mathbb Z\},
 \qquad
 L^\bullet=L_1^\star
 =\mathbb Z\alpha_0\oplus2\mathbb Z\alpha_1.
\]
We verify regularity directly.
Since
\(\langle h_1,\alpha_0\rangle=-4\) and
\(\langle h_1,2\alpha_1\rangle=4\),
both scaled simple roots
\(\alpha_0^\bullet=\alpha_0\) and
\(\alpha_1^\bullet=2\alpha_1\) belong to \(X^\bullet\).
The scaled coroots
\[
 h_0^\bullet=h_0|_{X^\bullet},
 \qquad
 h_1^\bullet=\tfrac12 h_1|_{X^\bullet}
\]
belong to
\(Y^\bullet=\operatorname{Hom}_{\mathbb Z}(X^\bullet,\mathbb Z)\),
and
\[
 (a_{ij}^\bullet)_{i,j=0,1}
 =
 \begin{pmatrix}
  2&-2\\
  -2&2
 \end{pmatrix},
 \qquad
 (d_0^\bullet,d_1^\bullet)=(4,4).
\]
Since \(2X\subseteq X^\bullet\subseteq X\),
the lattice \(X^\bullet\) has finite index in \(X\).
The scaled simple roots and coroots therefore remain
linearly independent, by regularity of the source datum.
This proves \eqref{eq:endpoint-datum-regular}.

In both cases \(\ell_0=1\).
\end{proof}

\subsection{The endpoint quotient $\mathcal Q$}
\label{subsec:endpoint-triangular-quotient}

We retain here the quotient and the properties needed for the parity
argument. The detailed word calculus, Gaussian vanishing, and
triangular straightening are collected in
Appendix~\ref{app:endpoint-technical}.
Let
\[
 \dot V
 :=\Bbbk\otimes_{\mathcal A,\,q\mapsto\bar q}
   \dot U_{\mathcal A}^{\Lus,(n)}.
\]
For a labelled divided-power word
\(\mathbf w=(u_1,\ldots,u_m;\eta)\), where \(\eta\in X\) and
\(u_j=E_{i_j}^{(a_j)}\) or \(F_{i_j}^{(a_j)}\) with \(a_j\geq1\), set
\[
 \lambda_j(\mathbf w)
 :=\eta+\sum_{v=j+1}^{m}\deg u_v
 \quad(0\leq j\leq m),
 \qquad
 \operatorname{ev}(\mathbf w):=u_1\cdots u_m1_\eta.
\]
For \(m=0\), the evaluation is \(1_\eta\).

The definition below encodes the two selection rules suggested by
quantum Frobenius in \cref{lem:modified-quantum-frobenius}: only weights
in \(X^\bullet\) and divided-power exponents divisible by the
corresponding \(\ell_i\) are allowed to survive. The intermediate
weights \(\lambda_j(\mathbf w)\) record the successive weight
idempotents through which the word passes. We exclude any word that
passes through an idempotent outside \(X^\bullet\), or contains a
divided-power letter whose exponent is not a multiple of its scaling
integer. At the endpoints the Frobenius size condition fails, so we
implement these selection rules directly in a quotient of \(\dot V\).

Call \(\mathbf w\) bad if
\[
 \bigl(\exists\,j\in\{0,\ldots,m\}:\
        \lambda_j(\mathbf w)\notin X^\bullet\bigr)
 \quad\text{or}\quad
 \bigl(\exists\,j\in\{1,\ldots,m\}:\
        \ell_{i_j}\nmid a_j\bigr),
\]
and define
\[
 \mathfrak J
 :=\operatorname{span}_{\Bbbk}
   \{\operatorname{ev}(\mathbf w)\mid\mathbf w\text{ is bad}\}.
\]
The formal definitions of these words and their evaluations are given in
Appendix~\ref{app:endpoint-words}.
To obtain the required obstruction, we must show that \(\mathfrak J\)
is a two-sided ideal and that \(E_01_0\) survives in the quotient.
The following theorem establishes these facts; its proof uses the
Gaussian and triangular arguments in the appendix.

\begin{theorem}
\label{thm:endpoint-triangular-quotient}
Under \cref{conv:endpoint-data},
\[
 \dot V\mathfrak J\subseteq\mathfrak J,
 \qquad
 \mathfrak J\dot V\subseteq\mathfrak J.
\]
Hence \(\mathcal Q:=\dot V/\mathfrak J\) is a root-graded
\(\Bbbk\)-algebra, and
\begin{equation}
 \bigl\{\nu\bigm|(\mathcal Q)_{\nu}\neq0\bigr\}
 \subseteq L^{\bullet},
 \qquad
 E_01_0+\mathfrak J
 \in(\mathcal Q)_{\alpha_0}\setminus\{0\}.
 \label{eq:endpoint-quotient-support}
\end{equation}
\end{theorem}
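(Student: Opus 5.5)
The plan is to establish the three assertions in the order: two-sided ideal property, root grading and support, and finally non-vanishing of \(E_01_0+\mathfrak J\). The last assertion is the one that needs the appendix machinery.

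\textbf{Ideal property and grading.} I would first record that \(\dot V\) is spanned over \(\Bbbk\) by the evaluations \(\operatorname{ev}(\mathbf w)\) of all labelled divided-power words, good or bad. This holds because \(\dot U_{\mathcal A}^{\Lus,(n)}\) is generated by the elements \(E_i^{(a)}1_\lambda\) and \(F_i^{(a)}1_\lambda\) together with the \(1_\lambda\), and products of such blocks with mismatched idempotents vanish.

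It therefore suffices to multiply a bad word on either side by one generator \(u1_\mu\). The product is either \(0\) or the evaluation of a concatenated word.
\begin{itemize}
\item For left multiplication, the intermediate weights \(\lambda_j\) of the old word reappear, with the same indices, among those of the new word.
\item For right multiplication by \(u1_\mu\), with \(\mu+\deg u=\eta\), they reappear with indices shifted by one.
\end{itemize}
In both cases the exponents of the old letters are unchanged. Hence any violation of \(\lambda_j\in X^\bullet\) or of \(\ell_{i_j}\mid a_j\) persists, so the new word is bad and \(\dot V\mathfrak J,\ \mathfrak J\dot V\subseteq\mathfrak J\).

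Each \(\operatorname{ev}(\mathbf w)\) is homogeneous of degree \(\lambda_0(\mathbf w)-\eta\). Thus \(\mathfrak J\) is spanned by homogeneous elements, and \(\mathcal Q\) inherits the root grading.

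\textbf{Support.} The quotient \(\mathcal Q\) is spanned by the images of good words. For a good word, every exponent \(a_j\) is a multiple of \(\ell_{i_j}\). Its degree \(\sum_j\pm a_j\alpha_{i_j}\) is therefore a sum of multiples of the scaled roots \(\alpha_i^\bullet=\ell_i\alpha_i\), and so lies in \(L^\bullet\). This proves the first inclusion in \eqref{eq:endpoint-quotient-support}.

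The word \((E_0;0)\) is itself good. Its weights are \(0\) and \(\alpha_0\), both of which lie in \(L^\bullet\subseteq X^\bullet\) by \eqref{eq:endpoint-datum-regular}, and \(\ell_0=1\) divides \(1\) by \eqref{eq:endpoint-affine-scaling-one}. Goodness alone, however, does not prevent \(E_01_0\) from lying in the span of other, bad words, so non-vanishing needs a separate argument.

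\textbf{Non-vanishing of \(E_01_0\) (main obstacle).} The strategy is to show that \(\mathfrak J\) equals the span of evaluations of \emph{triangularly ordered} bad words, that is, words of the form \(E\)-letters, then \(F\)-letters, then \(1_\eta\).

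The key step is to show that straightening preserves badness. Each adjacent \(F\)-before-\(E\) pair is rewritten via Lusztig's commutation formula
\[
E_i^{(a)}F_j^{(b)}1_\lambda=\sum_t c_t\,F_j^{(b-t)}E_i^{(a-t)}1_\lambda ,
\]
where \(c_t=0\) for \(t>0\) when \(i\neq j\), and \(c_t\) is a Gaussian binomial in \(q_{\Bbbk,i}\) depending on \(\langle h_i,\lambda\rangle\) when \(i=j\). The Serre-type relations among letters of the same sign are used inside the appropriate half. For each term, I would prove that either the new word is again bad, or its coefficient is a Gaussian binomial that vanishes at \(\bar q\).

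The vanishing would come from the root-of-unity Lucas-type identity for \(\left[\begin{smallmatrix}m\\t\end{smallmatrix}\right]_{q_{\Bbbk,i}}\), using \(\ell_i=\operatorname{ord}(q_{\Bbbk,i}^2)\). The case analysis is as follows.
\begin{itemize}
\item If \(\ell_i\nmid a\) or \(\ell_i\nmid b\), then a surviving term with \(\ell_i\mid(a-t)\) and \(\ell_i\mid(b-t)\) must carry a binomial divisible by the vanishing factor.
\item If some intermediate weight lies outside \(X^\bullet\), then \(\langle h_i,\lambda\rangle\not\equiv0\pmod{\ell_i}\) for the relevant \(i\), and I would check that this pushes the same vanishing onto the coefficients of good outputs.
\end{itemize}
This case check is where I expect the real work, and it is the content deferred to Appendix~\ref{app:endpoint-technical}. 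Induction on the number of inversions then finishes the reduction.

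Given the reduction, project onto the block \(1_{\alpha_0}\dot V1_0\) and use the triangular decomposition \(\dot V=\bigoplus_\eta U^+U^-1_\eta\). Since \(U^-\) and \(U^+\) are graded by \(-Q_+\) and \(Q_+\), the component \(U^+_{\alpha_0}U^-_01_0\) is \(\Bbbk E_01_0\). The only triangular word landing there is \((E_0;0)\), which is good. Every bad triangular word in this block therefore has \(F\)-part of nonzero degree and contributes nothing to that component. Hence \(E_01_0\notin\mathfrak J\), which gives \(E_01_0+\mathfrak J\in(\mathcal Q)_{\alpha_0}\setminus\{0\}\).
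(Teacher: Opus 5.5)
Your architecture matches the paper's. Two-sidedness comes from concatenating words. $\mathfrak J$ is then reduced to triangularly ordered bad words by a badness-preserving straightening, using Gaussian vanishing and induction on inversions. Finally the triangular decomposition is applied. However, the one step that carries the theorem, that straightening preserves badness, is not actually established, and the mechanism you propose for half of it is wrong.

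\textbf{The gap.} Your first bullet, on nonblock letters in the swapped pair, is essentially right. It needs the fact that a good output has outer weight $\eta\in X^\bullet$, which puts the top entry of the Gaussian in $\ell_i\mathbb Z$.

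Your second bullet is not right. In a local move on $E_i^{(a)}F_j^{(b)}1_\eta$ (or its mirror), three things are preserved:
every letter and weight outside the pair, and both outer weights of the pair, reappear unchanged in every output word, even after zero-exponent letters are deleted.
The only weight that disappears is the middle one ($\eta-b\alpha_j$ for this move). Its failure to lie in $X^\bullet$ cannot be ``pushed onto the coefficients'', for three reasons.
\begin{itemize}
\item The coefficient depends on the outer weight $\eta$, not on the middle weight.
\item For $i\neq j$ there is no coefficient at all.
\item Non-membership need not be detected by the color of the pair. In the $\Phi_8$ instance with $n\geq2$, $\lambda=\eta-\alpha_{n-1}$ with $\eta\in X^\bullet$ has $\langle h_{n-1},\lambda\rangle\in 2\mathbb Z=\ell_{n-1}\mathbb Z$, while $\langle h_n,\lambda\rangle\equiv 2\pmod 4$.
\end{itemize}

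The missing ingredient is the hypothesis $L^\bullet\subseteq X^\bullet$ of the endpoint convention, so that $X^\bullet+L^\bullet\subseteq X^\bullet$. It is used contrapositively, as in the paper. Suppose an output word with nonzero coefficient is good. Then $\eta\in X^\bullet$, $\ell_i\mid a-t$ and $\ell_j\mid b-t$. For $i=j$ this puts $a-b+\langle h_i,\eta\rangle$ in $\ell_i\mathbb Z$, so Gaussian vanishing forces $\ell_i\mid t$; for $i\neq j$ one has $t=0$. Hence $\ell_i\mid a$, $\ell_j\mid b$, and $\eta-b\alpha_j\in\eta+L^\bullet\subseteq X^\bullet$, so the input word is good. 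In particular a bad middle weight is never the sole witness of badness, and your second case collapses into the first. You should also check that deleting zero-exponent letters can only lower the inversion count.

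\textbf{What works, and where you improve on the paper.} With that repair your proof goes through, and two of your steps are leaner than the paper's. The paper derives both the support and the survival of $E_01_0$ from the half-ideal decomposition of $\mathfrak J$ and the isomorphism $\mathcal Q\cong\bigoplus_{\lambda\in X^\bullet}(V^-/\mathfrak I^-)\otimes\Bbbk1_\lambda\otimes(V^+/\mathfrak I^+)$.
\begin{itemize}
\item You get the support directly, because $\mathcal Q$ is spanned by images of good words, all of degree in $L^\bullet$.
\item You get survival by projecting $1_{\alpha_0}\dot V1_0$ onto the triangular component with trivial $F$-part. That component is $\Bbbk E_01_0\neq0$, and the only triangular word landing in it is the good word $(E_0;0)$.
\end{itemize}
Both arguments are valid and avoid the half-ideal lemma. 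The paper's route buys an explicit description of $\mathcal Q$, which the theorem does not need.

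\textbf{Minor slips.}
\begin{itemize}
\item With your order ($E$-letters first), the move you need is the mirror formula $F_i^{(b)}E_i^{(a)}1_\eta=\sum_t\left[\begin{smallmatrix}b-a-\langle h_i,\eta\rangle\\ t\end{smallmatrix}\right]_{q_{\Bbbk,i}}E_i^{(a-t)}F_i^{(b-t)}1_\eta$, not the formula you displayed.
\item The index shift of the intermediate weights happens for left multiplication, not right.
\item No same-sign (Serre) relations are needed, since only $E$--$F$ swaps occur.
\end{itemize}
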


\begin{proof}
A product of a divided-power word and a bad word, in either order,
is zero or the evaluation of a bad word. Hence \(\mathfrak J\) is a
two-sided ideal. Since the evaluated bad words are root homogeneous,
\(\mathfrak J\) is root graded. The straightening arguments in
Appendix~\ref{app:endpoint-straightening} give its triangular
decomposition. This decomposition yields the root support and the
nonvanishing in \eqref{eq:endpoint-quotient-support}; the full proof is
given in Appendix~\ref{app:endpoint-quotient-proof}.
\end{proof}

The root support in \eqref{eq:endpoint-quotient-support} allows the
parity on \(L^\bullet\) to grade \(\mathcal Q\).
Subsection~\ref{subsec:endpoint-parity-obstruction} verifies that every
surviving current-generator block is even, whereas
\(E_01_0+\mathfrak J\) is nonzero and odd.
Lemma~\ref{lem:real-current-parity-propagation} then makes the whole
current core even, and Lemma~\ref{lem:parity-coefficient-obstruction}
gives the endpoint coefficient-ideal obstructions.

\subsection{Parity separation and coefficient ideals}
\label{subsec:endpoint-parity-obstruction}

Retain \cref{conv:endpoint-data} and the notation of
\cref{subsec:endpoint-triangular-quotient}.  Exactly as in
\eqref{eq:epsilon-r} and \eqref{eq:rank-one-starred-parity}, define
\begin{equation}
 \epsilon:L^{\bullet}\longrightarrow\mathbb Z/2\mathbb Z,
 \qquad
 \epsilon\Bigl(\sum_{i\in I}b_i\alpha_i^{\bullet}\Bigr):=\overline{b_0},
 \qquad
 \epsilon\in\operatorname{Hom}_{\mathbb Z}
 \bigl(L^{\bullet},\mathbb Z/2\mathbb Z\bigr).
 \label{eq:endpoint-parity}
\end{equation}
By \eqref{eq:endpoint-quotient-support} the root grading of
\(\mathcal Q\) is supported on \(L^{\bullet}\), so
\begin{equation}
 \mathcal Q_{\bar e}
 :=\bigoplus_{\substack{\nu\in L^{\bullet}\\ \epsilon(\nu)=\bar e}}
   (\mathcal Q)_{\nu}
 \qquad(\bar e\in\mathbb Z/2\mathbb Z)
 \label{eq:endpoint-quotient-parity-spaces}
\end{equation}
satisfies
\begin{equation}
 \mathcal Q=\mathcal Q_{\bar0}\oplus\mathcal Q_{\bar1},
 \qquad
 \mathcal Q_{\bar e}\mathcal Q_{\bar f}
 \subseteq\mathcal Q_{\overline{e+f}} ,
 \label{eq:endpoint-quotient-parity-grading}
\end{equation}
the second inclusion because \(\mathcal Q\) is root graded and
\(\epsilon\) is additive.  Let
\begin{equation}
 \varphi:
 \Bbbk\otimes_{\mathcal A,\,q\mapsto\bar q}
 \dot U_{\mathcal A}^{\Lus,(n)}
 =\dot V
 \longrightarrow\mathcal Q
 \label{eq:endpoint-detector}
\end{equation}
be the canonical projection, a \(\Bbbk\)-algebra homomorphism by
\cref{thm:endpoint-triangular-quotient}.  Since \(\mathfrak J\) is root
graded,
\begin{equation}
 \varphi\Bigl(
  \bigl(\Bbbk\otimes_{\mathcal A}
   \dot U_{\mathcal A}^{\Lus,(n)}\bigr)_{\nu}
 \Bigr)
 \subseteq(\mathcal Q)_{\nu}
 \qquad
 \Bigl(\nu\in\bigoplus_{i\in I}\mathbb Z\alpha_i\Bigr).
 \label{eq:endpoint-detector-graded}
\end{equation}
Thus \(\varphi\) plays here the role played by
\(\operatorname{Fr}_{n,r}\) in \eqref{eq:higher-frobenius} and by
\(\operatorname{Fr}_r^{(4)}\) in
\eqref{eq:rank-one-phi4r-frobenius}, and
\(\mathcal Q_{\bar0}\oplus\mathcal Q_{\bar1}\) replaces
\eqref{eq:higher-target-parity-grading} and
\eqref{eq:rank-one-target-parity-grading}.

\begin{lemma}
\label[lemma]{lem:endpoint-current-evenness}
Let \((s,k,\varepsilon,i)\in
\mathbb N\times\mathbb Z\times\{1,-1\}\times\{1,\ldots,n\}\).
Then \(\deg\!\left((x_{i,k}^{\varepsilon})^{(s)}\right)
=s(k\delta+\varepsilon\alpha_i)\). Let
\[
 \gamma:=s(k\delta+\varepsilon\alpha_i)
 =sk\alpha_0
  +\sum_{\substack{1\leq j\leq n\\j\neq i}}2sk\,\alpha_j
  +s(2k+\varepsilon)\alpha_i.
\]
Then, in either instance of \cref{lem:endpoint-instances},
if \(\gamma\in L^{\bullet}\), then
\(\epsilon(\gamma)=\overline{sk}=\overline0\).
\end{lemma}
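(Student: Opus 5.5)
The degree formula is immediate from \eqref{eq:current-degrees}: the divided power \((x_{i,k}^{\varepsilon})^{(s)}\) is a scalar multiple of the \(s\)-th power of an element of degree \(k\delta+\varepsilon\alpha_i\). Substituting \(\delta=\alpha_0+2\alpha_1+\cdots+2\alpha_n\) (for \(n=1\), \(\delta=\alpha_0+2\alpha_1\)) gives the displayed expansion of \(\gamma\). Since \(\ell_0=1\) in both instances, the \(\alpha_0^\bullet\)-coordinate of \(\gamma\in L^\bullet\) is \(b_0=sk\). Hence \(\epsilon(\gamma)=\overline{sk}\), and it remains to show that \(sk\) is even whenever \(\gamma\in L^\bullet\).

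We mimic the proof of \cref{lem:uniform-current-evenness} at \(r=1\) and use the explicit lattices of \cref{lem:endpoint-instances}.

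In instance (a), \(L^\bullet=\mathbb Z\alpha_0\oplus\bigoplus_{j=1}^{n-1}2\mathbb Z\alpha_j\oplus4\mathbb Z\alpha_n\).
\begin{itemize}
\item If \(i<n\) (so \(n\geq2\)), the \(\alpha_n\)-coordinate of \(\gamma\) is \(2sk\). Membership in \(L^\bullet\) forces \(4\mid2sk\), so \(sk\) is even.
\item If \(i=n\), the \(\alpha_n\)-coordinate is \(s(2k+\varepsilon)\), and membership forces \(4\mid s(2k+\varepsilon)\). Since \(2k+\varepsilon\) is odd, \(4\mid s\), and so \(sk\) is even.
\end{itemize}

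In instance (b), \(n=1\), \(i=1\), and \(L^\bullet=\mathbb Z\alpha_0\oplus2\mathbb Z\alpha_1\). Membership forces \(2\mid s(2k+\varepsilon)\). Oddness of \(2k+\varepsilon\) again gives \(2\mid s\), hence \(sk\) is even.

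The cases \(s=0\) and \(k=0\) are included trivially, since then \(sk=0\). There is no real obstacle here. The only point requiring care is that \(\ell_0=1\) makes \(b_0=sk\) directly, so the divisibility \(r\mid s\) used in the general lemma is not needed. All the evenness then comes from the \(\alpha_n\)- (respectively \(\alpha_1\)-) coordinate constraint together with the oddness of \(2k+\varepsilon\).
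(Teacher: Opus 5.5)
Your proof is correct and follows essentially the same route as the paper. Since $\ell_0=1$, the $\alpha_0^{\bullet}$-coordinate of $\gamma$ is $sk$, and the evenness of $sk$ comes from two lattice constraints combined with the oddness of $2k+\varepsilon$: the $4\mathbb Z$ constraint on the $\alpha_n$-coordinate in instance (a), split into the cases $i<n$ and $i=n$, and the $2\mathbb Z$ constraint on the $\alpha_1$-coordinate in instance (b).
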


\begin{proof}
Since \(\ell_0=1\), the \(\alpha_0^{\bullet}\)-coordinate of \(\gamma\)
is \(sk\), so \(\epsilon(\gamma)=\overline{sk}\); it remains to show
\(2\mid sk\).  Note that \(2k+\varepsilon\) is odd.

In instance (a) of \cref{lem:endpoint-instances}, membership in
\(L^{\bullet}=L_1^{\bullet}\) forces the \(\alpha_n\)-coordinate of
\(\gamma\) to lie in \(4\mathbb Z\).  If \(i<n\) that coordinate is
\(2sk\), so \(4\mid2sk\) and \(2\mid sk\).  If \(i=n\) it is
\(s(2k+\varepsilon)\), so \(4\mid s(2k+\varepsilon)\); since
\(\gcd(4,2k+\varepsilon)=1\), this gives \(4\mid s\) and hence
\(2\mid sk\).  This is \cref{lem:uniform-current-evenness} at \(r=1\),
where that lemma remains valid.

In instance (b) we have \(n=1\), hence \(i=1\), and membership in
\(L^{\bullet}=L_1^{\star}\) forces the \(\alpha_1\)-coordinate of
\(\gamma\) to lie in \(2\mathbb Z\):
\[
 2\mid s(2k+\varepsilon),
 \qquad
 \gcd(2,2k+\varepsilon)=1
 \quad\Longrightarrow\quad
 2\mid s
 \quad\Longrightarrow\quad
 2\mid sk .
\]
\end{proof}

\begin{theorem}
\label{thm:endpoint-coefficient-ideals}
For every integer \(n\geq1\), one has
\begin{equation}
 \mathfrak c_{n,1}\subseteq\bigl(\Phi_8(q)\bigr),
 \qquad
 \mathfrak c_{1,1}\subseteq\bigl(\Phi_4(q)\bigr).
 \label{eq:endpoint-coefficient-ideals}
\end{equation}
\end{theorem}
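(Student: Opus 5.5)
The plan is to run the same three-step parity argument used for \cref{thm:phi8r-coefficient-ideal}, with the Frobenius map replaced by the canonical projection \(\varphi\colon\dot V\to\mathcal Q\) of \eqref{eq:endpoint-detector}. The first inclusion uses instance~(a) of \cref{lem:endpoint-instances}, with \(\Bbbk=\Bbbk_1^{(8)}\), \(\bar q=\zeta_1\), and \(\pi=\Phi_8(q)\), for arbitrary \(n\geq1\). The second uses instance~(b), with \(n=1\), \(\Bbbk=\Bbbk_1^{(4)}\), \(\bar q=\xi_1\), and \(\pi=\Phi_4(q)\). In both cases \cref{thm:endpoint-triangular-quotient} makes \(\mathcal Q\) a root-graded \(\Bbbk\)-algebra supported on \(L^\bullet\). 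Consequently \eqref{eq:endpoint-quotient-parity-grading} provides the \(\mathbb Z/2\mathbb Z\)-graded target \(D=\mathcal Q_{\bar0}\oplus\mathcal Q_{\bar1}\).

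\textbf{Step 1: every generator block is even.} Fix \(u\in\mathscr C_n\) of degree \(\nu\) and \(\lambda\in X\). By \eqref{eq:endpoint-detector-graded}, \(\varphi(\overline{u1_\lambda})\) lies in \((\mathcal Q)_\nu\). If this element is nonzero, the support statement in \eqref{eq:endpoint-quotient-support} forces \(\nu\in L^\bullet\). For a real-current generator \((x_{i,k}^{\varepsilon})^{(s)}\), \cref{lem:endpoint-current-evenness} then gives \(\epsilon(\nu)=\overline0\). For a toral generator, \(\nu=0\) is even. Here \(u1_\lambda\) means the image in \(\dot V\) of the homogeneous element \(u\in U_{\mathcal A}^{\mathrm{Lus},(n)}\), interpreted through \eqref{eq:modified-homogeneous-block} in \(\dot U_{\mathcal A}\) before specialization; \cref{lem:forward-inclusion} guarantees that \(u\) lies in the Lusztig form. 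So every nonzero image lies in \(\mathcal Q_{\bar0}\), and \cref{lem:real-current-parity-propagation} yields \(\varphi(\overline{\UDrn{n}1_0})\subseteq\mathcal Q_{\bar0}\).

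\textbf{Step 2: the odd witness.} Since \(r=1\), we have \(E_0^{(1)}=E_0\). Because \(\ell_0=1\), \(\alpha_0=\alpha_0^\bullet\) has \(\epsilon(\alpha_0)=\overline1\). By \eqref{eq:endpoint-quotient-support}, \(\varphi(\overline{E_01_0})=E_01_0+\mathfrak J\) is nonzero in \((\mathcal Q)_{\alpha_0}\subseteq\mathcal Q_{\bar1}\).

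\textbf{Step 3: the obstruction.} Apply \cref{lem:parity-coefficient-obstruction} with \(\chi\) the restriction of \(\varphi\) to \(\Bbbk\otimes_{\mathcal A}(U_{\mathcal A}^{\mathrm{Lus},(n)}1_0)\). This gives \(\mathfrak c_{n,1}\subseteq(\pi)\), which is each of the two inclusions.

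The main obstacle is not in this assembly; it is the content of \cref{thm:endpoint-triangular-quotient}. There one must show that \(\mathfrak J\) is a proper two-sided ideal not containing \(E_01_0\), using the Gaussian vanishing and straightening of the appendix, which we take as given. The one point to check in the assembly is a compatibility. The elements \(\overline{u1_\lambda}\) are evaluations of root-homogeneous elements of \(\dot V\), so the grading statement \eqref{eq:endpoint-detector-graded} applies to them. Since \(\mathfrak J\) is root graded, the projection respects degrees, and Step 1 goes through without any Frobenius size condition.
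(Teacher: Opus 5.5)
Your proposal is correct and is essentially the paper's own proof. You take the canonical projection $\varphi\colon\dot V\to\mathcal Q$ in instances (a) and (b) of \cref{lem:endpoint-instances}, show every nonzero generator block is even via \eqref{eq:endpoint-quotient-support} and \cref{lem:endpoint-current-evenness}, propagate this with \cref{lem:real-current-parity-propagation}, use $E_01_0+\mathfrak J$ as the nonzero odd witness, and conclude with \cref{lem:parity-coefficient-obstruction} at $r=1$; the only difference is bookkeeping (you merge the paper's Steps 1 and 2), and you rightly place the real work in \cref{thm:endpoint-triangular-quotient}.
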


\begin{proof}
Fix one of the two instances of \cref{lem:endpoint-instances} and let
\[
 \pi:=
 \begin{cases}
  \Phi_8(q),&\text{instance (a)},\\
  \Phi_4(q),&\text{instance (b)},
 \end{cases}
 \qquad
 \Bbbk=\operatorname{Frac}\bigl(\mathcal A/(\pi)\bigr),
\]
so that \cref{conv:endpoint-data} holds and
\cref{thm:endpoint-triangular-quotient} applies.  Let \(\varphi\) be as
in \eqref{eq:endpoint-detector}.

\emph{Step 1: every nonzero generator block is even.}
Let \((\lambda,u)\in X\times\mathscr C_n\) with
\(\varphi(\overline{u1_\lambda})\neq0\), and put \(\gamma:=\deg u\).
If \(u\in\{K_\delta^{\pm1}\}\cup\{K_i^{\pm1}\mid1\leq i\leq n\}\), then
\(\gamma=0\), \(\epsilon(0)=\overline0\), and
\eqref{eq:endpoint-detector-graded} gives
\(\varphi(\overline{u1_\lambda})\in(\mathcal Q)_{0}
\subseteq\mathcal Q_{\bar0}\).
If \(u=(x_{i,k}^{\varepsilon})^{(s)}\) with \(1\leq i\leq n\),
\(k\in\mathbb Z\), \(s\in\mathbb N\), \(\varepsilon\in\{1,-1\}\), then
\(\gamma=s(k\delta+\varepsilon\alpha_i)\) by
\eqref{eq:current-degrees}, and
\eqref{eq:endpoint-detector-graded} together with
\eqref{eq:endpoint-quotient-support} give
\[
 0\neq\varphi(\overline{u1_\lambda})\in(\mathcal Q)_{\gamma}
 \quad\Longrightarrow\quad
 \gamma\in L^{\bullet}.
\]
Hence \(\epsilon(\gamma)=\overline0\) by
\cref{lem:endpoint-current-evenness}, and
\(\varphi(\overline{u1_\lambda})\in\mathcal Q_{\bar0}\).  Therefore
\[
 \forall\,(\lambda,u)\in X\times\mathscr C_n,
 \qquad
 \varphi(\overline{u1_\lambda})\neq0
 \Longrightarrow
 \varphi(\overline{u1_\lambda})\in\mathcal Q_{\bar0}.
\]

\emph{Step 2: the whole real-current core is even.}
By \eqref{eq:endpoint-quotient-parity-grading},
\(\mathcal Q=\mathcal Q_{\bar0}\oplus\mathcal Q_{\bar1}\) is a
\(\mathbb Z/2\mathbb Z\)-graded \(\Bbbk\)-algebra, and \(\varphi\) is a
\(\Bbbk\)-algebra homomorphism, so Step~1 and
\cref{lem:real-current-parity-propagation} give
\begin{equation}
 \varphi\bigl(\overline{\UDrn{n}1_0}\bigr)
 \subseteq\mathcal Q_{\bar0}.
 \label{eq:endpoint-core-even}
\end{equation}

\emph{Step 3: the affine divided power is odd.}
By \cref{lem:endpoint-lattice-basics}(b),
\(\alpha_0=\alpha_0^{\bullet}\in L^{\bullet}\), whose
\(\alpha_0^{\bullet}\)-coordinate is \(1\); hence
\(\epsilon(\alpha_0)=\overline1\).  Since \(\ell_0=1\) we have
\(E_0^{(1)}=E_0\), and \eqref{eq:endpoint-quotient-support} gives
\begin{equation}
 0\neq\varphi\bigl(\overline{E_0^{(1)}1_0}\bigr)
 =E_01_0+\mathfrak J
 \in(\mathcal Q)_{\alpha_0}
 \subseteq\mathcal Q_{\bar1}.
 \label{eq:endpoint-E0-odd}
\end{equation}

\emph{Step 4: conclusion.}
Apply \cref{lem:parity-coefficient-obstruction} with \(r=1\),
\(D_{\bar e}=\mathcal Q_{\bar e}\), and
\[
 \chi:=\left.\varphi\right|_{
  \Bbbk\otimes_{\mathcal A}
  \bigl(U_{\mathcal A}^{\mathrm{Lus},(n)}1_0\bigr)} .
\]
Its two hypotheses are \eqref{eq:endpoint-core-even} and
\eqref{eq:endpoint-E0-odd}, and the sum
\(\mathcal Q_{\bar0}\oplus\mathcal Q_{\bar1}\) is internal by
\eqref{eq:endpoint-quotient-parity-grading}.  Hence
\[
 \mathfrak c_{n,1}\subseteq(\pi).
\]
Instance (a) is available for every \(n\geq1\) and gives
\(\mathfrak c_{n,1}\subseteq(\Phi_8(q))\); instance (b) has \(n=1\) and
gives \(\mathfrak c_{1,1}\subseteq(\Phi_4(q))\).  This is
\eqref{eq:endpoint-coefficient-ideals}.
\end{proof}
\begin{corollary}
\label[corollary]{cor:exact-endpoint-coefficient-ideals}
For every integer \(n\geq1\), one has
\begin{equation}
 \mathfrak c_{n,1}
 =
 \begin{cases}
  \bigl(\Phi_4(q)\Phi_8(q)\bigr),&n=1,\\
  \bigl(\Phi_8(q)\bigr),&n\geq2.
 \end{cases}
 \label{eq:exact-endpoint-coefficient-ideals}
\end{equation}
\end{corollary}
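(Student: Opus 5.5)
The plan is to prove the two inclusions separately, since \(\mathfrak c_{n,1}\) is an ideal of the unique factorization domain \(\mathcal A\) and the claimed generators are products of distinct irreducibles.

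\emph{Upper bound.} Here we exhibit explicit elements of \(\mathfrak c_{n,1}\), using the \(a=1\) cases of the sufficiency multipliers. For \(n\geq2\), Proposition~\ref{prop:scaled-affine-divided-powers} with \(a=1\) gives \(c_1E_0\in\UDrn{n}\), where \(c_1=q^4+1=\Phi_8(q)\). Hence \((\Phi_8(q))\subseteq\mathfrak c_{n,1}\). For \(n=1\), Proposition~\ref{prop:rank-one-direct-multiplier} with \(a=1\) gives \(d_1E_0\in\UDrn{1}\), where
\[
 d_1=(q^2+1)(q^4+1)=\Phi_4(q)\Phi_8(q).
\]
Hence \((\Phi_4(q)\Phi_8(q))\subseteq\mathfrak c_{1,1}\).

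\emph{Lower bound.} Theorem~\ref{thm:endpoint-coefficient-ideals} gives \(\mathfrak c_{n,1}\subseteq(\Phi_8(q))\) for every \(n\geq1\), which settles the case \(n\geq2\). For \(n=1\) it gives in addition \(\mathfrak c_{1,1}\subseteq(\Phi_4(q))\). Thus \(\mathfrak c_{1,1}\subseteq(\Phi_4(q))\cap(\Phi_8(q))\). The polynomials \(\Phi_4(q)\) and \(\Phi_8(q)\) are non-associate irreducible elements of \(\mathcal A=\mathbb Z[q,q^{-1}]\), which is a UFD as a localization of \(\mathbb Z[q]\); they are primitive with distinct roots, and neither is a unit. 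Therefore the intersection of the two principal prime ideals equals \((\Phi_4(q)\Phi_8(q))\), and we obtain \(\mathfrak c_{1,1}\subseteq(\Phi_4(q)\Phi_8(q))\).

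Combining the two directions gives the equalities in \eqref{eq:exact-endpoint-coefficient-ideals}. The only nontrivial input is Theorem~\ref{thm:endpoint-coefficient-ideals}, which we assume. Beyond that, the only point needing care is the identification \(c_1=\Phi_8(q)\) and \(d_1=\Phi_4(q)\Phi_8(q)\) exactly, and not merely up to a unit. This follows from \(q^2+1=\Phi_4(q)\) and \(q^4+1=\Phi_8(q)\).
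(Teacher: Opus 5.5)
Your proposal is correct and follows essentially the same route as the paper: the explicit multipliers \(d_1=\Phi_4(q)\Phi_8(q)\) and \(c_1=\Phi_8(q)\) from the \(a=1\) cases give one containment. Theorem~\ref{thm:endpoint-coefficient-ideals}, combined with \((\Phi_4(q))\cap(\Phi_8(q))=(\Phi_4(q)\Phi_8(q))\) for nonassociate primes in the UFD \(\mathcal A\), gives the other. Only your labels are swapped: with respect to ideal inclusion, the explicit multipliers supply the lower bound and the theorem the upper bound.
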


\begin{proof}
For \(n=1\), equation \eqref{eq:da-affine-powers} with \(a=1\)
gives
\[
 \Phi_4(q)\Phi_8(q)=d_1\in\mathfrak c_{1,1}.
\]
Since \(\mathfrak c_{1,1}\) is an \(\mathcal A\)-ideal,
Theorem~\ref{thm:endpoint-coefficient-ideals} yields
\[
 \bigl(\Phi_4(q)\Phi_8(q)\bigr)
 \subseteq\mathfrak c_{1,1}
 \subseteq
 \bigl(\Phi_4(q)\bigr)\cap\bigl(\Phi_8(q)\bigr)
 =
 \bigl(\Phi_4(q)\Phi_8(q)\bigr),
\]
where the last equality follows because \(\Phi_4(q)\) and
\(\Phi_8(q)\) are nonassociate prime elements of \(\mathcal A\).

For \(n\geq2\), equation \eqref{eq:scaled-affine-powers}
with \(a=1\) gives
\(\Phi_8(q)=c_1\in\mathfrak c_{n,1}\).
Together with Theorem~\ref{thm:endpoint-coefficient-ideals},
this gives
\[
 \bigl(\Phi_8(q)\bigr)
 \subseteq\mathfrak c_{n,1}
 \subseteq\bigl(\Phi_8(q)\bigr).
\]
\end{proof}

\section{The least ordinary localization}
\label{sec:least-ordinary-localization}

Retain the localization convention of
\cref{def:sufficient-localization} and the discussion preceding it; in
particular, both sides of every equality below are extended to the same
coefficient ring.

\begin{proof}[Proof of Theorem~\ref{thm:uniform-least-ring}]
If
\[
 \mathfrak g\notin
 \{A_{2m}^{(2)}\mid m\geq1\},
\]
then \cref{thm:nonexceptional-equality} gives
\[
 \UDr(\mathfrak g)
 =U_{\mathcal A}^{\mathrm{Lus}}(\mathfrak g),
 \qquad
 B_{\min}(\mathfrak g)=\mathcal A.
\]

Let
\[
 B=S^{-1}\mathcal A\subseteq\mathcal K
\]
be an ordinary localization.  We treat the two exceptional cases
separately.

Suppose first that \(n=1\).  By
\cref{thm:rank-one-Cstar},
\[
 \Cstar\UDrn{1}
 =
 \Cstar U_{\mathcal A}^{\mathrm{Lus},(1)}.
\]
Hence \(\Cstar\subseteq B\) implies, by extension of scalars,
\[
 B\UDrn{1}
 =
 BU_{\mathcal A}^{\mathrm{Lus},(1)}.
\]

Conversely, suppose that
\[
 B\UDrn{1}
 =
 BU_{\mathcal A}^{\mathrm{Lus},(1)}.
\]
The coefficient-ideal calculations give
\[
 \mathfrak c_{1,r}\subseteq\bigl(\Phi_{4r}(q)\bigr)
 \qquad
 (r\geq1\ \text{odd})
\]
by \cref{thm:endpoint-coefficient-ideals,prop:phi4r-coefficient-ideal},
and
\[
 \mathfrak c_{1,r}\subseteq\bigl(\Phi_{8r}(q)\bigr)
 \qquad(r\geq1)
\]
by \cref{thm:endpoint-coefficient-ideals,thm:phi8r-coefficient-ideal}.
Therefore \cref{lem:coefficient-prime-invertible} implies that
\[
 \Phi_{4r}(q)\in B^\times
 \quad(r\geq1\ \text{odd}),
 \qquad
 \Phi_{8r}(q)\in B^\times
 \quad(r\geq1).
\]
Every positive integer is either odd or of the form \(2r\), and
\(\Phi_{4(2r)}(q)=\Phi_{8r}(q)\).  Thus
\[
 \Phi_{4r}(q)\in B^\times
 \qquad(r\geq1),
\]
which is equivalent to \(\Cstar\subseteq B\).  Consequently,
\[
 B\UDrn{1}
 =
 BU_{\mathcal A}^{\mathrm{Lus},(1)}
 \quad\Longleftrightarrow\quad
 \Cstar\subseteq B.
\]

Now suppose that \(n\geq2\).  By
\cref{thm:higher-rank-Bstar},
\[
 \Bstar\UDrn{n}
 =
 \Bstar U_{\mathcal A}^{\mathrm{Lus},(n)},
\]
so \(\Bstar\subseteq B\) again implies equality after extension of
scalars.  Conversely, if
\[
 B\UDrn{n}
 =
 BU_{\mathcal A}^{\mathrm{Lus},(n)},
\]
then
\[
 \mathfrak c_{n,r}\subseteq\bigl(\Phi_{8r}(q)\bigr)
 \qquad(r\geq1)
\]
by \cref{thm:endpoint-coefficient-ideals,thm:phi8r-coefficient-ideal}.
Hence \cref{lem:coefficient-prime-invertible} gives
\[
 \Phi_{8r}(q)\in B^\times
 \qquad(r\geq1),
\]
or equivalently \(\Bstar\subseteq B\).  Therefore
\[
 B\UDrn{n}
 =
 BU_{\mathcal A}^{\mathrm{Lus},(n)}
 \quad\Longleftrightarrow\quad
 \Bstar\subseteq B
 \qquad(n\geq2).
\]

It follows that
\[
 B_{\min}(A_{2n}^{(2)})
 =
 \begin{cases}
  \Cstar,&n=1,\\
  \Bstar,&n\geq2.
 \end{cases}
\]
This proves all exceptional cases and completes the theorem.
\end{proof}

\begin{corollary}[No finite denominator set]
\label{cor:no-finite-denominator-set}
For every
\[
 n\in\mathbb Z_{\geq1},
 \qquad
 t\in\mathbb Z_{\geq0},
 \qquad
 (f_1,\ldots,f_t)\in
 \bigl(\mathcal A\setminus\{0\}\bigr)^t,
\]
the ordinary localization
\[
 B:=\mathcal A[f_1^{-1},\ldots,f_t^{-1}]
 \subset\mathcal K
\]
satisfies
\[
 B\UDrn{n}\neq BU_{\mathcal A}^{\Lus,(n)}.
\]
\end{corollary}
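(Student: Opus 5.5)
Suppose, for contradiction, that \(B=\mathcal A[f_1^{-1},\ldots,f_t^{-1}]\) satisfies \(B\UDrn{n}=BU_{\mathcal A}^{\Lus,(n)}\). Then \(B\) is a sufficient ordinary localization. By \cref{thm:uniform-least-ring}, \(\Phi_{8r}(q)\in B^\times\) for every \(r\geq1\). This holds in both ranks: for \(n=1\) the condition is \(\Phi_{4r}(q)\in B^\times\) for all \(r\), and this includes \(\Phi_{4(2r)}=\Phi_{8r}\). So it suffices to show that a finitely generated localization cannot make infinitely many pairwise nonassociate primes invertible. We may discard any \(f_j\) that is a unit. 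Note that \(t=0\) (i.e.\ \(B=\mathcal A\)) is already excluded, since \(\Phi_8(q)\notin\mathcal A^\times\).

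The key step is to describe the units of \(B\). The ring \(\mathcal A=\mathbb Z[q,q^{-1}]\) is a UFD, being a localization of \(\mathbb Z[q]\). Put \(f:=f_1\cdots f_t\), so that \(B=\mathcal A[f^{-1}]\). An element \(g\in\mathcal A\) is invertible in \(B\) exactly when \(gh=f^N\) for some \(h\in\mathcal A\) and some \(N\geq0\). By unique factorization, this happens exactly when every prime factor of \(g\) divides \(f\). In particular, a prime \(p\) of \(\mathcal A\) lies in \(B^\times\) only if \(p\) is associate to one of the finitely many prime factors of \(f\).

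Next we check that the elements \(\Phi_{8r}(q)\), \(r\geq1\), are prime in \(\mathcal A\) and pairwise nonassociate. Each \(\Phi_m\) is irreducible in \(\mathbb Z[q]\), being primitive and irreducible over \(\mathbb Q\). It is not associate to a power of \(q\), so it remains prime in \(\mathcal A\); the paper already uses this in \cref{cor:exact-endpoint-coefficient-ideals}. The units of \(\mathcal A\) are \(\pm q^k\). Distinct \(m\) give distinct root sets for \(\Phi_m\) (the primitive \(m\)-th roots of unity), so \(\Phi_m\) and \(\Phi_{m'}\) are nonassociate for \(m\neq m'\). Hence the infinitely many primes \(\Phi_{8r}(q)\) would all have to divide the single nonzero element \(f\), which has only finitely many prime factors up to association. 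This is a contradiction.

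No step here is a genuine obstacle; the corollary is a formal consequence of \cref{thm:uniform-least-ring}. The only point needing care is the unit criterion for \(\mathcal A[f^{-1}]\), namely that a prime becomes a unit only if it divides \(f\). Standard UFD arguments give this directly, via \(p\cdot(b/f^N)=1\Rightarrow p\,b=f^N\).
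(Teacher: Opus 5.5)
Your proof is correct and follows essentially the same route as the paper. Both arguments use Theorem~\ref{thm:uniform-least-ring} to reduce the corollary to the fact that a localization of the UFD $\mathcal A$ at finitely many elements can invert only finitely many pairwise nonassociate primes, whereas the cyclotomic polynomials form an infinite such family. The differences are cosmetic: you handle both ranks at once through $\Phi_{4(2r)}(q)=\Phi_{8r}(q)$ and distinguish cyclotomic polynomials by their root sets, while the paper treats the families $\Phi_{4r}(q)$ and $\Phi_{8r}(q)$ separately and compares lowest-order terms and monicity.
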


\begin{proof}
By Gauss's lemma, \(\mathbb Z[q]\) is a UFD, and every cyclotomic
polynomial \(\Phi_m(q)\) is prime in \(\mathbb Z[q]\).  Localizing at
the powers of \(q\) gives
\[
 \mathcal A=\mathbb Z[q,q^{-1}],
\]
which is again a UFD.  Since \(\Phi_m(0)=\pm1\), the prime \(\Phi_m(q)\) is not
associate to \(q\) and hence divides no power of \(q\).  Thus
\[
 (\Phi_m(q))\cap\{q^k\mid k\geq0\}=\varnothing,
\]
so the prime ideal \((\Phi_m(q))\) remains prime after localization
at the powers of \(q\).  Hence \(\Phi_m(q)\) is prime in
\(\mathcal A=\mathbb Z[q,q^{-1}]\).

The units of \(\mathcal A\) are precisely \(\pm q^k\), \(k\in\mathbb Z\).
If two cyclotomic polynomials are associates in \(\mathcal A\), then
\[
 \Phi_m(q)=\pm q^k\Phi_{m'}(q)
\]
for some \(k\in\mathbb Z\).  Comparing the lowest powers of \(q\)
gives \(k=0\), because both cyclotomic polynomials have nonzero
constant term.  Their monicity then gives
\[
 \Phi_m(q)=\Phi_{m'}(q),
\]
and hence \(m=m'\).  Therefore both
\[
 \{\Phi_{4r}(q)\mid r\geq1\}
 \quad\text{and}\quad
 \{\Phi_{8r}(q)\mid r\geq1\}
\]
are infinite families of pairwise nonassociate primes in
\(\mathcal A\).

Now let
\[
 B=\mathcal A[f_1^{-1},\ldots,f_t^{-1}]
 \subseteq\mathcal K,
\]
where \(f_1,\ldots,f_t\neq0\).  If a nonunit prime \(p\in\mathcal A\)
is invertible in \(B\), then
\[
 p^{-1}
 =
 \frac{a}{f_1^{e_1}\cdots f_t^{e_t}}
\]
for some \(a\in\mathcal A\) and \(e_1,\ldots,e_t\geq0\).  Thus
\[
 f_1^{e_1}\cdots f_t^{e_t}=pa,
\]
so \(p\) divides one of \(f_1,\ldots,f_t\).  Since each \(f_j\) has
only finitely many prime divisors in the UFD \(\mathcal A\), only
finitely many pairwise nonassociate primes can become invertible in
\(B\).

On the other hand, \cref{thm:uniform-least-ring} shows that
\[
 B\UDrn{n}
 =
 BU_{\mathcal A}^{\mathrm{Lus},(n)}
\]
would force every \(\Phi_{4r}(q)\) to be invertible in \(B\) when
\(n=1\), and every \(\Phi_{8r}(q)\) to be invertible in \(B\) when
\(n\geq2\).  In either case this would make infinitely many pairwise
nonassociate primes invertible in \(B\), a contradiction.
\end{proof}
\appendix
\section{Technical details in Section~\ref{subsec:endpoint-triangular-quotient}}
\label{app:endpoint-technical}

We retain Convention~\ref{conv:endpoint-data} and the notation of
Section~\ref{sec:endpoint-obstructions}, with
\(\mathbb N=\mathbb Z_{\geq0}\).
This appendix supplies the endpoint verification, word calculus,
and triangular decomposition used in the proof of
\cref{thm:endpoint-triangular-quotient}.

\subsection{Endpoint data and Gaussian vanishing}
\label{app:endpoint-preliminaries}

\begin{lemma}
\label[lemma]{lem:endpoint-lattice-basics}
\begin{enumerate}[label=\textup{(\alph*)}]
 \item \(X^{\bullet}\) is a subgroup of \(X\), and
 \(\lambda+\nu\in X^{\bullet}\) for all
 \((\lambda,\nu)\in X^{\bullet}\times L^{\bullet}\).
 \item \(\alpha_0=\alpha_0^{\bullet}\in L^{\bullet}\), and
 \[
  I^{\circ}:=\{i\in I\mid\ell_i>1\}\subseteq I\setminus\{0\} .
 \]
 \item For \((i,a)\in I\times\mathbb Z_{\geq1}\),
 \[
  \ell_i\nmid a\ \Longrightarrow\ \ell_i>1,
  \qquad
  \ell_i\mid a\ \Longrightarrow\
  a\alpha_i=\frac{a}{\ell_i}\alpha_i^{\bullet}\in L^{\bullet} .
 \]
\end{enumerate}
\end{lemma}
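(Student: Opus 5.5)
The proof is elementary lattice bookkeeping using only the definitions in Convention~\ref{conv:endpoint-data}. I would treat the three parts in order, each directly from the defining formulas \eqref{eq:endpoint-scaled-data}--\eqref{eq:endpoint-affine-scaling-one}.

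For (a), \(X^{\bullet}\) is the intersection over \(i\in I\) of the preimages of the subgroups \(\ell_i\mathbb Z\) under the homomorphisms \(\lambda\mapsto\langle h_i,\lambda\rangle\). It is therefore a subgroup of \(X\). The inclusion \(L^{\bullet}\subseteq X^{\bullet}\) is part of the standing hypothesis \eqref{eq:endpoint-datum-regular}. Hence for \(\lambda\in X^{\bullet}\) and \(\nu\in L^{\bullet}\) both summands lie in the subgroup \(X^{\bullet}\), and so does \(\lambda+\nu\). If one prefers not to lean on the hypothesis, one can check \(\alpha_j^{\bullet}\in X^{\bullet}\) directly: \(\langle h_i,\ell_j\alpha_j\rangle=\ell_ja_{ij}\), and \(\ell_ja_{ij}\in\ell_i\mathbb Z\) follows from \(d_i\ell_i\) and \(d_j\ell_j\) both being the least multiples of \(N\) with the stated property. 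I would only invoke this if needed, citing \cref{lem:endpoint-instances} for the two concrete instances.

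For (b), \eqref{eq:endpoint-affine-scaling-one} gives \(\ell_0=1\). Hence \(\alpha_0^{\bullet}=\ell_0\alpha_0=\alpha_0\), and this lies in \(L^{\bullet}\) by the definition of \(L^{\bullet}\) as the span of the \(\alpha_i^{\bullet}\). Since \(\ell_0=1\) is not \(>1\), the index \(0\) is excluded from \(I^{\circ}\), which gives \(I^{\circ}\subseteq I\setminus\{0\}\).

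For (c), the first implication is the contrapositive of the fact that \(1\) divides every integer: if \(\ell_i=1\) then \(\ell_i\mid a\). For the second, write \(a=m\ell_i\) with \(m\in\mathbb Z\). Then \(a\alpha_i=m\ell_i\alpha_i=m\alpha_i^{\bullet}\in L^{\bullet}\), and \(m=a/\ell_i\).

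There is no real obstacle here; the only step needing care is making sure (a) uses the correct source of \(L^{\bullet}\subseteq X^{\bullet}\), namely the convention's hypothesis, verified in both instances by \cref{lem:endpoint-instances}.
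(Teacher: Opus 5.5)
Your proposal is correct and follows the paper's proof essentially verbatim: (a) from the subgroup conditions defining \(X^{\bullet}\) together with the standing hypothesis \(L^{\bullet}\subseteq X^{\bullet}\), (b) from \(\ell_0=1\), and (c) from writing \(a=m\ell_i\). Your optional direct check that \(\alpha_j^{\bullet}\in X^{\bullet}\) is not needed. If you keep it, state it precisely: \(\ell_jd_j\in N\mathbb Z\) gives \(\ell_ja_{ij}d_i=\ell_jd_ja_{ji}\in N\mathbb Z\), and \(\{m\mid md_i\in N\mathbb Z\}=\ell_i\mathbb Z\), so \(\ell_i\mid\ell_ja_{ij}\).
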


\begin{proof}
(a)  Each defining condition in \eqref{eq:endpoint-scaled-weights} is a
subgroup condition, so \(X^{\bullet}\leq X\); the second assertion then
follows from \(L^{\bullet}\subseteq X^{\bullet}\) in
\eqref{eq:endpoint-datum-regular}.

(b)  \(\alpha_0^{\bullet}=\ell_0\alpha_0=\alpha_0\) by
\eqref{eq:endpoint-affine-scaling-one}, and the same identity gives
\(0\notin I^{\circ}\).

(c)  If \(\ell_i=1\) then \(\ell_i\mid a\); this is the first
implication.  The second is immediate.
\end{proof}

For \(c\in\mathbb Z\) and \(t\in\mathbb N\) we use Lusztig's symmetric
Gaussian convention: for \(c\geq0\) it is the usual Laurent polynomial,
with value zero for \(t>c\), and for \(c=-d<0\),
\begin{equation}
 \begin{bmatrix}-d\\t\end{bmatrix}_{u}
 :=(-1)^t
 \begin{bmatrix}d+t-1\\t\end{bmatrix}_{u} ,
 \label{eq:negative-symmetric-gaussian}
\end{equation}
as in \cite[\S1.3.1(a)]{L}.

\begin{lemma}[Endpoint Gaussian vanishing]
\label[lemma]{lem:endpoint-lucas}
Let \(\Bbbk\) be a field with \(\operatorname{char}\Bbbk=0\), let
\(\xi\in\Bbbk^{\times}\), and put
\(e:=\operatorname{ord}(\xi^{2})<\infty\).  Then
\begin{equation}
 \begin{bmatrix}c\\t\end{bmatrix}_{\xi}=0
 \qquad
 \bigl(c\in e\mathbb Z,\ t\in\mathbb N,\ e\nmid t\bigr),
 \label{eq:endpoint-gaussian-vanishing}
\end{equation}
with \eqref{eq:negative-symmetric-gaussian} understood for \(c<0\).
\end{lemma}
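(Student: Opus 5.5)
The plan is to reduce the statement to a count of factors of the cyclotomic polynomial \(\Phi_e\) in the Gaussian binomial, viewed as a polynomial over \(\mathbb Z\).

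First I would normalize the symmetric Gaussian. For \(0\le t\) and any integer \(c\), the symmetric coefficient in \eqref{eq:quantum-number-convention} is
\[
\begin{bmatrix}c\\t\end{bmatrix}_v=\prod_{s=1}^{t}\frac{v^{c-s+1}-v^{-(c-s+1)}}{v^{s}-v^{-s}} .
\]
The case \(c<0\) agrees with \eqref{eq:negative-symmetric-gaussian} by the usual sign manipulation, since each factor is odd in its exponent. Using \(v^m-v^{-m}=v^{-m}(v^{2m}-1)\), this product equals a power of \(v\) times
\[
g_{c,t}(u):=\prod_{s=1}^{t}\frac{u^{c-s+1}-1}{u^{s}-1}\in\mathbb Z[u,u^{-1}],
\qquad u=v^2 .
\]
It is a Laurent polynomial because it is, up to sign and a power of \(u\), the ordinary \(u\)-binomial, with the negative case handled via \eqref{eq:negative-symmetric-gaussian}.

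Since \(\operatorname{char}\Bbbk=0\) and \(\xi^2\) has order \(e\), the element \(\xi^2\) is a primitive \(e\)-th root of unity. Hence \(\Phi_e(\xi^2)=0\), and evaluation \(u\mapsto\xi^2\) factors through \(\mathbb Z[u,u^{-1}]/(\Phi_e(u))\). It therefore suffices to show that \(\Phi_e(u)\) divides \(g_{c,t}(u)\) in \(\mathbb Z[u,u^{-1}]\). The trivial cases can be dispatched directly: if \(0\le c<t\), the value is \(0\) by convention, and in particular this covers \(c=0\), \(t>0\).

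For the divisibility I would compute the \(\Phi_e\)-adic valuation in the UFD \(\mathbb Q[u,u^{-1}]\), and then invoke Gauss's lemma to descend to \(\mathbb Z\). Since \(u^m-1=\prod_{d\mid m}\Phi_d\) is squarefree, \(\Phi_e\) divides \(u^m-1\) to exact order \(1\) if \(e\mid m\), and to order \(0\) otherwise. This holds for \(m\ne0\), and also for negative \(m\) after extracting a unit. A zero factor \(m=0\) can occur only when \(0\in[c-t+1,c]\), which is again the trivial vanishing case.

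The valuation of \(g_{c,t}\) is then
\[
\#\{m\in[c-t+1,c]: e\mid m\}-\#\{m\in[1,t]:e\mid m\}
=\Bigl(\lfloor c/e\rfloor-\lfloor (c-t)/e\rfloor\Bigr)-\lfloor t/e\rfloor .
\]
With \(c=ke\), this becomes \(-\lfloor -t/e\rfloor-\lfloor t/e\rfloor\), which equals \(1\) exactly because \(e\nmid t\). This interval count works uniformly for negative \(c\), by writing the negative binomial as \((-1)^t\prod_{s}(u^{d+t-s}-1)/(u^s-1)\) with \(d=-c\in e\mathbb Z\). The interval \([d,d+t-1]\) then contains \(\lfloor (d+t-1)/e\rfloor-\lfloor (d-1)/e\rfloor=\lfloor (t-1)/e\rfloor+1\) multiples of \(e\). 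This exceeds \(\lfloor t/e\rfloor\) by one, again precisely because \(e\nmid t\).

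The main obstacle is bookkeeping rather than ideas. One must check that the convention \eqref{eq:negative-symmetric-gaussian} matches the product formula up to a unit. One must also ensure that a positive \(\Phi_e\)-valuation of a Laurent polynomial with integer coefficients really forces vanishing at \(\xi\), and not merely at some complex root of unity. The latter follows because \(\mathbb Z[u^{\pm1}]\to\Bbbk\), \(u\mapsto\xi^2\), kills \(\Phi_e\). As a cross-check, this agrees with the \(q\)-Lucas theorem, where the factor \(\binom{0}{t_0}_u\) with \(t_0\ne0\) vanishes.
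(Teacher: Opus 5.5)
Your proof is correct, but it does not follow the paper's route. The paper proves the lemma in one line, by citing Lusztig's quantum Lucas lemma \cite[Lemma~34.1.2(a)]{L} with $\ell=e$. Write $c=c_0+ec_1$ and $t=t_0+et_1$ with $0\le c_0,t_0<e$. Then the specialized Gaussian equals, up to a unit, $\begin{bmatrix}c_0\\t_0\end{bmatrix}_{\xi}\binom{c_1}{t_1}$. The hypotheses $c\in e\mathbb Z$ and $e\nmid t$ mean $c_0=0<t_0$, so the first factor vanishes.

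You instead pass to $u=v^2$ and reduce the claim to divisibility of $g_{c,t}(u)$ by $\Phi_e(u)$. You then count multiples of $e$ in the numerator and denominator exponent ranges, which gives $\Phi_e$-adic valuation $\lceil t/e\rceil-\lfloor t/e\rfloor=1$.

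\textbf{What your route buys.}
\begin{itemize}
\item It is self-contained: it needs only $u^m-1=\prod_{d\mid m}\Phi_d(u)$ and unique factorization in $\mathbb Q[u^{\pm1}]$.
\item It treats negative $c$ by the same interval count. The Lucas route instead needs the generalized binomial $\binom{c_1}{t_1}$ with $c_1<0$.
\item It avoids having to match Lusztig's normalization of $\ell$ and his unit factor.
\end{itemize}

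\textbf{What the citation buys.} The Lucas factorization gives the exact value of every specialized Gaussian, not only this vanishing pattern.

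One small simplification: since $\operatorname{char}\Bbbk=0$, evaluation at $u=\xi^2$ is already defined on $\mathbb Q[u^{\pm1}]$. So divisibility by $\Phi_e$ there suffices, and the Gauss-lemma descent to $\mathbb Z[u^{\pm1}]$ is unnecessary.
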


\begin{proof}
This is \cite[Lemma~34.1.2(a)]{L} with \(\ell=e\), after specializing the
quantum parameter to \(\xi\).  For \(e=1\) the condition \(e\nmid t\) is
empty.
\end{proof}

\subsection{Specialized halves, words, and half ideals}
\label{app:endpoint-words}

\medskip
\noindent\textbf{Specialized halves.}
Let
\begin{equation}
 \mathbf U_{\mathcal A}^{+}
 :=\mathcal A\bigl\langle E_i^{(a)}\bigm|i\in I,\ a\in\mathbb N\bigr\rangle,
 \qquad
 \mathbf U_{\mathcal A}^{-}
 :=\mathcal A\bigl\langle F_i^{(a)}\bigm|i\in I,\ a\in\mathbb N\bigr\rangle,
 \label{eq:integral-halves}
\end{equation}
and let \(\dot U_{\mathcal A}^{\Lus,(n)}\) be the source modified
integral form of \cref{sec:scaled-root-datum}, that is the form denoted
\(\dot U_{\mathcal A}\) in \cref{sec:coefficient-ideals}.  Define
\begin{equation}
 \dot V
 :=\Bbbk\otimes_{\mathcal A,\,q\mapsto\bar q}
   \dot U_{\mathcal A}^{\Lus,(n)},
 \qquad
 V^{\pm}
 :=\Bbbk\otimes_{\mathcal A,\,q\mapsto\bar q}
   \mathbf U_{\mathcal A}^{\pm},
 \label{eq:endpoint-specializations}
\end{equation}
so that \(V^{\pm}\) are unital \(\Bbbk\)-algebras graded by
\(\bigoplus_{i\in I}\mathbb Z\alpha_i\), and \(\dot V\) is a
\(\Bbbk\)-algebra with orthogonal idempotents \(1_\lambda\)
(\(\lambda\in X\)).  We write \(E_i^{(a)},F_i^{(a)},1_\lambda\) again for
the images of these elements.  Lusztig's integral triangular
decomposition and its integral basis
\cite[\S\S23.2.1--23.2.2]{L} are statements about free
\(\mathcal A\)-modules and therefore remain direct after the base change
\eqref{eq:endpoint-specializations}:
\begin{equation}
 \dot V
 =\bigoplus_{\lambda\in X}V^{-}1_\lambda V^{+},
 \qquad
 V^{-}\otimes_{\Bbbk}\Bbbk1_\lambda\otimes_{\Bbbk}V^{+}
 \xrightarrow{\ \sim\ }V^{-}1_\lambda V^{+}
 \quad(\lambda\in X),
 \label{eq:endpoint-triangular-decomposition}
\end{equation}
the second map being multiplication.  In particular the specialized basis
gives
\begin{equation}
 0\neq1\in(V^{-})_{0},
 \qquad
 0\neq E_0\in(V^{+})_{\alpha_0} .
 \label{eq:endpoint-E0-nonzero}
\end{equation}

\medskip
\noindent\textbf{Letters and words.}
We use formal letters and words so that their labels remain
part of the data. Put
\begin{equation}
 \mathscr S:=\{1,-1\}\times I\times\mathbb Z_{\geq1},
 \qquad
 \mathscr S^\varepsilon
 :=\{\varepsilon\}\times I\times\mathbb Z_{\geq1}
 \quad(\varepsilon\in\{1,-1\}),
 \label{eq:endpoint-letters}
\end{equation}
and call the elements of \(\mathscr S\) \emph{letters}.
Write \(V^1:=V^+\) and \(V^{-1}:=V^-\).
Define
\(\operatorname{ev}_\varepsilon:
\mathscr S^\varepsilon\longrightarrow V^\varepsilon\) by
\begin{equation}
 \operatorname{ev}_\varepsilon(\varepsilon,i,a)
 :=
 \begin{cases}
  E_i^{(a)},&\varepsilon=1,\\
  F_i^{(a)},&\varepsilon=-1,
 \end{cases}
 \qquad
 \deg(\varepsilon,i,a):=\varepsilon a\alpha_i,
 \qquad
 \mathscr S^{\mathrm{nb}}
 :=
 \bigl\{(\varepsilon,i,a)\in\mathscr S
        \bigm|\ell_i\nmid a\bigr\}.
 \label{eq:endpoint-letter-definitions}
\end{equation}
A letter is \emph{nonblock} if it belongs to
\(\mathscr S^{\mathrm{nb}}\), and \emph{block} otherwise.
By Lemma~\ref{lem:endpoint-lattice-basics}(b)--(c),
\begin{equation}
 \mathscr S^{\mathrm{nb}}
 \subseteq\{1,-1\}\times I^\circ\times\mathbb Z_{\geq1},
 \qquad
 \deg(\mathscr S\setminus\mathscr S^{\mathrm{nb}})
 \subseteq L^\bullet.
 \label{eq:nonblock-color-nonzero}
\end{equation}

For each \(\varepsilon\in\{1,-1\}\), identify a letter with
the corresponding one-letter word and extend
\(\operatorname{ev}_\varepsilon\) to
\[
 \operatorname{ev}_\varepsilon:
 \bigsqcup_{m\geq0}(\mathscr S^\varepsilon)^m
 \longrightarrow V^\varepsilon
\]
by
\[
 \operatorname{ev}_\varepsilon(x_1,\ldots,x_m)
 :=
 \operatorname{ev}_\varepsilon(x_1)\cdots
 \operatorname{ev}_\varepsilon(x_m).
\]
The empty word has value \(1\in V^\varepsilon\).

The set of words with a terminal weight is
\begin{equation}
 \mathscr M
 :=
 \bigsqcup_{m\geq0}\bigl(\mathscr S^m\times X\bigr),
 \qquad
 \mathbf w=(x_1,\ldots,x_m;\eta).
 \label{eq:endpoint-words}
\end{equation}
For \(x_j=(\varepsilon_j,i_j,a_j)\), define the intermediate
weights and the map
\(\operatorname{ev}_{\dot V}:\mathscr M\longrightarrow\dot V\) by
\begin{equation}
 \begin{aligned}
  \lambda_j(\mathbf w)
  &:=\eta+\sum_{u=j+1}^m\deg(x_u)
  \qquad(0\leq j\leq m),\\
  \operatorname{ev}_{\dot V}(\mathbf w)
  &:=
  \begin{cases}
   1_\eta,&m=0,\\[2pt]
   \displaystyle
   \prod_{j=1}^m
   \bigl(
    \operatorname{ev}_{\varepsilon_j}(x_j)
    1_{\lambda_j(\mathbf w)}
   \bigr),&m\geq1.
  \end{cases}
 \end{aligned}
 \label{eq:endpoint-word-eval}
\end{equation}
The product is taken in increasing order of \(j\).
Each factor denotes the modified divided power
\(E_{i_j}^{(a_j)}1_{\lambda_j(\mathbf w)}\) or
\(F_{i_j}^{(a_j)}1_{\lambda_j(\mathbf w)}\), according to
the sign \(\varepsilon_j\), and therefore belongs to \(\dot V\).
Since
\[
 \lambda_m(\mathbf w)=\eta,
 \qquad
 \lambda_{j-1}(\mathbf w)
 =\lambda_j(\mathbf w)+\deg(x_j),
\]
the factors have matching intermediate weights, and
\[
 \operatorname{ev}_{\dot V}(\mathbf w)
 \in
 1_{\lambda_0(\mathbf w)}\dot V1_\eta.
\]

We write both \(\operatorname{ev}_\varepsilon\) and \(\operatorname{ev}_{\dot V}\) as \(\operatorname{ev}\) for these evaluation
maps when the domain is clear. With this convention, for
\(m\geq1\),
\begin{equation}
 \begin{aligned}
  \operatorname{ev}(\mathbf w)
  &=
  \operatorname{ev}(x_1)\cdots
  \operatorname{ev}(x_m)1_\eta\\
  &=
  \bigl(
   1_{\lambda_0(\mathbf w)}
   \operatorname{ev}(x_1)
   1_{\lambda_1(\mathbf w)}
  \bigr)
  \cdots
  \bigl(
   1_{\lambda_{m-1}(\mathbf w)}
   \operatorname{ev}(x_m)
   1_{\lambda_m(\mathbf w)}
  \bigr),
 \end{aligned}
 \label{eq:endpoint-word-idempotents}
\end{equation}
where the first line is shorthand for the product defined in
\eqref{eq:endpoint-word-eval}.
Since \(\mathbf U_{\mathcal A}^{\pm}\) is generated by the
divided powers, the triangular decomposition
\eqref{eq:endpoint-triangular-decomposition} gives
\begin{equation}
 \dot V
 =
 \operatorname{span}_{\Bbbk}\operatorname{ev}(\mathscr M).
 \label{eq:endpoint-words-span}
\end{equation}
Define
\begin{equation}
 \begin{aligned}
  \mathscr B
  :={}&\Bigl\{
   \mathbf w=(x_1,\ldots,x_m;\eta)\in\mathscr M
   \ \Bigm|\
   \bigl(\exists i \in \{0,...,m\}:\ \lambda_i(\mathbf w)\notin X^{\bullet}\bigr)
   \ \vee\
   \bigl(\exists j \in \{1,...,m\}:\ x_j\in\mathscr S^{\mathrm{nb}}\bigr)
  \Bigr\},\\[2pt]
  \mathscr B^{\triangle}
  :={}&\left\{
   \mathbf w=(x_1,\ldots,x_m;\eta)\in\mathscr B
   \ \middle|\
   \begin{array}{c}
    \exists\,p\in\{0,\ldots,m\}:\
    x_i\in\mathscr S^{-1}\ (1\leq i\leq p),\\
    x_j\in\mathscr S^{1}\ (p<j\leq m)
   \end{array}
  \right\},\\[2pt]
  \mathfrak J
  :={}&\operatorname{span}_{\Bbbk}
   \operatorname{ev}(\mathscr B),
 \end{aligned}
 \label{eq:endpoint-bad-words}
\end{equation}
Thus \(\mathscr B^\triangle\) is the subset of \(\mathscr B\) consisting of triangularly ordered bad words, namely those in which all negative letters precede all positive letters.
Words in \(\mathscr B\) are called
\emph{bad}, the remaining words \emph{good}.

The subspace \(\mathfrak J\) imposes exactly the two selection rules that
quantum Frobenius would impose: forbidden weight idempotents vanish, and
only divided powers whose exponents are multiples of the scaling integers
survive.  At \(\ell_0=1\) the criterion of \cref{sec:scaled-root-datum}
is unavailable.  The next three results prove that these rules are
nevertheless compatible with triangular straightening and with
multiplication.

\medskip
\noindent\textbf{The half ideals.}
Let
\begin{equation}
 \mathfrak I^{+}
 :=\sum_{i\in I^{\circ}}V^{+}E_iV^{+},
 \qquad
 \mathfrak I^{-}
 :=\sum_{i\in I^{\circ}}V^{-}F_iV^{-} ,
 \label{eq:endpoint-half-ideals}
\end{equation}
with \(I^{\circ}=\{i\in I\mid\ell_i>1\}\subseteq I\setminus\{0\}\) as in
\cref{lem:endpoint-lattice-basics}(b).  By construction
\(\mathfrak I^{\pm}\) is the two-sided ideal of the unital algebra
\(V^{\pm}\) generated by \(\{E_i\}_{i\in I^{\circ}}\), resp.
\(\{F_i\}_{i\in I^{\circ}}\):
\begin{equation}
 V^{\pm}\mathfrak I^{\pm}\subseteq\mathfrak I^{\pm},
 \qquad
 \mathfrak I^{\pm}V^{\pm}\subseteq\mathfrak I^{\pm},
 \qquad
 \mathfrak I^{\pm}
 =\bigoplus_{\nu}\bigl(\mathfrak I^{\pm}\cap(V^{\pm})_{\nu}\bigr),
 \label{eq:half-ideal-properties}
\end{equation}
the last equality because the generators are root homogeneous.  These are
ideals of the half algebras only, not of \(\dot V\).

\begin{lemma}[Endpoint half ideals]
\label[lemma]{lem:endpoint-half-ideals}
For \(\varepsilon\in\{1,-1\}\) put \(V^{1}:=V^{+}\),
\(V^{-1}:=V^{-}\), \(\mathfrak I^{1}:=\mathfrak I^{+}\),
\(\mathfrak I^{-1}:=\mathfrak I^{-}\),
\[
 \mathscr W^{\varepsilon}
 :=\left\{
   (x_1,\ldots,x_m)
   \ \middle|\
   \begin{array}{c}
    m\in\mathbb Z_{\geq1},\quad
    x_j\in\mathscr S^{\varepsilon}\ (1\leq j\leq m),\\
    \exists j\in\{1,\ldots,m\}:\ x_j\in\mathscr S^{\mathrm{nb}}
   \end{array}
  \right\}.
\]
Then
\[
 \mathfrak I^{\varepsilon}
 =\operatorname{span}_{\Bbbk}
  \operatorname{ev}\bigl(\mathscr W^{\varepsilon}\bigr)
 \qquad(\varepsilon\in\{1,-1\}).
\]
\end{lemma}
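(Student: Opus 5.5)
We prove the two inclusions separately, working in the half algebra \(V^{\varepsilon}\) (we treat \(\varepsilon=1\); the case \(\varepsilon=-1\) is identical with \(F\) in place of \(E\)).

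\emph{Inclusion \(\supseteq\).} Let \((x_1,\ldots,x_m)\in\mathscr W^{1}\), and fix an index \(j\) with \(x_j=(1,i,a)\in\mathscr S^{\mathrm{nb}}\). By \eqref{eq:nonblock-color-nonzero} we have \(i\in I^\circ\), so \(\ell_i>1\) and \(\ell_i\nmid a\). The evaluation is
\[
 \operatorname{ev}(x_1)\cdots E_i^{(a)}\cdots\operatorname{ev}(x_m).
\]
Since \(\mathfrak I^+\) is a two-sided ideal of \(V^+\) by \eqref{eq:half-ideal-properties}, it suffices to show \(E_i^{(a)}\in\mathfrak I^+\). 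Write \(a=\ell_i b+c\) with \(0<c<\ell_i\). Lusztig's relation gives
\[
 E_i^{(c)}E_i^{(\ell_i b)}=\begin{bmatrix}a\\c\end{bmatrix}_{q_{\Bbbk,i}}E_i^{(a)}.
\]
At \(\operatorname{ord}(q_{\Bbbk,i}^2)=\ell_i\), the quantum Lucas theorem (the companion of \cite[Lemma~34.1.2]{L}) evaluates this Gaussian to \(\binom{b}{0}\) times \(\bigl[\begin{smallmatrix}c\\c\end{smallmatrix}\bigr]\), up to a sign and a power of \(q_{\Bbbk,i}\); hence it is a unit. Thus \(E_i^{(a)}\) is a unit multiple of \(E_i^{(c)}E_i^{(\ell_ib)}\). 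Moreover \(E_i^{(c)}=E_i^c/[c]_{q_{\Bbbk,i}}!\) with \([c]!\neq0\) because \(c<\ell_i\). Therefore \(E_i^{(c)}\) is a scalar multiple of \(E_i\cdot E_i^{c-1}\), which lies in \(V^+E_iV^+\). This proves \(E_i^{(a)}\in\mathfrak I^+\).

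\emph{Inclusion \(\subseteq\).} Since \(\mathfrak I^+=\sum_{i\in I^\circ}V^+E_iV^+\) and \(V^+\) is spanned by evaluations of words in \(\mathscr S^{1}\) (the empty word included), every element of \(\mathfrak I^+\) is a linear combination of products \(\operatorname{ev}(\mathbf u)\,E_i\,\operatorname{ev}(\mathbf v)\) with \(i\in I^\circ\). Such a product is the evaluation of the concatenated word \(\mathbf u\,(1,i,1)\,\mathbf v\). The letter \((1,i,1)\) is nonblock because \(\ell_i>1\) does not divide \(1\). Hence the concatenated word lies in \(\mathscr W^1\), which gives the inclusion.

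The only step with real content is the Lucas-type invertibility of \(\bigl[\begin{smallmatrix}a\\c\end{smallmatrix}\bigr]\) at the specialized root of unity. If a direct reference is awkward, we can instead argue by induction on \(b\), using \(E_i^{(\ell_i b)}E_i^{(c)}\) and the standard factorization of \(q\)-binomials at roots of unity. This uses only \(\operatorname{char}\Bbbk=0\) and \(e=\ell_i\), in line with \cref{lem:endpoint-lucas}.
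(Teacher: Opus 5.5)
Your proof is correct. Two of its parts coincide with the paper's proof: the inclusion \(\mathfrak I^{+}\subseteq\operatorname{span}_{\Bbbk}\operatorname{ev}(\mathscr W^{1})\), via words containing the nonblock letter \((1,i,1)\), and the reduction of the other inclusion to a single nonblock letter using two-sidedness of \(\mathfrak I^{+}\).

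The genuine difference is how you show \(E_i^{(a)}\in\mathfrak I^{+}\) when \(\ell_i\nmid a\). You write \(a=\ell_ib+c\) with \(0<c<\ell_i\) and use \(E_i^{(c)}E_i^{(\ell_ib)}=\left[\begin{smallmatrix}a\\c\end{smallmatrix}\right]_{q_{\Bbbk,i}}E_i^{(a)}\). You then invoke the quantum Lucas factorization to see that this Gaussian is a unit, and finish with \([c]_{q_{\Bbbk,i}}!\neq0\).

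The paper uses only the one-letter identity \(E_i^{(a-1)}E_i=[a]_{q_i}E_i^{(a)}\). It observes that \([a]_{q_{\Bbbk,i}}\neq0\) directly, since \((q_i^{2}-1)[a]_{q_i}=q_i^{1-a}(q_i^{2a}-1)\) and \(q_{\Bbbk,i}^{2a}\neq1\) when \(\ell_i\nmid a\). This gives \(E_i^{(a)}=[a]_{q_{\Bbbk,i}}^{-1}E_i^{(a-1)}E_i\in V^{+}E_iV^{+}\) immediately.

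So the paper needs only the nonvanishing of one quantum integer, which is the same kind of fact you already use for \([c]!\). Your route imports Lusztig's Lucas-type lemma with its sign and power-of-\(q\) bookkeeping. What it buys in return is the finer statement that \(E_i^{(a)}\) is a nonzero scalar multiple of \(E_i^{c}E_i^{(\ell_ib)}\), which separates off the block part. That finer statement is not needed here, and neither is your proposed fallback induction on \(b\).
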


\begin{proof}
\emph{Step 1:}
Let \((\varepsilon,i,a)\in\mathscr S^{\mathrm{nb}}\), so \(a\geq1\) and
\(\ell_i\nmid a\); by \cref{lem:endpoint-lattice-basics}(c),
\(\ell_i>1\) and \(i\in I^{\circ}\).  Over \(\mathcal A\) one has
\([a]_{q_i}=\sum_{j=0}^{a-1}q_i^{\,a-1-2j}\), and multiplication by
\(q_i^{2}-1\) telescopes to the identity of Laurent polynomials
\begin{equation}
 \bigl(q_i^{2}-1\bigr)[a]_{q_i}
 =q_i^{\,1-a}\bigl(q_i^{2a}-1\bigr)
 \qquad\text{in }\mathcal A .
 \label{eq:cleared-quantum-integer}
\end{equation}
Applying \(\mathcal A\to\Bbbk\), \(q\mapsto\bar q\), gives
\begin{equation}
 \bigl(q_{\Bbbk,i}^{2}-1\bigr)[a]_{q_{\Bbbk,i}}
 =q_{\Bbbk,i}^{\,1-a}\bigl(q_{\Bbbk,i}^{2a}-1\bigr).
 \label{eq:cleared-quantum-integer-specialized}
\end{equation}
Here \(q_{\Bbbk,i}\in\Bbbk^{\times}\), so
\(q_{\Bbbk,i}^{\,1-a}\in\Bbbk^{\times}\); and since
\(\operatorname{ord}(q_{\Bbbk,i}^{2})=\ell_i<\infty\),
\[
 q_{\Bbbk,i}^{2a}=\bigl(q_{\Bbbk,i}^{2}\bigr)^{a}=1
 \quad\Longleftrightarrow\quad
 \ell_i\mid a ,
\]
which fails by hypothesis.  Hence the right-hand side of
\eqref{eq:cleared-quantum-integer-specialized} is nonzero and, \(\Bbbk\)
being a field, so are both factors on the left:
\begin{equation}
 q_{\Bbbk,i}^{2}\neq1,
 \qquad
 [a]_{q_{\Bbbk,i}}\in\Bbbk^{\times}.
 \label{eq:quantum-integer-invertible}
\end{equation}
The divided-power identities
\[
 E_i^{(a-1)}E_i=[a]_{q_i}E_i^{(a)},
 \qquad
 F_i^{(a-1)}F_i=[a]_{q_i}F_i^{(a)}
\]
hold in \(\mathbf U_{\mathcal A}^{\pm}\), hence in \(V^{\pm}\) after the
base change \eqref{eq:endpoint-specializations}; they cannot be obtained
inside \(V^{\pm}\) by dividing by \([a]_{q_{\Bbbk,i}}!\), which may
vanish.  Multiplying by \([a]_{q_{\Bbbk,i}}^{-1}\) gives
\begin{equation}
 ev((1,i,a))=E_i^{(a)}
 =[a]_{q_{\Bbbk,i}}^{-1}E_i^{(a-1)}E_i\in\mathfrak I^{+},
 \qquad
  ev((-1,i,a))=F_i^{(a)}
 =[a]_{q_{\Bbbk,i}}^{-1}F_i^{(a-1)}F_i\in\mathfrak I^{-} .
 \label{eq:nonblock-letter-in-half-ideal}
\end{equation}

\emph{Step 2: \(\operatorname{span}_{\Bbbk}
\operatorname{ev}(\mathscr W^{\varepsilon})
\subseteq\mathfrak I^{\varepsilon}\).}
Let \((x_1,\ldots,x_m)\in\mathscr W^{\varepsilon}\) and choose \(j\) with
\(x_j\in\mathscr S^{\mathrm{nb}}\).  Then
\[
 \operatorname{ev}(x_1,\ldots,x_m)
 =\underbrace{\bigl(\operatorname{ev}(x_1)\cdots
   \operatorname{ev}(x_{j-1})\bigr)}_{\in V^{\varepsilon}}
  \operatorname{ev}(x_j)
  \underbrace{\bigl(\operatorname{ev}(x_{j+1})\cdots
   \operatorname{ev}(x_m)\bigr)}_{\in V^{\varepsilon}} ,
\]
empty products being \(1\).  By Step~1,
\(\operatorname{ev}(x_j)\in\mathfrak I^{\varepsilon}\), and
\eqref{eq:half-ideal-properties} gives
\(\operatorname{ev}(x_1,\ldots,x_m)\in\mathfrak I^{\varepsilon}\).
Taking \(\Bbbk\)-spans yields the inclusion.

\emph{Step 3: the reverse inclusion.}
Let \(i\in I^{\circ}\) and \(x,y\in V^{+}\).  Since \(V^{+}\) is spanned
by the elements \(\operatorname{ev}(u_1,\ldots,u_p)\) with
\(u_1,\ldots,u_p\in\mathscr S^{1}\) and \(p\in\mathbb N\), we may expand
\(x\) and \(y\) in such elements and obtain
\[
 xE_iy
 \in\operatorname{span}_{\Bbbk}
 \bigl\{\operatorname{ev}\bigl(u_1,\ldots,u_p,(1,i,1),v_1,\ldots,v_s\bigr)
 \bigm|u_\bullet,v_\bullet\in\mathscr S^{1}\bigr\}.
\]
Every word occurring here contains the letter \((1,i,1)\), and
\((1,i,1)\in\mathscr S^{\mathrm{nb}}\) because \(\ell_i>1\) does not
divide \(1\); hence every such word lies in \(\mathscr W^{1}\).  Summing
over \(i\in I^{\circ}\) gives
\(\mathfrak I^{+}\subseteq\operatorname{span}_{\Bbbk}
\operatorname{ev}(\mathscr W^{1})\).  The case \(\varepsilon=-1\) is
identical in \(V^{-}\).
\end{proof}

\subsection{Bad-word straightening and triangular decomposition}
\label{app:endpoint-straightening}

\begin{lemma}
\label[lemma]{lem:endpoint-bad-straightening}
\[
 \operatorname{span}_{\Bbbk}\operatorname{ev}(\mathscr B)
 =\operatorname{span}_{\Bbbk}\operatorname{ev}(\mathscr B^{\triangle}).
\]
\end{lemma}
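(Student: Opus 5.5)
The inclusion \(\supseteq\) is trivial, since \(\mathscr B^\triangle\subseteq\mathscr B\). For \(\subseteq\), I will show that every bad word can be straightened into a \(\Bbbk\)-combination of evaluations of triangularly ordered bad words. The argument is an induction on a disorder measure: for a word \(\mathbf w=(x_1,\ldots,x_m;\eta)\), take the total degree \(\sum_j a_j\) together with the number of inversions, i.e.\ pairs \(j<k\) with \(x_j\in\mathscr S^{1}\) and \(x_k\in\mathscr S^{-1}\). If \(\mathbf w\) has no inversion, it already lies in \(\mathscr B^\triangle\). Otherwise I choose an adjacent pair \(x_j=(1,i,a)\), \(x_{j+1}=(-1,i',b)\) and rewrite the middle factor \(E_i^{(a)}F_{i'}^{(b)}1_{\lambda_{j+1}}\) by Lusztig's commutation formula in \(\dot U\). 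For \(i\neq i'\) the letters simply swap. For \(i=i'\),
\[
E_i^{(a)}F_i^{(b)}1_\lambda=\sum_{t\geq0}\begin{bmatrix}a-b+\langle h_i,\lambda\rangle\\t\end{bmatrix}_{q_i}F_i^{(b-t)}E_i^{(a-t)}1_\lambda .
\]
The \(t=0\) term has one fewer inversion and the same letters, and the terms with \(t>0\) have strictly smaller total degree. So the induction reduces to the following claim: \emph{every term whose Gaussian coefficient survives specialization is again a bad word}.

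Case (A): \(\mathbf w\) contains a nonblock letter \(x_p\) with \(p\notin\{j,j+1\}\). That letter persists in every term, so every term stays bad. Case (B): some intermediate weight \(\lambda_p(\mathbf w)\notin X^\bullet\) with \(p\neq j\). All weights other than \(\lambda_j\) are unchanged, so again every term stays bad.

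This leaves the critical situation, which I expect to be the main obstacle. Here the only badness of \(\mathbf w\) is located at \(x_j\), at \(x_{j+1}\), or at \(\lambda_j\), with \(\lambda_{j-1},\lambda_{j+1}\in X^\bullet\) and all other letters block. If \(\ell_i=1\), then \(\lambda_j=\lambda_{j+1}-b\alpha_i\in X^\bullet\) by Lemma~\ref{lem:endpoint-lattice-basics}(a), and both letters are block, so \(\mathbf w\) would be good; hence \(\ell_i>1\). Since \(\lambda_{j+1},\lambda_{j-1}\in X^\bullet\), we get \((a-b)\alpha_i\in L^\bullet\), and so \(\ell_i\mid a-b\) (using \(\langle h_i,\alpha_i\rangle=2\) and regularity of the scaled datum). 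Moreover \(\ell_i\mid\langle h_i,\lambda_{j+1}\rangle\). Thus the top entry \(c=a-b+\langle h_i,\lambda_{j+1}\rangle\) lies in \(\ell_i\mathbb Z\), and Lemma~\ref{lem:endpoint-lucas} with \(\xi=q_{\Bbbk,i}\), \(e=\ell_i\) kills every term with \(\ell_i\nmid t\). In the surviving terms \(\ell_i\mid t\), so \(b-t\equiv b\) and \(a-t\equiv a\) modulo \(\ell_i\). Badness then propagates as follows. If \(x_j\) or \(x_{j+1}\) was nonblock, the corresponding new letter is still nonblock, or else it has exponent \(0\) and disappears; in the latter case the partner has exponent \(\equiv a-b\not\equiv0\)\ldots{} Here care is needed. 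If \(a\equiv0\) and \(b\not\equiv0\), then \(a-b\not\equiv0\) contradicts \(\ell_i\mid a-b\); hence \(a\equiv b\) always holds in the critical case. So either both letters are nonblock, in which case neither can vanish (exponent \(\equiv a\not\equiv0\) is nonzero), or both are block. If both are block, then \(\lambda_j\in X^\bullet\) as well, so \(\mathbf w\) was good, a contradiction.

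In every case, then, the surviving terms are bad words, with either fewer inversions or smaller total degree. The induction closes, and I obtain \(\operatorname{span}\operatorname{ev}(\mathscr B)\subseteq\operatorname{span}\operatorname{ev}(\mathscr B^\triangle)\). Finally, the \(i\ne i'\) swap and the \(t=0\) swap preserve the multiset of letters, and by Lemma~\ref{lem:endpoint-lattice-basics}(a) they preserve membership of the outer weights in \(X^\bullet\). Badness located at \(\lambda_j\) alone needs separate attention in the \(i\neq i'\) swap: there the new middle weight is \(\lambda_{j+1}+a\alpha_i\), and I will verify, using that the weights are bad modulo \(L^\bullet\), that it still lies outside \(X^\bullet\) unless some letter is nonblock.
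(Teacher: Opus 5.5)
\textbf{Gap in the critical equal-colour case.} Your overall strategy is essentially the paper's: straighten an adjacent $(+,-)$ pair, show that every term with nonzero specialized coefficient is again bad, and induct. Cases (A), (B), the $\ell_i=1$ subcase and the induction measure are all fine. The problem is the step ``$\lambda_{j\pm1}\in X^\bullet$, so $(a-b)\alpha_i\in L^\bullet$ and $\ell_i\mid a-b$''.

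Two weights in $X^\bullet$ only give $(a-b)\alpha_i\in X^\bullet$. The intersection $X^\bullet\cap\mathbb Z\alpha_i$ can be strictly larger than $\mathbb Z\alpha_i^\bullet$, and regularity of the scaled datum says nothing about it. In instance (b) of Lemma~\ref{lem:endpoint-instances}, $\langle h_1,\alpha_1\rangle=2\in 2\mathbb Z$ and $\ell_0=1$, so $\alpha_1\in X^\bullet\setminus L^\bullet$. In instance (a), $2\alpha_n\in X^\bullet\setminus L^\bullet$ although $\ell_n=4$.

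Here is a concrete failure. In instance (b), take $\eta\in X^\bullet$ and the word $((1,1,2),(-1,1,1);\eta)$.
\begin{itemize}
 \item Its weights $\eta+\alpha_1,\ \eta-\alpha_1,\ \eta$ all lie in $X^\bullet$, and its only badness is the nonblock letter $F_1^{(1)}$.
 \item Yet $a=2\not\equiv b=1\pmod{\ell_1}$, so your conclusion ``$a\equiv b$ always holds in the critical case'' is false.
 \item The top entry $1+\langle h_1,\eta\rangle$ is odd, so Lemma~\ref{lem:endpoint-lucas} does not apply.
 \item Indeed the $t=1$ term $E_1^{(1)}1_\eta$ has coefficient $[1+\langle h_1,\eta\rangle]_{\bar q}\neq 0$, since $\bar q^{2}=-1$ and the entry is odd. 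This contradicts your claim that surviving terms have $\ell_i\mid t$.
\end{itemize}
That term is still bad, since its letter is nonblock, but your argument does not show it.

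\textbf{Repair.} Split on whether $\ell_i\mid a-b$.
\begin{itemize}
 \item If $\ell_i\nmid a-b$: since $(a-t)-(b-t)=a-b$, in every term one of $a-t,\,b-t$ is not divisible by $\ell_i$. That exponent is therefore nonzero, so a nonblock letter survives and the term is bad, with no Gaussian input needed.
 \item If $\ell_i\mid a-b$: your argument goes through verbatim.
\end{itemize}

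\textbf{How the paper avoids this.} It argues contrapositively. If an output word with nonzero coefficient is good, then $\ell_i\mid a-t$ and $\ell_i\mid b-t$ (trivially for a deleted letter). So $\ell_i\mid a-b$ comes from the output letters, not from the weights. Together with $\lambda_{j+1}\in X^\bullet$, the top entry lies in $\ell_i\mathbb Z$, Lemma~\ref{lem:endpoint-lucas} forces $\ell_i\mid t$, and the input word is good.

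\textbf{The deferred different-colour step.} It is immediate, but not as you describe it. If both letters are block, then $\lambda_j=\lambda_{j+1}-b\alpha_{i'}\in X^\bullet$ by Lemma~\ref{lem:endpoint-lattice-basics}(a),(c). So ``badness at $\lambda_j$ alone'' never occurs, and in that situation the new middle weight $\lambda_{j+1}+a\alpha_i$ would lie in $X^\bullet$, not outside it. Otherwise some letter of the pair is nonblock, and the swapped word keeps it.
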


\begin{proof}
Since \(\mathscr B^{\triangle}\subseteq\mathscr B\), one inclusion is
immediate.  For the reverse inclusion we first analyse the two local
replacements used in triangular straightening. We show that each local replacement sends a bad input word to a
\(\Bbbk\)-linear combination of bad words. To prove that badness is preserved, we argue contrapositively: if an output word occurring with nonzero coefficient is good, then the input word is good.

\emph{Different colors.}
Let \(i\neq j\), \(a,b\geq1\), \(\eta\in X\).  Then
\begin{equation}
 E_i^{(a)}F_j^{(b)}1_\eta
 =F_j^{(b)}1_{\eta+a\alpha_i}E_i^{(a)}1_\eta ,
 \label{eq:endpoint-commute}
\end{equation}
so the input word \(\bigl((1,i,a),(-1,j,b);\eta\bigr)\), with
intermediate weight \(\eta-b\alpha_j\), is replaced by the single word
\(\bigl((-1,j,b),(1,i,a);\eta\bigr)\), with intermediate weight
\(\eta+a\alpha_i\).  Assume the output word is good.  Then
\(\ell_i\mid a\), \(\ell_j\mid b\) and \(\eta\in X^{\bullet}\), so
\cref{lem:endpoint-lattice-basics}(c) gives
\(b\alpha_j\in L^{\bullet}\) and hence, by
\cref{lem:endpoint-lattice-basics}(a),
\[
 \eta-b\alpha_j\in\eta+L^{\bullet}\subseteq X^{\bullet}.
\]
Both input letters are block, its intermediate weight lies in
\(X^{\bullet}\), and the outer weights and outside data are unchanged;
hence the input word is good.

\emph{Equal colors.}
Let \(i\in I\), \(a,b\geq1\), \(\eta\in X\).  Specializing Lusztig's
modified divided-power identity \cite[\S23.1.3]{L} gives
\begin{equation}
 E_i^{(a)}F_i^{(b)}1_\eta
 =\sum_{t=0}^{\min(a,b)}
  \begin{bmatrix}
   a-b+\langle h_i,\eta\rangle\\t
  \end{bmatrix}_{q_{\Bbbk,i}}
  F_i^{(b-t)}1_{\eta+(a-t)\alpha_i}E_i^{(a-t)}1_\eta .
 \label{eq:endpoint-straightening}
\end{equation}
In every summand
\(-(b-t)\alpha_i+(a-t)\alpha_i=(a-b)\alpha_i\), so the two outer weights
are preserved.  Set \(E_i^{(0)}=F_i^{(0)}=1\) and delete every
zero-exponent factor from the corresponding word; if \(t=a\), resp.
\(t=b\), the deleted intermediate weight \(\eta+(a-t)\alpha_i\) equals the
right, resp. the left, outer weight.  Hence every summand is again the
evaluation of a word in \(\mathscr M\).

Fix a summand with nonzero coefficient and suppose its word is good.
Write \(\ell:=\ell_i\).  Goodness gives
\[
 \ell\mid a-t,
 \qquad
 \ell\mid b-t,
 \qquad
 \eta\in X^{\bullet},
\]
the first two conditions holding trivially when the corresponding factor
was deleted, since then the exponent is \(0\).  Therefore
\[
 a-b=(a-t)-(b-t)\in\ell\mathbb Z,
 \qquad
 \langle h_i,\eta\rangle\in\ell\mathbb Z,
 \qquad
 a-b+\langle h_i,\eta\rangle\in\ell\mathbb Z ,
\]
and \cref{lem:endpoint-lucas}, applied with \(\xi=q_{\Bbbk,i}\) and
\(e=\ell=\operatorname{ord}(q_{\Bbbk,i}^{2})\), forces \(\ell\mid t\)
because the coefficient is nonzero.  Consequently
\[
 \ell\mid a,
 \qquad
 \ell\mid b,
 \qquad
 \eta-b\alpha_i\in\eta+L^{\bullet}\subseteq X^{\bullet},
\]
so both input letters are block and the input intermediate weight lies in
\(X^{\bullet}\).  As before the outer weights and all outside data agree
with those of the good output; hence the input word is good.

By contraposition, in both cases every nonzero output word obtained from
a bad input word is bad.

\emph{Global induction.}
For \(\mathbf w=(x_1,\ldots,x_m;\eta)\in\mathscr M\), put
\[
 \operatorname{inv}(\mathbf w)
 :=
 \#\bigl\{(r,s)\mid r<s,\ 
 x_r\in\mathscr S^{1},\ x_s\in\mathscr S^{-1}\bigr\}.
\]
We prove by induction on \(\operatorname{inv}(\mathbf w)\) that
\[
 \mathbf w\in\mathscr B
 \quad\Longrightarrow\quad
 \operatorname{ev}(\mathbf w)
 \in
 \operatorname{span}_{\Bbbk}\operatorname{ev}(\mathscr B^\triangle).
\]

If \(\operatorname{inv}(\mathbf w)=0\), no positive letter precedes a
negative letter.  Hence all negative letters precede all positive
letters, and therefore
\(\mathbf w\in\mathscr B^\triangle\).

Now suppose that
\[
 N:=\operatorname{inv}(\mathbf w)>0
\]
and that the assertion holds for all bad words of inversion number
strictly smaller than \(N\).  The sign sequence of \(\mathbf w\)
contains an adjacent pair
\[
 x_u\in\mathscr S^{1},
 \qquad
 x_{u+1}\in\mathscr S^{-1}.
\]
Apply the appropriate local replacement above.  After omitting all terms
with zero coefficient, this gives
\[
 \operatorname{ev}(\mathbf w)
 =
 \sum_{\mathbf w'}c_{\mathbf w'}\operatorname{ev}(\mathbf w'),
 \qquad
 c_{\mathbf w'}\in\Bbbk^\times.
\]
By the local analysis above, every output word
\(\mathbf w'\) occurring in this sum is bad.

It remains to compare inversion numbers.  If both letters of the chosen
pair survive, their sign pattern changes from
\[
 (+,-)\quad\text{to}\quad(-,+),
\]
so the inversion formed by this pair disappears. The remaining two possible outside signs
contribute no change of the inversion.  Consequently,
\[
 \operatorname{inv}(\mathbf w')
 =
 \operatorname{inv}(\mathbf w)-1
\]
whenever both letters survive.

If at least one letter of the pair is deleted, the chosen inversion
again disappears.  Moreover, deleting a letter can only remove
inversions involving that letter, and the relative order of all
remaining letters is unchanged.  Hence in this case
\[
 \operatorname{inv}(\mathbf w')
 \leq
 \operatorname{inv}(\mathbf w)-1.
\]
Thus every output word in the preceding sum satisfies
\[
 \mathbf w'\in\mathscr B,
 \qquad
 \operatorname{inv}(\mathbf w')<N.
\]
The induction hypothesis therefore gives
\[
 \operatorname{ev}(\mathbf w')
 \in
 \operatorname{span}_{\Bbbk}\operatorname{ev}(\mathscr B^\triangle)
\]
for every such \(\mathbf w'\).  Substituting these expressions into the
straightening formula yields
\[
 \operatorname{ev}(\mathbf w)
 \in
 \operatorname{span}_{\Bbbk}\operatorname{ev}(\mathscr B^\triangle).
\]
This proves
\[
 \operatorname{span}_{\Bbbk}\operatorname{ev}(\mathscr B)
 \subseteq
 \operatorname{span}_{\Bbbk}\operatorname{ev}(\mathscr B^\triangle).
\]
The reverse inclusion follows from
\(\mathscr B^\triangle\subseteq\mathscr B\), and the lemma follows.

\end{proof}

\begin{proposition}[Half-ideal decomposition of \(\mathfrak J\)]
\label[proposition]{prop:endpoint-half-ideal-decomposition}
\begin{equation}
 \mathfrak J
 =\bigoplus_{\lambda\in X\setminus X^{\bullet}}
   V^{-}1_\lambda V^{+}
 \ \oplus
 \bigoplus_{\lambda\in X^{\bullet}}
   \bigl(\mathfrak I^{-}1_\lambda V^{+}
        +V^{-}1_\lambda\mathfrak I^{+}\bigr),
 \label{eq:endpoint-half-ideal-form}
\end{equation}
\begin{equation}
 \mathfrak J
 =\bigoplus_{\nu\in\bigoplus_{i\in I}\mathbb Z\alpha_i}
   \bigl(\mathfrak J\cap(\dot V)_{\nu}\bigr),
 \label{eq:endpoint-half-ideal-grading}
\end{equation}
and, as root-graded \(\Bbbk\)-vector spaces,
\begin{equation}
 \dot V/\mathfrak J
 \cong
 \bigoplus_{\lambda\in X^{\bullet}}
 \bigl(V^{-}/\mathfrak I^{-}\bigr)
 \otimes_{\Bbbk}\Bbbk1_\lambda
 \otimes_{\Bbbk}\bigl(V^{+}/\mathfrak I^{+}\bigr).
 \label{eq:endpoint-quotient-decomposition}
\end{equation}
\end{proposition}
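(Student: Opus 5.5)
The plan is to derive \eqref{eq:endpoint-half-ideal-form} from the straightening \cref{lem:endpoint-bad-straightening}, and then to read off \eqref{eq:endpoint-half-ideal-grading} and \eqref{eq:endpoint-quotient-decomposition} from the direct triangular decomposition \eqref{eq:endpoint-triangular-decomposition}.

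Write $R$ for the right-hand side of \eqref{eq:endpoint-half-ideal-form}.

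\emph{Inclusion $\mathfrak J\subseteq R$.} By \cref{lem:endpoint-bad-straightening}, it suffices to treat triangularly ordered bad words
\[
 \mathbf w=(y_1,\ldots,y_p,z_1,\ldots,z_s;\eta)\in\mathscr B^{\triangle},
\]
with $y_\bullet\in\mathscr S^{-1}$ and $z_\bullet\in\mathscr S^{1}$. Let $\lambda:=\lambda_p(\mathbf w)$ be the middle weight. By \eqref{eq:endpoint-word-idempotents},
\[
 \operatorname{ev}(\mathbf w)=\operatorname{ev}(y_1,\ldots,y_p)\,1_\lambda\,\operatorname{ev}(z_1,\ldots,z_s)\in V^-1_\lambda V^+ .
\]
If $\lambda\notin X^{\bullet}$, this term already lies in the first summand of $R$. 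Suppose instead that $\lambda\in X^{\bullet}$.

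If some letter is nonblock, then \cref{lem:endpoint-half-ideals} places the corresponding half-word in $\mathfrak I^{\mp}$. Otherwise every letter is block, so every letter degree lies in $L^{\bullet}$ by \eqref{eq:nonblock-color-nonzero}. \cref{lem:endpoint-lattice-basics}(a) then shows that every intermediate weight lies in $\lambda+L^{\bullet}\subseteq X^{\bullet}$. That would make the word good, a contradiction. Hence $\mathfrak J\subseteq R$.

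\emph{Inclusion $R\subseteq\mathfrak J$.} Let $\lambda\notin X^{\bullet}$. Then $V^-1_\lambda V^+$ is spanned by evaluations of triangular words whose middle weight is $\lambda$, and all of these words are bad. Now let $\lambda\in X^{\bullet}$. By \cref{lem:endpoint-half-ideals}, $\mathfrak I^-1_\lambda V^+$ is spanned by evaluations of triangular words containing a nonblock negative letter, and these words are bad. The same argument applies to $V^-1_\lambda\mathfrak I^+$. One point needs care here: a word is evaluated with the idempotent placed at its right end, so it must be re-expressed with the middle idempotent $1_\lambda$. This is exactly \eqref{eq:endpoint-word-idempotents}. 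Choose the terminal weight $\eta=\lambda-\deg(z\text{-part})$, so that $\lambda_p=\lambda$.

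\emph{Directness and grading.} The sum over $\lambda$ is direct because \eqref{eq:endpoint-triangular-decomposition} is direct. The root grading \eqref{eq:endpoint-half-ideal-grading} holds because $\mathfrak J$ is spanned by root-homogeneous evaluations.

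\emph{The quotient.} Fix $\lambda\in X^{\bullet}$. Under the isomorphism $V^-\otimes\Bbbk1_\lambda\otimes V^+\cong V^-1_\lambda V^+$, the subspace $\mathfrak I^-\otimes V^++V^-\otimes\mathfrak I^+$ is carried onto $\mathfrak I^-1_\lambda V^++V^-1_\lambda\mathfrak I^+$. The quotient of $V^-\otimes V^+$ by $\mathfrak I^-\otimes V^++V^-\otimes\mathfrak I^+$ is $(V^-/\mathfrak I^-)\otimes(V^+/\mathfrak I^+)$ by elementary linear algebra over a field. Summing over $\lambda$ and discarding the summands with $\lambda\notin X^{\bullet}$, which lie entirely in $\mathfrak J$, gives \eqref{eq:endpoint-quotient-decomposition}. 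This isomorphism respects the grading because the ideals $\mathfrak I^{\pm}$ are graded by \eqref{eq:half-ideal-properties}.

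\emph{Main obstacle.} I expect the delicate step to be the bookkeeping of the middle weight in the inclusion $R\subseteq\mathfrak J$. The products $\mathfrak I^-1_\lambda V^+$ must be written as spans of evaluations of words whose middle weight is exactly $\lambda$. This needs the compatibility of the idempotent placement in \eqref{eq:endpoint-word-eval} with the factorization through $1_\lambda$.
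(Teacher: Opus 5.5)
Your proposal is correct and follows essentially the same route as the paper. You reduce to $\mathscr B^{\triangle}$ via the straightening lemma and factor through the middle idempotent $1_{\lambda_p(\mathbf w)}$. For the reverse inclusion you use Lemma~\ref{lem:endpoint-half-ideals}, and you read off directness and the quotient from the triangular decomposition \eqref{eq:endpoint-triangular-decomposition} together with the quotient--tensor isomorphism over $\Bbbk$. Your only minor variation is deriving \eqref{eq:endpoint-half-ideal-grading} from $\mathfrak J$ being spanned by root-homogeneous evaluations rather than from the gradedness of each summand, which is equally valid.
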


\begin{proof}
By \cref{lem:endpoint-bad-straightening},
\(\mathfrak J=\operatorname{span}_{\Bbbk}
\operatorname{ev}(\mathscr B^{\triangle})\).

\emph{Left side contained in right side.}
Let \(\mathbf w=(x_1,\ldots,x_m;\eta)\in\mathscr B^{\triangle}\) with
splitting index \(p\), and put \(\lambda:=\lambda_p(\mathbf w)\).  By
\eqref{eq:endpoint-word-idempotents},
\[
 \operatorname{ev}(\mathbf w)
 =\bigl(\operatorname{ev}(x_1)\cdots\operatorname{ev}(x_p)\bigr)
  1_\lambda
  \bigl(\operatorname{ev}(x_{p+1})\cdots\operatorname{ev}(x_m)1_\eta\bigr)
 \in V^{-}1_\lambda V^{+} .
\]
If \(\lambda\notin X^{\bullet}\), this lies in the first sum of
\eqref{eq:endpoint-half-ideal-form}.  Assume \(\lambda\in X^{\bullet}\).
If every letter of \(\mathbf w\) were block, then
\eqref{eq:nonblock-color-nonzero} would give
\(\deg(x_j)\in L^{\bullet}\) for all \(j\), hence
\[
 \lambda_j(\mathbf w)\in\lambda+L^{\bullet}\subseteq X^{\bullet}
 \qquad(0\leq j\leq m)
\]
by \cref{lem:endpoint-lattice-basics}(a), contradicting
\(\mathbf w\in\mathscr B\).  So some \(x_j\) is nonblock.  If
\(j\leq p\), then \((x_1,\ldots,x_p)\in\mathscr W^{-1}\) and
\cref{lem:endpoint-half-ideals} places \(\operatorname{ev}(\mathbf w)\)
in \(\mathfrak I^{-}1_\lambda V^{+}\); if \(j>p\), then
\((x_{p+1},\ldots,x_m)\in\mathscr W^{1}\) and
\(\operatorname{ev}(\mathbf w)\in V^{-}1_\lambda\mathfrak I^{+}\).

\emph{Right side contained in left side.}
Let \(\lambda\in X\setminus X^{\bullet}\).  Expanding both halves of
\(V^{-}1_\lambda V^{+}\) in evaluated words gives \(\Bbbk\)-linear
combinations of \(\operatorname{ev}(\mathbf w)\) with
\(\mathbf w\in\mathscr B^{\triangle}\) and
\(\lambda_p(\mathbf w)=\lambda\notin X^{\bullet}\); hence
\(V^{-}1_\lambda V^{+}\subseteq\mathfrak J\).  Let
\(\lambda\in X^{\bullet}\).  By \cref{lem:endpoint-half-ideals},
\(\mathfrak I^{-}\) is spanned by
\(\operatorname{ev}(\mathscr W^{-1})\) and \(V^{+}\) by evaluated
positive words, so \(\mathfrak I^{-}1_\lambda V^{+}\) is spanned by
\(\operatorname{ev}(\mathbf w)\) with
\(\mathbf w\in\mathscr B^{\triangle}\) containing a nonblock letter;
similarly for \(V^{-}1_\lambda\mathfrak I^{+}\).  Both are therefore
contained in \(\mathfrak J\).

\emph{Directness, grading, quotient.}
The summands of \eqref{eq:endpoint-half-ideal-form} lie in pairwise
distinct blocks \(V^{-}1_\lambda V^{+}\) of
\eqref{eq:endpoint-triangular-decomposition}, so the sum is direct.  Each
summand is root graded, the blocks by
\eqref{eq:endpoint-triangular-decomposition} and
\(\mathfrak I^{\pm}\) by \eqref{eq:half-ideal-properties}; this gives
\eqref{eq:endpoint-half-ideal-grading}.
Finally, fix \(\lambda\in X^\bullet\) and let
\[
 p^\pm:V^\pm\twoheadrightarrow V^\pm/\mathfrak I^\pm
\]
be the quotient maps.  Since tensoring over the field \(\Bbbk\) is
exact, the standard quotient--tensor isomorphism gives
\[
 \frac{V^-\otimes_{\Bbbk}\Bbbk1_\lambda\otimes_{\Bbbk}V^+}
 {\mathfrak I^-\otimes_{\Bbbk}\Bbbk1_\lambda\otimes_{\Bbbk}V^+
  +V^-\otimes_{\Bbbk}\Bbbk1_\lambda\otimes_{\Bbbk}\mathfrak I^+}
 \cong
 (V^-/\mathfrak I^-)\otimes_{\Bbbk}\Bbbk1_\lambda
 \otimes_{\Bbbk}(V^+/\mathfrak I^+).
\]
Equivalently,
\[
 \ker\bigl(p^-\otimes\operatorname{id}\otimes p^+\bigr)
 =
 \mathfrak I^-\otimes_{\Bbbk}\Bbbk1_\lambda\otimes_{\Bbbk}V^+
 +
 V^-\otimes_{\Bbbk}\Bbbk1_\lambda\otimes_{\Bbbk}\mathfrak I^+.
\]
Transporting this kernel through the multiplication isomorphism in
\eqref{eq:endpoint-triangular-decomposition} yields a root-graded
isomorphism
\[
 \frac{V^-1_\lambda V^+}
 {\mathfrak I^-1_\lambda V^+
  +V^-1_\lambda\mathfrak I^+}
 \cong
 (V^-/\mathfrak I^-)\otimes_{\Bbbk}\Bbbk1_\lambda
 \otimes_{\Bbbk}(V^+/\mathfrak I^+).
\]
For \(\lambda\notin X^\bullet\),
\eqref{eq:endpoint-half-ideal-form} gives
\[
 V^-1_\lambda V^+\subseteq\mathfrak J,
\]
so the entire \(\lambda\)-block vanishes in \(\dot V/\mathfrak J\).
Since the triangular decomposition in
\eqref{eq:endpoint-triangular-decomposition} is an internal direct sum,
taking the direct sum of the surviving blocks over
\(\lambda\in X^\bullet\) gives
\[
 \dot V/\mathfrak J
 \cong
 \bigoplus_{\lambda\in X^\bullet}
 (V^-/\mathfrak I^-)\otimes_{\Bbbk}\Bbbk1_\lambda
 \otimes_{\Bbbk}(V^+/\mathfrak I^+),
\]
which is \eqref{eq:endpoint-quotient-decomposition}.

\end{proof}

\subsection{Proof of the quotient theorem}
\label{app:endpoint-quotient-proof}

\begin{proof}[Proof of \cref{thm:endpoint-triangular-quotient}]
\emph{Two-sidedness.}
Let
\[
 \mathbf u=(u_1,\ldots,u_p;\theta)\in\mathscr M,
 \qquad
 \mathbf w=(x_1,\ldots,x_m;\eta)\in\mathscr B.
\]
Since the \(1_\lambda\) are orthogonal idempotents,
\[
 \operatorname{ev}(\mathbf u)\operatorname{ev}(\mathbf w)
 =
 \begin{cases}
  \operatorname{ev}
  (u_1,\ldots,u_p,x_1,\ldots,x_m;\eta),
  &\theta=\lambda_0(\mathbf w),\\
  0,&\theta\neq\lambda_0(\mathbf w),
 \end{cases}
\]
and
\[
 \operatorname{ev}(\mathbf w)\operatorname{ev}(\mathbf u)
 =
 \begin{cases}
  \operatorname{ev}
  (x_1,\ldots,x_m,u_1,\ldots,u_p;\theta),
  &\eta=\lambda_0(\mathbf u),\\
  0,&\eta\neq\lambda_0(\mathbf u).
 \end{cases}
\]

Suppose first that the product on the left is nonzero, and denote the
corresponding concatenated word by \(\mathbf v\).  Then
\[
 \lambda_{p+j}(\mathbf v)=\lambda_j(\mathbf w)
 \qquad(0\leq j\leq m).
\]
Thus every letter \(x_j\) of \(\mathbf w\), and every boundary weight
\(\lambda_j(\mathbf w)\), occurs unchanged in \(\mathbf v\).  Hence a
nonblock letter or a forbidden boundary weight witnessing
\(\mathbf w\in\mathscr B\) also witnesses
\(\mathbf v\in\mathscr B\).

Likewise, if the product on the right is nonzero, then its concatenated
word \(\mathbf v\) satisfies
\[
 \lambda_j(\mathbf v)=\lambda_j(\mathbf w)
 \qquad(0\leq j\leq m),
\]
and again all letters and boundary weights of \(\mathbf w\) occur
unchanged in \(\mathbf v\).  Therefore \(\mathbf v\in\mathscr B\) in
this case as well.  Consequently,
\[
 \operatorname{ev}(\mathscr M)\operatorname{ev}(\mathscr B)
 \subseteq
 \{0\}\cup\operatorname{ev}(\mathscr B),
 \qquad
 \operatorname{ev}(\mathscr B)\operatorname{ev}(\mathscr M)
 \subseteq
 \{0\}\cup\operatorname{ev}(\mathscr B).
\]
Since
\[
 \dot V=\operatorname{span}_{\Bbbk}\operatorname{ev}(\mathscr M),
 \qquad
 \mathfrak J
 =\operatorname{span}_{\Bbbk}\operatorname{ev}(\mathscr B),
\]
by \eqref{eq:endpoint-words-span} and the definition of \(\mathfrak J\),
\(\Bbbk\)-bilinearity gives
\[
 \dot V\mathfrak J\subseteq\mathfrak J,
 \qquad
 \mathfrak J\dot V\subseteq\mathfrak J.
\]
Thus \(\mathfrak J\) is a two-sided ideal of \(\dot V\).  Its
root-gradedness was established in
\cref{prop:endpoint-half-ideal-decomposition}; hence
\(\mathcal Q=\dot V/\mathfrak J\) is a root-graded
\(\Bbbk\)-algebra.

\emph{Support.}
Let
\[
 \overline V^\pm:=V^\pm/\mathfrak I^\pm,
 \qquad
 L^\bullet_+:=\bigoplus_{i\in I}\mathbb N\alpha_i^\bullet.
\]
By \cref{lem:endpoint-half-ideals}, \(\overline V^+\) is spanned by
the images of evaluated positive words containing no nonblock letter.
For a letter \(x=(1,i,a)\in\mathscr S^1\setminus
\mathscr S^{\mathrm{nb}}\), one has \(\ell_i\mid a\), and therefore
\[
 \deg(x)=a\alpha_i
 =\frac{a}{\ell_i}\alpha_i^\bullet
 \in\mathbb N\alpha_i^\bullet.
\]
It follows that
\[
 \operatorname{Supp}(\overline V^+)\subseteq L^\bullet_+.
\]
The same argument for negative words gives
\[
 \operatorname{Supp}(\overline V^-)\subseteq-L^\bullet_+.
\]

By \eqref{eq:endpoint-quotient-decomposition},
\[
 \mathcal Q
 \cong
 \bigoplus_{\lambda\in X^\bullet}
 \overline V^-\otimes_{\Bbbk}\Bbbk1_\lambda
 \otimes_{\Bbbk}\overline V^+
\]
as root-graded \(\Bbbk\)-vector spaces and more explicitly,
\[
 \mathcal Q_\nu
 \cong
 \bigoplus_{\lambda\in X^\bullet}
 \ \bigoplus_{\substack{\beta,\gamma\in L^\bullet_+\\
                         \gamma-\beta=\nu}}
 (\overline V^-)_{-\beta}
 \otimes_{\Bbbk}\Bbbk1_\lambda
 \otimes_{\Bbbk}(\overline V^+)_{\gamma}.
\]  Since every \(1_\lambda\) has
root degree \(0\), a nonzero homogeneous component
\((\mathcal Q)_\nu\) can occur only if
\[
 \nu\in-L^\bullet_++L^\bullet_+\subseteq L^\bullet.
\]
Hence
\[
 \bigl\{\nu\mid(\mathcal Q)_\nu\neq0\bigr\}
 \subseteq L^\bullet.
\]

\emph{Survival of \(E_01_0\).}
By \cref{lem:endpoint-lattice-basics}(b),
\[
 \alpha_0=\alpha_0^\bullet\in L^\bullet\subseteq X^\bullet,
\]
while \(0\in X^\bullet\) because
\(\langle h_i,0\rangle=0\) for every \(i\in I\).

By \cref{lem:endpoint-half-ideals}, \(\mathfrak I^-\) is spanned by
evaluations of words in \(\mathscr W^{-1}\).  Every such word is
nonempty and has strictly negative root degree, so
\[
 (\mathfrak I^-)_0=0.
\]
Similarly, every word in \(\mathscr W^1\) contains a nonblock letter
\((1,i,a)\).  By \eqref{eq:nonblock-color-nonzero}, its color satisfies
\(i\in I^\circ\), and hence \(i\neq0\) by
\cref{lem:endpoint-lattice-basics}(b).  Since all letters of the word
have positive degree, its degree has a strictly positive
\(\alpha_i\)-coefficient and therefore cannot equal \(\alpha_0\).
Consequently,
\[
 (\mathfrak I^+)_{\alpha_0}=0.
\]
Together with \eqref{eq:endpoint-E0-nonzero}, these equalities give
\[
 1+\mathfrak I^-\neq0
 \quad\text{in }\overline V^-,
 \qquad
 E_0+\mathfrak I^+\neq0
 \quad\text{in }\overline V^+.
\]

In the \(\lambda=\alpha_0\) summand of
\eqref{eq:endpoint-quotient-decomposition}, the multiplication
isomorphism uses
\[
 1_{\alpha_0}E_0=E_01_0.
\]
Therefore
\[
 E_01_0+\mathfrak J
 \longleftrightarrow
 (1+\mathfrak I^-)\otimes1_{\alpha_0}
 \otimes(E_0+\mathfrak I^+).
\]
All three factors on the right are nonzero, so their pure tensor is
nonzero over the field \(\Bbbk\).  Hence
\[
 E_01_0+\mathfrak J
 \in
 (\mathcal Q)_{\alpha_0}\setminus\{0\},
\]
as required.
\end{proof}

\bigskip
\centerline{\bf Acknowledgments}
The project is partially supported by the Simons Foundation grant no. MP-TSM-00002518.

\vskip 20pt

\noindent{\bf Data Availability Statement.}
All data of this work are included in the manuscript and its supplemental materials.

\bigskip
\noindent
\textsc{Ziyang Jiang}, Department of Mathematics, North Carolina State
University, Raleigh, NC 27695, USA\\
Email address: \href{mailto:zjiang29@ncsu.edu}{\texttt{zjiang29@ncsu.edu}}

\medskip
\noindent
\textsc{Naihuan Jing}, Department of Mathematics, North Carolina State
University, Raleigh, NC 27695, USA\\
Email address: \href{mailto:jing@ncsu.edu}{\texttt{jing@ncsu.edu}}

\end{document}